\theoremstyle{plain}
\newtheorem{theorem}{Theorem}[section]
\newtheorem{lemma}[theorem]{Lemma}
\newtheorem{proposition}[theorem]{Proposition}
\newtheorem{corollary}[theorem]{Corollary}
\theoremstyle{definition}
\newtheorem{definition}[theorem]{Definition}
\newtheorem{example}[theorem]{Example}
\newtheorem{exercise}[theorem]{Exercise}
\newtheorem{problem}[theorem]{Problem}
\newtheorem{construction}[theorem]{Construction}
\theoremstyle{remark}
\newtheorem{remark}[theorem]{Remark}
\newtheorem{fact}[theorem]{Fact}
\newtheorem{convention}[theorem]{Convention}
\title{Cayley--Abels graphs: \\ Local and global perspectives}
\author{Waltraud Lederle\footnote{WL is an F.R.S.-FNRS postdoc. She was partly supported by the Early Postdoc.Mobility scholarship No. 175106 from the Swiss National Science Foundation.}}
\date{Preliminary version, comments welcome\footnote{to waltraud.lederle@uclouvain.be}}
\let\phi\varphi
\let\epsilon\varepsilon
\renewcommand\emptyset\varnothing
\newcommand{\quot}{{\setminus}}
\newcommand{\lpres}{\langle\!\langle}
\newcommand{\rpres}{\rangle\!\rangle}
\newcommand{\T}{\mathbb{T}}
\DeclareMathOperator\Aut{Aut}
\newcommand{\AutT}{\operatorname{Aut}(\T)}
\newcommand{\V}{\mathbf{V}}
\newcommand{\E}{\mathbf{E}}
\newcommand{\A}{\mathbf{A}}
\newcommand{\Or}{\mathbf{O}}
\newcommand{\D}{\mathrm{D}}
\newcommand{\N}{\mathbf{N}}
\newcommand{\B}{\mathbf{B}}
\newcommand{\val}{\mathrm{val}}
\newcommand{\U}{\mathbf{U}}
\newcommand{\foris}[3]{
	\ifthenelse{\equal{#1}{}}%
	{}%
	{#1\colon}%
	\T \setminus #2 \to \T \setminus #3}%
\newcommand{\Td}{\T_d}
\newcommand{\AutTd}{\Aut(\Td)}
\newcommand{\Sym}{\operatorname{Sym}}
\newcommand{\calE}{\mathcal{E}}
\newcommand{\ZZ}{\mathbb{Z}}
\newcommand{\RR}{\mathbb{R}}
\newcommand{\QQ}{\mathbb{Q}}
\newcommand{\NN}{\mathbb{N}}
\begin{document}
	\maketitle
	\begin{abstract}
		We give an introduction to the Cayley--Abels graph for a totally disconnected, locally compact (tdlc) group.
		It is a generalization of the Cayley graph.
		We illustrate that on the one hand, Cayley--Abels graphs are useful tools to extend concepts concerning finitely generated groups to compactly generated, tdlc groups and on the other hand, they can be used to investigate properties that, for finitely generated groups, are trivial.
	\end{abstract}

\tableofcontents

\section*{List of abbreviations and mathematical notation}

\begin{tabular}{c l}
	tdlc & totally disconnected, locally compact \\
	cgtdlc & compactly generated, totally disconnected, locally compact \\
	$\NN$ & the natural numers $\NN \coloneqq  \{0, 1, 2, 3, \dots\}$ \\
	$|X|$ & the cardinality of the set $X$ \\
	$G_\alpha$ & all elements in $G$ fixing the element $\alpha$ \\
	$G_A$ & all elements in $G$ fixing every element in the set $A$ \\
	$G_{\{A\}}$ & all elements in $G$ leaving the set $A$ invariant \\
	$\Sym(X)$ & symmetry group of the set $X$ \\
	$\V\Gamma$ & set of vertices of the graph $\Gamma$ \\
	$\A\Gamma$ & set of arcs of the graph $\Gamma$ \\
	$\E\Gamma$ & set of edges of the graph $\Gamma$ \\
	$\N\alpha$ & set of vertices neighbouring the vertex $\alpha$ \\
	$\calE\Gamma$ & set of ends of the graph $\Gamma$ \\
	$\B(\alpha,r)$ & the subgraph consisting of the closed $r$-ball around $\alpha$ \\
	$d_\Gamma$ & the graph theoretical distance on $\V\Gamma$ \\
	$\val(\alpha)$ & valency of the vertex $\alpha$ \\
	$\val(\Gamma)$ & valency of (every vertex of) the graph $\Gamma$ \\
	$\Aut(\Gamma)$ & the automorphism group of the graph $\Gamma$
\end{tabular}

\newpage

\section{Introduction}

The study of totally disconnected, locally compact (tdlc) groups takes inspiration from both Lie groups and discrete groups.
The perhaps most obvious influence from Lie groups is the very concept of a Lie group over a local field, which also comes with a Lie algebra, an exponential function, etc. (see \cite{Gloeckner2018} for an introduction).
The present text is about the discrete group flavour of tdlc groups. We study a generalization of the Cayley graph to totally disconnected, locally compact (tdlc) groups that is commonly called called the Cayley--Abels graph.

Recall that for a finitely generated group $G$ with finite, symmetric generating set $S$, the associated \emph{Cayley graph} is the graph with vertex set $G$ and such that $g,h \in G$ are adjacent if and only if there exists an $ s \in S$ such that $g = hs$.
Equivalently it is a locally finite, connected graph with an action of $G$ that is free and transitive on the vertices. 

Relaxing the condition that the group should act freely on the vertices and allowing for compact, open vertex stabilizers, it turns out that many tools from geometric group theory become available for compactly generated tdlc (cgtdlc) groups.
This brings us to the main definition of this text.

\begin{definition}\label{def:maindef}
	Let $G$ be a tdlc group. A \emph{Cayley--Abels graph} for $G$ is a locally finite, connected graph $\Gamma$ together with a vertex-transitive action of $G$ on $\Gamma$ with compact, open vertex stabilizers.
\end{definition}

Every cgtdlc group has a Cayley--Abels graph, which depends not only on the compact generating set, but also on the chosen compact, open vertex stabilizer. 
Just as in the finitely generated case, two Cayley--Abels graphs of a cgtdlc group are quasi-isometric. For this reason, several notions such as hyperbolicity or the number of ends carry over from finitely generated groups to cgtdlc groups. We will explore in Section \ref{sect:fg} which similarities between finitely generated and cgtdlc groups can be extruded from Cayley--Abels graphs.

But the Cayley--Abels graph also carries information about something that is 
trivial for discrete groups and nonexistent for Lie groups, namely arbitrarily small compact, open subgroups. Basically all research on tdlc groups crucially involves this ``local structure". These non-discrete aspects of Cayley--Abels graphs are discussed in Section \ref{sect:lc}.

Historically, to the best of the author's knowledge, Cayley--Abels graphs were introduced by Herbert Abels \cite{a73}. He was interested in Specker compactifications of a locally compact group.
More than thirty years later, Kr\"on and M\"oller \cite{KronMoller2008} brought it again to the surface, reproving some of Abels' results via graph theoretic methods but also expanding on the theory.
Nowadays, the Cayley--Abels graph often serves as a useful tool in proving theorems about tdlc groups, but is rarely the center of attention itself - something that the author of the present notes wishes to change.

\subsection{Acknowledgements}
I am grateful to R\"ognvaldur G. M\"oller for having provided his notes of a minicourse he gave in Blaubeuren; they were very helpful to me.
I also want to express my thanks to Lancelot Semal for careful reading and helpful feedback.
This survey grew out of a mini course given by the author at the WinSum School ``Locally compact groups acting on discrete structures" in December 2020 at the Bernoulli Center at the EPFL.
The author thanks Alejandra Garrido, Colin Reid, Stephan Tornier and George Willis for organizing this event.

\section{Preliminaries and Notation}
\subsection{Graphs}

Our definition of a graph largely follows Serre \cite{s03}.
The author assumes that the reader already has some familiarity with graphs and therefore keeps it short.

\begin{definition}
	A \emph{graph} is a tuple $\Gamma = (\V\Gamma,\A\Gamma,o_\Gamma,t_\Gamma,i_\Gamma)$ consisting of two sets, the \emph{vertex set} $\V\Gamma$ whose elements will be called \emph{vertices} and an \emph{arc set} $\A\Gamma$ whose elements are called \emph{arcs}, and maps $o_\Gamma,t_\Gamma \colon \A\Gamma \to \V\Gamma$ called \emph{origin} and \emph{terminus}, as well as $i_\Gamma \colon \A\Gamma \to \A\Gamma$ satisfying the following conditions.
	The map $i_\Gamma$ is a fixed-point-free involution and $o_\Gamma \circ i_\Gamma = t_\Gamma$.
\end{definition}

We will usually drop the subscript in these maps. Note that $t \circ i = o \circ i \circ i = o$.
An arc can be thought of as arrow from its origin to its terminus.

\begin{definition}
	We define a few concepts around graphs.
	\begin{enumerate}
		\item The \emph{edge set} of $\Gamma$ is $\E\Gamma \coloneqq  \{\{e,i(e)\} \mid e \in \A\Gamma \}$, its elements will be called \emph{edges}.
		\item A graph is a \emph{tree} if every sequence $(e_1,\dots,e_n)$ of arcs with $t(e_i)=o(e_{i+1})$ and $i(e_i) \neq e_{i+1}$ for every $1\leq i < n$ satisfies $o(e_1) \neq t(e_n)$.
		\item Let $n \geq 0$. A \emph{path of length $n$} from $\alpha$ to $\beta$ in $\Gamma$ is an $n$-tuple of distinct arcs $(e_1,\dots,e_n)$ with $o(e_1)=\alpha$ and $t(e_n)=\beta$ such that $t(e_{i})=o(e_{i+1})$ for $i=1,\dots,n-1$, but $o(e_{1}),\dots,o(e_n),t(e_n)$ are distinct. If there is no path of smaller length from $o(e_1)$ to $t(e_n)$ then $(e_1,\dots,e_n)$ is called a \emph{geodesic path}.
		\item A \emph{sub-path} of a path $(e_1,\dots,e_n)$ is a path $(e_i,\dots,e_j)$ with $1 \leq i \leq j \leq n$.
		\item The \emph{valency} of a vertex $\alpha \in \V\Gamma$ is the cardinality $\val(\alpha) \coloneqq  |o^{-1}(v)|=|t^{-1}(v)|$.
		If the valency of every vertex is finite, we call $\Gamma$ \emph{locally finite.}
		\item If $\val(\alpha)=\val(\beta)$ for all $\alpha,\beta \in \V\Gamma$ we call $\Gamma$ a \emph{regular graph} and define the \emph{valency} of $\Gamma$ by $\val(\Gamma) \coloneqq \val(\alpha)$.
		\item The \emph{set of neighbours} of $\alpha \in \V\Gamma$ is $\N(\alpha) \coloneqq  t(o^{-1}(\alpha)) \subset \V\Gamma$ or, in other words, those vertices that are connected to $\alpha$ by an edge.
		\item A \emph{subgraph} of $\Gamma$ is a graph $\Delta \coloneqq  (\V\Delta,\A\Delta,o_\Delta,t_\Delta,i_\Delta)$ such that $\V\Delta \subset \V\Gamma, \A\Delta \subset \A\Gamma$ and $o_\Delta,t_\Delta,i_\Delta$ are the restrictions of $o_\Gamma,t_\Gamma$ and $i_\Gamma$.
		\item Let $\Delta \subset \Gamma$ be a subgraph. The \emph{graph difference} $\Gamma \setminus \Delta$ is the minimal subgraph of $\Gamma$ with
		arc set $\A(\Gamma \setminus \Delta) \coloneqq  \A\Gamma \setminus \A\Delta$ and
		whose vertex set contains $\V\Gamma \setminus \V\Delta$. Its vertex set is $\V(\Gamma \setminus \Delta) = \{\alpha \in \V\Gamma \mid o^{-1}(\alpha) \in \A\Gamma \setminus \A\Delta \} \cup (\V\Gamma \setminus \V\Delta)$.
		\item The \emph{(graph-theoretical) distance} $d_\Gamma \colon \V\Gamma \times \V\Gamma \to \NN \cup \{\infty\}$ is defined by setting $d_\Gamma(\alpha,\beta)$ the length of a geodesic connecting $\alpha$ and $\beta$.
		\item The graph $\Gamma$ is called \emph{connected} if each two vertices have finite distance from each other, and a \emph{connected component} is a maximal, connected subgraph.
		\item Let $r \geq 0$. The \emph{ball of radius $r$ around $\alpha$} is the largest subgraph $\B(\alpha,r)$ of $\Gamma$ containing all vertices at distance at most $r$ from $\alpha$.
	\end{enumerate}
\end{definition}

Each graph also has a geometric realization as one-dimensional CW-complex, where $\V\Gamma$ is the set of $0$-cells and each edge $\{e,i(e)\}$ gives rise to a $1$-cell that is attached to $o(e)$ and $t(e)$. We will only the geometric realization in Section \ref{sect:trees}, but it is helpful to visualize a graph in that way.

\begin{example}\label{expl:graph_N_Z}
	The graph $\Gamma_\NN$ has vertex set $\NN$ and arc set $\{(n,m) \mid n,m \in \NN, \, |n-m|=1 \}$. Origin and terminus are given by $o((n,m))=n$ and $t((n,m))=m$.  In particular, the involution is $i((n,m))=(m,n)$.
	
	The graph $\Gamma_\ZZ$ has vertex set $\ZZ$ and arc set $\{(n,m) \mid n,m \in \ZZ, \, |n-m|=1 \}$. Origin and terminus are given by $o((n,m))=n$ and $t((n,m))=m$.  In particular, the involution is $i((n,m))=(m,n)$.
\end{example}

\begin{convention}\label{conv:graphs}
	In the remainder of our notes, all our graphs will be \emph{simple}, i.e. there will be no multiple edges between vertices and no loops.
	This means we can consider the arc set as a subset $\A\Gamma \subset \V\Gamma \times \V\Gamma$ not intersecting the diagonal, and $\E\Gamma \subset \{E \subset \V\Gamma \mid |E|=2 \}$.
	In particular our convention implies that $\val(\alpha) = |\N(\alpha)|$ for every vertex $\alpha \in \V\Gamma$.
	We can write paths as sequences of distict vertices.
\end{convention}


We turn now to the definition of \emph{ends} of a graph $\Gamma$, going back to Freudenthal~\cite{Freudenthal1945} and Halin~\cite{Halin1964}. A detailed introduction is \cite{Kroen05}.

\begin{definition}\label{def:ray_line}
	A \emph{ray} in a graph $\Gamma$ is an infinite sequence of distinct vertices $\rho\coloneqq (\alpha_0,\alpha_1,\alpha_2,\dots)$ such that $(\alpha_{i-1},\alpha_i) \in \A\Gamma$ for every $i \geq 1$.
	
	A \emph{line} in a graph $\Gamma$ is a bi-infinite sequence $\rho\coloneqq (\dots,\alpha_0,\alpha_1,\alpha_2,\dots)$ such that $(\alpha_{i-1},\alpha_i) \in \A\Gamma$ for every $i \in \ZZ$.
\end{definition}
	
	Then we can also talk about sub-paths and sub-rays of rays and lines: A sub-path will be a finite sequence $(\alpha_i,\dots,\alpha_j)$ with $i \leq j$ and a \emph{sub-ray} an infinite sequence $(\alpha_n,\alpha_{n+1},\dots)$.
	A ray is called a \emph{geodesic ray} if every sub-path is a geodesic path.
	Also a line is called a \emph{geodesic line} if every sub-path is a geodesic path.
	
	Two rays $\rho_1, \rho_2$ are \emph{equivalent} if there exists a third ray containing infinitely vertices from both $\rho_1$ and $\rho_2$.

\begin{definition}
	An \emph{end} of $\Gamma$ is an equivalence class of this equivalence relation.
	We denote the set of all ends by $\calE\Gamma$.
\end{definition}

\begin{exercise}
	Show that equivalence of rays is indeed an equivalence relation.
\end{exercise}

\begin{exercise}
	Determine the set of ends for a few graphs you know. If you know other types of boundaries, compare them with the set of ends.
\end{exercise}

The existence of ends is a consequence of a famous lemma by K\H{o}nig; the ``only if"-part is obvious.

\begin{lemma}[K\H{o}nig's Lemma]
	\label{lem:koenigslemma}
	A connected, locally finite graph has an end if and only if it infinite.
\end{lemma}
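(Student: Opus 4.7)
The equivalence has a trivial direction and a classical direction. I will treat both, with the bulk of the work going into the construction of a ray in an infinite, connected, locally finite graph.

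For the easy direction, suppose $\calE\Gamma \neq \emptyset$. Any end, being an equivalence class of rays, contains a ray $\rho = (\alpha_0, \alpha_1, \alpha_2, \ldots)$; by Definition~\ref{def:ray_line} the $\alpha_i$ are pairwise distinct, so $\V\Gamma$ is infinite, hence $\Gamma$ is infinite.

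For the substantive direction, assume $\Gamma$ is connected, locally finite, and infinite. The plan is to produce a ray by an inductive, ``infinite pigeonhole'' argument. Fix any vertex $\alpha_0 \in \V\Gamma$. For each $r \geq 0$, the ball $\B(\alpha_0, r)$ is finite: by induction, using that $\B(\alpha_0, r+1) \subseteq \B(\alpha_0, r) \cup \bigcup_{\beta \in \V\B(\alpha_0, r)} \N(\beta)$, a finite union of finite sets by local finiteness. Since $\V\Gamma$ is infinite and $\Gamma$ is connected, there exist vertices at arbitrarily large distance from $\alpha_0$; for each $r \geq 1$ pick a geodesic path $\gamma_r$ from $\alpha_0$ to some vertex at distance exactly $r$.

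Now construct $\alpha_1, \alpha_2, \ldots$ inductively so that $(\alpha_{i-1}, \alpha_i) \in \A\Gamma$, the $\alpha_i$ are pairwise distinct, and at each stage the set
\[
J_i \coloneqq \{\, r \geq i \mid \gamma_r \text{ begins with the prefix } (\alpha_0, \alpha_1, \ldots, \alpha_i) \,\}
\]
is infinite. For the base case $J_0 = \{r \geq 0\}$ is infinite. For the inductive step, assume $J_i$ is infinite. Each $\gamma_r$ with $r \in J_i$, $r > i$, has a well-defined $(i+1)$-st vertex which is a neighbour of $\alpha_i$. Since $\val(\alpha_i)$ is finite, some neighbour $\alpha_{i+1}$ of $\alpha_i$ occurs as the $(i+1)$-st vertex of infinitely many of these $\gamma_r$, giving an infinite $J_{i+1}$. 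That $\alpha_{i+1}$ differs from each previous $\alpha_j$ follows because every prefix of a geodesic is itself a geodesic, so $\alpha_{i+1}$ lies at distance $i+1$ from $\alpha_0$ while $\alpha_j$ lies at distance $j \leq i$.

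The sequence $(\alpha_0, \alpha_1, \ldots)$ is then a ray by construction, and its equivalence class is an end of $\Gamma$. The only real subtlety is ensuring that the pigeonhole step preserves the invariant ``infinitely many geodesics share this prefix'', which is precisely what guarantees that the construction never stalls; this is the same mechanism as in the classical König infinity lemma applied to the tree of geodesic prefixes rooted at $\alpha_0$.
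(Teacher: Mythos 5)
Your proof is correct: the easy direction is exactly the observation that a ray has infinitely many distinct vertices, and the substantive direction is the standard K\H{o}nig pigeonhole argument (finite balls, geodesics to vertices at every distance, and inductively extending a prefix shared by infinitely many of them), with the distinctness of the $\alpha_i$ properly justified via the fact that sub-paths of geodesics are geodesics. The paper itself gives no proof of this lemma --- it cites it as a famous result and only remarks that the ``only if'' direction is obvious --- so your argument supplies precisely the classical proof the paper implicitly relies on.
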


The set of ends $\calE \Gamma$ of a graph can be endowed with a topology that we will now define.

\begin{lemma}
	Let $\rho$ be a ray in $\Gamma$ and let $\Theta \subset \Gamma$ be a finite subgroup.
	Let $\Delta$ be a connected component of $\Gamma \setminus \Delta$.
	Then, $\Delta$ either contains finitely many vertices of $\rho$, or it contains all but finitely many vertices of $\rho$.
\end{lemma}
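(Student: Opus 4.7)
The statement clearly contains typos (``subgroup'' should read ``subgraph'', and the component should be one of $\Gamma\setminus\Theta$, not $\Gamma\setminus\Delta$), and I will work with what the author evidently means: $\Theta$ is a finite subgraph of $\Gamma$, and $\Delta$ is a connected component of $\Gamma\setminus\Theta$. The plan is to exhibit an explicit tail of $\rho$ that is entirely contained in a single connected component of $\Gamma\setminus\Theta$, and then note that $\Delta$ either meets this tail (in which case it swallows the whole tail) or it does not (in which case it meets $\rho$ only inside the removed finite part).

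First I would use the finiteness of $\Theta$ to trim the ray. Writing $\rho = (\alpha_0,\alpha_1,\alpha_2,\dots)$, the vertices $\alpha_i$ are distinct by Definition \ref{def:ray_line}, so at most finitely many of them can lie in $\V\Theta$. Hence there exists $N \in \NN$ with $\alpha_i \notin \V\Theta$ for every $i \geq N$. In particular, each such $\alpha_i$ lies in $\V(\Gamma\setminus\Theta)$ by the definition of graph difference in item~(9). Moreover, for $i \geq N+1$ the arc $(\alpha_{i-1},\alpha_i)$ has both endpoints outside $\V\Theta$, so it cannot belong to $\A\Theta$; thus it lies in $\A\Gamma\setminus\A\Theta = \A(\Gamma\setminus\Theta)$.

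The upshot is that the sub-ray $\rho' \coloneqq (\alpha_N,\alpha_{N+1},\alpha_{N+2},\dots)$ is contained in $\Gamma\setminus\Theta$ and is itself connected (consecutive vertices are joined by arcs of $\Gamma\setminus\Theta$), so it lies inside a single connected component $\Delta_0$ of $\Gamma\setminus\Theta$. At this point the dichotomy is automatic: if $\Delta = \Delta_0$, then $\Delta$ contains every $\alpha_i$ with $i\geq N$, hence all but finitely many vertices of $\rho$; if $\Delta \neq \Delta_0$, then $\Delta$ is disjoint from $\rho'$, so the only vertices of $\rho$ that $\Delta$ can contain are among $\alpha_0,\dots,\alpha_{N-1}$, which is a finite set.

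No step looks genuinely hard; the only subtlety I anticipate is bookkeeping with the slightly non-obvious definition of $\V(\Gamma\setminus\Theta)$, which is why I singled out that an arc between two vertices outside $\V\Theta$ is automatically outside $\A\Theta$. Once that is observed, the tail of $\rho$ past index $N$ sits cleanly in one component and the claim follows.
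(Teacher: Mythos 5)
Your proof is correct; the paper leaves this lemma as an exercise, so there is no argument to compare against, and yours is the natural one: trim the ray past the finitely many vertices meeting $\V\Theta$, observe that the remaining tail is a connected subgraph of $\Gamma\setminus\Theta$ and hence lies in a single component $\Delta_0$, and conclude by whether $\Delta=\Delta_0$ or not (distinct components being vertex-disjoint by maximality). You also correctly diagnosed the two typos in the statement.
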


\begin{proof}
	Exercise.
\end{proof}

\begin{lemma}[\cite{Halin1964} (1.3)]\label{lem:ray_equivalence}
	Two rays $\rho_1, \rho_2$ in $\Gamma$ are equivalent if and only if for each finite subgraph $\Theta \subset \Gamma$, every connected component of $\Delta \setminus \Theta$ contains either all but finitely many vertices of both $\rho_1$ and $\rho_2$, or only finitely many vertices of both $\rho_1$ and $\rho_2$.
\end{lemma}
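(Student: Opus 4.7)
My plan is to split the biconditional, handing the forward direction to the preceding lemma and handling the converse by an explicit zig-zag construction.

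\textbf{Forward direction.} Suppose $\rho_1 \sim \rho_2$, witnessed by a third ray $\rho_3$ that shares infinitely many vertices with each of $\rho_1$ and $\rho_2$. Fix any finite subgraph $\Theta \subset \Gamma$. The preceding lemma, applied to $\rho_3$, produces a unique connected component $C$ of $\Gamma \setminus \Theta$ that contains all but finitely many vertices of $\rho_3$; every other component of $\Gamma \setminus \Theta$ meets $\rho_3$ in only finitely many vertices. Because $\rho_3$ shares infinitely many vertices with $\rho_1$, the component $C$ contains infinitely many vertices of $\rho_1$. A second application of the preceding lemma, now to $\rho_1$, upgrades ``infinitely many'' to ``all but finitely many''. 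The same argument with $\rho_2$ in place of $\rho_1$ finishes this direction: $C$ is the common component required by the statement, and every other component of $\Gamma \setminus \Theta$ contains only finitely many vertices of either ray.

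\textbf{Converse direction.} Assume the separation condition, and write $\rho_1 = (\alpha_0,\alpha_1,\dots)$ and $\rho_2 = (\beta_0,\beta_1,\dots)$. I will build $\rho_3$ inductively as an increasing union $P_1 \subset P_2 \subset \cdots$ of finite paths which alternately end on $\rho_1$ and on $\rho_2$. Suppose $P_n$ is already built, ending (say) at a vertex $\beta_{j_n}$ of $\rho_2$, and has used only finitely many vertices. Let $\Theta_n$ be the finite subgraph consisting of $P_n$ together with the initial segments $\alpha_0,\dots,\alpha_{i_n}$ and $\beta_0,\dots,\beta_{j_n}$. By hypothesis there is a connected component $C_n$ of $\Gamma \setminus \Theta_n$ containing all but finitely many vertices of each $\rho_k$; choose $i_{n+1}>i_n$ and $j_{n+1}>j_n$ with $\alpha_{i_{n+1}},\beta_{j_{n+1}}\in C_n$, and pick a path in $C_n$ from $\beta_{j_n+1}$ along $\rho_2$ to $\beta_{j_{n+1}}$ and then across $C_n$ to $\alpha_{i_{n+1}}$. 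Since this path lies in $\Gamma \setminus \Theta_n$, it avoids all previously used vertices. I then trim it so that its interior meets $\rho_1 \cup \rho_2$ only at the two endpoints (walk along the cross-portion from its $\rho_2$-side and cut at the first time it hits $\rho_1$, redefining $\alpha_{i_{n+1}}$ to be that meeting vertex; similarly on the $\rho_2$-side), readjusting $i_{n+1},j_{n+1}$ as needed; they remain strictly larger than their predecessors because the trimmed path lives outside $\Theta_n$. Appending this trimmed path to $P_n$ gives $P_{n+1}$, now ending on $\rho_1$; swap the roles of $\rho_1$ and $\rho_2$ at the next step.

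The union of the $P_n$ is a ray $\rho_3$ that contains $\alpha_{i_n}$ and $\beta_{j_n}$ for all $n$, so it witnesses $\rho_1 \sim \rho_2$. The main obstacle, and the reason care is needed, is the bookkeeping in the converse: nothing in the hypothesis prevents $\rho_1$ and $\rho_2$ from sharing vertices or from the raw connecting path produced by connectedness of $C_n$ wandering repeatedly through $\rho_1\cup\rho_2$. The trimming step is what guarantees that the limit sequence is a ray in the sense of Definition \ref{def:ray_line}, that is, that all its vertices are distinct.
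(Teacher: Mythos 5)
The paper does not actually prove this lemma --- it is quoted from Halin --- so your argument has to stand on its own. The forward direction is fine: the tail of $\rho_3$ beyond its last visit to $\V\Theta$ is connected in $\Gamma\setminus\Theta$, so it singles out one component $C$ with all but finitely many vertices of $\rho_3$, and the two applications of the preceding lemma then upgrade ``infinitely many'' to ``all but finitely many'' for $\rho_1$ and $\rho_2$ exactly as you say.

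The converse has a genuine gap at the step where you route ``along $\rho_2$ from $\beta_{j_n+1}$ to $\beta_{j_{n+1}}$'' and assert that this portion is ``a path in $C_n$'' which ``avoids all previously used vertices.'' Your trimming repairs only the cross-portion. The detour along $\rho_2$ consists of $\rho_2$-vertices of index greater than $j_n$, and nothing prevents such a vertex from coinciding with some $\alpha_i$, $i\le i_n$ --- in particular with the endpoint $\alpha_{i_m}$ of an \emph{earlier} cross-path, which is a vertex of $P_n$. In that case the detour re-enters $\Theta_n$ (so it does not lie in $C_n$; indeed $\beta_{j_n+1}$ itself may already be a used vertex), and the limit sequence repeats a vertex and is not a ray. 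This is precisely the shared-vertex scenario you flag in your last sentence, but the trimming as described never touches the along-$\rho_2$ segment, so it does not resolve it. The standard repair is to dispose of the degenerate case first: if $\rho_1$ and $\rho_2$ share infinitely many vertices, then $\rho_3\coloneqq\rho_1$ already witnesses equivalence; otherwise pass to disjoint tails (which changes neither the hypothesis nor the conclusion, as only finitely many vertices are discarded), after which your zig-zag goes through --- the detour then avoids all $\alpha_i$ by disjointness, avoids $\beta_0,\dots,\beta_{j_n}$ because ray vertices are distinct, and avoids the interiors of earlier cross-paths because those were trimmed off $\rho_1\cup\rho_2$. One further technicality: with the paper's definition of graph difference, $\V(\Gamma\setminus\Theta_n)$ retains every vertex of $\Theta_n$ that is incident to an arc outside $\A\Theta_n$, so components of $\Gamma\setminus\Theta_n$ need not avoid $\V P_n$ at all; to genuinely delete the used vertices you must let $\Theta_n$ contain every arc incident to them.
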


Let $\Theta \subset \Gamma$ be a finite subgraph and $\Delta$ a connected component of $\Gamma \setminus \Theta$. 
Lemma \ref{lem:ray_equivalence} allows us to talk about whether $\Delta$ \emph{contains} the end $\xi$ or not.
A basic open set of $\calE \Gamma$ consists of all ends $\xi \in \calE \Gamma$ contained in $\Delta$.

\begin{exercise}
	Prove that these sets indeed form the basis of a topology on $\calE\Gamma$.
\end{exercise}

\begin{exercise}\label{exer:ray_equiv_sufficient}
	Show that, in Lemma \ref{lem:ray_equivalence}, it is not necessary to demand that the condition holds for every finite subgraph $\Theta$, but it is enough to consider nested finite subgraphs $\Theta_1 \subset \Theta_2 \subset \dots$ covering $\Gamma$.
\end{exercise}

It is immediately obvious from the definition that each basic open set is not just open, but also closed, and its complement is a finite union of basic open sets if $\Gamma$ is locally finite and has finitely many connected components.
Lemma~\ref{lem:ray_equivalence} shows in particular that $\calE\Gamma$ is Hausdorff.

We summarize the topological properties of $\calE\Gamma$ in a lemma. We didn't prove compactness, it can be done as an exercise.

\begin{lemma}
	Let $\Gamma$ be a locally finite graph. Then $\calE\Gamma$ is compact, Hausdorff, second countable and totally disconnected.
\end{lemma}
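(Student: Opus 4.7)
My plan is to fix a countable exhaustion of $\Gamma$ by finite subgraphs and read off all four properties from it. I would assume $\Gamma$ is connected (otherwise argue on each connected component separately), fix a base vertex $\alpha_0$, and set $\Theta_n \coloneqq \B(\alpha_0,n)$. Local finiteness makes each $\Theta_n$ finite, and connectedness ensures $\bigcup_n \Theta_n = \Gamma$. The key initial fact I would record is that $\Gamma \setminus \Theta_n$ has only finitely many connected components: any such component is linked to $\Theta_n$ by at least one arc, and only finitely many arcs are incident to the finite vertex set $\V\Theta_n$.

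Second countability then drops out of Exercise~\ref{exer:ray_equiv_sufficient}: the countably many basic opens $U_\Delta$, as $\Delta$ ranges over components of $\Gamma \setminus \Theta_n$ for some $n \in \NN$, form a basis of $\calE\Gamma$. For the Hausdorff property, given distinct ends $\xi_1 \neq \xi_2$ represented by rays $\rho_1, \rho_2$, Lemma~\ref{lem:ray_equivalence} yields some $\Theta_n$ for which the tails of $\rho_1$ and $\rho_2$ lie in different components $\Delta_1 \neq \Delta_2$, and then $U_{\Delta_1}$ and $U_{\Delta_2}$ are disjoint opens separating them. Total disconnectedness follows once I note that every $U_\Delta$ is in fact clopen, because its complement in $\calE\Gamma$ is the finite union of $U_{\Delta'}$ over the sibling components of $\Gamma \setminus \Theta_n$, and a Hausdorff space with a basis of clopen sets is totally disconnected.

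The main obstacle is compactness, which the excerpt defers. My plan is to realize $\calE\Gamma$ as a closed subspace of the compact product $\prod_n C_n$, with $C_n$ the finite discrete set of components of $\Gamma \setminus \Theta_n$. The map $\phi \colon \calE\Gamma \to \prod_n C_n$ sends an end $\xi$ to $(\Delta_n(\xi))_n$, where $\Delta_n(\xi)$ is the unique component into which any representing ray eventually enters; existence and uniqueness come from the lemma preceding Lemma~\ref{lem:ray_equivalence}. Then $\phi$ is a continuous injection whose image is exactly the set of coherent sequences $(D_n)_n$ with $D_{n+1} \subset D_n$, which is a closed subset of the product. Conversely, any such coherent sequence arises from an end: each $D_n$ is connected, locally finite and infinite, so K\H{o}nig's Lemma (Lemma~\ref{lem:koenigslemma}) together with a standard diagonal extraction produces a ray lying eventually in every $D_n$, realizing the sequence. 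This makes $\phi$ a homeomorphism onto a closed, hence compact, subspace of the product, and so $\calE\Gamma$ is compact.
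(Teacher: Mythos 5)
Your argument is correct, and it supplies considerably more than the paper does: the text only records that the basic open sets are clopen with finite-union complements and that Lemma~\ref{lem:ray_equivalence} gives the Hausdorff property, and it explicitly leaves compactness as an exercise. Your route to compactness --- identifying $\calE\Gamma$ with the inverse limit of the finite sets $C_n$ of components of $\Gamma \setminus \B(\alpha_0,n)$, sitting as a closed subspace of $\prod_n C_n$ --- is the standard and arguably cleanest way to do that exercise, and it has the pleasant side effect of delivering second countability and total disconnectedness from the same data. The one step worth writing out in full is the surjectivity onto the coherent sequences: the ``diagonal extraction'' works most smoothly if you pick $\beta_n \in D_n$ and take \emph{geodesic} paths $P_m$ from $\alpha_0$ to $\beta_m$; then the portion of $P_m$ outside $\B(\alpha_0,n)$ is connected and hence lies in $D_n$ for every $n \leq m$, and the K\H{o}nig-extracted geodesic ray inherits this, so its end really does map to $(D_n)_n$. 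With arbitrary paths the tails need not stay in the right components without extra care. A second point to make explicit: a continuous bijection onto a compact space does not by itself make the domain compact, so you must check that $\phi$ is open onto its image; this does follow from your basis claim, since $\phi(U_\Delta)$ is precisely the intersection of the image with a cylinder set, but say so.

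One caveat about the reduction at the start: if $\Gamma$ has infinitely many infinite connected components, then $\calE\Gamma$ is an infinite disjoint union of nonempty clopen pieces and is \emph{not} compact, so ``argue on each component separately'' cannot rescue compactness there. This is really a gap in the lemma's statement rather than in your proof --- the paper itself hedges with ``if $\Gamma$ \dots has finitely many connected components'' --- but you should state the connectedness (or finitely-many-infinite-components) hypothesis rather than suggest the general case reduces to it.
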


The set of ends of a graph can be considered a ``boundary", turning the union $\V\Gamma \cup \calE\Gamma$ into a compact topological space with dense subset $\V\Gamma$. A sequence of vertices $(\alpha_n)_{n \in \NN}$ converges to an end $\xi \in \calE\Gamma$ if for every finite subgraph $\Theta \subset \Gamma$ and connected component $\Delta$ of $\Gamma \setminus \Theta$ containing $\xi$ the sequence $(\alpha_n)$ eventually lies in $\V\Delta$.

Obviously, every line $(\dots,\alpha_{-1},\alpha_0,\alpha_1,\dots)$ defines two (not necessarily different) ends: the equivalence class of $(\alpha_0,\alpha_1,\dots)$ and the equivalence class of $(\alpha_0,\alpha_{-1},\dots)$.

\begin{lemma}
	Let $\Gamma$ be a locally finite graph and let $\rho^+,\rho^- \in \calE\Gamma$ be different ends.
	Then, there exists a geodesic line $\omega$ such that $\rho^+,\rho^-$ are the two ends defined by $\omega$.
\end{lemma}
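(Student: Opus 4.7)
The plan is to produce the line as a concatenation of two geodesic rays meeting at a common vertex, extracted from a sequence of finite geodesic segments joining vertices far along representatives of $\rho^+$ and $\rho^-$. A \emph{separation plus compactness} argument forces all such geodesics to cross a common vertex, and local finiteness lets us take a limit.

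More precisely, since $\rho^+ \ne \rho^-$, the Hausdorffness of $\calE\Gamma$ together with Lemma \ref{lem:ray_equivalence} (applied to representative rays) provides a finite subgraph $\Theta \subset \Gamma$ such that $\rho^+$ and $\rho^-$ belong to different connected components of $\Gamma \setminus \Theta$. Pick sequences of vertices $(\alpha_n)$ and $(\beta_n)$ on representative rays that converge to $\rho^+$ and $\rho^-$ respectively, and for each $n$ let $\gamma_n$ be a geodesic path from $\alpha_n$ to $\beta_n$. Eventually $\alpha_n$ and $\beta_n$ lie in different components of $\Gamma \setminus \Theta$, so each $\gamma_n$ must cross $\V\Theta$. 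Since $\V\Theta$ is finite, by pigeonhole there is a single vertex $v_0 \in \V\Theta$ and an infinite subsequence (still denoted $\gamma_n$) whose every member passes through $v_0$. Splitting $\gamma_n$ at $v_0$ yields a geodesic segment $\gamma_n^+$ from $v_0$ to $\alpha_n$ and a geodesic segment $\gamma_n^-$ from $v_0$ to $\beta_n$ (any sub-path of a geodesic is itself a geodesic).

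Now I would apply König's lemma (Lemma \ref{lem:koenigslemma}) to the set of all finite prefixes of the $\gamma_n^+$'s. By local finiteness, the tree of geodesic paths starting at $v_0$ that appear as an initial segment of infinitely many $\gamma_n^+$ is locally finite and infinite, so it contains an infinite ray $\omega^+ = (v_0, v_1, v_2, \dots)$. By construction every finite prefix of $\omega^+$ is a prefix of some $\gamma_n^+$, hence is a geodesic, so $\omega^+$ is a geodesic ray. Running the same argument on the other side gives a geodesic ray $\omega^- = (v_0, v_{-1}, v_{-2}, \dots)$.

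It remains to check three things. First, $\omega = (\dots, v_{-1}, v_0, v_1, \dots)$ is a geodesic line: any two of its vertices $v_{-i}, v_j$ both appear as initial pieces of $\gamma_n^-$ and $\gamma_n^+$ for some sufficiently large $n$, so they lie on the geodesic $\gamma_n$, whence $d_\Gamma(v_{-i}, v_j) = i + j$ is the length along $\omega$. Second, $\omega^+$ defines the end $\rho^+$: for any finite subgraph $\Theta'$, enlarging $\Theta'$ we may assume it still separates $\rho^+$ from $\rho^-$, and for large enough $n$ the vertex $\alpha_n$ lies in the component $\Delta^+$ of $\Gamma \setminus \Theta'$ containing $\rho^+$. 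A tail of the geodesic $\gamma_n^+$ lies in $\Delta^+$; since arbitrarily long initial segments of $\omega^+$ agree with some $\gamma_n^+$, a tail of $\omega^+$ lies in $\Delta^+$, proving convergence to $\rho^+$. The symmetric argument handles $\omega^-$ and $\rho^-$. The main delicate point — and the step I expect to require the most care — is precisely this last verification that the extracted rays land in the prescribed ends rather than accidentally in a different end lying in the same component of $\Gamma \setminus \Theta$; the fact that $\omega^\pm$ track actual geodesics to vertices converging to $\rho^\pm$ is what makes the argument go through.
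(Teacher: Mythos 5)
Your overall strategy is the same as the paper's: join vertices escaping to $\rho^+$ and $\rho^-$ by finite geodesics, use a finite separator together with local finiteness to extract a limit by pigeonhole, and then check that the limit is geodesic and defines the right two ends. The final end-identification is handled correctly, and your instinct that it needs care is sound.

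There is, however, a genuine gap in the step where you assemble $\omega$ from $\omega^+$ and $\omega^-$. You extract $\omega^+$ by applying K\H{o}nig's lemma to the prefixes of the $\gamma_n^+$ and then, \emph{separately}, extract $\omega^-$ from the prefixes of the $\gamma_n^-$. Your justification that $d_\Gamma(v_{-i},v_j)=i+j$ is that both prefixes lie on a single geodesic $\gamma_n$ ``for some sufficiently large $n$''. But the set of indices $n$ for which $(v_0,\dots,v_j)$ is a prefix of $\gamma_n^+$ and the set for which $(v_0,\dots,v_{-i})$ is a prefix of $\gamma_n^-$ are each infinite, and two independent extractions give no reason for these sets to intersect; if they do not, the two prefixes never sit on a common $\gamma_n$ and the distance computation has no support. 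The inference you would then be left with --- two geodesic rays issuing from a common vertex towards the two ends concatenate to a geodesic line --- is false in general: take the graph with vertex set $\ZZ\cup\{x\}$, the usual edges of $\ZZ$, and $x$ adjacent to $3$ and $-3$; the rays $(0,1,2,\dots)$ and $(0,-1,-2,\dots)$ are geodesic and represent the two (distinct) ends, but their concatenation contains the sub-path from $-3$ to $3$ of length $6$ while $d_\Gamma(-3,3)=2$. The repair is standard and is exactly what the paper does: extract \emph{two-sided} segments simultaneously, i.e.\ at stage $k$ pass (by pigeonhole over the finitely many possibilities) to a subsequence of the $\gamma_n$ on which the sub-path of length $2k$ centred at $v_0$ is constant, obtaining nested geodesic segments $\omega_1\subset\omega_2\subset\dots$ whose union is then automatically a geodesic line with the required ends. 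With that modification your argument is complete and coincides with the paper's proof in all essentials.
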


\begin{proof}
Let $\alpha \in \V\Gamma$, $n \geq 0$ and denote by $\Theta^+_n$ and $\Theta^-_n$ the connected components of $\Gamma \setminus \B(\alpha,n)$ containing $\rho^+$ and $\rho^-$, respectively. Clearly $\Theta^+_n \supset \Theta^+_{n-1}$ for all $n \geq 1$ and $\rho_+$ is the unique end contained in all $\Theta^+_n$; the analogue statement holds for $\Theta^-_n$.

Let $\Omega_n$ be the set of all geodesic paths starting in $\V\Theta^-_n \cap \V\B(\alpha,n)$ and ending in $\V\Theta^+_n \cap \V\B(\alpha,n)$. Note that $\Omega_n$ is finite and every sub-path of a geodesic path in $\Omega_n$ is a geodesic path.
Since $\rho^+ \neq \rho^-$ there exists an $N \geq 0$ such that $\Theta^+_N \neq \Theta^-_N$ and, for each $n \geq N$, every element of $\Omega_n$ has an edge in $B(\alpha,N)$.
Let $\Omega := \bigcup_{n \geq N} \Omega_n$.
Since $\Omega$ is infinite, there is an element $\omega_1 \in \Omega_N$ contained in infinitely many paths in $\Omega$. Then, there exists an element $\omega_2$, having $\omega_1$ as sub-path, contained in infinitely many paths in $\Omega$.
Inductively, we get an infinite sequence $\omega_1,\omega_2,\dots$ of geodesics such that $\omega_n$ contains $\omega_{n-1}$ as sub-path. Using Exercise \ref{exer:ray_equiv_sufficient} we see that the unique bi-infinite line $\omega$ containing all $\omega_n$ as sub-paths satisfies the claim.
\end{proof}

\begin{exercise}
	Adapt this proof to show that every end can be represented by a geodesic ray. Note that this is also a corollary of the above lemma if the graph has at least two ends.
\end{exercise}

\begin{definition}
	An end is \emph{thick} if it has infinitely many pairwise disjoint representatives, otherwise it is called \emph{thin}.
\end{definition}

The following is a consequence of Menger's theorem, see the discussion at the beginning of Section 4 in \cite{Thomassen1993}.

\begin{lemma}\label{lem:thinends}
		An end $\rho$ of a locally finite graph $\Gamma$ is thin if and only if there exists an $n \geq 0$ and a sequence of finite subgraphs $\Theta_1, \Theta_2, \dots$ with at most $n$ edges such that for all $i \geq 1$ 
		the connected component of $\Gamma \setminus \Theta_{i-1}$ containing $\rho$
		contains both the graph $\Theta_i$ and the connected component of $\Gamma \setminus \Theta_i$ containing $\rho$.
\end{lemma}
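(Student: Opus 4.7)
The plan is to treat the two implications separately; the essential input for both is Menger's theorem applied at the end $\rho$, as hinted at by the citation to~\cite{Thomassen1993}.

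For the ``if'' direction, let $C_i$ denote the connected component of $\Gamma \setminus \Theta_i$ containing $\rho$, so by hypothesis $C_{i-1} \supset \Theta_i$ and $C_{i-1} \supset C_i$. Suppose towards contradiction that $\rho_1, \dots, \rho_{n+1}$ are pairwise disjoint rays representing $\rho$. By Lemma~\ref{lem:ray_equivalence} each $\rho_j$ lies eventually in every $C_i$. Choose $i$ large enough that each of the $n+1$ rays has a vertex outside $C_{i-1}$; this is possible since the initial vertices of the rays form a finite set and the $C_i$ shrink toward $\rho$. Then each $\rho_j$ crosses from outside $C_i$ into $C_i$ within $C_{i-1}$, and such a crossing is forced to traverse an edge of $\Theta_i$. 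Vertex-disjointness of the rays forces these crossing edges to be pairwise distinct, contradicting $|\E\Theta_i| \leq n$.

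For the ``only if'' direction, let $n$ be the maximum number of pairwise disjoint representatives of $\rho$ and fix a basepoint $\alpha_0 \in \V\Gamma$. The key tool is a Menger-type statement at the end: for every finite subgraph $\Lambda \subset \Gamma$ there exists a finite subgraph $\Theta_\Lambda$ separating $\Lambda$ from $\rho$ whose edge count equals the maximum number of pairwise disjoint rays joining $\Lambda$ to $\rho$, and this maximum is bounded by $n$. Using the exhaustion of $\Gamma$ by the balls $\B(\alpha_0, r)$, I would inductively select $r_i$ and a separator $\Theta_i$ between $\B(\alpha_0, r_i)$ and $\rho$ of size at most $n$, with $r_i$ large enough to ensure $\Theta_i \cup C_i \subset C_{i-1}$. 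This produces the required sequence.

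The main obstacle is the precise form of Menger's theorem needed. Classical Menger applies to two finite vertex sets, while here one endpoint is an end of an infinite graph. Passing from finitely many disjoint paths between two finite sets to disjoint rays between a finite set and an end, and extracting a single finite edge-separator that realizes the bound $n$, requires a diagonal/compactness argument. I would import this variant of Menger as a black box from~\cite{Thomassen1993} rather than reprove it, as the present note's focus is not on graph-theoretic combinatorics.
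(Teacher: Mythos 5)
The paper itself offers no proof of this lemma, only the pointer to Thomassen, so the question is whether your sketch is sound; I see two genuine gaps. In the ``if'' direction, the assertion that ``the $C_i$ shrink toward $\rho$'' is precisely the content that needs proving, and it does not follow formally from the nesting hypothesis (as literally stated, the hypothesis is even satisfied by taking every $\Theta_i$ edgeless, so that $C_i$ is the whole component of $\rho$ for any end whatsoever). Assuming each $\Theta_i$ has at least one edge, the shrinking is true but takes an argument: from $\A\Theta_i \subset \A C_{i-1} \subset \A C_j$ for $j<i$ together with $\A C_j \cap \A\Theta_j = \emptyset$ one gets that the $\Theta_i$ are pairwise edge-disjoint; hence every fixed vertex $v$ has all of its finitely many incident edges outside $\A\Theta_i$ for all large $i$, so $\bigcap_i \V C_i$ is closed under adjacency; if it were non-empty it would contain the endpoints of the edges of $\Theta_1$, forcing those edges into $\A C_2 \subset \A C_1$, which is impossible. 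You need to supply something like this; the rest of your crossing-count argument is fine.

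The ``only if'' direction has a more serious problem: you bound the separators by $n$, the maximum number of pairwise (vertex-)disjoint rays in $\rho$, but the lemma asks for separators with at most $n$ \emph{edges}, and Menger's vertex form pairs vertex-disjoint paths with vertex-separators, not edge-separators. The quantity dual to an $n$-edge cut is the number of edge-disjoint rays, which can exceed the number of vertex-disjoint rays without bound. Concretely, take the ray $r_0 r_1 r_2 \dots$ and, for each $i \geq 1$, add $i$ new vertices each adjacent to both $r_i$ and $r_{i+1}$. Every ray to the unique end passes through all but finitely many of the cut vertices $r_i$, so the end has only one pairwise disjoint representative and is thin; yet any edge set lying beyond $r_i$ that separates $r_i$ from the end must meet each of the $i+1$ edge-disjoint paths from $r_i$ to $r_{i+1}$, so a nested sequence as in the lemma forces $|\E\Theta_k| \to \infty$. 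Hence your argument cannot produce the required $\Theta_i$, and in fact the statement cannot be proved in the form you are assuming: before importing the Menger-type result from Thomassen you must either replace ``at most $n$ edges'' by a bound on vertices (a vertex being removed together with all incident edges) or read ``disjoint'' as ``edge-disjoint'' in the definition of thin. (A smaller point: that a thin end has a finite \emph{maximum} number of disjoint rays, rather than merely admitting no infinite family, is Halin's theorem and should be cited, not assumed.)
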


%
%
%
%
%
%
%

\subsection{Tdlc groups}

Almost every statement about totally disconnected, locally compact groups involves more or less directly the famous theorem of Van Dantzig.

\begin{theorem}[Van Dantzig's Theorem]\label{thm:vandantzig}
	Let $G$ be a tdlc group. Every neighbourhood of the identity of $G$ contains a compact, open subgroup.
\end{theorem}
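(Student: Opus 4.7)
The plan is to show that inside any neighbourhood $U$ of the identity we can construct a compact open subgroup by taking the stabilizer of a suitable compact open neighbourhood of $e$ under left translation.

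First, I would reduce the problem to producing a compact open \emph{neighbourhood} $V$ of $e$ contained in $U$. This is the classical fact that in a locally compact Hausdorff space the compact open sets form a neighbourhood basis at every point as soon as the space is totally disconnected. I would prove this by starting with a compact neighbourhood $K \subseteq U$ of $e$ (available by local compactness), noting that $K$ is itself a compact Hausdorff totally disconnected space, and invoking that on such a space quasi-components agree with components, so there is a clopen subset $V$ of $K$ with $e \in V$ and $V$ arbitrarily small. Since $V$ is clopen in the compact $K$ it is compact, and since it is open in $K$ and $K$ is a neighbourhood of $e$, up to shrinking inside the interior of $K$ we may take $V$ open in $G$ as well. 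This step is the main obstacle, since it is the only place where total disconnectedness is genuinely used.

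Given such a compact open $V \subseteq U$, my next step would be to produce an open symmetric neighbourhood $W$ of $e$ satisfying $WV \subseteq V$. For each $v \in V$ continuity of multiplication yields an open neighbourhood $W_v$ of $e$ with $W_v v \subseteq V$; by compactness of $V$ finitely many translates $W_{v_i} v_i$ cover $V$, and then $W_0 := \bigcap_i W_{v_i}$ is an open neighbourhood of $e$ with $W_0 V \subseteq V$. I would then set $W := W_0 \cap W_0^{-1}$, which is open, symmetric, and still satisfies $WV \subseteq V$.

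Finally, I would define
\[
 H := \{ g \in G : gV = V \}
\]
and check the three required properties. It is clearly a subgroup. It is contained in $V$, because $g \in H$ gives $g = g \cdot e \in gV = V$, and hence it is compact (as a closed subset of the compact $V$: closedness follows from continuity of the multiplication maps $g \mapsto gv$). It is open because $W \subseteq H$: for $g \in W$ we have $gV \subseteq V$ by construction, and since $g^{-1} \in W$ too we also get $g^{-1}V \subseteq V$, so $V = g(g^{-1}V) \subseteq gV \subseteq V$, forcing $gV = V$. Then $H \subseteq V \subseteq U$ is the desired compact open subgroup.
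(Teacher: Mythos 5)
The paper states Van Dantzig's Theorem without proof, so there is no in-text argument to compare against; your proposal is the standard proof of this classical result, and its overall architecture is sound: (i) use the coincidence of components and quasi-components in a compact Hausdorff space to find a compact open neighbourhood $V\subseteq U$ of $e$ (correctly identified as the only place total disconnectedness enters), (ii) find a symmetric open $W$ with $WV\subseteq V$, and (iii) pass to the stabilizer $H=\{g: gV=V\}$, which is an open subgroup contained in $V$ and hence compact.

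There is one local error in step (ii) as written. Choosing $W_v$ with only $W_v v\subseteq V$ and then covering $V$ by the sets $W_{v_i}v_i$ does not yield $W_0V\subseteq V$ for $W_0=\bigcap_i W_{v_i}$: if $v=wv_i$ with $w\in W_{v_i}$ and $g\in W_0$, then $gv=gwv_i$, and there is no reason for $gw$ to lie in $W_{v_i}$, so membership of $gv$ in $V$ does not follow. The standard repair is to use continuity of $(g,h)\mapsto ghv$ at $(e,e)$ to choose $W_v$ with $W_vW_vv\subseteq V$; then for $v\in W_{v_i}v_i$ and $g\in W_0\subseteq W_{v_i}$ one gets $gv\in W_{v_i}W_{v_i}v_i\subseteq V$. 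With this one-line fix the argument is complete. Two smaller points are fine as sketched but deserve a word when written out: extracting a single small clopen set from the triviality of the quasi-component of $e$ in $K$ requires a finite-intersection/compactness argument on $K\setminus O$, and the closedness of $H$ uses that $V$ is closed in $G$ (compact in Hausdorff), so that each set $\{g: gv\in V\}$ is closed.
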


Obvious examples of tdlc groups are discrete groups.
Compact examples are (infinite) direct products of finite groups and their closed subgroups, the \emph{profinite groups}.
Subgroups, products, extensions and direct limits of tdlc groups are again tdlc.
The class of tdlc groups obtained by starting with discrete groups and profinite groups forms and forming above operations is called the class of \emph{elementary groups}, a class with its own structure theory largely due to Wesolek \cite{Wesolek15}.
In contrast to the elementary groups stands the class of compactly generated, topologically simple groups, studied most notably by Caprace--Reid--Willis \cite{crw17a}.
Examples there can be found among automorphism groups of locally finite trees, Lie groups over $\QQ_p$, Neretin's group and its relatives, Kac--Moody groups and others.

\begin{exercise}
	Show that every totally disconnected topological group is Hausdorff.
\end{exercise}

In these notes we will give a proof that every cgtdlc group, after taking the quotient by a compact, normal subgroup, is (topologically) isomorphic to a group of automorphisms of a locally finite graph. Before we define graph automorphisms in the next subsection we introduce the appropriate topology. We turn the group $\Sym(X)$ together with the permutation topology into a topological group.

\begin{definition}\label{def:permtop}
	Let $X$ be a set and $\Sym(X)$ its symmetry group. The \emph{permutation topology} on $\Sym(X)$ can be defined in the following equivalent ways:
	\begin{enumerate}
		\item as the \emph{topology of pointwise convergence}, where $X$ is endowed with the discrete topology,
		\item as the \emph{compact-open topology}, where $X$ is endowed with the discrete topology,
		\item by stipulating that a basis of neighborhoods of the identity element is the set $\{\Sym(X)_{A} \mid A \subset X \text{ finite}  \}$ consisting of all fixators of finite sets.
	\end{enumerate}
\end{definition}

To understand the third way of defining the topology, note that to define a topology on a topological group it is enough to give a neighbourhood basis of the identity element, because this neighbourhood basis can then be translated everywhere by right and left multiplication.
This is a common way to define a topology on a group, but some care is needed to prove that what we get out is actually a group topology. Often the following Bourbaki lemma comes in handy. Recall that a \emph{filter} on a set $X$ is a subset of the power set $\mathcal{B} \subset \mathcal{P}(X)$, with $X \in \mathcal{B}$, that is closed under taking finite intersections and supsets.

\begin{lemma}[\cite{b98}, Ch. III, Sect. I, Subsect. 2, Prop. 1] \label{lem:bourbaki}
	Let $G$ be a group and $\mathcal B$ be a filter on $G$ satisfying the following three conditions.
	\begin{enumerate}
		\item For every $U \in \mathcal B$ there exists a $V \in \mathcal B$ such that $VV \subset U$.
		\item For every $U \in \mathcal B$ holds $U^{-1} \in \mathcal B$.
		\item For every $g \in G$ and every $V \in \mathcal B$ holds $gVg^{-1} \in \mathcal B$.
	\end{enumerate}
	Then, there exists a unique group topology on $G$ such that $\mathcal B$ is a neighbourhood basis of the identity element.
\end{lemma}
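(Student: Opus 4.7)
The plan is to construct a topology on $G$ from $\mathcal B$, verify it is a group topology in which $\mathcal B$ is a neighbourhood basis of the identity, and then argue uniqueness. To set things up, I would declare a subset $\Omega\subset G$ to be \emph{open} if for every $g\in\Omega$ there exists $U\in\mathcal B$ with $gU\subset\Omega$. Arbitrary unions of such sets are evidently open, and finite intersections are open because $\mathcal B$ is a filter and so any two elements of $\mathcal B$ have a common refinement inside $\mathcal B$. This produces a topology on $G$ for which left translations are tautologically homeomorphisms, so everything else reduces to checking statements at the identity.

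The next step is to show $\mathcal B$ is a neighbourhood basis of $1$. Given $U\in\mathcal B$ I would pick $V\in\mathcal B$ with $VV\subset U$ using condition (1), and then verify that the set $\{g\in G\mid gV\subset U\}$ is open in our topology, contains $1$, and is contained in $U$. (The implicit assumption needed is $1\in U$ for every $U\in\mathcal B$, which is part of the standard setup.) Conversely, any open neighbourhood of $1$ contains some $U\in\mathcal B$ directly from the definition of the topology.

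To prove multiplication is continuous at $(g,h)$, I take $U\in\mathcal B$ and look for $V_1,V_2\in\mathcal B$ with $(gV_1)(hV_2)\subset ghU$, which rearranges to $(h^{-1}V_1h)V_2\subset U$. First pick $W\in\mathcal B$ with $WW\subset U$ via condition (1), then set $V_1\coloneqq hWh^{-1}\in\mathcal B$ by condition (3) and $V_2\coloneqq W$; this gives $(h^{-1}V_1h)V_2=WW\subset U$, as required. For continuity of inversion at $g$, given $U\in\mathcal B$ I want $V\in\mathcal B$ with $(gV)^{-1}\subset g^{-1}U$, equivalently $V\subset g^{-1}U^{-1}g$; conditions (2) and (3) let me simply set $V\coloneqq g^{-1}U^{-1}g$.

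Uniqueness is the easiest step: any group topology making $\mathcal B$ a neighbourhood basis of $1$ must, by continuity of left translation, have $\{gU\mid U\in\mathcal B\}$ as a neighbourhood basis of $g$ for every $g\in G$, and this forces the open sets to be exactly those described above. The main obstacle I anticipate is continuity of multiplication at a point $(g,h)$ with $h\neq 1$: condition (1) by itself gives only ``one-sided'' control by $V\cdot V$, so condition (3) is essential to migrate $V_1$ past $h$ via conjugation. Without some such conjugation hypothesis the whole argument breaks down, which is precisely why axiom (3) appears.
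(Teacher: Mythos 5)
The paper does not prove this lemma --- it cites Bourbaki and leaves the verification as an exercise --- so I am assessing your argument on its own merits. Your overall architecture is the standard one and most of it is sound: the collection of sets $\Omega$ with the property that every $g\in\Omega$ admits $U\in\mathcal B$ with $gU\subset\Omega$ is indeed a topology invariant under left translation; your reductions for continuity of multiplication (conjugating $V_1$ past $h$ via condition (3)) and of inversion (using (2) and (3)) are correct; and the uniqueness argument is fine. One small correction: the fact that $1\in U$ for every $U\in\mathcal B$ is not an extra ``standard setup'' assumption but a consequence of the hypotheses (pick $V$ with $VV\subset U$; then $V\cap V^{-1}\in\mathcal B$ is nonempty, and for $v$ in it, $1=vv^{-1}\in VV\subset U$) --- this is exactly the exercise the paper poses right after the lemma, and you should derive it rather than assume it.

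The genuine gap is in the step showing that each $U\in\mathcal B$ is a neighbourhood of $1$: the set $\Omega=\{g\in G\mid gV\subset U\}$ that you propose is \emph{not} open in general, and the verification you defer would fail. Concretely, take $G=\RR$ with $\mathcal B$ the usual neighbourhood filter of $0$ (which satisfies all three conditions), $U=(-1,1)$ and $V=(-\tfrac12,\tfrac12)$; then $\{g\mid g+V\subset U\}=[-\tfrac12,\tfrac12]$, which is closed but not open, in your topology as well as the usual one. The problem is that membership of $g$ in your set gives $gV\subset U$, but to show openness at $g$ you would need some $W\in\mathcal B$ with $gWV\subset U$, and nothing in the axioms lets you absorb $W$ into $V$ on the left. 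The standard repair is to use instead
\[
\Omega_U \coloneqq \{g\in G\mid \exists W\in\mathcal B \colon gW\subset U\},
\]
i.e.\ the interior of $U$: if $gW\subset U$, choose $W'\in\mathcal B$ with $W'W'\subset W$; then for every $w\in W'$ one has $gwW'\subset gW'W'\subset gW\subset U$, so $gW'\subset\Omega_U$, proving $\Omega_U$ is open, while $1\in\Omega_U\subset U$ follows as in your argument. With that substitution the rest of your proof goes through.
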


\begin{exercise}
	Show that, in Lemma \ref{lem:bourbaki}, the conditions imply that every element of $\mathcal B$ contains the identity.
\end{exercise}

\begin{exercise}
	Show that the three definitions in Definition \ref{def:permtop} are equivalent.
	Use Lemma \ref{lem:bourbaki} to show that it indeed gives a group topology on $\Sym(X)$.
\end{exercise}

\begin{proposition}\label{prop:SymXtdlc}
	Let $X$ be a set.
	\begin{enumerate}
		\item The symmetry group $\Sym(X)$ with the permutation topology is totally disconnected.
		\item Let $x \in X$ and $G \leq \Sym(X)$ closed. Assume that for every $y \in Y$ the orbit $G_x y$ is finite. Then $G_x$ is compact.
		
		In particular, if the stabilizer of every element has only finite orbits, the group $G$ with the permutation topology is a tdlc group.
	\end{enumerate}
\end{proposition}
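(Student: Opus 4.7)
For (1), I would observe that for each finite $A \subset X$, the fixator $\Sym(X)_A$, which by definition is a basic neighbourhood of the identity, is in fact clopen: its complement is the union of the open cosets $g\Sym(X)_A$ with $g \notin \Sym(X)_A$. Thus $\Sym(X)$ admits a basis of clopen neighbourhoods of the identity. Given any two distinct elements $g, h \in \Sym(X)$, one can choose a finite $A \subset X$ on which they disagree, and then $g\Sym(X)_A$ separates them by a clopen set; so any connected subset of $\Sym(X)$ must be a singleton.

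For (2), my plan is to realise $G_x$ as a closed subspace of the compact product $P \coloneqq \prod_{y \in X} G_x \cdot y$, which is compact by Tychonoff's theorem since each factor is finite and discrete. I would use the evaluation map $\Phi \colon G_x \to P$, $\Phi(g) = (g \cdot y)_{y \in X}$, which is injective and continuous; moreover, both the permutation topology on $G_x$ and the subspace topology inherited from $P$ are the initial topologies induced by the evaluations $g \mapsto g y$, so $\Phi$ is a topological embedding. Compactness of $G_x$ therefore reduces to showing that $\Phi(G_x)$ is closed in $P$.

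The hard part will be this closedness. Given $(\sigma_y)_y \in P$ in the closure, I pick a net $(g_i)$ in $G_x$ with $g_i y$ eventually equal to $\sigma_y$ for every $y \in X$, and set $\sigma(y) \coloneqq \sigma_y$; this defines a function $\sigma \colon X \to X$. Injectivity of $\sigma$ is immediate from that of the $g_i$. The subtle step is surjectivity: a pointwise limit of permutations need not be a permutation in general (for instance, the shift $n \mapsto n+1$ on $\NN$ is a pointwise limit of finite cyclic permutations), and it is precisely the finite-orbit hypothesis that rescues the argument. For each $z \in X$, the orbit $G_x \cdot z$ is finite and $g_i^{-1} z \in G_x \cdot z$, so after passing to a subnet I may assume $g_i^{-1} z$ is eventually a constant $y$; then $g_i y = z$ eventually, hence $\sigma(y) = z$. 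Thus $\sigma \in \Sym(X)$, and $g_i \to \sigma$ in the permutation topology. Since $G$ is closed in $\Sym(X)$ and $\sigma(x) = x$, I conclude $\sigma \in G_x$ with $\Phi(\sigma) = (\sigma_y)_y$, proving that $\Phi(G_x)$ is closed.

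The last assertion then follows by combining (1) and (2): $G$ inherits total disconnectedness from $\Sym(X)$, and $G_x = G \cap \Sym(X)_{\{x\}}$ is an open subgroup which by (2) is compact, providing the required compact open neighbourhood of the identity.
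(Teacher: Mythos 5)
Your argument is correct. Part (1) is essentially the paper's proof: clopen fixators of finite sets separate points. For part (2), however, you take a genuinely different (though cousinly) route. The paper also invokes Tychonoff, but on the product $\prod_{i} \Sym(X_i)$ of the symmetric groups of the $G_x$-orbits $X_i$; it then observes that the natural homomorphism $\prod_i \Sym(X_i) \to \Sym(X)$ is continuous, so its image is compact, and $G_x$ sits inside that image (since it preserves each of its orbits) as a closed subgroup, hence is compact. This sidesteps entirely the delicate point you isolate, because every element of $\prod_i \Sym(X_i)$ is already a bijection of $X$; the price is that one must check continuity of that homomorphism (left as an exercise in the paper) and note that $G_x$ lands in its image. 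Your version instead embeds $G_x$ into the compact product $\prod_{y} G_x\cdot y$ and proves closedness of the image by hand, which forces you to confront the fact that a pointwise limit of permutations need only be an injection --- and your use of the finite-orbit hypothesis to recover surjectivity (passing to a subnet on which $g_i^{-1}z$ is constant) is exactly the right fix and is carried out correctly. Your approach is more self-contained and makes visible precisely where the finite-orbit hypothesis is used; the paper's is shorter and hides that issue inside the compactness of $\prod_i\Sym(X_i)$. Both are valid, and your deduction of the ``in particular'' clause matches the intended one.
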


\begin{proof}
	We first prove 1. It is enough to show that for all $g,h \in \Sym(X)$ with $g \neq h$ there exists a clopen set $A \subset \Sym(X)$ with $g \in A$ and $h \notin A$. We can assume without loss of generality that $g=1$ and $h \neq 1$. Take $x \in X$ with $hx \neq x$.
	Note that $A = \Sym(X)_x$ clearly satisfies $1 \in A$ and $h \notin A$.
	By the definition of the permutation topology $A$ is open. It is also closed, since open subgroups of topological groups are automatically closed.
	
	To prove 2., let $\{X_i \subset X \mid i \in I\}$ be the set of orbits of $G_x$.
	The product $\prod_{i \in I} \Sym(X_i)$ is a product of finite groups and thus compact by Tychonoff's theorem.
	There is an obvious group homomorphism $\prod_{i \in I} \Sym(X_i) \to \Sym(X)$.
	Check that it is continuous.
	Therefore, it has compact image, and also $G_x$, being a closed subgroup of this image, is compact.
\end{proof}

\subsection{Groups acting on graphs}

Graphs come with natural structure-preserving maps.

\begin{definition}
	Let $\Gamma$ and $\Delta$ be two graphs. A \emph{graph morphism} $\varphi \colon \Gamma \to \Delta$ consists of two maps $\varphi_\V \colon \V\Gamma \to \V\Delta$ and $\varphi_\A \colon \A\Gamma \to \A\Delta$ such that $o_\Delta \circ \varphi_\A = \varphi_\V \circ o_\Gamma$, $\phi_\V \circ t_\Gamma = t_\Delta \circ \phi_\A$ and $\phi_\A \circ i_\Gamma = i_\Delta \circ \phi_\A$.
	
	We call $\phi$ a \emph{graph isomorphism} if both $\phi_\V$ and $\phi_\A$ are bijections. We call it a \emph{graph automorphism} if it is a graph isomorphism and $\Gamma = \Delta$. The set of all graph automorhpisms of a graph $\Gamma$ is a group under composition, we denote it by $\Aut(\Gamma)$.
\end{definition}

\begin{exercise}
	Find out which of the conditions on $\phi_\V$ and $\phi_\A$ follow from the other ones.
\end{exercise}

We usually omit the subscripts and write only $\phi$ for $\phi_\V$ and $\phi_\A$.
In these notes, since by convention graphs are simple, a graph morphism $\varphi$ is uniquely determined by $\phi_\V$.

\begin{exercise}
	Find an example illustrating that a graph morphism $\phi$ is not uniquely determined by $\phi_\A$.
\end{exercise}

\begin{exercise}
	Check that a graph morphism is a graph isomorphism if and only if it has a two-sided inverse.
\end{exercise}

A famous theorem by Tits \cite{Tits1970} states that every automorphism of a tree fixes a vertex, inverts an edge or translates along a bi-infinite line. There is a similar statement for graphs.

\begin{proposition}[\cite{Halin73}]
	\label{prop:types_of_elements}
	Let $\Gamma$ be a connected graph and $\phi \in \Aut(\Gamma)$.
	Then, $\phi$ is either
	\begin{enumerate}
		\item \emph{elliptic}, i.e. there is a finite subgraph $\Theta$ of $\Gamma$ with $\phi(\Theta) = \Theta$;
		\item \emph{parabolic}, i.e. it is not elliptic and it fixes a unique thick end; or
		\item \emph{hyperbolic}, i.e. it is not elliptic and it fixes exactly two thin ends. 
	\end{enumerate}
  If $\phi$ is parabolic or hyperbolic, it has power that translates along an infinite line.
\end{proposition}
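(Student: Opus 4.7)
The strategy is to extend Tits' trichotomy for tree automorphisms by analyzing the dynamics of $\phi$ on the compact end-space $\calE\Gamma$. First, I would dispose of ellipticity: if no finite subgraph is $\phi$-invariant, then the induced subgraph on any $\phi$-orbit in $\V\Gamma$ is itself an invariant subgraph, so every orbit is infinite; by local finiteness, $d(\alpha, \phi^n(\alpha)) \to \infty$ as $|n| \to \infty$ for any vertex $\alpha$, and by compactness of $\calE\Gamma$ both half-orbits $(\phi^n(\alpha))_{n \ge 0}$ and $(\phi^n(\alpha))_{n \le 0}$ accumulate in $\calE\Gamma$. Note that passing to a nonzero power preserves non-ellipticity, since $\Theta \cup \phi(\Theta) \cup \cdots \cup \phi^{k-1}(\Theta)$ is finite $\phi$-invariant whenever $\Theta$ is finite $\phi^k$-invariant.

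Next I would extract two distinguished $\phi$-fixed ends $\rho^+, \rho^-$ from these accumulation sets. The set $A^+$ of accumulation ends of the forward orbit is nonempty, closed, and $\phi$-invariant: shifting the orbit by one index does not change its accumulation set, and $\phi$ extends to a homeomorphism of $\V\Gamma \cup \calE\Gamma$, so $\phi(A^+) = A^+$. The crucial step — and the main obstacle — is to show $|A^+|=1$. The plan is to assume distinct $\xi_1, \xi_2 \in A^+$, separate them by a finite cut $\Theta$ with components $C_1, C_2$, pass to a power $\phi^k$ that preserves each $C_i$ setwise (such $k$ exists because $\phi$ permutes the finitely many components of $\Gamma \setminus \Theta$), and combine injectivity of the orbit with the $\phi^k$-invariance of $A^+$ to force a contradiction. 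Dually one obtains a unique $\rho^- \in A^-$. Then split into cases according to whether $\rho^+ = \rho^-$.

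In the \emph{parabolic} case ($\rho^+ = \rho^-$, called $\xi$), the end $\xi$ is the unique fixed end on $\calE\Gamma$ since every two-sided orbit accumulates only at $\xi$. Thickness of $\xi$ comes from Lemma \ref{lem:thinends}: if $\xi$ were thin, the uniformly bounded separating cuts $\Theta_i$ would be permuted by $\phi$ with only finitely many choices per level, so some power $\phi^k$ would fix each $\Theta_i$ together with its components, yielding a finite $\phi^k$-invariant subgraph and contradicting non-ellipticity. In the \emph{hyperbolic} case ($\rho^+ \ne \rho^-$), the preceding lemma supplies a geodesic line whose ends are $\rho^\pm$, and the analogous cut argument shows both ends must be thin (a thick fixed end would force a second distinct fixed end, reducing to the parabolic case). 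For the translation axis in either case, I would exploit thinness (or, in the parabolic case, the bi-infinite accumulation at $\xi$) to work inside a $\phi^k$-invariant bounded-width region straddled by the orbit; a pigeonhole/compactness argument extracts a geodesic line on which a further power of $\phi$ acts as a nontrivial translation — nontriviality because any $\phi^k$-fixed vertex on the axis would make $\phi^k$ elliptic. The main obstacle lies precisely in controlling this translation axis: absent the uniqueness of geodesics one enjoys in trees, the construction genuinely leans on Lemma \ref{lem:thinends} to reduce to finitely many combinatorial possibilities, at the cost of repeatedly replacing $\phi$ by a power.
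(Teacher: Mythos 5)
The paper does not prove this proposition --- it is quoted from Halin's 1973 paper --- so there is no in-house argument to compare yours against; I can only assess your plan on its own terms. The architecture is sensible (and close to how Halin-type arguments actually run): reduce to infinite orbits, use $d(\alpha,\phi^n\alpha)\to\infty$ and compactness of $\calE\Gamma$ to extract limit ends $\rho^{\pm}$, and use Lemma \ref{lem:thinends} to control thickness. Note that local finiteness of $\Gamma$ is needed throughout and should be stated. For the step you flag as the main obstacle, $|A^{+}|=1$, there is a clean closing move you do not quite reach: consecutive orbit points satisfy $d(\phi^{n}\alpha,\phi^{n+1}\alpha)=d(\alpha,\phi\alpha)=:c$, so if the forward orbit entered two distinct components of $\Gamma\setminus\Theta$ infinitely often it would have to come within distance $c$ of the finite set $\V\Theta$ infinitely often, contradicting $d(\alpha,\phi^{n}\alpha)\to\infty$. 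No passage to powers or invariance of $A^{+}$ is needed there.

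Two of your stated justifications, however, do not work as written. First, ``$\xi$ is the unique fixed end since every two-sided orbit accumulates only at $\xi$'' is a non sequitur: an end can be fixed by $\phi$ without being an accumulation point of any vertex orbit, so convergence of all orbits to $\xi$ does not by itself exclude other fixed ends; the same gap reappears in the hyperbolic case, where you must rule out a \emph{third} fixed (possibly thin) end and your remark only addresses a hypothetical thick one. Second, in the thin/thick dichotomy you assert that $\phi$ ``permutes'' the uniformly bounded separating cuts $\Theta_i$ of Lemma \ref{lem:thinends}; it does not --- $\phi(\Theta_i)$ is merely \emph{some} cut of the same size separating $\xi$, not one of the chosen $\Theta_j$ --- so the conclusion that a power of $\phi$ fixes each $\Theta_i$ does not follow. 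The genuine content of Halin's theorem lies precisely in these points and in the construction of the translated line, which your proposal defers to a ``pigeonhole/compactness argument'' without specifying the invariant finite structure the pigeonhole is applied to. As it stands the proposal is a plausible outline, not a proof: the elliptic reduction, the power trick, and (with the fix above) the convergence of half-orbits are solid, but uniqueness of the fixed ends, thinness versus thickness, and the existence of the translation axis all remain open.
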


We now turn to groups acting on graphs.

\begin{definition}
	Let $G$ be a group and $\Gamma$ a graph. An \emph{action} of $G$ on $\Gamma$ is a homomorphism $\Phi \colon G \to \Aut(\Gamma)$.
\end{definition}

We adopt the usual notations from group actions. For example, for every vertex $\alpha \in \V\Gamma$ and a group element $g\in G$ we denote $g\alpha \coloneqq \Phi(g)(\alpha)$ and $G\alpha$ is then the orbit of $\alpha$ under $G$.
The stabilizer of $\alpha$ in $G$ is denoted by $G_\alpha$.
 
\begin{lemma}\label{lem:vertextrans}
	Let $G$ be a group acting on a connected graph $\Gamma$. Assume there exists a vertex $\alpha$ of $\Gamma$ such that $\N(\alpha) \subset G\alpha$, i.e. the orbit of $\alpha$ contains all neighbours of $\alpha$. Then, $G$ acts vertex-transitively on $\Gamma$.
\end{lemma}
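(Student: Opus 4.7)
The plan is to prove vertex-transitivity by showing $G\alpha = \V\Gamma$, exploiting connectedness via an induction on path length from $\alpha$. Given any vertex $\beta \in \V\Gamma$, connectedness provides a path $(\alpha = \beta_0, \beta_1, \ldots, \beta_n = \beta)$, and I will show by induction on $n$ that $\beta_n \in G\alpha$.

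The base case $n = 0$ is trivial. For the inductive step, the key observation is that the hypothesis $\N(\alpha) \subset G\alpha$ propagates along the orbit: if $g \in G$, then $g$ maps $\N(\alpha)$ bijectively onto $\N(g\alpha)$, so $\N(g\alpha) = g\N(\alpha) \subset gG\alpha = G\alpha$. Assuming $\beta_{n-1} = g\alpha$ for some $g \in G$, the vertex $\beta_n$ is a neighbour of $\beta_{n-1} = g\alpha$, hence lies in $\N(g\alpha) \subset G\alpha$ by the previous sentence. This closes the induction and gives $\beta \in G\alpha$.

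There is no real obstacle here; the only subtlety worth stating cleanly is the equivariance $\N(g\alpha) = g\N(\alpha)$, which follows immediately from the fact that an automorphism preserves the adjacency relation (equivalently, preserves $o$ and $t$). Once that is recorded, the induction is a one-line bookkeeping argument and no appeal to the structure of $\Gamma$ beyond connectedness is required.
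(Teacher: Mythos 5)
Your proof is correct: the induction on path length, together with the equivariance $\N(g\alpha) = g\N(\alpha)$ and the fact that $gG\alpha = G\alpha$, is exactly the intended argument. The paper leaves this lemma as an exercise, so there is no written proof to compare against, but your write-up fills that gap cleanly and identifies the one point worth making explicit.
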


\begin{proof}
	Exercise.
\end{proof}

The existence of a vertex-transitive action strongly restricts the possible type of the end space. The following theorem talks about the compact topological space $\V\Gamma \cup \calE\Gamma$.

\begin{theorem}[\cite{a73} Korollar 6.6]
	Let $\Gamma$ be a connected, locally finite graph. Assume that every end of $\Gamma$ is an accumulation point of one, and hence every, vertex orbit of $\Aut(\Gamma)$.
	Then, either $\calE\Gamma$ consists of at most two points, or it is homeomorphic to the Cantor set.
\end{theorem}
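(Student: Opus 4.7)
From the preceding lemma, $\calE\Gamma$ is compact, Hausdorff, second countable and totally disconnected; Urysohn's metrization theorem then makes it metrizable. By Brouwer's classical characterization, a nonempty compact metrizable totally disconnected space is homeomorphic to the Cantor set if and only if it has no isolated points. Thus it suffices to show that whenever $|\calE\Gamma| \geq 3$, the space $\calE\Gamma$ is perfect.

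I argue the contrapositive: assume $\xi_0 \in \calE\Gamma$ is isolated, witnessed by a finite subgraph $\Theta$ and a component $\Delta$ of $\Gamma \setminus \Theta$ with $\xi_0$ as its unique end; the goal is to conclude $|\calE\Gamma| \leq 2$. Fix $\alpha \in \V\Delta$ and set $D \coloneqq \mathrm{diam}(\Theta \cup \{\alpha\})$. The core step is a propagation lemma: for any end $\eta \in \calE\Gamma$ and any basic neighborhood $U_{\Theta',\Delta'}$ of $\eta$, the accumulation hypothesis yields $g \in \Aut(\Gamma)$ with $g\alpha \in \V\Delta'$ and $d(g\alpha, \V\Theta') > D$; then $g\Theta \cup \{g\alpha\} \subset \B(g\alpha, D) \subset \V\Delta'$, and by transporting a geodesic ray in $\Delta$ from $\alpha$ to $\xi_0$ under $g$ and invoking Lemma~\ref{lem:ray_equivalence} together with the uniqueness of $g\xi_0$ as the only end of the component $g\Delta$ of $\Gamma \setminus g\Theta$ containing $g\alpha$, one concludes $g\xi_0 \in U_{\Theta', \Delta'}$. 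Specialising to an isolated $\eta$ (with singleton cone) forces $g\xi_0 = \eta$, so every isolated end lies in $\Aut(\Gamma) \cdot \xi_0$; a further application to a hypothetical non-isolated end, together with the accumulation hypothesis, shows no such end exists. Hence $\calE\Gamma$ is discrete and, being compact, finite. A concluding case analysis using Proposition~\ref{prop:types_of_elements} rules out $|\calE\Gamma| \geq 3$: the accumulation hypothesis supplies non-elliptic automorphisms, and in a three-element end set a hyperbolic element fixing two ends must fix the third by bijectivity, contradicting the ``exactly two'' clause; the parabolic case is excluded by combining the propagation lemma with the thick/thin distinction preserved by automorphisms.

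The main obstacle is the propagation lemma: although the finite piece $g\Theta$ is easily contained in $\Delta'$ via the ball argument, the infinite component $g\Delta$ could a priori extend through $\V\Theta'$ into another component of $\Gamma \setminus \Theta'$. The cleanest resolution is to track a geodesic ray in $\Delta$ from $\alpha$ to $\xi_0$ under $g$: its initial long segment is forced by the ball argument into $\V\Delta'$, and by Lemma~\ref{lem:ray_equivalence} together with the uniqueness of $g\xi_0$ as the only end of $g\Delta$, the entire equivalence class of the ray --- in particular $g\xi_0$ --- is pinned into the $\Delta'$-component of $\Gamma \setminus \Theta'$.
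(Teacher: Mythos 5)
The paper states this result as a citation to Abels and gives no proof, so I am judging your argument on its own terms. The Brouwer reduction (compact, second countable, Hausdorff, totally disconnected, perfect $\Rightarrow$ Cantor) is the right frame, but your central ``propagation lemma'' is false as stated, and its proof contains exactly the gap you flag without closing it. Placing $g\alpha$ deep in $\Delta'$ only controls $\B(g\alpha,D)$, hence $g\Theta$ and the first $D$ steps of the ray $g\rho$; the set $g\Delta$ is a component of $\Gamma\setminus g\Theta$, not of $\Gamma\setminus\Theta'$, so nothing prevents $g\rho$ from exiting $\Delta'$ through $\Theta'$ and converging in a different component of $\Gamma\setminus\Theta'$. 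For a concrete counterexample, decorate the bi-infinite line with an asymmetric gadget along each edge $\{n,n+1\}$ (say a triangle glued to $n+1$ together with a pendant edge to $n$): then $\Aut(\Gamma)\cong\ZZ$ acts by translations, every vertex orbit accumulates at both ends, both ends are isolated, yet no automorphism carries $\xi_0=+\infty$ into a neighbourhood of $-\infty$ --- even though a translation by $-100$ places $g\alpha$ and $g\Theta$ as deep in that neighbourhood as you like. Since your proof of the lemma nowhere uses $|\calE\Gamma|\geq 3$, it cannot be repaired as written. There is a second independent gap: even if the orbit of $\xi_0$ were dense, a dense set of isolated points does not make a compact space discrete (consider $\{0\}\cup\{1/n \mid n\geq 1\}$), so ``hence $\calE\Gamma$ is discrete and, being compact, finite'' does not follow, and the sentence dismissing non-isolated ends is an assertion rather than an argument.

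The workable argument runs in the direct direction and uses $|\calE\Gamma|\geq 3$ as a counting resource, which your contrapositive framing discards. Assume at least three ends and fix a finite \emph{connected} subgraph $\Theta$ such that three distinct components $C_1,C_2,C_3$ of $\Gamma\setminus\Theta$ each contain an end. Given any end $\xi$ and any basic neighbourhood determined by a finite connected $\Theta'$ and a component $\Delta'$ of $\Gamma\setminus\Theta'$ containing $\xi$, the accumulation hypothesis produces $g$ with $g\Theta\subset\B(g\alpha,D)\subset\Delta'$. Now $\Theta'$ together with all components of $\Gamma\setminus\Theta'$ other than $\Delta'$ is connected and disjoint from $g\Theta$, hence lies in a \emph{single} component of $\Gamma\setminus g\Theta$; at most one of $gC_1,gC_2,gC_3$ can be that component, so at least two of them lie entirely inside $\Delta'$ and contribute two distinct ends to $\Delta'$. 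Hence no end is isolated, $\calE\Gamma$ is perfect, and Brouwer's characterization finishes the proof. I recommend restructuring your argument along these lines.
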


Recall that the graph-theoretical distance on the vertices of a graph $\Gamma$ takes values in the natural numbers, hence it turns $\V\Gamma$ into a discrete topological space.

\begin{lemma}
	Let $G$ be a topological group acting on a graph $\Gamma$.
	The following are equivalent.
	\begin{enumerate}
		\item The action of $G$ on $\V\Gamma$ is continuous.
		\item For every vertex $\alpha \in \V\Gamma$ the stabilizer $G_\alpha$ is open.
		\item The homomorphism $G \to \Aut(\Gamma)$ is continuous.
	\end{enumerate}
\end{lemma}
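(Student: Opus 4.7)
The plan is to prove the cycle (1)$\Rightarrow$(2)$\Rightarrow$(3)$\Rightarrow$(1). The two facts doing all the work are that $\V\Gamma$ carries the discrete topology (as remarked immediately before the statement) and that, by Definition \ref{def:permtop}, the permutation topology on $\Sym(\V\Gamma)$ admits as a neighbourhood basis of the identity the family $\{\Sym(\V\Gamma)_F \mid F \subset \V\Gamma \text{ finite}\}$; restricting to $\Aut(\Gamma) \leq \Sym(\V\Gamma)$, this gives a neighbourhood basis of the identity in the target of the homomorphism $\Phi \colon G \to \Aut(\Gamma)$.

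For (1)$\Rightarrow$(2), I would fix $\alpha \in \V\Gamma$ and consider the orbit map $G \to \V\Gamma$, $g \mapsto g\alpha$, which is the composition of the continuous inclusion $g \mapsto (g,\alpha)$ with the continuous action map $G \times \V\Gamma \to \V\Gamma$. Since $\{\alpha\}$ is open in the discrete space $\V\Gamma$, its preimage $G_\alpha$ is open in $G$.

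For (2)$\Rightarrow$(3), since $\Phi$ is a group homomorphism it suffices to check continuity at the identity. A typical basic neighbourhood of the identity in $\Aut(\Gamma)$ has the form $\Aut(\Gamma)_F = \bigcap_{\alpha \in F} \Aut(\Gamma)_\alpha$ for some finite $F \subset \V\Gamma$, and its preimage $\Phi^{-1}(\Aut(\Gamma)_F) = \bigcap_{\alpha \in F} G_\alpha$ is a finite intersection of open subgroups, hence open.

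For (3)$\Rightarrow$(1), I would first observe that the tautological action $\Aut(\Gamma) \times \V\Gamma \to \V\Gamma$ is continuous: at a point $(\psi, \alpha)$ the set $\psi \Aut(\Gamma)_\alpha \times \{\alpha\}$ is a neighbourhood that maps constantly to $\psi\alpha$. Composing this with $\Phi \times \operatorname{id}_{\V\Gamma}$ realizes the $G$-action as a composition of continuous maps. I do not expect any real obstacle; every step is essentially an unpacking of the definition of the permutation topology in light of the discreteness of $\V\Gamma$, and the main thing to remember is simply to note that a group homomorphism into a topological group is continuous as soon as it is continuous at the identity.
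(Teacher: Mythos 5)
Your proposal is correct and follows essentially the same cycle (1)$\Rightarrow$(2)$\Rightarrow$(3)$\Rightarrow$(1) as the paper, with the same key observations (continuity of the orbit map, stabilizers as a neighbourhood sub-basis, and openness of the relevant sets in $G \times \V\Gamma$). The only cosmetic difference is that for (3)$\Rightarrow$(1) you factor the action through the continuous tautological action of $\Aut(\Gamma)$ on $\V\Gamma$, whereas the paper directly exhibits the preimage of a vertex as a union of open boxes; both amount to the same computation.
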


\begin{proof}
	We first prove that 1. implies 2.
	Assume that the map $G \times \V\Gamma \to \V\Gamma$ is continuous.
	Let $\alpha \in \V\Gamma$.
	The map $G \to \V\Gamma, \alpha \mapsto g\alpha$ has to be continuous as well, so the pre-image $G_\alpha$ of $\{\alpha\}$ is open.
	
	Now we prove that 2. implies 3.
	Note that the stabilizers $\Aut(\Gamma)_\alpha$ form a neighborhood sub-basis of the identity in $\Aut(\Gamma)$. Clearly $G_\alpha$ is the pre-image of such a subbasis-element, which proves this implication.
	
	Lastly we prove that 3. implies 1.
	We have to show that for every $\alpha \in \V\Gamma$ the set $\{(g,\beta) \mid g\beta = \alpha\}$ is open.
	But $\{(g,\beta) \mid g\beta = \alpha\} = 
    \bigcup_{g \in G} g G_{g^{-1}\alpha} \times \{g^{-1} \alpha\} $
a union of open sets.
	This finishes the proof.
\end{proof}

\begin{lemma}\label{lem:AutGamma_tdlc}
	Let $\Gamma$ be a graph. Show that $\Aut(\Gamma) \leq \Sym(\V\Gamma)$ is a closed subgroup. In particular, if $\Gamma$ is locally finite, it is tdlc.
\end{lemma}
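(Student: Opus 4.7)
The plan is as follows. By Convention \ref{conv:graphs} all graphs are simple, so the exercise after the definition of graph morphism tells us that a graph automorphism of $\Gamma$ is uniquely determined by its action on $\V\Gamma$; under this identification, $\Aut(\Gamma)$ is precisely the set of $g \in \Sym(\V\Gamma)$ such that $\{\alpha,\beta\} \in \E\Gamma$ if and only if $\{g\alpha,g\beta\} \in \E\Gamma$ for every $\alpha,\beta \in \V\Gamma$. This is the characterization I would exploit.

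For closedness, I would take $g \in \Sym(\V\Gamma)$ in the closure of $\Aut(\Gamma)$ and verify the above condition pair by pair. Fix $\alpha,\beta \in \V\Gamma$. By the definition of the permutation topology as pointwise convergence (Definition \ref{def:permtop}(1)), applied to the finite set $\{\alpha,\beta,g^{-1}\alpha,g^{-1}\beta\}$, there exists $h \in \Aut(\Gamma)$ agreeing with $g$ and with $g^{-1}$ on $\{\alpha,\beta\}$. Since $h$ preserves adjacency in both directions, $\{\alpha,\beta\} \in \E\Gamma$ if and only if $\{h\alpha,h\beta\} = \{g\alpha,g\beta\} \in \E\Gamma$. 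As $\alpha,\beta$ were arbitrary, $g \in \Aut(\Gamma)$. Using basic neighbourhoods rather than sequences sidesteps the fact that $\Sym(\V\Gamma)$ need not be first countable.

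For the tdlc conclusion when $\Gamma$ is locally finite, I would apply Proposition \ref{prop:SymXtdlc}(2) to the closed subgroup $\Aut(\Gamma) \leq \Sym(\V\Gamma)$. It suffices to check that for every $\alpha \in \V\Gamma$ and every $\beta \in \V\Gamma$ the orbit $\Aut(\Gamma)_\alpha \cdot \beta$ is finite. Any automorphism fixing $\alpha$ preserves $d_\Gamma(\alpha,\cdot)$, so this orbit is contained in the sphere of radius $d_\Gamma(\alpha,\beta)$ around $\alpha$; by a short induction on the radius, balls in a locally finite graph are finite, hence so are spheres. Proposition \ref{prop:SymXtdlc}(2) then gives that $\Aut(\Gamma)$ is tdlc, and the proof is complete.

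I do not expect any serious obstacle: the only small care-point is to pick the testing finite set in the closedness argument large enough to control both $g$ and $g^{-1}$, which is what makes the argument symmetric in $\alpha,\beta$ and avoids assuming $\Gamma$ is locally finite for that half.
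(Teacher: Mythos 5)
Your argument is correct and is essentially the paper's proof in contrapositive form: the paper shows the complement of $\Aut(\Gamma)$ is open by exhibiting, for $g \notin \Aut(\Gamma)$, a basic neighbourhood (agreement on the endpoints of a violated arc) disjoint from $\Aut(\Gamma)$, whereas you show every point of the closure preserves adjacency by approximating on finite vertex sets --- the same underlying observation that being an automorphism is a condition checkable pair by pair in the permutation topology. The tdlc conclusion is obtained in both cases from Proposition \ref{prop:SymXtdlc}; you merely spell out the finiteness of the point-stabilizer orbits (which, here as in the paper, tacitly uses connectedness of $\Gamma$ so that $d_\Gamma(\alpha,\beta)$ is finite).
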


\begin{proof}
	We show that the complement of $\Aut(\Gamma)$ is open.
	Let $g \in \Sym(\V\Gamma) \setminus \Aut(\Gamma)$. This means that there exists an arc $e \in \A\Gamma$ such that $g(e) \notin \A\Gamma$.
	Then, $g G_e = \{h \in G \mid h(e)=g(e) \}$ is an open neighbourhood of $g$ with $g G_e \cap \Aut(\Gamma) = \emptyset$.
	
	The last statement follows from Proposition \ref{prop:SymXtdlc}.
\end{proof}

\subsection{The local action}

Let $\Gamma$ be a graph and let $G$ be a group that acts vertex-transitively on $\Gamma$ of valency $d$.
Let $\alpha \in \V\Gamma$ and $g \in G_\alpha$. Then $g$ leaves $\N(\alpha)$, the set of neighbours of $\alpha$, invariant and we can see $g$ as an element, and $G_\alpha$ as a subgroup of, a symmetric group of degree $\val(\alpha)$. 

\begin{definition}
	Let $G$ be a group acting on a graph $\Gamma$. Let $\alpha \in \V\Gamma$. The \emph{local action} of $G$ at $\alpha$ is the subgroup $G_\alpha/(G_\alpha \cap G_{\N(\alpha)}) \leq \Sym(\N(\alpha))$.
\end{definition}

If $G$ acts vertex-transitively on $\Gamma$ and $\alpha,\beta \in \V\Gamma$, the local actions at $\alpha$ and $\beta$ are conjugate via any element $g \in G$ with $g\alpha = \beta$.
Now we can fix a bijection $\N(\alpha) \to \{1,\dots,d\}$ and see that the conjugacy class of $G_\alpha/(G_\alpha \cap G_{\N(\alpha)}) < \Sym(d)$ does not depend on $\alpha$.

\begin{definition}
	Let $G$ be a group acting vertex-transitively on a graph $\Gamma$ of valency $d$. The \emph{local action} of $G$ is the conjugacy class of $G_\alpha/(G_\alpha \cap G_{\N(\alpha)})$, seen as subgroup of $\Sym(d)$.
\end{definition}

%

Typically we will say that the local action is a subgroup of $\Sym(d)$ without referring to the conjugacy class.

\begin{exercise}\label{exer:local_action_normal_subgp}
	Let $N \triangleleft G$. Note that $N$ does not necessarily act vertex-transitively on $\Gamma$.
	Show that the \emph{local action} of $N$ on $\Gamma$ is still a well-defined notion and is a normal subgroup of the local action of $G$.
\end{exercise}
%

We will see later that the possible local actions of $G$ acting on various graphs $\Gamma$ can give us local information on $G$.

\begin{definition}
	Let $G$ act vertex-transitively on a graph $\Gamma$. We say that the action is \emph{locally transitive} if the local action is transitive.
\end{definition}

\paragraph{Burger--Mozes universal groups.}
We refer to \cite{GGT18} for an introduction into these groups that were originally defined by Burger--Mozes \cite{bm00a}. The basic idea is to define a group that, for a given local action, consists of all tree automorphisms defined by this local action.
Let $\T_d$ be a regular tree of degree $d$. Let $F \leq \Sym(d)$.
Choose a \emph{regular legal colouring} of $\T_d$, which means take $d$ colors and give every edge a color such that adjacent to every vertex all colors are present.
Formally, it is a map $\sigma \colon \A\T_d \to \{1,\dots,d\}$ satisfying the following conditions. For each vertex $\alpha$ the restriction $\sigma|_{o^{-1}(\alpha)} \colon o^{-1}(\alpha) \to \{1,\dots,d\}$ is a bijection, and $\sigma(e)=\sigma(i(e))$ for every $e \in \A\T_d$.
Note that every tree automorphism at every vertex induces an element of $\Sym(d)$ in an obvious way.
The \emph{Burger--Mozes universal group} $\U(F)$ is the set of all automorphisms of $\Td$ such that at every vertex this induced permutation lies in $F$. Formally,
it is defined by $$\U(F) \coloneqq  \{g \in \AutTd \mid \forall \alpha \in \V\Td \colon \, \sigma|_{o^{-1}(g\alpha)} \circ g|_{o^{-1}(\alpha)} \circ (\sigma|_{o^{-1}(\alpha)})^{-1} \in F \}.$$
The group $\U(F)$ depends on the choice of $\sigma$ only up to conjugation by an element of $\AutTd$.

\begin{exercise}
	Show that $\U(F) \leq \AutTd$ is vertex-transitive and closed.
\end{exercise}

\subsection{Quotient graphs}

In our definition of a quotient graph, we strongly make use of the convention that all our graphs are simple.

\begin{definition}
	Let $\Gamma$ be a graph and $\sim$ an equivalence relation on $\V\Gamma$.
	The \emph{quotient graph} $\Gamma/\sim$ has vertex set $\V\Gamma/\sim$, the set of all $\sim$-equivalence classes. The pair
	$([\alpha],[\beta])$, with $\alpha,\beta \in \V\Gamma$, is an arc in $\Gamma/\sim$ if and only if $\alpha \nsim \beta$ and there exists $\alpha'\sim \alpha$ and $\beta' \sim \beta$ such that $(\alpha,\beta) \in \A\Gamma$.
\end{definition}

It is obvious from the definition that the projection $\pi_\V \colon \V\Gamma \to \V(\Gamma/\sim)$ defines a graph homomorphism.

\begin{exercise}
	Give an example of a graph $\Gamma$ and an equivalence relation $\sim$ on $\Gamma$ such that there exists $\alpha \in \V\Gamma$ with $\val(\alpha) < \val([\alpha])$.
\end{exercise}

\begin{definition}
	Let $X$ be a set and $G$ a group acting on $X$.
	An equivalence relation $\sim$ on $X$ is called a \emph{$G$-congruence}
	if for all $x,y \in X$ and for all $g \in G$ we have that $x\sim y$ if and only if $gx\sim gy$.
\end{definition}

\begin{lemma}\label{lem:quotientgraph}
	Let $\Gamma$ be a graph, $G$ a group acting on $\Gamma$ and $\sim$ a $G$-congruence on $\V\Gamma$.
	Then, the action of $G$ on $\Gamma$ descends to an action of $G$ on $\Gamma/\sim$.
	
	If, in addition, $G$ is a topological group acting continuously on $\Gamma$ and the setwise stabilizer of every equivalence class is open in $G$, then $G$ acts continuously on $\Gamma/\sim$.
\end{lemma}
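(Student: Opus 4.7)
\medskip
\noindent\textbf{Proof plan.}
The plan is to first define the action on the quotient at the level of vertices, then check that it respects arcs, and finally to reduce continuity to the characterization in the previous lemma.

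For the first assertion, I would define $g \cdot [\alpha] \coloneqq [g\alpha]$ for $\alpha \in \V\Gamma$ and $g \in G$. The $G$-congruence hypothesis says precisely that $\alpha \sim \beta$ iff $g\alpha \sim g\beta$, so this assignment is well-defined and injective on equivalence classes, i.e. it gives a permutation of $\V(\Gamma/\sim)$. To see that this permutation is a graph automorphism of $\Gamma/\sim$, suppose $([\alpha],[\beta])$ is an arc; by the definition of the quotient graph this means $[\alpha] \neq [\beta]$ and there exist $\alpha' \sim \alpha$, $\beta' \sim \beta$ with $(\alpha',\beta') \in \A\Gamma$. Since $g$ acts by a graph automorphism of $\Gamma$, $(g\alpha', g\beta') \in \A\Gamma$; and $g\alpha' \sim g\alpha$, $g\beta' \sim g\beta$ by the congruence property, while $[g\alpha] \neq [g\beta]$ follows from $[\alpha] \neq [\beta]$ by the same property. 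Hence $(g[\alpha], g[\beta]) = ([g\alpha],[g\beta])$ is an arc of $\Gamma/\sim$. The fact that the map $G \to \Aut(\Gamma/\sim)$ is a homomorphism then follows immediately from the corresponding property for $G \curvearrowright \Gamma$.

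For the second assertion, by the lemma characterizing continuous actions in terms of open vertex stabilizers, it suffices to show that for every $[\alpha] \in \V(\Gamma/\sim)$ the stabilizer $G_{[\alpha]}$ (computed for the action on $\Gamma/\sim$) is open in $G$. But by the definition of the action on the quotient,
\[
G_{[\alpha]} = \{g \in G \mid [g\alpha] = [\alpha]\} = \{g \in G \mid g\alpha \sim \alpha\},
\]
and using the $G$-congruence property this coincides with the setwise stabilizer of the subset $[\alpha] \subset \V\Gamma$ under the action of $G$ on $\Gamma$; the latter is open in $G$ by hypothesis. Applying the characterization lemma in the other direction then gives that $G \curvearrowright \Gamma/\sim$ is continuous.

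The verifications are routine and essentially a matter of chasing definitions; the only point that needs a moment's care is the identification $G_{[\alpha]} = G_{\{[\alpha]\}}$ in the quotient action with the setwise stabilizer in the original action, which is where the $G$-congruence hypothesis is used crucially.
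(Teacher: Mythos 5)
The paper states this lemma without giving a proof, so there is nothing to compare against; your argument is correct and is clearly the intended one. In particular, you use the congruence property exactly where it is needed (well-definedness of $g\cdot[\alpha]=[g\alpha]$, preservation of the non-adjacency condition $\alpha\nsim\beta$ in the definition of arcs of $\Gamma/\sim$, and the identification of the vertex stabilizer $G_{[\alpha]}$ in the quotient with the setwise stabilizer of the class $[\alpha]\subset\V\Gamma$), and the reduction of continuity to openness of vertex stabilizers via the earlier characterization lemma is the right move.
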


\begin{example}\label{ex:quotientgraph_by_normal_subgroup}
	Let $N \triangleleft G$ be a normal subgroup. Then, the orbits of the action of $N$ on $\V\Gamma$ form a $G$-congruence. To see this, we have to prove that for every $\alpha, \beta \in \V\Gamma$ and $n \in N$ with $\beta = n \alpha$ and every $g \in G$ there exists an $n' \in N$ such that $g \beta = n' g \alpha$. But this follows quickly from normality.
	We denote the quotient graph by $N \quot \Gamma$.
\end{example}

The following lemma summarizes some basic facts about the quotient graph by a normal subgroup. Its proof is straight forward; the only non-obvious statement is Part 4. This part is the easiest understood by visualizing it with $\Gamma$ a bi-infinite line and $N$ the group generated by a translation of length $2$.

\begin{lemma}\label{lem:quotient_by_normal}
	Let $G$ be a group acting on a graph $\Gamma$ and let $N \triangleleft G$ be a normal subgroup. Let $K$ be the kernel of the action of $G$ on $\Gamma$.
	\begin{enumerate}
		\item The kernel of the action of $G$ on $N\quot \Gamma$ is the subgroup $NK$.
		\item The quotient group $G/N$ acts on the quotient graph $N\quot \Gamma$.
		
		      If $G$ acts continuously on $\Gamma$, also the action of $G/N$ on $N \quot \Gamma$ is continuous.
		\item The vertex stabilizers are $G_{N\alpha} = G_\alpha N$ and $(G/N)_{N\alpha} = G_\alpha N/N$.
		\item For every $\alpha \in \V\Gamma$ we have $\val(N\alpha) \leq \val(\alpha)$.
		      If $\val(\alpha)$ is finite, equality holds if and only if
		      for every $\beta \in \V\B(\alpha,1)$ we have $N\beta \cap \V\B(\alpha,1) =\{\beta\}$.
%
	\end{enumerate}
\end{lemma}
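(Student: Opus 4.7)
The plan is to unravel the definition of $N\quot\Gamma$ directly, relying throughout on the fact that, by normality, the $N$-orbits form a $G$-congruence (as in Example \ref{ex:quotientgraph_by_normal_subgroup}).

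I would treat items 1, 2, 3 as bookkeeping and dispatch them first. For item 3, $g \in G_{N\alpha}$ if and only if $g\alpha \in N\alpha$, if and only if $g \in NG_\alpha = G_\alpha N$ (the last equality using normality of $N$); the description $(G/N)_{N\alpha} = G_\alpha N/N$ follows by applying the quotient map $G \to G/N$. For item 1, the inclusion $NK \subseteq \ker(G \to \Aut(N\quot\Gamma))$ is immediate since $N$ preserves every $N$-orbit and $K$ already fixes $\Gamma$ pointwise; the reverse inclusion is obtained by taking $g$ in the kernel, which by item 3 lies in $G_\alpha N$ for every $\alpha$, and adjusting by a single element of $N$ (chosen at a base vertex) to land inside $K$. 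For item 2, item 1 places $N$ inside the kernel, so the action descends to $G/N$; continuity follows because item 3 exhibits $G_{N\alpha} = G_\alpha N$ as a union of cosets of the open subgroup $G_\alpha$, and the projection $G \to G/N$ is open.

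The substantial point is item 4. The key step is the concrete identification
\[
  \N(N\alpha) = \{N\beta : \beta \in \N(\alpha),\, N\beta \neq N\alpha\}.
\]
The inclusion ``$\supseteq$'' is immediate, while for ``$\subseteq$'' one uses $N$-equivariance to translate any adjacent pair of representatives so that the representative of $N\alpha$ becomes $\alpha$ itself. This description gives a surjection $\pi \colon \N(\alpha) \to \N(N\alpha) \cup \{N\alpha\}$, $\beta \mapsto N\beta$, whence $\val(N\alpha) \leq \val(\alpha)$. In the finite-valency setting, equality holds if and only if $\pi$ is injective and avoids $N\alpha$. Injectivity translates into $N\beta \cap \N(\alpha) = \{\beta\}$ for every $\beta \in \N(\alpha)$, while avoidance of $N\alpha$ translates into $N\alpha \cap \V\B(\alpha,1) = \{\alpha\}$. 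Packaging these together yields exactly ``$N\beta \cap \V\B(\alpha,1) = \{\beta\}$ for every $\beta \in \V\B(\alpha,1)$''.

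The main obstacle I expect is landing on the clean unified criterion in item 4. A priori one might try to phrase the condition only on $\N(\alpha)$, but one must also rule out that some neighbor of $\alpha$ is $N$-equivalent to $\alpha$, in which case the corresponding edge collapses and disappears from the quotient. The uniform condition over $\V\B(\alpha,1)$ handles the vertex $\alpha$ and its neighbors on equal footing; the bi-infinite line modulo a length-$2$ translation, suggested by the author, makes vivid exactly how this collapse can occur and why both halves of the condition are needed.
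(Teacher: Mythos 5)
The paper offers no proof of this lemma (it is declared straightforward, with only the bi-infinite-line picture as a hint for part 4), so your write-up is not competing with an argument in the text. Your treatment of items 2, 3 and 4 is correct and is surely the intended one; in particular the identification $\N(N\alpha)=\{N\beta : \beta\in\N(\alpha),\ N\beta\neq N\alpha\}$ and the translation of ``equality of valencies'' into injectivity of $\beta\mapsto N\beta$ on $\N(\alpha)$ plus the non-collapse of edges at $\alpha$ is exactly the right bookkeeping.

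There is, however, a genuine gap in your item 1, and it cannot be repaired. Membership of $g$ in the kernel of the action on $N\quot\Gamma$ only gives you, for \emph{each} vertex $\beta$, some $n_\beta\in N$ with $g\beta=n_\beta\beta$; choosing $n=n_{\alpha_0}$ at a base vertex makes $n^{-1}g$ fix $\alpha_0$, but nothing forces $n^{-1}g$ to fix any other vertex, i.e.\ to lie in $K$. The $n_\beta$ need not be simultaneously realizable by a single element of $N$. In fact the reverse inclusion $\ker\subseteq NK$ is false in general: take $\Gamma=\Gamma_\ZZ$, $G=\Aut(\Gamma_\ZZ)=D_\infty$ and $N=2\ZZ$ the even translations (this is the very example the author suggests for part 4, but with the full dihedral group acting). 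Here $K=\{1\}$, so $NK=2\ZZ$ has index $4$ in $G$, whereas $N\quot\Gamma$ is a single edge and the kernel of the action on it is the index-$2$ subgroup of parity-preserving elements; the reflection $n\mapsto -n$ lies in that kernel but not in $NK$. What your argument actually proves, and what is true in general, is that the kernel equals $\bigcap_{\alpha}G_\alpha N=\{g\in G \mid g\alpha\in N\alpha \text{ for all } \alpha\}$, which contains $NK$ but may be strictly larger. You should either prove only the inclusion $NK\subseteq\ker$ together with this description of the kernel, or flag that item 1 as stated needs an additional hypothesis.
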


\subsection{Quasi-isometries}

Two metric spaces are quasi-isometric if they are the same up to stretching with a finite factor and finite tearing.

\begin{definition}\label{def:qi}
	Let $(X,d_X)$ and $(Y,d_Y)$ be metric spaces. A \emph{quasi-isometry} from $X$ to $Y$ is a map $f \colon X \to Y$ such that there exist constants $K_1,\dots,K_5 >0$ with
	\begin{enumerate}
		\item $K_1 d_X(x_1,x_2) - K_2 \leq d_Y(f(x_1),f(x_2)) \leq K_3 d_X(x_1,x_2) + K_4$ for all $x_1,x_2 \in X$, and
		\item for all $y \in Y$ there exists an $x \in X$ with $d_Y(f(x),y) \leq K_5$.
	\end{enumerate}
If such an $f$ exists, then $X$ and $Y$ are called \emph{quasi-isometric}.
\end{definition}

The second condition means that $f$ is ``almost surjective''.
Quasi-isometry is an equivalence relation on metric spaces (neglecting the problem that metric spaces do not form a set).

\begin{example}
	The embedding $\ZZ \to \RR$ is a quasi-isometry with $K_1=K_3=K_5=1$ and $K_2=K_4=0$.
\end{example}

The following is obvious from the definition.

\begin{lemma}\label{lem:qi_boundedsets}
	Let $f \colon X \to Y$ be a quasi-isometry between two metric spaces.
	\begin{enumerate}
		\item Let $A \subset X$ be a subset. Then, $A$ is bounded if and only if $f(A)$ is bounded.
		\item Let $B \subset Y$ be a subset. Then, $B$ is bounded if and only if $f^{-1}(B)$ is bounded.
	\end{enumerate}
\end{lemma}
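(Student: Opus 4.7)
The plan is to derive both parts from the two-sided linear distortion bound in Definition~\ref{def:qi}(1), invoking almost surjectivity (condition~(2)) only for one direction of part~2.

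For part~1 the computation is mechanical. If $A \subset X$ has finite diameter $D$, the upper bound immediately gives $d_Y(f(x_1), f(x_2)) \leq K_3 D + K_4$ for all $x_1, x_2 \in A$, so $f(A)$ is bounded. Conversely, if $f(A)$ has finite diameter $D'$, the lower bound rearranges to $d_X(x_1, x_2) \leq (D' + K_2)/K_1$ for all $x_1, x_2 \in A$, so $A$ is bounded.

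For part~2, the direction ``$B$ bounded implies $f^{-1}(B)$ bounded'' is the same computation applied to $A = f^{-1}(B)$, whose image lies in $B$ and thus has diameter at most $\operatorname{diam}(B)$. For the reverse direction I would use almost surjectivity: given $b_1, b_2 \in B$, pick approximate preimages $x_1, x_2 \in X$ with $d_Y(f(x_i), b_i) \leq K_5$; then
\[
d_Y(b_1, b_2) \;\leq\; d_Y(f(x_1), f(x_2)) + 2K_5 \;\leq\; K_3\, d_X(x_1, x_2) + K_4 + 2K_5,
\]
so it suffices to control the distances among the approximate preimages.

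The main obstacle is precisely in controlling these approximate preimages: the $x_i$ selected by almost surjectivity satisfy only that $f(x_i)$ lies in the $K_5$-neighbourhood $N_{K_5}(B)$ of $B$, and need not themselves lie in $f^{-1}(B)$, so the hypothesis ``$f^{-1}(B)$ bounded'' does not literally bound them. Overcoming this is a routine but careful unwinding: one applies the same scheme to the slightly larger set $f^{-1}(N_{K_5}(B))$---which is bounded iff $f^{-1}(B)$ is, since any two points of $f^{-1}(N_{K_5}(B))$ can be moved into $f^{-1}(B)$ at the cost of a bounded error controlled by $K_5$ and the affine-distortion constants---or, equivalently, one fixes an ``almost inverse'' $Y \to X$ constructed via almost surjectivity and applies part~1 to it. Either way the conclusion follows without further input beyond the two defining inequalities of a quasi-isometry.
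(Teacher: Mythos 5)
Part~1 and the ``$B$ bounded $\Rightarrow f^{-1}(B)$ bounded'' half of part~2 are correct, and your derivations are exactly what the paper has in mind (the paper offers no written proof, declaring the lemma obvious from the definition); note that the only direction used later, in the proof of Lemma~\ref{lem:NZnotqi}, is precisely this half of part~2, which you obtain cleanly by applying part~1 to $A=f^{-1}(B)$.

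The remaining direction of part~2 is where there is a genuine gap, and it is not one you can patch. You correctly isolate the obstacle --- the approximate preimages $x_i$ produced by almost surjectivity lie only in the preimage of the $K_5$-neighbourhood of $B$, not in $f^{-1}(B)$ --- but the repair you propose fails. The claimed equivalence ``the preimage of the $K_5$-neighbourhood of $B$ is bounded iff $f^{-1}(B)$ is'' is false: a point $x$ with $f(x)$ within distance $K_5$ of $B$ need not be anywhere near a point of $f^{-1}(B)$, because $f^{-1}(B)$ can be tiny or empty while $B$ itself is large. Concretely, take the quasi-isometry $f\colon \ZZ \hookrightarrow \RR$ and $B=\{0\}\cup(\RR\setminus\ZZ)$: then $f^{-1}(B)=\{0\}$ is bounded while $B$ is unbounded, and the preimage of the $1$-neighbourhood of $B$ is all of $\ZZ$. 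The same example shows that this direction of the lemma is false as literally stated, so no argument can close the gap; it becomes true (by your $2K_5$-computation, now applied to genuine preimages) if one adds the hypothesis $B\subset f(X)$, or if one replaces $f^{-1}(B)$ by the preimage of a $K_5$-neighbourhood of $B$ throughout. The ``almost inverse'' variant you mention runs into the identical problem, since an almost inverse sends $B$ into the preimage of a neighbourhood of $B$ rather than into $f^{-1}(B)$.
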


We call two graphs quasi-isometric if there is a quasi-isometry between their vertex sets, where the metric is the graph-theoretical distance.

\begin{lemma}\label{lem:NZnotqi}
The subsets $\NN \subset \RR$ and $\ZZ \subset \RR$ are not quasi-isometric.
\end{lemma}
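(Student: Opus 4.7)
The plan is to argue by contradiction, exploiting the asymmetry that $\NN$ escapes to infinity in only one direction, whereas $\ZZ$ does so in two. Since quasi-isometry is an equivalence relation, if $\NN$ and $\ZZ$ were quasi-isometric there would exist a quasi-isometry $f \colon \NN \to \ZZ$; I will work with this direction. Fix constants $K_1,\dots,K_5$ as in Definition \ref{def:qi}.

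First I would record two easy consequences. Taking $|n-m|=1$ in the upper bound gives a uniform bound on consecutive jumps: $|f(n+1)-f(n)| \leq K_3+K_4 =: J$. Applying Lemma \ref{lem:qi_boundedsets} to the bounded set $\{z \in \ZZ : |z| \leq R\}$ (equivalently, noting that the lower bound gives $|f(n)-f(0)| \geq K_1 n - K_2$) shows $|f(n)| \to \infty$ as $n \to \infty$. Choose $N_0 \geq 0$ so large that $|f(n)| > J$ for every $n \geq N_0$.

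The main step is a sign-stabilization argument on $[N_0, \infty)$. Suppose $f(N_0) > J$; then $f(N_0+1) \geq f(N_0) - J > 0$, and combined with $|f(N_0+1)| > J$ this forces $f(N_0+1) > J$. Iterating, $f(n) > J$ for every $n \geq N_0$ (the case $f(N_0) < -J$ is symmetric). Hence $f(\NN) \subseteq \{f(0),\dots,f(N_0-1)\} \cup (J, \infty)$, which is bounded below by some integer $L$. But then $L - K_5 - 1 \in \ZZ$ lies at distance greater than $K_5$ from every point of $f(\NN)$, contradicting the almost-surjectivity condition of Definition \ref{def:qi}.

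The one subtle choice is the direction of the quasi-isometry. The argument above would be unavailable if one worked instead with $g \colon \ZZ \to \NN$: since the target has no negative values, both $g(n)$ and $g(-n)$ are forced to $+\infty$ automatically, and there is no sign to stabilize. The asymmetry between $\NN$ and $\ZZ$ only becomes directly usable when one maps $\NN$ into $\ZZ$.
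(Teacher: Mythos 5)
Your proof is correct, and it rearranges the argument differently from the paper's, so a short comparison is worth making. The paper also starts from a quasi-isometry $f \colon \NN \to \ZZ$ with the same bounded-jump observation, but then runs the logic in the opposite order: it invokes almost surjectivity first to produce points of $\NN$ mapping arbitrarily far in both the positive and negative directions, interleaves these into a single sequence, and uses a discrete intermediate-value argument (partitioning $\ZZ$ into windows of width $K$) to conclude that $f$ hits a fixed bounded window $X_0$ infinitely often; the contradiction is then with part 2 of Lemma \ref{lem:qi_boundedsets}, i.e.\ with the lower quasi-isometry bound forcing $f^{-1}(X_0)$ to be bounded. You instead use the lower bound first, to get $|f(n)| \to \infty$, combine it with the jump bound to stabilize the sign of $f$ on a tail of $\NN$, and contradict almost surjectivity because the image misses a half-line. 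The two proofs exploit the same pair of facts (bounded jumps plus the one-ended versus two-ended asymmetry) but derive the contradiction from opposite halves of Definition \ref{def:qi}; yours avoids the subsequence interleaving and the partition into windows, so it is somewhat leaner, while the paper's crossing argument is the one that generalizes to the end-counting statement of Proposition \ref{prop:qi_graphs} (whose proof explicitly reuses it). Your closing remark about the direction of the quasi-isometry is apt: the paper makes the same reduction silently, relying on symmetry of the quasi-isometry relation.
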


\begin{proof}
	Let $f \colon \NN \to \ZZ$ be a quasi-isometry.
	By definition, there exists an integer $K > 0$ such that $|f(n)-f(n+1)| \leq K$ for all $n \in \NN$. Denote $X_k := [kK,(k+1)K-1]$ for all $k \in \ZZ$.
	The sets $X_k$ form a disjoint cover of $\ZZ$.
	By choice of $K$, whenever $f(n) \in X_k$ for some $n \in \NN$ and $k \in \ZZ$, we know that $f(n-1),f(n+1) \in X_{k-1}\cup X_k \cup X_{k+1}$.
	By the ``almost surjectivity"-condition of Definition \ref{def:qi}, there exist sequences $(n_i)_{i \geq 0}$ and $(m_i)_{i \geq 0}$ in $\NN$ such that $(f(n_i))_{i \geq 0} \to \infty$ and $(f(m_i))_{i \geq 0} \to -\infty$. By passing to subsequences we can assume that $n_i \leq m_i \leq n_{i+1}$ for all $i \geq 0$.
	But then, for $i$ sufficiently big, there must be an element $j_i \in [n_i,m_i]$ such that $f(j_i) \in X_0$. In particular, $f^{-1}(X_0)$ is infinite and hence unbounded. This contradicts Lemma \ref{lem:qi_boundedsets}.
\end{proof}

%
%

The set of ends is invariant under quasi-isometry; we prove here a version for locally finite graphs. 

\begin{proposition}[\cite{Moeller1992}, Proposition 1]
	\label{prop:qi_graphs}
	Let $\Gamma$ and $\Delta$ be two locally finite, connected graphs and $f \colon \V\Gamma \to \V\Delta$ a quasi-isometry.
	Then, $f$ extends uniquely to a continuous map $\bar{f} \colon \V\Gamma \cup \calE \Gamma \to \V\Delta \cup \calE\Delta$, which restricts to a homeomorphism $\bar{f} \colon \calE \Gamma \to \calE \Delta$.
	Moreover, $\bar{f}$ maps thick ends to thick ends and thin ends to thin ends.
\end{proposition}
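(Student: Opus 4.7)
The plan is to define $\bar f$ to be $f$ on $\V\Gamma$ and to act on ends through representative rays, then prove well-definedness, bijectivity via a quasi-inverse, continuity, and preservation of the thin/thick classification via disjoint-ray representatives. Throughout, write $K_1,\dots,K_5$ for the quasi-isometry constants of $f$ and set $L \coloneqq K_3+K_4$.

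\textbf{Defining $\bar f$ on ends.} Given $\xi \in \calE\Gamma$, choose a ray $\rho = (\alpha_0,\alpha_1,\dots)$ representing $\xi$. Because consecutive $\alpha_i,\alpha_{i+1}$ are at $\Gamma$-distance $1$, the quasi-isometry inequality gives $d_\Delta(f(\alpha_i),f(\alpha_{i+1})) \leq L$, so we can concatenate geodesics from $f(\alpha_i)$ to $f(\alpha_{i+1})$ into an infinite walk $W_\rho$ in $\Delta$. The lower inequality in Definition \ref{def:qi} gives $d_\Delta(f(\alpha_0),f(\alpha_i)) \geq K_1 i - K_2 \to \infty$, so $W_\rho$ eventually leaves every finite subgraph of $\Delta$. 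K\H{o}nig's Lemma (Lemma \ref{lem:koenigslemma}), applied to the locally finite connected subgraph spanned by $W_\rho$, extracts a ray $\rho' \subset W_\rho$; set $\bar f(\xi) \coloneqq [\rho']$.

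\textbf{Well-definedness and bijectivity.} If $\rho_1,\rho_2$ both represent $\xi$, then for any finite $\Theta \subset \Delta$, Lemma \ref{lem:qi_boundedsets} shows that $\{\alpha \in \V\Gamma : d_\Delta(f(\alpha),\Theta)\leq L\}$ is bounded, hence contained in a finite subgraph $\Theta' \subset \Gamma$. Lemma \ref{lem:ray_equivalence} forces tails of $\rho_1,\rho_2$ into a common component of $\Gamma \setminus \Theta'$; the $L$-thickening then forces $W_{\rho_1},W_{\rho_2}$, and hence $\rho_1',\rho_2'$, into a common component of $\Delta \setminus \Theta$, so $\bar f(\xi)$ is well-defined. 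A standard geometric-group-theory fact produces a quasi-inverse $g \colon \V\Delta \to \V\Gamma$ with $g\circ f$ and $f\circ g$ each at uniformly bounded distance from the identity. Running the same construction with $g$ gives $\bar g \colon \calE\Delta \to \calE\Gamma$; since rays at bounded Hausdorff distance are equivalent, $\bar g \circ \bar f = \mathrm{id}_{\calE\Gamma}$ and $\bar f \circ \bar g = \mathrm{id}_{\calE\Delta}$.

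\textbf{Continuity, uniqueness, and thin/thick.} Continuity on $\V\Gamma$ is trivial since $\V\Gamma$ is discrete. At $\xi \in \calE\Gamma$, let $U$ be the basic open neighbourhood of $\bar f(\xi)$ determined by a finite $\Theta \subset \Delta$ and a component $\Sigma$ of $\Delta \setminus \Theta$. Replace $\Theta$ by its $L$-neighbourhood $\widetilde\Theta$ (finite by local finiteness of $\Delta$) and let $\Theta' \subset \Gamma$ be a finite subgraph containing the bounded set $f^{-1}(\widetilde\Theta)$; the component of $\Gamma \setminus \Theta'$ containing $\xi$ is then mapped into $U$, yielding continuity. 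The symmetric argument for $\bar g$ makes $\bar f|_{\calE\Gamma}$ a homeomorphism, and uniqueness of the extension is immediate because $\V\Gamma$ is dense in the Hausdorff space $\V\Gamma \cup \calE\Gamma$. For the thick/thin dichotomy, if $\xi$ admits $k$ pairwise disjoint ray representatives $\rho^{(1)},\dots,\rho^{(k)}$, then the walks $W_{\rho^{(j)}}$ stay in disjoint $L$-tubes outside a finite set, and a Menger-style extraction (as in the discussion preceding Lemma \ref{lem:thinends}) produces $k$ representatives of $\bar f(\xi)$ that are pairwise disjoint outside a finite subgraph; applying $\bar g$ gives the reverse bound. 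The main obstacle is exactly this last step: the images under $f$ of disjoint rays can interleave and share vertices in $\Delta$, so converting ``bounded-tube disjointness" into honest disjointness of ray representatives requires the Menger input together with a careful bookkeeping of the tubes around each $W_{\rho^{(j)}}$.
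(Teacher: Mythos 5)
Your construction of $\bar f$ and the proof that it restricts to a homeomorphism of end spaces are correct, but you take a genuinely different route from the paper for the bijectivity step. The paper extends $f$ along sequences of vertices converging to an end (the analogue of the argument in Lemma \ref{lem:NZnotqi}), then proves surjectivity directly by lifting a ray $(\beta_i)$ of $\Delta$ through almost-surjectivity, and injectivity directly by choosing a finite separator $\Theta\subset\Gamma$ whose complementary components are at distance greater than $(2K_5+1)K_3+K_4$ and showing that a path in $\Delta$ avoiding the $(K_5+1)$-neighbourhood of $f(\Theta)$ would produce a coarse path in $\Gamma$ avoiding $\Theta$; the homeomorphism then comes for free from compactness. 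Your quasi-inverse argument packages the same content more cleanly and is perfectly valid (the quasi-inverse exists by almost-surjectivity together with the lower bound in Definition \ref{def:qi}, and rays at bounded Hausdorff distance are equivalent). Your explicit use of K\H{o}nig's Lemma to extract a genuine ray from the walk $W_\rho$ is a nice touch the paper glosses over.

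The thick/thin step, however, contains a false intermediate claim. You assert that for pairwise disjoint representatives $\rho^{(1)},\dots,\rho^{(k)}$ of a thick end, the walks $W_{\rho^{(j)}}$ ``stay in disjoint $L$-tubes outside a finite set.'' This is not true: disjoint rays converging to the same thick end can run at graph distance $1$ from one another along their entire length, so their images lie within $K_3+K_4$ of each other forever; worse, the lower bound $K_1 d - K_2$ only gives coarse injectivity, so $f$ may literally identify vertices of distinct $\rho^{(j)}$. Your closing caveat concedes exactly this, but the ``careful bookkeeping'' you defer to is the entire content of the step, and no amount of tube-tracking will recover disjointness from the (false) tube-disjointness premise. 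The workable route is through the separator characterization of Lemma \ref{lem:thinends}: one shows that the nested bounded-size separators witnessing thinness of an end transport across the quasi-isometry (a separator $\Theta_i$ in one graph yields, after thickening by the relevant constants and pulling back, a separator of bounded \emph{diameter} in the other, and Menger's theorem converts the bound on disjoint rays into a bound on separator size and back). In fairness, the paper itself only says ``the rest is proven via similar ideas'' for this part, so both accounts are sketches here --- but yours rests on an incorrect assertion rather than an omission, and should be repaired along the separator/Menger line before it can stand.
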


\begin{proof}
	Since $\V\Gamma$ is dense in $\V\Gamma \cup \calE\Gamma$, it is clear that a continuous extension has to be unique.
	
	\emph{Claim:} Let $(\alpha_i)_{i \in \NN}$ be a sequence in $\V\Gamma$ converging to $\xi \in \calE \Gamma$. Then, the sequence $(f(\alpha_i))_{i \in \NN}$ converges to a point in $\calE\Delta$ that is independent of the chosen sequence $(\alpha_i)$.
	
	\emph{Proof:} The argument is essentially the same as in the proof of Lemma \ref{lem:NZnotqi}, we leave it as exercise.
	
	The claim shows that the extension $\bar{f}$ exists and $\bar{f}(\calE\Gamma) \subset \calE\Delta$. Continuity we leave as exercise.
	
	To prove surjectivity, let $(\beta_0,\beta_1,\dots)$ be a ray in $\Delta$.
	There exists $K_5 \geq 0$ such that, for every $i \geq 0$, we have $\alpha_i \in \V\Gamma$ with $d_\Delta(\beta_i,f(\alpha_i)) \leq K_5$. It is easy to see that $(f(\alpha_i))_{i \in \NN}$ and $(\beta)_{i \in \NN}$ converge to the same end.
	
	We now prove injectivity. Let $(\dots,\alpha_{-1},\alpha_0,\alpha_1,\dots)$ be a line defining two different ends in $\calE\Gamma$.
	Assume, by contradiction, that $\bar{f}$ maps them to the same end in $\calE\Delta$.
	Let $\Theta$ be a finite subgraph of $\Gamma$ such that the distance between two connected components of $\Gamma \setminus \Theta$ is at least $(2K_5+1) K_3 + K_4+1$.
	Let $\Theta'$ be the $(K_5+1)$-neighbourhood around the subgraph $f(\Theta)$.
	Let $n \geq 0$ be an integer such that $f(\alpha_i) \in \V(\Delta \setminus \Theta')$ for all $i \geq n$ and all $i \leq -n$.
	By assumption, there exists a path $(\beta_0,\dots,\beta_m)$ from $f(\alpha_{-n})$ to $f(\alpha_n)$ in $\Delta \setminus \Theta'$.
	By almost surjectivity, there exist $\alpha_1',\dots,\alpha_m' \in \V\Gamma$ with $d(f(\alpha_i'),\beta_i) \leq K_5$ for all $i=0,\dots,m$. We can choose $\alpha_0'=\alpha_{-n}$ and $\alpha_m'=\alpha_n$. By construction, $\alpha'_1,\dots,\alpha'_{m-1} \notin \V\Theta$.
	Note that $d(f(\alpha'_i),f(\alpha'_{i+1})) \leq 2K_5+1$ and therefore $d(\alpha'_i,\alpha'_{i+1}) \leq (2K_5+1)K_3 + K_4$.
	However, by choice of $\Theta$, one of $\alpha_1',\dots,\alpha_{m-1}'$ has to be in $\V\Theta$. This is a contradiction.	
	
	A continuous bijection between two compact Hausdorff spaces is a homeomorphism.
	
	The rest is proven via similar ideas.
\end{proof}

Lemma \ref{lem:NZnotqi} is also a direct corollary of Proposition \ref{prop:qi_graphs}.

%

\begin{exercise}
	Let $d,d' \geq 3$ be positive integers. Then, the $d$-regular tree and the $d'$-regular tree are quasi-isometric.
\end{exercise}

\subsection{Willis theory}

In his pioneering paper \cite{w94} Willis laid the foundation for the modern study of tdlc groups.
He introduced concepts that are still central in their structure theory.

\begin{definition}
	Let $G$ be a tdlc group. The \emph{scale function} on $G$ is the function $s_G \colon G \to \NN$ defined by
	\[
		s_G(g) \coloneqq  \min\{[U : U \cap gUg^{-1}] \mid U \leq G \text{ compact, open} \}.
	\]
	Any compact, open subgroup $U \leq G$ achieving this minimum is called \emph{tidy} for $g$.
	The group $G$ is called \emph{uniscalar} if $s_G(g)=1$ for every $g \in G$.
\end{definition}

The original definition of tidy subgroups is more complicated, but it turns out to be equivalent to this one.

\begin{fact}
	We collect a few properties.
	\begin{enumerate}
		\item The function $s_G \colon G \to \NN$ is continuous.
		\item For all $g \in G$ and $n \in \NN$ we have $s_G(g^n)=s_G(g)^n$.
		\item For all $g \in G$ such that $\langle g \rangle$ has compact closure, we have $s_G(g)=1$. This follows from the classical fact that in a tdlc group, every compact subgroup is contained in a compact, open subgroup.
		\item Denote the modular function on $G$ by $\Delta_G$. Then $\Delta_G(g)=s_G(g)/s_G(g^{-1})$ for every $g \in G$.
		\item A compact, open subgroup is tidy for $g$ if and only if it is tidy for $g^{-1}$.
	\end{enumerate}
\end{fact}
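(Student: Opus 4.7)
My plan is to dispose of (3) directly from the cited classical fact, then to prove (5) as the cornerstone and deduce (2) and (4) by bookkeeping. Continuity in (1) then follows from upper semicontinuity together with (5). The main obstacle is (5), which rests on the full structural theory of tidy subgroups from Willis's original paper and cannot be avoided. For (3), the cited fact applied to the compact group $\overline{\langle g\rangle}$ gives a compact open $U$ containing $g$, whence $gUg^{-1}=U$ and $s_G(g)\le[U:U\cap gUg^{-1}]=1$.

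For (5), I would appeal to Willis's characterization of tidy subgroups: a compact open $U$ is tidy for $g$ if and only if it admits a factorization $U=U_+U_-$ with $gU_+g^{-1}\supseteq U_+$, $g^{-1}U_-g\supseteq U_-$, together with a closure condition on the ascending unions $\bigcup_n g^n U_+g^{-n}$ and $\bigcup_n g^{-n}U_- g^n$. This description is manifestly symmetric under $g\leftrightarrow g^{-1}$ and $U_+\leftrightarrow U_-$, yielding (5). For (2), iterating the inclusion $gU_+g^{-1}\supseteq U_+$ gives the telescoping identity $[g^nU_+g^{-n}:U_+]=[gU_+g^{-1}:U_+]^n$; combined with the tidy formula $s_G(g^n)=[g^nU_+g^{-n}:U_+]$ (which holds because the same factorization witnesses tidiness of $U$ for every power of $g$), this yields $s_G(g^n)=s_G(g)^n$.

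For (4), fix $U$ tidy for both $g$ and $g^{-1}$ using (5), and a left Haar measure $\mu$. The identity
$$\frac{[gUg^{-1}:U\cap gUg^{-1}]}{[U:U\cap gUg^{-1}]}=\frac{\mu(gUg^{-1})}{\mu(U)}=\Delta_G(g)$$
(up to the paper's normalization of $\Delta_G$) has denominator $s_G(g)$; the numerator equals $[U:U\cap g^{-1}Ug]=s_G(g^{-1})$ after conjugating by $g$, delivering (4). For (1), since $\NN$ is discrete, I need only prove local constancy. If $U$ is tidy for $g$ and $h=gv$ with $v\in U$, then $hUh^{-1}=gvUv^{-1}g^{-1}=gUg^{-1}$, so $[U:U\cap hUh^{-1}]=s_G(g)$ and hence $s_G(h)\le s_G(g)$ on the open neighbourhood $gU$. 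The reverse inequality is obtained by running the same coset argument for $g^{-1}$ on the neighbourhood $g^{-1}U$ of $g^{-1}$ (valid since $U$ is tidy for $g^{-1}$ by (5)) and then transferring back to $g$ via (4), which forces $s_G(h)$ and $s_G(h^{-1})$ to vary in tandem.
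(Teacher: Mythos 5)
The paper records these five properties as a \emph{Fact} without proof, so there is no internal argument to compare against; I will assess your sketch on its own terms. Items (3) and (5) are fine: (3) is exactly the one-line consequence of the classical embedding of $\overline{\langle g\rangle}$ into a compact open subgroup, and (5) correctly follows from the symmetry of Willis's original definition of tidiness under $g\leftrightarrow g^{-1}$, $U_+\leftrightarrow U_-$ (the paper explicitly licenses the use of that definition by asserting its equivalence with the minimising one). For (2) and (4) your bookkeeping is right, though be aware that the assertion ``the same factorization witnesses tidiness of $U$ for every power of $g$'' is itself a theorem of Willis, not a formal consequence of the factorization; and in (4) your displayed quantity is $\Delta_G(g)^{-1}$ in the paper's normalization (compare Lemma~\ref{lem:modularfct}), which you flagged and which does not affect the conclusion.

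The genuine gap is in (1). Your coset computation $hUh^{-1}=gUg^{-1}$ for $h=gv$, $v\in U$, only yields $s_G(h)\le [U:U\cap hUh^{-1}]=s_G(g)$ on $gU$, i.e.\ upper semicontinuity; the same computation for inverses yields $s_G(h^{-1})\le s_G(g^{-1})$. Transferring via (4) gives $s_G(h)/s_G(h^{-1})=\Delta_G(h)=\Delta_G(g)=s_G(g)/s_G(g^{-1})$ (since $\Delta_G$ kills the compact subgroup $U$), but these three constraints are simultaneously satisfied by $s_G(h)=s_G(g)/k$ and $s_G(h^{-1})=s_G(g^{-1})/k$ for any common divisor $k$: a fixed ratio does not prevent both values from dropping in proportion. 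So ``varying in tandem'' does not force local constancy. The missing ingredient is Willis's theorem that a subgroup tidy for $g$ is tidy for every element of $UgU$ (equivalently, that the upper bound $[U:U\cap hUh^{-1}]$ is actually the \emph{minimum} for $h$); this is where the $U=U_+U_-$ structure theory enters irreducibly, and it cannot be replaced by the modular-function argument. As written, your proof of (1) establishes only that $s_G$ is upper semicontinuous.
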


Using a construction similar to Cayley--Abels graphs, M\"oller proved the following characterisation of tidy subgroups.

\begin{proposition}[\cite{Moeller2002}, Corollary 3.5]\label{prop:scale_power}
	Let $G$ be a tdlc group and $g \in G$. Then, $U \leq G$ is tidy for $g$ if and only if $[U : U \cap g^n U g^{-n}] = [U : U \cap gUg^{-1}]^n$ for all $n \in \NN$.
\end{proposition}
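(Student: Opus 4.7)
The plan is to work with the sequence $a_n := [U : U \cap g^n U g^{-n}]$ for $n \in \NN$ and reduce everything to a single submultiplicative inequality $a_{m+n} \leq a_m \cdot a_n$ combined with the facts $s_G(g^n) = s_G(g)^n$ and $s_G(g^n) \leq a_n$ already listed. I would prove submultiplicativity by inserting the intermediate subgroup $U \cap g^m U g^{-m} \cap g^{m+n} U g^{-(m+n)}$: the first index in the resulting chain is at most $a_m$, and conjugating by $g^m$ identifies $g^m U g^{-m} \cap g^{m+n} U g^{-(m+n)}$ with $g^m(U \cap g^n U g^{-n})g^{-m}$, whose index in $g^m U g^{-m}$ equals $a_n$.

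For the $(\Rightarrow)$ direction, assume $U$ is tidy for $g$, i.e.\ $a_1 = s_G(g)$. Induction on $n$ via submultiplicativity gives $a_n \leq a_1^n = s_G(g)^n = s_G(g^n)$, while the definition of the scale gives $a_n \geq s_G(g^n)$, forcing equality $a_n = a_1^n$ throughout.

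For the $(\Leftarrow)$ direction, assume $a_n = a_1^n$ for every $n$. I would pick an auxiliary subgroup $V$ that is tidy for $g$ (which exists by the definition of the scale), and by the already-proved $(\Rightarrow)$ direction applied to $V$ we know $b_n := [V : V \cap g^n V g^{-n}] = s_G(g)^n$. Setting $W := U \cap V$, which is compact open and of finite index in both $U$ and $V$ (say $k := [U : W]$ and $\ell := [V : W]$), I would chain two comparisons: first $a_n \leq k \cdot [W : W \cap g^n W g^{-n}]$ coming from the inclusion $W \cap g^n W g^{-n} \subseteq U \cap g^n U g^{-n}$, and second $[W : W \cap g^n W g^{-n}] \leq \ell \cdot b_n$ coming from the injection of cosets $W/(W \cap g^n W g^{-n}) \hookrightarrow V/(V \cap g^n V g^{-n})$ with fibres of size at most $\ell$. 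Putting this together, $a_1^n = a_n \leq k \ell \cdot s_G(g)^n$, so extracting $n$-th roots and letting $n \to \infty$ yields $a_1 \leq s_G(g)$. Combined with the trivial $a_1 \geq s_G(g)$, this shows $U$ is tidy.

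The main obstacle is the $(\Leftarrow)$ direction: the hypothesis is purely about $U$, so one cannot conclude directly without introducing an external tidy subgroup $V$ and controlling the discrepancy between $U$ and $V$ through their intersection $W$. The two index comparisons involving $W$ are the technical heart of the argument, and the limit step via $n$-th roots is what ultimately turns a quantitative hypothesis for all $n$ into an equality at $n=1$.
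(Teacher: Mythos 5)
Your argument is correct, but it is worth noting that the paper itself gives no proof of this statement: it is quoted from M\"oller's paper, where it is obtained by a graph-theoretic construction (a rough Cayley graph / highly arc-transitive digraph built from $U$ and $g$). Your route is a purely algebraic index count and is genuinely different. The submultiplicativity $a_{m+n}\leq a_m a_n$ via the intermediate subgroup $U\cap g^mUg^{-m}\cap g^{m+n}Ug^{-(m+n)}$ is fine, and the forward direction then follows by squeezing against $s_G(g^n)=s_G(g)^n\leq a_n$. For the converse, your comparison with an auxiliary tidy $V$ through $W=U\cap V$ works; the only imprecision is the phrase ``injection of cosets \dots with fibres of size at most $\ell$'' (an injection has trivial fibres) --- what you actually need, and what is true, is the factorization $[W:W\cap g^nWg^{-n}]=[W:W\cap g^nVg^{-n}]\cdot[W\cap g^nVg^{-n}:W\cap g^nWg^{-n}]\leq b_n\cdot \ell$, each factor bounded by the standard coset-injection lemma. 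The trade-off to be aware of is that your proof consumes the identity $s_G(g^n)=s_G(g)^n$, which the survey lists as a known Fact but which in Willis's theory is itself a non-trivial theorem usually established via the structure of tidy subgroups; M\"oller's proof of the corollary is part of an independent, geometric derivation of such facts. Within the logical conventions of this survey, however, your argument is complete and considerably more elementary than the cited one.
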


\section{Constructions and examples}

\subsection{Constructions and quasi-isometry}
\label{sect:constructions}

We give two different constructions to obtain a Cayley--Abels graph $\Gamma$ for a cgtdlc group $G$.
Fix a compact, open subgroup $B \leq G$. In each construction, $B$ will be the stabilizer of a vertex. The vertex set will always be the set of cosets $G/B$. Remembering that by the orbit-stabilizer theorem, for any Cayley--Abels graph $\Delta$ for $G$ and $\alpha \in \V\Delta$
with $G_\alpha =B$ the map $G/B \to \V\Delta, \, gB \mapsto g \alpha$ is a bijection intertwining the action of $G$ on $G/B$ and on $\V\Delta$. So it is not surprising that a construction of a Cayley--Abels graph would just take $G/B$ as vertex set.

\begin{construction} \label{cons:1}
		Let $K$ be a compact generating set for $G$. Let $\hat{\Gamma}$ be the Cayley graph of $G$ with generating set $K$. Note that this graph is connected, but not locally finite. Let $\sim$ denote the coset relation $g \sim h \Leftrightarrow gB=hB$; it is a $G$-congruence on $\hat{\Gamma}$. Now a Cayley--Abels graph is the quotient $\Gamma \coloneqq  \hat{\Gamma}/\sim$. The action of $G$ on $\Gamma$ is induced by the action of $G$ on $\hat{\Gamma}$. In other words, it is via left-multiplication on the coset space $G/B$.
\end{construction}

\begin{construction}\label{cons:2}
		Let $S \subset G$ be a finite subset satisfying the four conditions
	\begin{enumerate}
		\item $S=S^{-1}$
		\item $S \cap B =\emptyset$
		\item $BSB = SB = BS$
		\item $\langle S,B \rangle = \langle S \rangle B = G$.
	\end{enumerate}
	Let $\V\Gamma \coloneqq  G/B$ and $\A\Gamma \coloneqq  \{ (gB,gsB) \mid s \in S, \, g \in G \}$. The action of $G$ on $\Gamma$ is via left-multiplication on the coset space $G/B$.
\end{construction}

%
	
\begin{remark}\label{rem:BSB}
	In Construction \ref{cons:2}, the set of neighbours of the vertex $B$ is the set of cosets $SB$. Thus the equation $BSB = SB$ is interpreted as ``the subgroup $B$ leaves the neighbours of the vertex $B$'' invariant, which is clear from $B=G_B$.
	It is not a difficult exercise to prove that the three sets in Condition 3 are equal to $\{g \in G \mid gB \in \N(B) \}$, the set of group elements sending the vertex $B$ to one of its neighbours.
\end{remark}

\paragraph{Attention!}
	Do not confuse the Cayley--Abels graph with the Schreier graph, usually defined for finitely generated groups. In a Schreier graph, the vertex set is the set of right cosets $B \quot G$ for a subgroup $B \leq G$.
	The arcs are $(Bg,Bgs)$ for $s$ in a finite, symmetric generating set of $G$. There is in general no action of $G$ on its Schreier graph!

	Now we prove that the constructions above indeed all give Cayley--Abels graphs, and that they are actually general.
	
\begin{proposition}
	Let $G$ be a cgtdlc group.
	\begin{enumerate}
		\item In Constructions \ref{cons:1} and \ref{cons:2}, the graph $\Gamma$ is a Cayley--Abels graphs for $G$ and $B$ is the stabilizer of some vertex.
		\item Given any Cayley--Abels graph $\Delta$ for $G$ and any stabilizer $B \leq G$ of a vertex in $\Delta$, there exists a compact subset $K \subset G$, a finite subset $S \leq G$ and a $G$-equivariant
		 graph isomorphism $\phi \colon \Delta\to\Gamma$, where $\Gamma$ is as in Construction \ref{cons:1} and \ref{cons:2}. 
	\end{enumerate}
\end{proposition}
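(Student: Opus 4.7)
The plan is to verify each construction yields a Cayley--Abels graph, then realize an arbitrary Cayley--Abels graph in both forms via a $G$-equivariant bijection.

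For Construction \ref{cons:1}, vertex-transitivity of the left-multiplication action of $G$ on $G/B$ is automatic, as is the fact that $G_B = B$ is compact and open. Connectedness of $\Gamma$ is inherited from connectedness of the (non-locally finite) Cayley graph $\hat\Gamma$, which follows because $K$ generates $G$. The one serious point is local finiteness: by the definition of the quotient graph, the neighbours of the vertex $B$ in $\Gamma$ are exactly the cosets in $BKB \setminus \{B\}$. Since $K$ and $B$ are compact so is $BKB$, and since $B$ is open, $BKB$ is a finite union of $B$-cosets. For Construction \ref{cons:2}, all four conditions on $S$ are used: $S \cap B = \emptyset$ rules out loops; $S = S^{-1}$ gives closure under the involution $i$; $BSB = SB$ is the precise content of $G$-invariance of the arc set on cosets, see Remark \ref{rem:BSB}; and $\langle S\rangle B = G$ yields connectedness. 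Local finiteness is immediate from finiteness of $S$.

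For part 2, fix $\Delta$ and $\alpha \in \V\Delta$ with $G_\alpha = B$, and let $\phi \colon G/B \to \V\Delta$, $gB \mapsto g\alpha$ be the orbit-stabilizer bijection. Pick $g_1,\dots,g_n \in G$ with $\N(\alpha) = \{g_1\alpha,\dots,g_n\alpha\}$. For Construction \ref{cons:1}, set $K \coloneqq B \cup \{g_1,\dots,g_n\}$; this is compact, and connectedness of $\Delta$ gives $\langle K\rangle = G$ by iterating the equivariant observation that $\N(g\alpha) = \{g g_i\alpha\}$. For Construction \ref{cons:2}, let $N \coloneqq \{g \in G \mid g\alpha \in \N(\alpha)\}$. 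The crucial observations are that $N$ is a finite union of $B$-cosets (local finiteness of $\Delta$ together with openness of $B$), that $N = N^{-1}$ (symmetry of the edge relation), and that $N$ is $B$-bi-invariant (from $B = G_\alpha$ together with the fact that $B$ permutes $\N(\alpha)$). Pick any finite symmetric $S \subset N$ with $SB = N$; the conditions $S \cap B = \emptyset$, $BSB = SB$ and $\langle S\rangle B = G$ follow respectively from $\alpha \notin \N(\alpha)$, from $B$-bi-invariance of $N$, and from the same connectedness argument as before.

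With these choices of $K$ and $S$, it remains to check that $\phi$ is a graph isomorphism onto the output $\Gamma$ of each construction. It is a $G$-equivariant bijection on vertices, so by vertex-transitivity it suffices to check that $\phi$ sends the neighbours of $B$ in $\Gamma$ bijectively to $\N(\alpha)$. In both constructions, the neighbours of $B$ in $\Gamma$ are precisely the nontrivial cosets contained in $N$: this is the compactness argument from part 1 for Construction \ref{cons:1}, and immediate from the definition in Construction \ref{cons:2}. I expect the main obstacle to be the verifications for Construction \ref{cons:2}, specifically pinning down $B$-bi-invariance of $N$ and then tracing through how it forces $BSB = SB$; all the remaining verifications are essentially bookkeeping on cosets.
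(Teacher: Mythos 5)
Your proposal is correct and follows essentially the same route as the paper: verify both constructions directly (with local finiteness of Construction \ref{cons:1} coming from compactness of $BKB$ and openness of $B$), then use the orbit--stabilizer bijection $gB \mapsto g\alpha$ together with the identification $SB = BSB = \{g \in G \mid g\alpha \in \N(\alpha)\}$ to realize an arbitrary Cayley--Abels graph in both forms. The only cosmetic difference is your choice $K = B \cup \{g_1,\dots,g_n\}$ in place of the paper's $K = SB$, which changes nothing of substance.
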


\begin{proof}
	Construction \ref{cons:1}: By Lemma \ref{lem:quotientgraph} we know that $G$ acts vertex-transitively with compact, open vertex stabilizers on $\Gamma$. It is clear that $B$ is the stabilizer of the vertex $B$. Also, $\Gamma$ is connected because $\hat{\Gamma}$ is. It is left to show that $\Gamma$ is locally finite. It is enough to prove that the vertex $B$ has finitely many neighbours. Clearly $B$ is adjacent to $gB$ if and only if there exist $b,b' \in B$ and $k \in K$ with $bk=gb'$, i.e. if and only if $gB \subset BKB$. By compactness of $BKB$ this holds only for finitely many cosets $gB$.
	
	Construction \ref{cons:2}: First we want to show that $\Gamma$ is locally finite. 
	It suffices to prove that the neighbours of a coset $gB$ described in Construction \ref{cons:2} is independent of the coset representative, i.e.
	we prove that if $gB =g'B$ then for every $s \in S$ there exists $s' \in S$ with $gsB = g's'B$. This follows from the third condition. The forth condition implies that $\Gamma$ is connected. It is clear that $B \leq G$ is the stabilizer of the vertex $B \in \V\Gamma$ and that the action of $G$ on the vertices of $\Gamma$ is transitive. To see that this action is by graph automorphisms, note that if $(gB,gsB)$ is an arc in $\Gamma$, then so is $(hgB,hgsB)$ for all $h \in G$.
	
	We now prove Part 2. of the proposition. Let $\Delta$ be a Cayley--Abels graph for $G$, let $\alpha \in \V\Delta$ and denote $B\coloneqq G_\alpha$. Choose a finite, symmetric subset $S \subset G$ such that $S\alpha$ is the set of neighbours of $\alpha$. Define $K\coloneqq SB$. It is an exercise to show that $$K = SB = BSB = BS=\{g \in G \mid g\alpha \in \N(\alpha) \}.$$	
	By the orbit-stabilizer theorem the map $\phi \colon G/B\to\V\Delta , hB \mapsto h\alpha$ is a bijection and it clearly intertwines the action of $G$.
	Now, identifying $G/B$ with $\V\Gamma$ in Construction \ref{cons:1} and \ref{cons:2}, we are left with proving that $\phi$ is a graph isomorphism.
	
	In Construction \ref{cons:1}, two vertices $gB$ and $hB$ are adjacent if and only if there exist $b,b' \in B$ and $k \in K$ with $gb = hb'k$. By definition of $K$ that is equivalent to there existence of an $s \in S$ with $gB = hsB$.
	By definition $\phi(gB)=g\alpha$ and $\phi(hsB)=hs\alpha$.
	Two vertices $g\alpha,\beta \in \V\Delta$ with $g \in G$ are neighbours if and only if $\alpha,g^{-1}\beta$ are are also neighbours. That is the case if and only if there exists an $s \in S$ with $s\alpha=g^{-1}\beta$, i.e. $gs\alpha = \beta$.
	This shows that $\phi$ is indeed a graph isomorphism $\Gamma \to \Delta$ in Construction \ref{cons:1}.
	
	Also in Construction \ref{cons:2}, two vertices $gB$ and $hB$ are adjacent if and only if there exists $s \in S$ with $gB=hsB$. The rest follows as above.
\end{proof}

The following is a generalization of the fact that every group acting freely and transitively on the vertices of a locally finite, connected graph is finitely generated.

\begin{proposition}[\cite{KronMoller2008} Theorem 2.2]
	Let $G$ be a tdlc group admitting a Cayley--Abels graph. Then, $G$ is compactly generated.
\end{proposition}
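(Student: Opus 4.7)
The plan is to exhibit an explicit compact generating set built from the combinatorics of the Cayley--Abels graph. Let $\Gamma$ be a Cayley--Abels graph for $G$, fix a vertex $\alpha \in \V\Gamma$, and set $B \coloneqq G_\alpha$. By hypothesis $B$ is compact and open, and by local finiteness the neighbour set $\N(\alpha)$ is finite, say $\N(\alpha) = \{\beta_1,\dots,\beta_d\}$. Using vertex-transitivity, choose $s_1,\dots,s_d \in G$ with $s_i\alpha = \beta_i$ and form the finite symmetric set $S \coloneqq \{s_1^{\pm 1},\dots,s_d^{\pm 1}\}$. I will show that $G = \langle S \cup B\rangle$, so that $S \cup B$ is a compact generating set, completing the proof.

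The key step is a translation between paths in $\Gamma$ starting at $\alpha$ and words in $S \cup B$. Given $g \in G$, connectedness of $\Gamma$ yields a path $\alpha = \alpha_0, \alpha_1, \dots, \alpha_n = g\alpha$. I will prove by induction on $n$ that $g \in \langle S \cup B\rangle$. Set $h_0 \coloneqq 1$; assuming $h_{i-1} \in \langle S\rangle$ satisfies $h_{i-1}\alpha = \alpha_{i-1}$, observe that $h_{i-1}^{-1}\alpha_i$ is a neighbour of $\alpha$, hence equals $s_{j_i}\alpha$ for some $j_i$; so $h_i \coloneqq h_{i-1} s_{j_i}$ belongs to $\langle S\rangle$ and satisfies $h_i\alpha = \alpha_i$. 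After $n$ steps, $h_n\alpha = g\alpha$, so $g^{-1}h_n \in G_\alpha = B$, which gives $g \in \langle S\rangle \cdot B \subseteq \langle S \cup B\rangle$.

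This establishes $G = \langle S \cup B\rangle$ with $S \cup B$ compact (as a finite union of a finite set and a compact set), so $G$ is compactly generated. No genuinely hard step arises: the argument is a direct generalization of the standard fact that a group acting freely and transitively on a connected, locally finite graph is finitely generated, with $B$ replacing the role of the trivial stabilizer. The only place one has to be a little careful is to record that the elements $s_1,\dots,s_d$ do exist (which uses vertex-transitivity applied to each neighbour) and that the inductive bookkeeping truly produces $h_n\alpha = g\alpha$ and not merely a vertex in the same $B$-orbit; both are immediate from the construction.
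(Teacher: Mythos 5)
Your proof is correct and follows essentially the same route as the paper: take $S$ finite with $S\alpha = \N(\alpha)$, show $\langle S\rangle$ acts vertex-transitively, and conclude $g \in \langle S\rangle G_\alpha$. The only difference is that the paper delegates the path-induction showing vertex-transitivity of $\langle S\rangle$ to Lemma~\ref{lem:vertextrans} (stated earlier and left as an exercise), whereas you spell it out inline.
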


\begin{proof}
	Let $\Gamma$ be a Cayley--Abels graph for $G$ and $\alpha \in \V\Gamma$.
	Let $S \subset G$ be finite such that $S\alpha$ consist of all neighbours of $\alpha$. We show that $G_\alpha \cup S$ is a compact generating set for $G$. Compactness is clear.
	
	 By Lemma \ref{lem:vertextrans} the finitely generated group $\langle S \rangle$ acts vertex-transitively on $\Gamma$. Let $g \in G$ be arbitrary. Choose an element $h \in \langle S \rangle$ with $g\alpha = h\alpha$. Then, $h^{-1}g \in G_\alpha$, hence $g \in \langle G_\alpha \cup S \rangle$ and we are done.
\end{proof}

\begin{remark}
	If $G$ is finitely generated and $B=\{1\}$, the Cayley--Abels graph is simply the Cayley graph. A Cayley graph comes with a legal, regular coloring preserved by the action of $G$. For a general Cayley--Abels graph, one has such a coloring if and only if $B$ is a normal subgroup of $G$, which is the case if and only if $B$ equals the kernel of the action of $G$ on $\Gamma$.
\end{remark}

The next proposition says that two Cayley--Abels graphs for a cgtdlc group $G$ are quasi-isometric and the quasi-isometry behaves well with the action of $G$ on the vertices.

\begin{proposition} [\cite{KronMoller2008} Theorem 2.7, Theorem 2.7+, Theorem 3.14]
	\label{prop:CAqi}
  Let $G$ be a cgtdlc group and let $\Gamma$ and $\Delta$ be Cayley--Abels graphs for $G$.
  Then, $\Gamma$ and $\Delta$ are quasi-isometric.
  
  More precisely, there exists a quasi-isometry $\phi \colon \Gamma \to \Delta$ that
  is \emph{quasi-$G$-equivariant}, i.e. there exists a constant $K >0$ such that for every $\alpha \in \V\Gamma$ and every $g \in G$ we have $d_\Delta(g\phi(v),\phi(gv)) < K$.
  
  Each $\phi$ satisfying this properties
   extends uniquely to a continuous map $\overline{\phi} \colon \V \Gamma \cup \calE \Gamma \to \V\Delta \cup \calE\Delta$ restricting to a $G$-equivariant
    homeomorphism $\calE \Gamma \to \calE \Delta$. 
\end{proposition}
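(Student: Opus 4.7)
The plan is to realise both $\V\Gamma$ and $\V\Delta$ as coset spaces $G/B_\Gamma$ and $G/B_\Delta$, define a map $\varphi$ via a choice of coset representatives, and then deduce the extension to ends by invoking Proposition~\ref{prop:qi_graphs} once the quasi-isometry has been established.

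First, I would fix $\alpha \in \V\Gamma$, $\beta \in \V\Delta$, and set $B_\Gamma \coloneqq G_\alpha$, $B_\Delta \coloneqq G_\beta$. Choose a set $R \subset G$ of representatives for $G/B_\Gamma$ and define $\varphi \colon \V\Gamma \to \V\Delta$ by $\varphi(r\alpha) \coloneqq r\beta$ for $r \in R$. The one compactness fact that does all the work is: for any compact $K \subset G$, the orbit $K\beta$ is compact in the discrete space $\V\Delta$ (the action is continuous and $B_\Delta$ is open), hence finite. In particular the constant $C \coloneqq \max_{b \in B_\Gamma} d_\Delta(\beta, b\beta)$ is finite. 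Symmetrically construct $\psi \colon \V\Delta \to \V\Gamma$ with analogous constant $C'$.

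Next I would verify the three required properties. \textbf{Upper Lipschitz bound:} if $x = g\alpha$ and $y = h\alpha$ are adjacent, Construction~\ref{cons:2} applied to $\Gamma$ gives $g^{-1}h \in SB_\Gamma$ for a fixed finite symmetric $S \subset G$; writing $\varphi(x) = gb_1\beta$ and $\varphi(y) = gb_2\beta$ with $b_1 \in B_\Gamma$ and $b_2 \in SB_\Gamma$ gives $d_\Delta(\varphi(x),\varphi(y)) = d_\Delta(b_1\beta, b_2\beta) \leq M$, where $M$ is the finite diameter of $(B_\Gamma \cup SB_\Gamma)\beta$; iterating along a geodesic yields the upper Lipschitz inequality. \textbf{Almost surjectivity:} every $g\beta$ is within distance $C$ of $\varphi(g\alpha)$. \textbf{Lower Lipschitz bound:} the same compactness argument shows $d_\Gamma(\psi\varphi(x), x) \leq C''$ for some constant $C''$, so combining this with the upper Lipschitz bound for $\psi$ produces a lower Lipschitz bound for $\varphi$. \textbf{Quasi-$G$-equivariance:} for $g\in G$ and $h\alpha \in \V\Gamma$, both $g\varphi(h\alpha)$ and $\varphi(gh\alpha)$ lie in the set $ghB_\Gamma\beta$, which has diameter at most $2C$ since the action of $gh$ is an isometry.

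Finally, applying Proposition~\ref{prop:qi_graphs} to $\varphi$ provides the unique continuous extension $\overline{\varphi}$ and the homeomorphism $\calE\Gamma \to \calE\Delta$. For $G$-equivariance of $\overline{\varphi}|_{\calE\Gamma}$: fix $g\in G$ and $\xi\in\calE\Gamma$ and pick vertices $\alpha_n \to \xi$. Continuity of the $G$-action on $\V\Gamma \cup \calE\Gamma$ gives $g\alpha_n \to g\xi$, whence $\varphi(g\alpha_n) \to \overline{\varphi}(g\xi)$ by continuity of $\overline{\varphi}$; quasi-equivariance provides $d_\Delta(\varphi(g\alpha_n), g\varphi(\alpha_n)) < K$, while continuity of the $G$-action on $\V\Delta \cup \calE\Delta$ gives $g\varphi(\alpha_n) \to g\overline{\varphi}(\xi)$, so the two bounded-distance sequences share their limit in $\calE\Delta$. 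The step I expect to be most delicate is the lower Lipschitz bound: the upper bound and almost-surjectivity are immediate from a single compactness argument, but the lower bound forces one to introduce the auxiliary map $\psi$ and chase the composition $\psi\circ\varphi$ through two layers of coset representatives to show it is uniformly close to the identity.
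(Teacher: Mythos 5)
Your proof is correct, but it takes a genuinely different route from the paper. You build a single map $\varphi\colon G/B_\Gamma \to G/B_\Delta$ by choosing coset representatives, get the upper Lipschitz bound and almost surjectivity from finiteness of orbits of compact sets, and obtain the lower bound by constructing a quasi-inverse $\psi$ and showing $\psi\circ\varphi$ is uniformly close to the identity --- essentially the standard Milnor--\v{S}varc-style argument, run in one uniform step with no case distinction. The paper instead reduces to two special cases by interposing a third Cayley--Abels graph with vertex stabilizer $G_\gamma \cap G_\delta$: when the stabilizers are equal, the identity map on $G/G_\gamma$ is shown to be a quasi-isometry directly; when one stabilizer is contained in the other, the target is realized as a quotient graph $\Gamma/\!\sim$ by the coset congruence (Lemma \ref{lem:quotientgraph}) and the quotient map is the quasi-isometry, with the lower Lipschitz constant read off from the diameter of the finite fibre $\phi^{-1}(\N(\delta)\cup\{\delta\})$, whose cardinality is $(|\N(\delta)|+1)[G_\delta:G_\gamma]$ --- no quasi-inverse needed. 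What each buys: the paper's maps are canonical and exactly $G$-equivariant in each case (the quasi-equivariance constant only enters through the composition), the constants are explicit in terms of subgroup indices, and the argument showcases the quotient-graph machinery reused later (e.g.\ Proposition \ref{prop:quotient_CA_by_normal}); your version avoids the reduction entirely and makes the dependence on a choice of representatives, and hence the reason equivariance is only \emph{quasi}, completely transparent. Your treatment of the boundary extension and the $G$-equivariance of $\overline{\varphi}|_{\calE\Gamma}$ via convergent sequences at bounded distance matches what the paper leaves implicit in ``Equivariance is a simple consequence of quasi-equivariance.''
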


\begin{proof}
	Let $\gamma \in \V\Gamma$ and $\delta \in \V\Delta$. It is enough to prove the two cases $G_\gamma = G_\delta$ and $G_\gamma \leq G_\delta$, the rest follows from transitivity considering $G_\gamma \cap G_\delta$.
	Let $d_\Gamma$ be the distance on $\Gamma$ and $d_\Delta$ the distance on $\Delta$.
	
	We first consider the first case. In that case, we can identify $\V\Gamma = \V\Delta = G/G_\gamma$. We want to show that the identity map between $\V\Gamma$ and $\V\Delta$ is a quasi-isometry. The second condition (almost surjectivity) is trivially satisfied because the identity is surjective.
	For the inequality, let $K_1 > 0$ satisfy $\N_\Gamma(\gamma) \subset \V\B_\Delta(\delta,K_1)$, where $\N_\Gamma(\gamma)$ is the set of neighbours of $\gamma$ in $\Gamma$ and $\B_\Delta(\delta,K_1)$ is the subgraph of $\Delta$ that is the ball of radius $K_1$ around $\delta$. Let $(e_1,\dots,e_n)$ be a geodesic path in $\Gamma$ between two vertices $\alpha \coloneqq  o(e_1)$ and $\beta \coloneqq  t(e_n)$ with $d_\Gamma(\alpha,\beta)=n$.
	Since $G$ acts vertex-transitively by isometries on $\Gamma$ and $\Delta$, we know that  $\N_\Gamma(o(e_i)) \subset \B_\Delta(o(e_i),K_1)$ for all $i=1,\dots,n$.
	In particular $d_\Delta(o(e_i),t(e_i)) \leq K_1$.
	Therefore $d_\Delta(\alpha,\beta) \leq d_\Delta(o(e_1),t(e_1)) + \dots + d_\Delta(o(e_n),t(e_n)) \leq n K_1$.
	
    The other inequality follows by symmetry, exchanging the roles of $\Gamma$ and $\Delta$.
    
	We now consider the second case. We identify $\V\Gamma = G/G_\gamma$.
	The relation given by $G_\delta$-cosets, i.e. $g G_\gamma \sim h G_\gamma$ if and only if $g G_\delta = h G_\delta$, is a $G$-congruence with finite equivalent classes on $\Gamma$. By Lemma \ref{lem:quotientgraph} the group $G$ acts continuously on the quotient graph $\Gamma/\sim$. We can identify $\V(\Gamma/\sim) = G/G_\delta$.
	It is easy to see that $\Gamma/\sim$ is connected, locally finite and the stabilizer of the vertex $G_\delta \in \V(\Gamma/\sim)$ is $G_\delta \leq G$, so $\V\Gamma/\sim$ is a Cayley--Abels graph for $G$.
	By the first case, we assume without loss of generality that $\Delta = \Gamma/\sim$.
		
	We want to prove that the quotient map $\phi \colon \V\Gamma \to \V\Delta,\, gG_\gamma \mapsto gG_\delta$ is a quasi-isometry. It is clearly surjective.
	One of the inequalities is obvious. If $g,h \in G$ then $d_\Delta(gG_\delta, hG_\delta) \leq d_\Gamma(gG_\gamma,hG_\gamma)$.
	
	The pre-image $\phi^{-1}(\N(\delta) \cup \{\delta\})$ is finite, its cardinality is $(|\N(\delta)|+1)[G_\delta:G_\gamma]$. Let $K$ be its diameter.
	This implies that for every $\delta' \in \N(\delta)$ and every $\alpha,\beta \in \V\Gamma$ with $\phi(\alpha)=\delta$ and $\phi(\beta)=\delta'$ we have $d_\Gamma(\alpha,\beta) \leq K$.
	For every vertex $\alpha \in \V\Gamma$ and every $g \in G$ we have $\phi(g\alpha)=g\phi(\alpha)$. Consequently, for every $\alpha' \in \V\Delta$ the diameter of $\phi^{-1}(\N(\alpha'))$ is $K$.
	Let $\alpha,\beta \in \V\Gamma$ be two vertices, set 
	 $n \coloneqq  d_\Delta(\phi(\alpha),\phi(\beta))$ and let $(e_1,\dots,e_n)$ be a geodesic path in $\Delta$ with $o(e_1)=\phi(\alpha)$ and $t(e_n)=\phi(\beta)$.
	Similar to the first case, we can see that $d_\Gamma(\alpha,\beta) \leq Kn$, so setting $K_1 \coloneqq  1/K$ we are done.
	
	Existence of $\overline{\phi}$ was proven in Proposition \ref{prop:qi_graphs}.
	Equivariance is a simple consequence of quasi-equivariance of $\phi$.
\end{proof}

As a corollary, we can split up elements of a $G$ into three different types as in Proposition \ref{prop:types_of_elements}.

\begin{definition} \label{def:group_types_of_elements}
	Let $G$ be a cgtdlc group and let $\Gamma$ be a Cayley--Abels graph for $G$.
	An element $g \in G$ is called \emph{elliptic, hyperbolic} or \emph{parabolic} according to whether it acts like an elliptic, hyperbolic or parabolic element on $\Gamma$.
\end{definition}

\begin{corollary}[\cite{KronMoller2008}, Theorem 3.32]
	\label{cor:group_types_of_elements}
	Let $G$ be a cgtdlc group and let $g \in G$. Whether $g$ is elliptic, hyperbolic or parabolic as in Definition~\ref{def:group_types_of_elements} is independent of the choice of $\Gamma$.
\end{corollary}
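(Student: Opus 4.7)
The plan is to transport each of the three defining properties across the quasi-isometry supplied by Proposition \ref{prop:CAqi}. Let $\Gamma$ and $\Delta$ be two Cayley--Abels graphs for $G$ and fix $\phi \colon \V\Gamma \to \V\Delta$ that is quasi-$G$-equivariant with constant $K$ and extends to $\bar{\phi}$ whose restriction to ends is a $G$-equivariant homeomorphism $\calE\Gamma \to \calE\Delta$; by Proposition \ref{prop:qi_graphs}, this restriction sends thick ends to thick ends and thin ends to thin ends.

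My first step is to reformulate ellipticity intrinsically: on a connected, locally finite graph $\Lambda$, an automorphism $h$ is elliptic if and only if some (equivalently, every) vertex orbit of $\langle h \rangle$ is bounded in $d_\Lambda$. The ``if'' direction holds because a bounded orbit in a locally finite graph is finite and therefore is the vertex set of a finite $h$-invariant subgraph (taking no arcs); for ``only if'', vertices in the fixed finite subgraph clearly have finite orbits, and any other vertex remains at a constant distance from that subgraph under powers of $h$, so its orbit lies in a finite annulus. With this characterization in hand, I would transfer ellipticity across $\phi$: fix $\alpha \in \V\Gamma$ and set $\beta \coloneqq \phi(\alpha)$, so that quasi-equivariance gives $d_\Delta(g^n \beta, \phi(g^n \alpha)) \leq K$ for all $n$, and Lemma \ref{lem:qi_boundedsets} transports boundedness of $\{g^n\alpha\}$ to boundedness of $\phi(\{g^n\alpha\})$, hence of $\{g^n \beta\}$; a symmetric argument using a quasi-$G$-equivariant quasi-inverse of $\phi$ yields the converse.

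For the non-elliptic cases I would argue on the end space. Since $\bar{\phi}|_{\calE\Gamma}$ is $G$-equivariant, it restricts to a bijection between the sets of $g$-fixed ends on either side, and since it preserves thickness and thinness, this bijection matches $g$-fixed thick ends with $g$-fixed thick ends and similarly for thin ends. By Proposition \ref{prop:types_of_elements} the parabolic case is characterized by the existence of a unique $g$-fixed thick end, and the hyperbolic case by exactly two $g$-fixed thin ends; both conditions are invariants of $\bar{\phi}|_{\calE\Gamma}$, so the type of $g$ agrees on $\Gamma$ and $\Delta$. The main technical point one relies on is the upgrade from quasi-$G$-equivariance of $\phi$ on vertices to honest $G$-equivariance of $\bar\phi$ on $\calE$, but this is already part of Proposition \ref{prop:CAqi}, so no new difficulty arises and the three steps together yield the corollary.
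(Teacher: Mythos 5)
Your argument is correct and is exactly the route the paper intends: the corollary is stated as an immediate consequence of Proposition \ref{prop:CAqi}, whose quasi-$G$-equivariant quasi-isometry and $G$-equivariant, thickness-preserving boundary homeomorphism are precisely what you use. The only ingredient you add beyond what the paper makes explicit is the bounded-orbit reformulation of ellipticity needed to transport that case, and your justification of it (bounded implies finite by local finiteness, and invariance of a finite subgraph bounds every orbit) is sound.
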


\subsection{Normal subgroups}

It should not come as a surprise that Cayley--Abels graphs behave well with respect to quotients.
%

\begin{proposition}\label{prop:quotient_CA_by_normal}
	Let $G$ be a cgtdlc group and $N \triangleleft G$ a closed, normal subgroup.
	Let $\Gamma$ be a Cayley--Abels graph of $G$.
	\begin{enumerate}
		\item The quotient graph $N \quot\Gamma$ is a Cayley--Abels graph for the quotient group $G/N$.
		\item We have $\val(N \quot\Gamma) \leq \val(\Gamma)$.
		Equality holds if and only if for one, and hence every, vertex $\alpha \in \V\Gamma$ the compact set $\{n \in N \mid n\alpha \in \B(\alpha,2)\}$ is contained in the kernel of the action of $G$ on $\Gamma$.
	\end{enumerate}
\end{proposition}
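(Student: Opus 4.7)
My plan is to split the statement into Parts (1) and (2) and reduce each to the machinery already assembled in Lemma \ref{lem:quotientgraph} and Lemma \ref{lem:quotient_by_normal}. Write $K$ for the kernel of the action of $G$ on $\Gamma$.

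For Part (1), I first invoke Example \ref{ex:quotientgraph_by_normal_subgroup} to see that the $N$-orbits form a $G$-congruence, so $N\quot\Gamma$ is a well-defined graph. Lemma \ref{lem:quotient_by_normal} then gives an action of $G/N$ with vertex stabilizers $(G/N)_{N\alpha} = G_\alpha N/N$. These are compact, being the continuous image of the compact $G_\alpha$ under $G \to G/N$, and open, since their preimage $G_\alpha N = \bigcup_{n \in N} G_\alpha n$ is a union of open translates of $G_\alpha$. To obtain continuity of the action through Lemma \ref{lem:quotientgraph}, I verify that the setwise stabilizer of $N\alpha$ is open; normality of $N$ makes this stabilizer equal to $NG_\alpha$, again open. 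Vertex-transitivity passes from $\Gamma$, connectedness descends via the graph morphism $\Gamma \to N\quot\Gamma$, and local finiteness follows from Lemma \ref{lem:quotient_by_normal}(4).

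For Part (2), the inequality is Lemma \ref{lem:quotient_by_normal}(4), which also supplies a local criterion for equality at $\alpha$: $\val(N\alpha) = \val(\alpha)$ if and only if $N\beta \cap \V\B(\alpha,1) = \{\beta\}$ for every $\beta \in \V\B(\alpha,1)$. Since both this local criterion and the kernel condition are $G$-invariant in $\alpha$ (using normality of $N$ and of $K$, together with vertex-transitivity of $G$), the ``one, hence every'' clause is immediate, and I am reduced to showing that the local criterion at a fixed $\alpha$ is equivalent to $\{n \in N : n\alpha \in \V\B(\alpha,2)\} \subset K$. The ``if'' direction is a triangle inequality: if $\beta, n\beta \in \V\B(\alpha,1)$ for some $n \in N$, then $d(\alpha, n\alpha) \le d(\alpha, n\beta) + d(n\beta, n\alpha) \le 2$, so $n \in K$ and hence $n\beta = \beta$. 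For ``only if'', let $n \in N$ with $n\alpha \in \V\B(\alpha,2)$; I rule out $d(\alpha, n\alpha) = 1$ using the local criterion at $\alpha$ with $\beta = \alpha$, and handle $d(\alpha, n\alpha) = 2$ by choosing a midpoint $\beta$, noting that $\beta$ and $n^{-1}\beta$ both sit in $\V\B(\alpha,1)$ (forcing $n\beta = \beta$), then applying the criterion at $\beta$ to $\alpha$ and $n\alpha$ (both in $\V\B(\beta,1)$) to conclude $n\alpha = \alpha$. Thus $n \in G_\alpha$; for any neighbour $\gamma$ of $\alpha$ the vertex $n\gamma$ again lies in $\V\B(\alpha,1)$, so $n\gamma = \gamma$, and induction along paths together with connectedness of $\Gamma$ gives $n \in K$.

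The main obstacle is this final propagation step of ``only if'', lifting ``$n$ fixes $\alpha$'' to ``$n$ lies in the kernel''. This is exactly where the ``one, hence every'' portion must already be in place, because the inductive step uses the local criterion at every vertex rather than just at $\alpha$; once that is secured, connectedness of $\Gamma$ handles the rest.
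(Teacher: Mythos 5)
Your proposal is correct and follows the same route as the paper, which simply derives the proposition from Lemma \ref{lem:quotient_by_normal} (together with Lemma \ref{lem:quotientgraph}); you have merely written out the details the paper leaves implicit, in particular the equivalence between the local criterion of Lemma \ref{lem:quotient_by_normal}(4) and the radius-$2$ kernel condition, and your handling of the ``only if'' propagation step via the criterion at every vertex is sound.
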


\begin{proof}
	This is a consequence of Lemma \ref{lem:quotient_by_normal}.
\end{proof}

\begin{exercise} Recall from Exercise \ref{exer:local_action_normal_subgp} that even though normal subgroups do not act vertex-transitively in general, we can still talk about their ``local action".
	Show that the condition in 2. implies in particular that the local action of $N$ on $\Gamma$ is trivial.
\end{exercise}

\subsection{Examples}

\begin{example}
	Let $G$ be finitely generated and $B=\{1\}$. Then $G$ is discrete and every Cayley--Abels graph is a Cayley graph.
\end{example}

\begin{exercise}
	Let $G$ be a tdlc group. The graph only consisting of one vertex is a Cayley--Abels graph for $G$ if and only if $G$ is compact.
\end{exercise}

\begin{example}
	Let $\T$ be a regular tree and $G \leq \AutT$ any closed, vertex-transitive subgroup (for example a Burger--Mozes universal group $\U(F)$ or the stabilizer of a chosen end). Then $\T$ is a Cayley--Abels graph for $\T$.
\end{example}

\begin{proposition}[\cite{BMW12} Prop. 14]
	Let $G$ act co-compactly on a locally finite, connected cell complex $X$.
	Then, the $1$-skeleton of $X$ is quasi-isometric to a Cayley--Abels graph of $G$. If $G$ acts vertex-transitively on $X$, the $1$-skeleton of $X$ is itself a Cayley--Abels graph for $G$.
\end{proposition}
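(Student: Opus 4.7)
The plan is first to dispatch the vertex-transitive case, which is almost immediate, and then to reduce the general co-compact case to an auxiliary Cayley--Abels graph built on a single $G$-orbit of vertices.

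For the vertex-transitive case, write $\Gamma \coloneqq X^{(1)}$ for the 1-skeleton; local finiteness and connectedness of $X$ pass to $\Gamma$. Pick a vertex $\alpha \in \V\Gamma$. Since $G$ acts vertex-transitively on $\Gamma$, it only remains to check that $G_\alpha$ is compact and open in $G$. Compactness comes from the implicit properness of the action (in the tdlc context, ``$G$ acts on a cell complex'' includes continuity and compact cell-stabilizers), and openness follows from the standard equivalence between continuity of a $G$-action on a graph and openness of all vertex stabilizers. Thus $\Gamma$ satisfies Definition \ref{def:maindef} directly.

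For the general co-compact case, I fix a vertex $\alpha \in \V X^{(1)}$ and write $\Gamma \coloneqq X^{(1)}$. Co-compactness of the $G$-action together with local finiteness of $\Gamma$ yields a constant $D \geq 0$ such that every vertex of $\Gamma$ is at $d_\Gamma$-distance at most $D$ from $G\alpha$. Setting $R \coloneqq 2D+1$, I then define an auxiliary graph $\Gamma'$ with vertex set $G\alpha$, declaring $g\alpha$ and $h\alpha$ to be adjacent exactly when $0 < d_\Gamma(g\alpha,h\alpha) \leq R$. Vertex-transitivity of $G$ on $\Gamma'$ and compact-openness of $G_\alpha$ are inherited from the ambient action; local finiteness of $\Gamma'$ follows from that of $\Gamma$, since $\B(\alpha,R) \subset \Gamma$ is finite and each orbit-point within it contributes a single neighbour. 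Connectedness of $\Gamma'$ is the one delicate point: given a $\Gamma$-path from $g\alpha$ to $h\alpha$, I replace each intermediate vertex by a chosen nearby element of $G\alpha$ (within $d_\Gamma$-distance $D$), and observe that consecutive replacements are then at $d_\Gamma$-distance at most $2D+1 = R$, hence adjacent in $\Gamma'$.

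It remains to exhibit a quasi-isometry from $\Gamma'$ to $\Gamma$. I take the inclusion $\V\Gamma' = G\alpha \hookrightarrow \V\Gamma$, which is almost surjective with constant $D$ by the defining property of $D$. The replacement argument from the previous paragraph also gives $d_{\Gamma'}(g\alpha,h\alpha) \leq d_\Gamma(g\alpha,h\alpha)$ for all $g\alpha,h\alpha \in G\alpha$, while each $\Gamma'$-edge is realised by a $\Gamma$-path of length at most $R$, so $d_\Gamma \leq R \cdot d_{\Gamma'}$ in the reverse direction. These two bounds yield all the constants in Definition \ref{def:qi}. The main subtlety is calibrating $R$ so that connectedness and both sides of the distance inequality hold simultaneously; the choice $R = 2D+1$ is tailored precisely for this, and I expect no other real obstacle beyond careful bookkeeping.
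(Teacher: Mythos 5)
Your argument is correct. Note that the paper itself states this proposition without proof, merely citing \cite{BMW12}, so there is nothing to compare against; your construction --- passing to a single vertex orbit $G\alpha$, joining orbit points at $d_\Gamma$-distance at most $R=2D+1$, and checking connectedness, local finiteness and the two-sided distance bounds via the replacement-by-nearby-orbit-point argument --- is the standard one and all the estimates check out, including the correct handling of the implicit hypothesis that vertex stabilizers are compact and open.
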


The next example allows us to get new Cayley--Abels graphs out of old ones.
If a finitely generated group $G=\langle S \rangle$, where $S$ is a finite, symmetric generating set, acts on a set $X$, the \emph{action graph} $\Gamma$ is defined as follows.
The vertex set is $X$ and the arc set is $\{(x,sx) \mid s \in S \}$.

\begin{example}\label{expl:newCayofOld}
	Let $\Gamma$ be a Cayley--Abels graph for $G$ and assume that the action is locally transitive. Then, the following is another Cayley--Abels graph for $G$.
	The vertices of $\Gamma'$ are the arcs of $\Gamma$.
	For all $\beta \in \V\Gamma$ and distinct $\alpha,\alpha' \in \N(\beta)$ there is an edge between $(\alpha,\beta)$ and $(\alpha',\beta)$. There also is an edge between the arcs $(\alpha,\beta)$ and $(\beta,\alpha)$.	
\end{example}

Less formally, this means that every vertex of $\Gamma$ is replaced with a complete graph.

\begin{exercise}
	Modify the construction in Example \ref{expl:newCayofOld} such that you get a graph of smaller degree if the local action is not, for example, primitive.
	
	What could you get in the case where $\Gamma$ is a regular tree of degree $d$ and $G=\U(D_{d})$ is a Burger--Mozes universal group, where the dihedral group $D_d$ is the symmetry group of the regular $d$-gon?
\end{exercise}

\begin{example}\label{expl:arc_graph}
	A similar construction as Example \ref{expl:newCayofOld} that works also if $G$ is only edge-transitive is the following.
	After doing the construction, we contract all the edges of the form $\{(\alpha,\beta),(\beta,\alpha)\}$ to a vertex.
	
	More explicitly, let $\Gamma$ be a connected, locally finite graph on which $G$ acts edge-transitively. Then, the following is a Cayley--Abels graph for $G$.
	The vertices of $\Gamma'$ are the edges of $\Gamma$. There is an edge between $\{\alpha,\beta\}$ and $\{\alpha',\beta\}$ if there exists $g \in G_\beta$ with $\{g\alpha,g\beta\} = \{\alpha',\beta\}$.
\end{example}

\begin{exercise}
	Do this for the automorphism group of a biregular tree.
\end{exercise}

The author does not know, but would be interested in, the answer to the following question (it is true for $d'=2$ or if $d,d'$ are small).

\begin{problem}
	Let $\T_{d,d'}$ be the biregular tree with valencies $d \geq 3$ and $d' \geq 2$.
	Does the construction from Example \ref{expl:arc_graph} give a Cayley--Abels graph of minimal possible valency for $\Aut(\T_{d,d'})$?
\end{problem}

Few examples of Cayley--Abels graphs have been explicitly drawn.
It is usually just a tool in a proof, like for us in Theorem \ref{thm:esschiefseries}.

\begin{problem}
	Give a good explicit description of a Cayley--Abels graph for Neretin's group.
\end{problem}

\subsection{Tree automorphism groups}

In this subsection we want to get a better understanding of $\Aut(\T_d)$ for a $d$-regular tree $\T_d$. 
It should not go without mentioning that $\Aut(\T_d)$ is a tremendously rich group that can probably be rightfully regarded as the most fundamental example in the theory of tdlc groups.
We start with the special case of Proposition \ref{prop:types_of_elements}.

\begin{proposition}\cite{Tits1970}
	Let $\varphi \in \Aut(\T_d)$. Then, $\varphi$ either fixes a vertex, transposes an edge or translates along an infinite line.
\end{proposition}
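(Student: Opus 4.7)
The plan is to analyse the \emph{translation length}
\[
\ell(\varphi) \coloneqq \min_{\alpha \in \V\T_d} d_{\T_d}(\alpha,\varphi(\alpha)),
\]
which is a nonnegative integer (the minimum exists since $d_{\T_d}$ takes values in $\NN$). The case $\ell(\varphi)=0$ is trivial: any minimizer $\alpha$ is fixed by $\varphi$.

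Assume now $\ell \coloneqq \ell(\varphi) \geq 1$, let $\alpha$ be a minimizer, and write the geodesic path from $\alpha$ to $\varphi(\alpha)$ as $\gamma = (\alpha_0,\alpha_1,\dots,\alpha_\ell)$, with $\alpha_0 = \alpha$ and $\alpha_\ell = \varphi(\alpha)$. The image $\varphi(\gamma) = (\varphi(\alpha_0),\dots,\varphi(\alpha_\ell))$ is a geodesic from $\varphi(\alpha)$ to $\varphi^2(\alpha)$. The key dichotomy is whether the concatenation $\gamma \cdot \varphi(\gamma)$ is itself a geodesic, i.e.\ whether $\alpha_{\ell-1} \neq \varphi(\alpha_1)$. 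I plan to split on this.

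Suppose first that $\alpha_{\ell-1} = \varphi(\alpha_1)$, so $\gamma$ and $\varphi(\gamma)$ backtrack at $\varphi(\alpha)$. If $\ell \geq 2$, then the sub-path $(\alpha_1,\alpha_2,\dots,\alpha_{\ell-1})$ realises a path from $\alpha_1$ to $\varphi(\alpha_1)$ of length $\ell-2$, so $d(\alpha_1,\varphi(\alpha_1)) \leq \ell - 2 < \ell$, contradicting the minimality of $\ell$. Hence $\ell = 1$; then $\alpha_{\ell-1} = \alpha_0 = \alpha$ and $\varphi(\alpha_1) = \varphi^2(\alpha)$, which gives $\varphi^2(\alpha) = \alpha$. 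Thus $\varphi$ swaps the endpoints of the edge $\{\alpha,\varphi(\alpha)\}$, i.e.\ it transposes that edge.

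Suppose instead that $\alpha_{\ell-1} \neq \varphi(\alpha_1)$, so that $\gamma \cdot \varphi(\gamma)$ is a geodesic of length $2\ell$ from $\alpha$ to $\varphi^2(\alpha)$. Applying the same argument at $\varphi(\alpha)$ in place of $\alpha$ (which is also a minimizer, since $d(\varphi(\alpha),\varphi^2(\alpha)) = \ell$ by $\varphi$-invariance of the metric), and iterating in both directions, one obtains by induction that for every $n \in \ZZ$ the concatenation $\bigcup_{k=-n}^{n-1} \varphi^k(\gamma)$ is a geodesic path of length $2n\ell$. Taking the union gives a bi-infinite line
\[
L \coloneqq \bigcup_{n \in \ZZ} \varphi^n(\gamma)
\]
whose every finite sub-path is geodesic (hence $L$ is a geodesic line), which is $\varphi$-invariant by construction, and on which $\varphi$ acts as translation by $\ell$. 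The main obstacle is the inductive step producing $L$: one must verify that at each new junction $\varphi^k(\alpha)$ no backtracking occurs, for which the hypothesis $\alpha_{\ell-1} \neq \varphi(\alpha_1)$ translates under $\varphi^{k-1}$ to precisely the non-backtracking condition needed, so the argument reduces uniformly to the base case already handled.
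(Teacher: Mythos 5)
Your argument is correct. Note that the paper itself gives no proof of this proposition --- it is stated with a citation to Tits --- so there is nothing internal to compare against; what you have written is the classical argument (essentially Proposition 25 in Serre's \emph{Trees}): minimize the displacement $\ell(\varphi)$, and split on whether the concatenation $\gamma\cdot\varphi(\gamma)$ backtracks at $\varphi(\alpha)$. All three cases come out correctly: backtracking with $\ell\geq 2$ contradicts minimality, backtracking with $\ell=1$ forces $\varphi^2(\alpha)=\alpha$ and hence an edge transposition, and no backtracking propagates under conjugation by powers of $\varphi$ to every junction, yielding the invariant axis. The only step you leave implicit is that a non-backtracking path in a tree is automatically geodesic (and in particular has distinct vertices), which is what lets you conclude that $\bigcup_{n\in\ZZ}\varphi^n(\gamma)$ is a genuine line rather than a closed walk; this is standard and follows from the paper's definition of a tree, but it is the one place where the tree hypothesis is actually used, so it deserves a sentence.
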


Note that, in particular, there are not parabolic elements - which is clear, because there are no thick ends.

Next we look at compact subgroups.

\begin{proposition}\label{prop:cpct in AutT}
	Let $K \leq \Aut(\T_d)$ be a compact subgroup.
	Then $K$ fixes a vertex or stabilizes an edge.
	In particular, vertex stabilizers and edge stabilizers are maximal among compact subgroups and the only compact, normal subgroup of $\Aut(\T_d)$ is the trivial group.
\end{proposition}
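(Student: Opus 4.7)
The plan is to reduce the first claim to the standard \emph{bounded-orbit} fixed-point theorem for tree actions, and then to derive the two consequences from the fact that $\AutTd$ is vertex- and edge-transitive.

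First I would observe that every compact subgroup $K \leq \AutTd$ has finite orbits on $\V\Td$. Indeed, in the permutation topology each vertex stabilizer $\AutTd_\alpha$ is open, so the open cover $\{k\, \AutTd_\alpha \cap K : k \in K\}$ of $K$ has a finite subcover, and hence the orbit $K\alpha$ is finite. Next, fix any vertex $\alpha$, let $T \subset \Td$ be the convex hull (minimal subtree) of the finite set $K\alpha$, and observe that $T$ is a finite $K$-invariant subtree. The iterated leaf-removal procedure — at each step delete all degree-one vertices of the current subtree — preserves $K$-invariance and strictly shrinks any subtree with more than one edge. Hence it terminates at either a single vertex or a single edge, and that vertex/edge is fixed/stabilized by $K$. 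This proves the first sentence.

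For maximality, suppose $K$ is compact and $K \supseteq \AutTd_\alpha$. By the first part $K$ fixes some vertex $\beta$ or stabilizes some edge $\{\beta,\gamma\}$. Using that $\AutTd_\alpha$ acts transitively on each sphere $S(\alpha,n)$ around $\alpha$ (standard for the regular tree, proved by induction on $n$), one sees that $\AutTd_\alpha \subseteq \AutTd_\beta$ forces $\beta = \alpha$, hence $K = \AutTd_\alpha$; and in the edge case the same sphere-transitivity together with the fact that $|\N(\alpha)| = d \geq 2$ rules out $\AutTd_\alpha$ stabilizing a proper edge, so only $\alpha \in \{\beta,\gamma\}$ can occur, contradicting transitivity of $\AutTd_\alpha$ on $\N(\alpha)$. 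The argument for an edge stabilizer is analogous, using transitivity of the edge stabilizer on vertices at prescribed distance from the edge.

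Finally, let $N \triangleleft \AutTd$ be a compact normal subgroup; by the first part $N$ fixes a vertex or stabilizes an edge. If $N$ fixes $\alpha$, then for every $g \in \AutTd$ the conjugate $gNg^{-1} = N$ fixes $g\alpha$; by vertex-transitivity $N$ fixes every vertex and so is trivial. If $N$ stabilizes the edge $\{\alpha,\beta\}$, then by edge-transitivity of $\AutTd$ every element of $N$ stabilizes every edge. But in $\Td$ with $d \geq 2$ any automorphism stabilizing every edge must fix every vertex: if $n \in N$ satisfied $n(v) \neq v$, then for every neighbour $w$ of $v$ the edge $\{v,w\}$ is flipped, forcing $n(v) = w$ for every such $w$, which contradicts $|\N(v)| \geq 2$. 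Hence $N$ is trivial.

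The only non-routine step is the bounded-orbit theorem in the first paragraph; I expect to spend most of the write-up giving the leaf-removal argument carefully. The maximality statements and the triviality of compact normal subgroups then follow by short combinatorial arguments leveraging the transitivity properties of $\AutTd$.
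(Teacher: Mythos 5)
Your proposal is correct and is exactly the standard argument one would expect here: compactness plus openness of vertex stabilizers gives finite orbits, the centre of the convex hull of a finite orbit (via simultaneous leaf deletion, halting once at most two vertices remain) is a fixed vertex or invariant edge, and the two consequences follow from sphere-transitivity of vertex and edge stabilizers together with vertex- and edge-transitivity of $\Aut(\T_d)$ and faithfulness of its action. The paper states this proposition without proof, so there is no argument to compare against, but your write-up fills that gap correctly.
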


In general, a tdlc group might not have maximal compact subgroups; not even if $\T_d$ is a Cayley--Abels graph. Let for example $\xi \in \calE \T_d$ be an end and $G=\Aut(\T_d)_\xi$ its stabilizer. Let $(\alpha_0,\alpha_1,\dots)$ be any ray representing $\xi$.
Then $G_{\alpha_0} \leq G_{\alpha_1} \leq \dots$ is an ascending sequence of compact, open subgroups of $G$ that never stabilizes. In the next lemma we want to understand the relationship between edge and vertex stabilizers.

Note that in $\Aut(\T_d)$ not just all vertex stabilizers are conjugate, but also all edge stabilizers because the local action is transitive.

\begin{lemma}\label{lem:rel betw max cpcts in AutT}
	Let $\alpha,\beta$ be neighboring vertices in $\T_d$. 
	Let $U := G_{\alpha}$ and $V := G_{\{\alpha,\beta\}}$.
	Then
	\begin{enumerate}
		\item $U \cap V = G_{(\alpha,\beta)}$
		\item $[V : U \cap V] = 2$
		\item $[U : U \cap V] = d$
		\item The group $U \cap V$ is a maximal subgroup of both $U$ and $V$.
		\item The only compact subgroups containing $U \cap V$ are $U$, $G_\beta$ and $V$.
		\item $N_{\AutTd}(U) = U$
		\item $N_{\AutTd}(V) = V$
	\end{enumerate}
\end{lemma}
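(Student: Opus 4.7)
The plan is to work through the seven parts in order (assuming the interesting case $d \geq 3$), relying on two structural facts about $\AutTd$: first, the vertex stabilizer $G_\alpha$ acts on the set of $d$ subtrees hanging off $\alpha$ through the full symmetric group $\Sym(d)$ and can in fact independently apply any automorphism to each subtree; second, for every edge of $\T_d$ there exists an automorphism inverting it. Part 1 is immediate: any element of $V$ fixing $\alpha$ must also fix the other endpoint $\beta$ of the setwise-stabilized edge $\{\alpha,\beta\}$. Part 2 follows from the short exact sequence $1 \to U \cap V \to V \to \Sym(\{\alpha,\beta\}) \to 1$, whose right map is surjective thanks to the edge-inverting automorphism. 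Part 3 is the orbit-stabilizer theorem applied to the transitive action of $U$ on $\N(\alpha)$, whose point stabilizer at $\beta$ is $U \cap V$.

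For Part 4 the claim for $V$ is immediate from $[V : U \cap V] = 2$. In $U$, I would write $U_0 \trianglelefteq U$ for the pointwise fixator of $\N(\alpha)$, so that $U/U_0 \cong \Sym(d)$ and $(U \cap V)/U_0$ corresponds to the point stabilizer of $\beta$, i.e.\ to $\Sym(d-1) < \Sym(d)$. Since the standard action of $\Sym(d)$ is primitive, $\Sym(d-1)$ is maximal, and any proper intermediate subgroup of $U$ would descend to a proper intermediate subgroup of $\Sym(d)$, a contradiction.

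The main obstacle is Part 5. Starting from Proposition \ref{prop:cpct in AutT}, any compact $H \supseteq U \cap V$ is contained in some $G_\gamma$ or some $G_{\{\gamma,\delta\}}$. The crucial sub-claim is that the set of vertices pointwise fixed by $U \cap V$ is exactly $\{\alpha,\beta\}$, and the set of edges setwise stabilized by $U \cap V$ is exactly $\{\{\alpha,\beta\}\}$. Both follow from the rich independent action of $U \cap V = G_\alpha \cap G_\beta$ on each of the $d-1$ subtrees of $\T_d$ hanging off $\alpha$ on the side away from $\beta$ (and symmetrically off $\beta$): any vertex other than $\alpha$ or $\beta$ lies in one such subtree and can be sent to a vertex in a different subtree (using $d-1 \geq 2$), and any edge other than $\{\alpha,\beta\}$ is similarly displaced by such a subtree swap. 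Consequently $\gamma \in \{\alpha,\beta\}$ or $\{\gamma,\delta\}=\{\alpha,\beta\}$, forcing $H \leq U$, $H \leq G_\beta$, or $H \leq V$, and Part 4 finishes the classification.

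Parts 6 and 7 follow the same template. For Part 6, $gUg^{-1} = U$ rewrites as $G_{g\alpha} = G_\alpha$; the same subtree-permutation argument shows that the vertex set pointwise fixed by all of $G_\alpha$ is exactly $\{\alpha\}$, forcing $g\alpha = \alpha$ and $g \in U$. For Part 7, $gVg^{-1} = V$ rewrites as $G_{\{g\alpha,g\beta\}} = G_{\{\alpha,\beta\}}$; since $V$ contains $U \cap V$, and by the Part 5 analysis the unique edge stabilized setwise by $U \cap V$ is $\{\alpha,\beta\}$, we get $\{g\alpha,g\beta\} = \{\alpha,\beta\}$ and hence $g \in V$.
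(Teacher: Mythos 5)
The paper leaves this lemma as an exercise, so there is no proof to compare against; judged on its own, your argument is correct and complete. The two ingredients you isolate --- that $\Aut(\T_d)$ acts independently and with full symmetric local action on the subtrees hanging off a vertex, and Proposition \ref{prop:cpct in AutT} placing every compact subgroup inside a vertex or edge stabilizer --- are exactly what is needed, and the key sub-claim (the fixed-vertex set of $U\cap V$ is $\{\alpha,\beta\}$ and the only invariant edge is $\{\alpha,\beta\}$) is proved correctly via subtree swaps. Your standing assumption $d\geq 3$ is genuinely necessary for Parts 4, 5 and 7 (for $d=2$ the group $U\cap V$ is trivial, so Part 5 fails outright), and it would be worth stating explicitly that the $G_\beta$ branch of Part 5 is closed off by the $\beta$-symmetric version of Part 4; otherwise nothing is missing.
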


\begin{proof}
	Exercise.
\end{proof}

Let $\AutTd^+ \leq \Aut(\T_d)$ be the subgroup generated by all vertex stabilizers.
If we consider the natural bipartition $\V\T_d = V_1 \sqcup V_2$ on the vertices (i.e. such that vertices of $V_1$ are only adjacent to vertices of $V_2$ and vice versa), then 
it is not hard to show that $\Aut(\T_d)^+ = \Aut(\T_d)_{\{V_1\}}$ and $[\Aut(\T_d) : \Aut(\T_d)^+] = 2$.

\begin{theorem}[\cite{Tits1970}]
	The group $\AutTd^+$ is simple. It is the unique non-trivial, proper, normal subgroup of $\Aut(\T_d)$.
\end{theorem}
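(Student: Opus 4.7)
The plan is to invoke Tits' classical simplicity criterion, which applies because $\Aut(\T_d)$ satisfies the \emph{independence property} (often called Property~P). Once simplicity of $\Aut(\T_d)^+$ is in hand, uniqueness among the proper non-trivial normal subgroups of $\Aut(\T_d)$ will follow formally from the fact that $[\Aut(\T_d) : \Aut(\T_d)^+] = 2$ together with a triviality-of-centralizer argument.

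First I would verify Property~P for $\Aut(\T_d)$. Given an edge $e=\{\alpha,\beta\}$, removing $e$ splits $\T_d$ into two half-trees $T_\alpha$ and $T_\beta$ (rooted at $\alpha$ and $\beta$). Property~P is the assertion that the pointwise stabilizer of $e$ decomposes as the direct product $\Fix(T_\alpha) \times \Fix(T_\beta)$; equivalently, any automorphism of $T_\alpha$ fixing $\alpha$ extends to an automorphism of $\T_d$ acting as the identity on $T_\beta$. This is immediate from the homogeneity of $\T_d$: one simply glues the prescribed automorphism of $T_\alpha$ to the identity on $T_\beta$, and the extension lies in $\Aut(\T_d)^+$ since it fixes a vertex.

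Next, I would prove that $H \coloneqq \Aut(\T_d)^+$ is simple via Tits' commutator argument. Let $N \triangleleft H$ be non-trivial and pick $g \in N \setminus \{1\}$. A short preliminary observation, using vertex-transitivity of $H$ together with the fact that $\T_d$ has no fixed end under $H$, lets me choose an edge $e$ such that $g$ does not stabilize the half-tree $T_\beta$ cut off by $e$. Then for any $h \in \Fix(T_\alpha)$ supplied by Property~P, the commutator $[g,h]$ belongs to $N$, is non-trivial by the choice of $e$, and, upon a second application of Property~P at a suitable edge, has support confined to a single half-tree fixator. Since conjugates (under $H$) of such half-tree fixators generate $H$, we conclude $N = H$, so $H$ is simple.

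Finally, for uniqueness, let $M \triangleleft \Aut(\T_d)$ with $\{1\} \neq M \neq \Aut(\T_d)$. Since $H$ is simple and $M \cap H$ is normal in $H$, either $M \cap H = H$ or $M \cap H = \{1\}$. In the first case $M \supseteq H$, and the index-$2$ inclusion $H \leq \Aut(\T_d)$ together with $M \neq \Aut(\T_d)$ forces $M = H$. In the second case every $g \in M$ centralizes $H$, since $[g,h] \in M \cap H = \{1\}$ for all $h \in H$. But the centralizer of $H$ in $\Aut(\T_d)$ is trivial: a non-identity element $n$ would move some vertex $\alpha$, and the stabilizer $\Aut(\T_d)_\alpha$, which is contained in $H$, would then also have to fix $n\alpha \neq \alpha$; this contradicts the fact that $\Aut(\T_d)_\alpha$ fixes no vertex other than $\alpha$ (easy from transitivity of local actions, e.g.\ Lemma~\ref{lem:rel betw max cpcts in AutT}). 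Hence this case is impossible and $M = H$.

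The main obstacle is the commutator step in the simplicity proof: extracting from an arbitrary non-trivial $g \in N$ an element of $N$ whose support is confined to one side of some edge, so that Property~P can be leveraged to show that all half-tree fixators lie in $N$. This typically requires a case split on whether $g$ is elliptic or hyperbolic and a careful choice of $h$ and $e$ to guarantee that $[g,h]$ is non-trivial and properly supported.
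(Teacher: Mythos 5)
The paper gives no proof of this theorem; it is quoted directly from Tits' 1970 paper, and your outline is precisely Tits' original strategy (the independence property plus the commutator lemma), so there is no competing argument in the text to compare against. Your verification of Property~P and your uniqueness step are correct as written: the reduction via $M \cap H$, and the triviality of $C_{\Aut(\T_d)}(H)$ via the fact that a vertex stabilizer fixes no other vertex (equivalently $N_{\AutTd}(U)=U$ from Lemma~\ref{lem:rel betw max cpcts in AutT}), is exactly the standard argument. (Everything here tacitly assumes $d \geq 3$; for $d = 2$ the statement is false since $\Aut(\T_2) \cong D_\infty$, and neither you nor the paper flags this.)

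The one genuine gap is the commutator step, which you correctly identify as the main obstacle, but your description of it is slightly off target. First, to get an edge $e$ with a half-tree $T$ satisfying $gT \cap T = \emptyset$ you cannot take $e$ on the axis of a hyperbolic $g$ (there $gT$ is nested with $T$); the right choice is a \emph{branch} hanging off the axis, or more generally a half-tree meeting the geodesic from some $v$ to $gv$ only near $v$ — and one must first argue that $N$ contains such a $g$ at all, e.g.\ by producing a hyperbolic element in $N$. Second, the single commutator $[g,h]$ being supported in $T \cup gT$ does not by itself place any half-tree fixator inside $N$. The actual mechanism is the double commutator: for $h_1, h_2 \in \Fix(\T_d \setminus T)$ one computes $[[h_1,g],h_2] = [h_1,h_2]$ because $g h_1^{-1} g^{-1}$ has support disjoint from $T$ and hence commutes with $h_2$; this shows $N$ contains the \emph{derived subgroup} of every half-tree fixator. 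One then still needs the separate, non-formal fact (Tits' Lemma 4.3) that these derived subgroups generate $\AutTd^+$ — your phrase ``conjugates of such half-tree fixators generate $H$'' is true but is not what the commutator step delivers; it delivers only their commutator subgroups. With those two corrections the argument closes, and it is the proof the cited reference gives.
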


We stress the fact that all proper, open subgroups of $\Aut(\T_d)^+$ are compact and as a consequence, all compact, open subgroups in $\Aut(\T_d)$ have compact normalizers.

The following proposition about tree automorphism groups is well-known to experts, but its proof seems to be rarely presented. It relies heavily on maximal compact subgroups and their relationship with each other.

\begin{proposition}
	Let $d \neq d'$ be non-negative integers.
	Then, $\T_{d'}$ cannot be a Cayley--Abels graph for $\AutTd$, except in the case $d'=0$ and $d=1$.
\end{proposition}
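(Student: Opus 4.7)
The plan is to show that any Cayley--Abels graph of $\AutTd$ that is a regular tree forces $d'=d$, apart from the stated exception. I would first dispose of $d\in\{0,1\}$ directly: $\Aut(\T_0)$ is trivial and $\Aut(\T_1)\cong\ZZ/2$, so their only Cayley--Abels graphs are $\T_0$ (which yields exactly the exception $d=1,d'=0$) and $\T_1$ itself. So assume $d\geq 2$ throughout, set $G\coloneqq\AutTd$, and suppose $\T_{d'}$ is a Cayley--Abels graph for $G$ with vertex stabiliser $B$ and action $\phi\colon G\to\Aut(\T_{d'})$. The kernel of $\phi$ is a compact, closed, normal subgroup of $G$, hence trivial by Proposition~\ref{prop:cpct in AutT}, so $\phi$ is injective. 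Small $d'$ can then be eliminated immediately: $d'=0$ would make $G$ compact, $d'=1$ would demand a compact open index-$2$ subgroup, and $d'=2$ would require an injective continuous homomorphism from the uncountable $G$ into the countable group $D_\infty=\Aut(\T_2)$; each is impossible for $d\geq 2$ in view of the classification of closed normal subgroups of $\AutTd$. Hence we may assume $d\geq 2$ and $d'\geq 3$.

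The heart of the argument is then to show that $B$ itself is a maximal compact subgroup of $G$, i.e.\ $B\in\{G_\alpha,G_{\{\alpha,\beta\}}\}$. By Proposition~\ref{prop:cpct in AutT}, $B$ is contained in some such maximal compact $U$. The image $\phi(U)$ is compact in $\Aut(\T_{d'})$, so again by Proposition~\ref{prop:cpct in AutT} applied there, it fixes a $\T_{d'}$-vertex $w$ or stabilises a $\T_{d'}$-edge $\epsilon$; then $U$ lies in the preimage of the corresponding stabiliser, a compact open subgroup of $G$, so by maximality $U$ equals $G_w$ or $G_\epsilon$ in the $\T_{d'}$-action. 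If $U=G_w$, vertex-transitivity makes $B$ and $U$ conjugate vertex stabilisers of the $\T_{d'}$-action; nested conjugate compact opens have equal Haar measure and so coincide, giving $B=U$. If instead $U=G_\epsilon$, a Haar-measure comparison combining $\mu(G_\epsilon)=2\mu(G_w\cap G_{w'})$ with the orbit-stabiliser bound $[G_v:G_v\cap G_{v'}]\leq d'$ forces $d'\leq 2$, contradicting $d'\geq 3$. So $B$ is maximal compact.

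Two cases remain. \emph{Case A} ($B=G_\alpha$): the orbit map gives a $G$-equivariant bijection $\V\T_{d'}\leftrightarrow\V\Td$, and the neighbours of $\alpha$ in $\T_{d'}$ form a union of $G_\alpha$-orbits, each of which is a sphere $S_n(\alpha)\subseteq\V\Td$. Thus there is a non-empty $I\subseteq\NN_{>0}$ with $\beta\sim\gamma$ in $\T_{d'}$ if and only if $d_{\Td}(\beta,\gamma)\in I$. If $1,n\in I$ with $n\geq 2$, any $\Td$-geodesic of length $n$ together with the direct distance-$n$ edge produces an $(n{+}1)$-cycle, so $\T_{d'}$ is not a tree. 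If $1\notin I$, then either a parity-of-distance argument in $\Td$ yields disconnectedness (which applies for $d=2$ or whenever all elements of $I$ share a common parity), or one exhibits an explicit short cycle, for instance $(0,A_{1,1},A_2,A_{1,2})$ in $\T_3$ with $I=\{3\}$, and an analogous construction gives a $4$-cycle for every $n\geq 2$, $d\geq 3$. Hence $I=\{1\}$ and $\T_{d'}=\Td$, contradicting $d\neq d'$. \emph{Case B} ($B=G_{\{\alpha,\beta\}}$): the identification is now $\V\T_{d'}\leftrightarrow\E\Td$, and the smallest non-trivial $B$-orbit on $\E\Td$ yields the line graph $L(\Td)$, which for $d\geq 3$ contains the $K_d$-clique on the $d$ edges at any vertex of $\Td$ and is therefore not a tree; larger orbit choices only add more edges and more cycles.

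The main obstacle is the combinatorial verification in Case A that $I\neq\{1\}$ always yields a cycle or disconnection: this requires a small case split on the parities and sizes of elements of $I$. Once that is in place, the rest of the argument is driven cleanly by the maximal-compact structure of $\AutTd$ (Proposition~\ref{prop:cpct in AutT} together with Lemma~\ref{lem:rel betw max cpcts in AutT}, whose self-normalising statements 6--7 underlie the Haar-measure rigidity used above).
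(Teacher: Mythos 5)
Your strategy is genuinely different from the paper's: you try to prove that the vertex stabiliser $B$ of the $\T_{d'}$-action is itself a maximal compact subgroup of $\AutTd$ and then classify all $G$-invariant graph structures on $\V\Td$ (resp.\ $\E\Td$) combinatorially, whereas the paper never analyses $B$ directly; it applies the ``fixes a vertex or stabilises an edge'' dichotomy to the pair $U=G_\alpha$, $V=G_{\{\alpha,\beta\}}$ of $\Td$-stabilisers simultaneously and uses their mutual position (Lemma~\ref{lem:rel betw max cpcts in AutT}) to recover a tree isomorphism from the poset of compact subgroups. The combinatorial halves of your argument are essentially workable, although Case B is incomplete as stated: the neighbour set of the base vertex is an arbitrary non-empty union of $B$-orbits on $\E\Td$, and if the orbit of edges \emph{sharing a vertex} with $\{\alpha,\beta\}$ is not among them, the graph does not contain $L(\Td)$, so ``larger orbit choices only add more cycles'' does not cover all cases; you would need to exhibit cycles (or disconnectedness) for each distance class separately, as you do in Case A.

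The genuine gap is the step meant to show $B$ is maximal compact, in the sub-case $U=G_\epsilon$. From $B=G_v\le G_{\{w,w'\}}$, $\mu(G_v)=\mu(G_w)$ and $\mu(G_{\{w,w'\}})=2\mu(G_w\cap G_{w'})$ you can only conclude $[G_w:G_w\cap G_{w'}]\le 2$, i.e.\ that the $G_w$-orbit of the one neighbour $w'$ has at most two elements. This gives $d'\le 2$ only if the local action of $G$ on $\T_{d'}$ is transitive, which you have not established and which need not hold for a Cayley--Abels graph. Pushing the inequalities through, what this case actually produces is $B=G_w=G_{w'}$ with $[U:B]=2$: two adjacent vertices of $\T_{d'}$ with the same stabiliser, and $B$ an index-two normal subgroup of a maximal compact subgroup. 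That configuration is not self-contradictory on its face and must be excluded by a further argument (for instance: the $G$-orbit of the edge $\{w,w'\}$ is then an invariant perfect matching, contracting it yields another tree Cayley--Abels graph whose vertex stabiliser $N_G(B)=U$ \emph{is} maximal compact, your Cases A/B force that quotient to be $\Td$ with stabiliser $U=G_\gamma$, and the resulting two-to-one identification contradicts $[U:B]=2$). Without some such additional step the reduction to Cases A and B is not justified, so the proof as written does not go through.
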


\begin{proof}
	We assume $d,d' \geq 2$, the other cases are clear.
	
	Assume that $G \coloneqq  \AutTd$ acts vertex-transitively with compact, open vertex stabilizers on $\T_{d'}$. We have to prove that $d=d'$. Recall that $\AutTd$ does not have any compact, normal subgroups other than the trivial subgroup, so the action is faithful and we can say $G \leq \Aut(\T_{d'})$. Let $U \leq G$ be a vertex stabilizer and $V \leq G$ the stabilizer of an adjacent edge in $\T_d$.
	Recall that $U$ and $V$ are maximal among compact subgroups.
	
	\emph{Claim:} Both $V$ and $U$ are equal to the stabilizer in $G$ of a vertex or an edge in $\T_{d'}$, but not an arc.
	
	The first part follows directly from Proposition \ref{prop:cpct in AutT}.
	We show the second part for $U$; the proof for $V$ is verbatim.
	Assume by contradiction that $U \leq G_{(\alpha,\beta)}$ for some arc $(\alpha,\beta)$. Maximality implies $U = G_\alpha = G_\beta$. There is $g \in G$ with $g \alpha = \beta$. But then $U = G_\beta = gG_\alpha g^{-1}$ and $g \in N_G(U)$. Lemma \ref{lem:rel betw max cpcts in AutT} implies $g \in U$, contradiction.
	This finished the proof of the claim.
	
%
%
%
	
	Since $U$ and $V$ are not conjugate, they cannot both fix a vertex in $\T_{d'}$.
	This leaves three cases.
	
	\emph{Case 1:} Both $U$ and $V$ stabilize an edge.
	
	Let $V = G_{\{\alpha,\beta\}}$ and $U = G_{\{\gamma,\delta\}}$; assume that $\alpha$ is closer to $\gamma$ and $\delta$ than $\beta$.
	The group $G_{\alpha}$ clearly satisfies $V \cap G_{\alpha} \geq V \cap U$ and in particular $G_\alpha \geq V \cap U$ and therefore, since $U$ does not fix a vertex by assumption, we get $G_\alpha = U \cap V$ by Lemma \ref{lem:rel betw max cpcts in AutT}.
	Then $G_\alpha$ fixes both $(\alpha,\beta)$ and $(\gamma,\delta)$.
	Maximality gives $G_\alpha = G_\beta = G_\gamma = G_\delta$.
	Propagating the argument we see that there are infintely many vertices $\alpha_1,\alpha_2,\dots$ with $G_\alpha = G_{\alpha_i}$. But every $g_i \in G$ with $g_i \alpha = \alpha_i$ is contained in the normalizer of $G_\alpha$; contradiction to the fact that $G_\alpha$ must have a compact normalizer.	
	
	\emph{Case 2:} The group $U$ stabilizes an edge and the group $V$ stabilizes a vertex.
	
	Let $\alpha,\beta \in \V\T_{d'}$ such that $U=G_{\{\alpha,\beta\}}$.
	Then $U \cap V$ is contained in $G_\beta$ and $G_\gamma$, which are two different conjugates of $V$. This is a contradiction to Lemma~\ref{lem:rel betw max cpcts in AutT}.
	
	\emph{Case 3:} The group $U$ stabilizes a vertex and the group $V$ stabilizes an edge.
	
	Since, in $\T_d$ and in $\T_{d'}$, the subgroup $U$ stabilizes a unique vertex, the map
	$\alpha \mapsto G_\alpha$ defines a bijection $\V \T_{d'} \to \{gUg^{-1} \mid g \in G\}$
	and $\V \T_{d} \to \{gUg^{-1} \mid g \in G\}$.
	In particular, we get a bijection $\phi \colon \V\T_d \to \V\T_{d'}$ satisfying $G_\alpha = G_{\phi(\alpha)}$ for all $\alpha \in \V\T_d$.
	To show that this bijection is a tree isomorphism, it is enough to show that it is a graph morphism.
	Now the key observation is that neighbors can be detected via inclusion.
	Namely, 
	if $\{\alpha,\beta\}$ is an edge in $\T_d$ then there is no $\gamma \in \V\T_d$ with $G_\gamma \leq G_\alpha \cap G_\beta$. But then there is no $\gamma' \in \V\T_{d'}$
	with $G_{\gamma'} \leq G_{\phi(\alpha)} \cap G_{\phi(\beta)}$ and thus $\{\phi(\alpha),\phi(\beta)\}$ is an edge in $\T_{d'}$.	
	This implies that $\phi$ is a graph morphism and the proof is complete..
%
%
%
%
%
\end{proof}

\section{Lifting to trees}\label{sect:trees}

In this section, we take a slightly different look at graphs: We consider them as topological realizations of one-dimensional cell complexes. In particular, they are metric spaces, and every edge is isometric to the unit interval. The automorphism group of a graph is then the same as its isometry group.

\begin{definition}
	Let $X,Y$ be metric spaces. A \emph{local isometry} is a continuous, open map $f \colon X \to Y$ such that every $x \in X$ has a neighbourhood $B \subset X$ such that $f|_B \colon B \to f(B)$ is an isometry.
\end{definition}

With this definition, a graph morphism $f \colon \Gamma \to \Delta$ is a local isometry if and only if for every vertex $\alpha \in \V\Gamma$, the restriction $f|_{\N(\alpha)}$ is a bijection $\N(\alpha) \to \N(f(\alpha))$. In that case, if $\Gamma$ and $\Delta$ are in addition connected, $f$ is already a covering map. Clearly $\Gamma$ is a tree if and only if it is simply connected and then a local isometry is automatically a tree automorphism.

We will need the well-known theorem about lifting maps to the universal covering.

\begin{theorem}
	Let $(X,x_0)$ and $(Y,y_0)$ be path connected, locally path connected metric spaces with a base point.
	Let $\pi \colon (\tilde X, \tilde x_0) \to (X,x_0)$ be a universal covering such that $\pi$ is a local isometry.
	Let $f \colon (Y,y_0) \to (X,x_0)$ be a continuous local isometry.
	
	Then, there exists a unique local isometry $\tilde f \colon (Y,y_0) \to (\tilde X, \tilde x_0)$ with $\pi \circ \tilde{f} = f $ if and only if $f_*(\pi_1(Y,y_0)) = \{1\}$.
	
	If $f$ is a covering map, then $\tilde f$ is a covering map.
		
	\begin{equation*}
	\xymatrix{
		 & (\tilde X, \tilde x_0) \ar[d]^{p} \\
		(Y,y_0) \ar[r]^{f}\ar@{-->}[ur]^{\tilde{f}}  & (X,x_0)
	}
	\end{equation*}
\end{theorem}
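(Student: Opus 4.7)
The plan is to reduce this to the classical lifting criterion from covering space theory and then upgrade the resulting continuous lift to a local isometry, using that both $\pi$ and $f$ are local isometries.

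First I would dispose of the ``only if'' direction and uniqueness, which are formal. If a lift $\tilde f$ exists, then $f_* = \pi_* \circ \tilde f_*$, and since $\pi$ is a universal covering, $\pi_1(\tilde X,\tilde x_0) = \{1\}$, so $f_*(\pi_1(Y,y_0)) = \{1\}$. Uniqueness of the lift with prescribed value $\tilde f(y_0) = \tilde x_0$ follows from the standard argument that the set of points where two lifts agree is clopen in $Y$, which is connected.

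For existence, I would invoke the classical lifting criterion for covering maps: given $\pi \colon \tilde X \to X$ a covering and $f \colon Y \to X$ a continuous map from a path-connected, locally path-connected space, a continuous lift $\tilde f$ with $\tilde f(y_0) = \tilde x_0$ exists if and only if $f_*(\pi_1(Y,y_0)) \subseteq \pi_*(\pi_1(\tilde X,\tilde x_0))$. Since $\pi$ is the universal covering, the right-hand side is trivial, so the hypothesis $f_*(\pi_1(Y,y_0)) = \{1\}$ is exactly what is needed. Here I use that $\pi$ being a local isometry implies in particular that it is a covering map (each $x \in X$ has a neighbourhood $B$ on which $\pi$ is a local homeomorphism in the strong sense required).

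Next, I would promote the continuous lift $\tilde f$ to a local isometry. Fix $y \in Y$. Choose a neighbourhood $B_Y$ of $y$ on which $f$ is an isometry onto its image and a neighbourhood $\tilde B$ of $\tilde f(y)$ on which $\pi$ is an isometry onto $\pi(\tilde B)$. Shrinking $B_Y$ if necessary (using continuity of $\tilde f$) so that $\tilde f(B_Y) \subseteq \tilde B$, the equation $\pi \circ \tilde f|_{B_Y} = f|_{B_Y}$ expresses $\tilde f|_{B_Y}$ as the composition $(\pi|_{\tilde B})^{-1} \circ f|_{B_Y}$ of two isometries, hence an isometry. This also shows $\tilde f$ is open, since it is locally the composition of two open maps.

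Finally, suppose $f$ is a covering map; I claim $\tilde f$ is a covering. Pick $\tilde x \in \tilde X$ and set $x = \pi(\tilde x)$. Choose a connected open neighbourhood $U$ of $x$ that is evenly covered by both $\pi$ and $f$, writing $\pi^{-1}(U) = \bigsqcup_j \tilde U_j$ and $f^{-1}(U) = \bigsqcup_i V_i$ with all sheets connected and each restriction a homeomorphism onto $U$. Let $\tilde U_{k}$ be the sheet containing $\tilde x$. For each $V_i$, connectedness of $V_i$ forces $\tilde f(V_i)$ to lie in a single sheet $\tilde U_{j(i)}$, and then $\tilde f|_{V_i} = (\pi|_{\tilde U_{j(i)}})^{-1} \circ f|_{V_i}$ is a homeomorphism $V_i \to \tilde U_{j(i)}$. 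Hence $\tilde f^{-1}(\tilde U_k) = \bigsqcup_{i \colon j(i)=k} V_i$ exhibits $\tilde U_k$ as evenly covered by $\tilde f$. The main subtlety, and the place where I would be most careful, is making sure the neighbourhood $U$ can indeed be chosen small enough to be simultaneously evenly covered by both $\pi$ and $f$ and that each sheet $V_i$ is connected, which is where local path-connectedness of $X$ (inherited from $Y$ via the covering) is used.
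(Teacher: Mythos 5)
The paper states this theorem without proof, citing it as the well-known lifting theorem from covering space theory, so there is no in-paper argument to compare against; your proposal supplies the standard textbook proof, and it is essentially correct. Two small points deserve attention. First, your parenthetical claim that ``$\pi$ being a local isometry implies in particular that it is a covering map'' is false in general (the inclusion of a proper open subset is a local isometry but not a covering); this does no harm here only because the hypothesis already assumes $\pi$ is a universal covering, so you should simply drop that remark and use the hypothesis directly. Second, in the final step you verify that every point of $\tilde X$ has an evenly covered neighbourhood for $\tilde f$, but a covering map must also be surjective, and your index set $\{i \mid j(i)=k\}$ could a priori be empty for some sheet $\tilde U_k$. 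This is repaired by the standard connectedness argument: your analysis shows that the image of $\tilde f$ meets each sheet $\tilde U_k$ either trivially or in all of $\tilde U_k$, so the image is open and closed in the connected space $\tilde X$ and nonempty, hence everything. With these two touch-ups the argument is complete.
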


Noting that the universal covering of a graph is a tree, we get the following corollary.

\begin{corollary}\label{cor:universalcover}
	Let $\Gamma$ be a locally finite graph, $\alpha \in \V\Gamma$ and $g \in \Aut(\Gamma)$.
	Let $\T$ be a tree, $\tilde \alpha \in \V\T$ and $\pi \colon \T \to\Gamma$ be a universal covering map with $\pi(\tilde \alpha)=\alpha$.
	Choose $\tilde \beta \in \V\T$ with $\pi(\tilde{\beta})= g^{-1}(\alpha)$.
	
	Then, there exists a unique tree automorphism $\overline{g} = \widetilde{g \circ \pi} \in \Aut(\T)$ with $\overline{g}(\tilde \beta)=\tilde \alpha$ such that $ \pi \circ \overline{g}=g \circ \pi $.
	
	\begin{equation*}
	   \xymatrix{
	   (\T,\tilde{\beta}) \ar@{-->}[r]^{\overline{g}} \ar[d]^{\pi} & (\T,\tilde{\alpha}) \ar[d]^{\pi} \\
	   (\Gamma,g^{-1}(\alpha)) \ar[r]^{g} & (\Gamma,\alpha)
		}
	\end{equation*}
\end{corollary}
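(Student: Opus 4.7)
The plan is to apply the lifting theorem (stated just above) to the composite map $f \coloneqq g \circ \pi \colon (\T, \tilde\beta) \to (\Gamma, \alpha)$, with the covering $\pi \colon (\T, \tilde\alpha) \to (\Gamma, \alpha)$ playing the role of the universal cover on the right side of the diagram. This immediately makes the whole statement fall out as a near-direct corollary, modulo verifying the hypotheses and then promoting the resulting local isometry to an actual tree automorphism.

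First I would check that $f$ satisfies the hypotheses of the lifting theorem. The graph automorphism $g$ is an isometry in the CW-realisation, and $\pi$ is by hypothesis a local isometry, so their composite $f = g \circ \pi$ is a continuous local isometry. Since $g$ is bijective and $\pi$ is a covering, $f$ is in fact itself a covering map. The basepoint condition is the computation $f(\tilde\beta) = g(\pi(\tilde\beta)) = g(g^{-1}(\alpha)) = \alpha$. Finally, the obstruction $f_*(\pi_1(\T, \tilde\beta))$ is trivial because $\T$ is a tree, hence simply connected. The lifting theorem then produces a unique local isometry $\overline{g} \colon (\T, \tilde\beta) \to (\T, \tilde\alpha)$ with $\pi \circ \overline{g} = f = g \circ \pi$, which by the last clause of the theorem is itself a covering map.

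It remains to argue that $\overline{g}$ is a tree automorphism, not merely a local isometry. Since $\overline{g}$ is a covering map of $\T$ and the target tree $\T$ is simply connected, $\overline{g}$ must be a homeomorphism (any connected covering of a simply connected space is trivial). Alternatively, I would apply the same lifting construction to $g^{-1}$, with basepoints swapped, to obtain a local isometry $h \colon (\T, \tilde\alpha) \to (\T, \tilde\beta)$ satisfying $\pi \circ h = g^{-1} \circ \pi$. Then $h \circ \overline{g}$ is a lift of $\pi$ sending $\tilde\beta$ to $\tilde\beta$, and so is $\mathrm{id}_\T$; uniqueness in the lifting theorem forces $h \circ \overline{g} = \mathrm{id}_\T$, and symmetrically $\overline{g} \circ h = \mathrm{id}_\T$. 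Uniqueness of $\overline{g}$ itself is likewise immediate from the uniqueness clause of the lifting theorem, applied to the basepoint data $\tilde\beta \mapsto \tilde\alpha$.

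There is essentially no substantial obstacle here: the content of the corollary is that the lifting theorem applies in this geometric setting, and the only mild subtlety is the verification that the lift is a bijection rather than a proper covering of $\T$ by itself. This is handled either by covering-space generalities (a connected covering of a simply connected space is a homeomorphism) or by the symmetric lift-of-inverse argument sketched above.
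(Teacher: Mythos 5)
Your proof is correct and takes essentially the same route as the paper, which derives the corollary directly from the lifting theorem applied to $g\circ\pi$ after noting that the universal cover of a graph is a tree (so the $\pi_1$-obstruction vanishes and, as observed just before the theorem, a local isometry between trees is automatically an automorphism). Your extra care in verifying that the lift is a genuine bijection, via either covering-space generalities or the lift-of-$g^{-1}$ argument, fills in exactly the detail the paper leaves implicit.
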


In the situation of the above commutative diagram, we say that $\overline{g}$ \emph{covers} $g$.
We see that it is possible to construct a tree automorphism covering any given graph automorphism, but it is important to emphasize that choices of base points were involved.
As a consequence, there is, in general, no group homomorphism of the form $G \to  \widetilde{G},\, g \to \overline{g}$, as is easily seen in the following example.

\begin{example}
	Let $\Gamma$ be a triangle and $G = \mathbb{Z}/3\mathbb{Z}$.
	Then $\T$ is the bi-infinite line and $\AutT$ does not have any elements of order $3$.
	Thus there cannot be a group homomorphism of the form $G \to \AutT,\, g \mapsto \overline{g}$.
\end{example}

\begin{definition}\label{def:tildeG}
	Let $(\Gamma,\alpha)$ be a graph and let $\pi \colon  (\T, \tilde{\alpha}) \to (\Gamma,\alpha)$ be a universal covering of $\Gamma$. Let $G \leq \Aut(\Gamma)$.
	Then
	\[
	\widetilde{G} \coloneqq  \{ \overline{g} \in \Aut(\T) \mid \exists g \in G \colon  \pi \circ \overline{g} = g \circ \pi \}.
	\]
\end{definition}

\begin{lemma}
	With the notation from Definition \ref{def:tildeG}, the group $\widetilde{G}$ is a subgroup of $\AutT$. The map $\Phi \colon \widetilde{G} \to G, \, \overline{g} \mapsto g$ is a well-defined group homomorphism and its kernel is the set of deck transformations.
%
\end{lemma}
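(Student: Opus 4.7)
The plan is to verify each claim in a short, direct computation using the defining relation $\pi \circ \overline{g} = g \circ \pi$.

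First I would show that $\widetilde{G}$ is a subgroup of $\Aut(\T)$. The identity is in $\widetilde{G}$ since $\pi \circ \mathrm{id}_{\T} = \mathrm{id}_\Gamma \circ \pi$ and $\mathrm{id}_\Gamma \in G$. For closure under composition, suppose $\overline{g}_1, \overline{g}_2 \in \widetilde{G}$ with $\pi \circ \overline{g}_i = g_i \circ \pi$ where $g_i \in G$. Then
\[
\pi \circ (\overline{g}_1 \circ \overline{g}_2) = g_1 \circ \pi \circ \overline{g}_2 = g_1 \circ g_2 \circ \pi,
\]
and $g_1 g_2 \in G$, so $\overline{g}_1 \circ \overline{g}_2 \in \widetilde{G}$. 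For inverses, from $\pi \circ \overline{g} = g \circ \pi$ we obtain $\pi = g \circ \pi \circ \overline{g}^{-1}$, hence $\pi \circ \overline{g}^{-1} = g^{-1} \circ \pi$, placing $\overline{g}^{-1}$ in $\widetilde{G}$.

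Next I would check that $\Phi$ is well-defined, meaning that the element $g \in G$ satisfying $\pi \circ \overline{g} = g \circ \pi$ is uniquely determined by $\overline{g}$. Since $\pi \colon \T \to \Gamma$ is surjective, any two graph automorphisms of $\Gamma$ that agree on the image of $\pi$ agree everywhere; this forces uniqueness. That $\Phi$ is a homomorphism is precisely the calculation already made: $\Phi(\overline{g}_1 \circ \overline{g}_2) = g_1 g_2 = \Phi(\overline{g}_1)\Phi(\overline{g}_2)$.

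Finally, for the kernel: $\overline{g} \in \ker \Phi$ means $\Phi(\overline{g}) = \mathrm{id}_\Gamma$, equivalently $\pi \circ \overline{g} = \pi$. By definition, this is exactly the condition that $\overline{g}$ is a deck transformation of the covering $\pi \colon \T \to \Gamma$. Hence $\ker \Phi$ is the group of deck transformations, completing the proof. The only conceptual point worth stressing, and really the only potential pitfall, is the well-definedness of $\Phi$, which ultimately rests on surjectivity of $\pi$; everything else is a one-line diagram chase.
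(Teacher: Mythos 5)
Your proof is correct and follows essentially the same route as the paper's: the same one-line diagram chases for closure under composition and inverses, well-definedness of $\Phi$ via surjectivity of $\pi$, and identification of the kernel directly from the definition of deck transformations. Nothing to add.
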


\begin{proof}
	Clearly the identity map covers the identity map. Also, for every $\overline{g} \in \widetilde{G}$, if $g \circ \pi = \pi \circ \overline{g}$ then $g^{-1} \circ \pi = \pi \circ \overline{g}^{-1}$, so $\overline{g}^{-1}$ covers $g^{-1}$ and $\overline{g}^{-1} \in \widetilde{G}$.
	Let $\overline{g}, \overline{h} \in \widetilde{G}$.
	There exist $g,h \in G$ such that $\pi \circ g = \overline{g} \circ \pi$ and $\pi \circ h = \overline{h} \circ \pi$. Then $\pi \circ h \circ g = \overline{h} \circ \pi \circ g = \overline{h} \circ \overline{g} \circ \pi$.
%
%
	In particular $\overline{h} \circ \overline{g}$ covers $hg$, therefore $\overline{h} \circ \overline{g} \in \widetilde{G}$. 
	
	Next we show that the map $\Phi$ is well-defined. Assume that there exist $g,g' \in G$ and $\overline{g} \in \widetilde{G}$ with $\pi \overline{g} = g \pi = g' \pi$.
	It is surjectivity of $\pi$ that implies $g = g'$.
	
	We showed above that $\overline{h} \circ \overline{g}$ covers $hg$. This proves that $\Phi$ is a group homomorphism. The kernel of $\Phi$ is the set of all $\overline{1} \in \widetilde{G}$ such that $\pi \circ \overline{1} = \pi$. This is by definition the set of deck transformations.
\end{proof}

Recall the following fact about deck transformations.

\begin{fact}
	Let $(X,x_0)$ be a path connected, locally path connected topological space and $\pi \colon (\tilde{X},\tilde{x_0}) \to (X,x_0)$ its universal covering. Then, the group of deck transformations is isomorphic to the fundamental group $\pi_1(X,x_0)$.
\end{fact}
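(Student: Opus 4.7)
The plan is to construct an explicit isomorphism $\Phi \colon \pi_1(X,x_0) \to \operatorname{Deck}(\pi)$, where $\operatorname{Deck}(\pi)$ denotes the group of deck transformations of $\pi \colon (\tilde X, \tilde x_0) \to (X,x_0)$. The entire argument rests on the unique path-lifting property of coverings and the fact that a covering map of a simply connected, locally path-connected space is uniquely determined by where it sends a single point.

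First I would define $\Phi$ on representatives: given a loop $\gamma$ at $x_0$, lift it uniquely to a path $\tilde\gamma$ in $\tilde X$ starting at $\tilde x_0$, and let $\tilde x_\gamma \coloneqq \tilde\gamma(1)$, which lies in the fiber $\pi^{-1}(x_0)$. Since $\tilde X$ is path connected and simply connected, there exists a unique covering automorphism $\varphi_\gamma \in \operatorname{Deck}(\pi)$ with $\varphi_\gamma(\tilde x_0)=\tilde x_\gamma$; set $\Phi([\gamma]) \coloneqq \varphi_\gamma$. Well-definedness on homotopy classes is the homotopy lifting property: a homotopy of loops lifts to a homotopy of their lifts, which is constant on endpoints because the fiber is discrete.

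Next I would verify that $\Phi$ is a group homomorphism. Given loops $\gamma,\delta$, the concatenation $\gamma * \delta$ lifts to $\tilde\gamma$ followed by the unique lift of $\delta$ starting at $\tilde x_\gamma$. The latter lift equals $\varphi_\gamma \circ \tilde\delta$ by uniqueness, since $\varphi_\gamma \circ \tilde\delta$ starts at $\varphi_\gamma(\tilde x_0) = \tilde x_\gamma$ and projects to $\delta$. Therefore $\widetilde{\gamma * \delta}(1) = \varphi_\gamma(\tilde x_\delta) = \varphi_\gamma(\varphi_\delta(\tilde x_0))$, so $\Phi([\gamma * \delta]) = \varphi_\gamma \circ \varphi_\delta$ by the uniqueness-at-a-point property of deck transformations.

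For injectivity, if $\Phi([\gamma])=\mathrm{id}$, the lift $\tilde\gamma$ is a loop in $\tilde X$, which is nullhomotopic by simple connectedness; projecting the nullhomotopy shows $[\gamma]=1$. For surjectivity, given $\varphi \in \operatorname{Deck}(\pi)$, pick any path in $\tilde X$ from $\tilde x_0$ to $\varphi(\tilde x_0)$ and project it to a loop $\gamma$ at $x_0$; then $\Phi([\gamma])=\varphi$ by construction. The only subtle point is carefully matching the order of concatenation with the order of composition in the homomorphism step; the standard convention requires reading $\gamma * \delta$ as ``first $\gamma$, then $\delta$'', and then the identity $\Phi([\gamma*\delta]) = \Phi([\gamma]) \circ \Phi([\delta])$ comes out correctly as shown above. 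No hard analysis is required; the proof is a direct application of the covering space machinery, which is why the statement is invoked as a fact.
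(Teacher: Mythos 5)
Your proof is correct and is the standard covering-space argument (lift loops to define the map into the deck group, homotopy lifting for well-definedness, unique path lifting for the homomorphism property, simple connectedness of $\tilde X$ for injectivity, path connectedness for surjectivity); the convention check at the end is handled correctly. The paper states this as a recalled \emph{Fact} and supplies no proof of its own, so there is nothing to compare against; the only point worth flagging is that the existence of the deck transformation $\varphi_\gamma$ with $\varphi_\gamma(\tilde x_0)=\tilde x_\gamma$ uses the lifting criterion, which is where the local path connectedness hypothesis (inherited by $\tilde X$ from $X$) is actually needed, not just path connectedness and simple connectedness of $\tilde X$.
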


We apply this to automorphism groups of graphs. This gives a short exact sequence
\[
\{1\} \to \pi_{1}(\Gamma,\alpha) \to \widetilde{G} \to G \to \{1\}.
\]
Recall that the fundamental group of a locally finite graph is isomorphic to a free group on finitely or countably many generators. Also recall that the group of deck transformations acts freely on the points of the covering space.

\begin{lemma}
	Let $(\Gamma,\alpha)$ be a locally finite graph and $\pi \colon (\T,\tilde{\alpha}) \to (\Gamma,\alpha)$ its universal covering. Assume that $G \leq \Aut(\Gamma)$ is vertex-transitive.
	\begin{enumerate}
		\item The subgroup $\widetilde{G} \leq \Aut(\T)$ is vertex-transitive.
		\item The subgroup $\widetilde{G} \leq \Aut(\T)$ has the same local action as $G \leq \Aut(\Gamma)$.
	\end{enumerate}
\end{lemma}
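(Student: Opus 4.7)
The plan is to deduce both parts directly from Corollary \ref{cor:universalcover}, using only the fact that $\pi$ is a covering map that restricts to a bijection between the neighbours of a vertex in $\T$ and the neighbours of its image in $\Gamma$.

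For vertex-transitivity of $\widetilde{G}$, I would take arbitrary $\tilde{\beta}, \tilde{\gamma} \in \V\T$ and set $\beta \coloneqq \pi(\tilde{\beta})$, $\gamma \coloneqq \pi(\tilde{\gamma})$. Since $G$ is vertex-transitive on $\Gamma$, there is some $g \in G$ with $g(\beta) = \gamma$. Applying Corollary \ref{cor:universalcover} with the data $\alpha \coloneqq \gamma$, $\tilde{\alpha} \coloneqq \tilde{\gamma}$ (note that the hypothesis $\pi(\tilde{\beta}) = g^{-1}(\alpha) = \beta$ is automatic), produces a lift $\overline{g} \in \widetilde{G}$ with $\overline{g}(\tilde{\beta}) = \tilde{\gamma}$. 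This proves part 1.

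For part 2, fix $\tilde{\alpha} \in \V\T$ and set $\alpha \coloneqq \pi(\tilde{\alpha})$. Because $\pi$ is a covering and a local isometry, the restriction $\pi|_{\N(\tilde{\alpha})} \colon \N(\tilde{\alpha}) \to \N(\alpha)$ is a bijection; I will use this bijection to identify the two symmetric groups. First I would show that the group homomorphism $\Phi \colon \widetilde{G} \to G$ restricts to a surjection $\widetilde{G}_{\tilde{\alpha}} \to G_\alpha$. Containment in $G_\alpha$ is immediate from the intertwining relation $\pi \circ \overline{g} = g \circ \pi$ applied at $\tilde{\alpha}$. For surjectivity, given $g \in G_\alpha$, Corollary \ref{cor:universalcover} applied with $\tilde{\beta} \coloneqq \tilde{\alpha}$ produces a lift $\overline{g}$ fixing $\tilde{\alpha}$, i.e.\ lying in $\widetilde{G}_{\tilde{\alpha}}$.

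Finally, for any $\overline{g} \in \widetilde{G}_{\tilde{\alpha}}$ and any $\tilde{\delta} \in \N(\tilde{\alpha})$, the intertwining identity gives $\pi(\overline{g}(\tilde{\delta})) = g(\pi(\tilde{\delta}))$, so the bijection $\pi|_{\N(\tilde{\alpha})}$ conjugates the permutation of $\N(\tilde{\alpha})$ induced by $\overline{g}$ into the permutation of $\N(\alpha)$ induced by $g = \Phi(\overline{g})$. Combined with the surjectivity of $\Phi|_{\widetilde{G}_{\tilde{\alpha}}}$, this yields equality of the two images in $\Sym(d)$ after the identification, proving that $\widetilde{G}$ and $G$ have the same local action. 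I do not anticipate a real obstacle: the whole argument is essentially bookkeeping around the lifting corollary and the fact that $\pi$ is a local bijection on $1$-balls.
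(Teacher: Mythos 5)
Your proposal is correct and follows essentially the same route as the paper: lift group elements via Corollary \ref{cor:universalcover} to get vertex-transitivity, then identify $\N(\tilde{\alpha})$ with $\N(\alpha)$ via $\pi$ and use surjectivity of $\Phi|_{\widetilde{G}_{\tilde{\alpha}}} \colon \widetilde{G}_{\tilde{\alpha}} \to G_\alpha$ to match the local actions. The only (immaterial) difference is that the paper additionally observes, via freeness of the deck transformation action, that this restriction of $\Phi$ is an isomorphism, whereas you correctly note that surjectivity already suffices.
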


\begin{proof}
	We first prove 1. Let $\tilde{\beta} \in \V\T$. By assumption there exists $g \in G$ with $g(\alpha)=\pi(\tilde{\beta})$. By Corollary \ref{cor:universalcover} there exists $\overline{g} \in \widetilde{G}$ such that $\overline{g}(\tilde{\beta})=\tilde{\alpha}$.
	
	Now we prove 2. Since we are assuming that $\Gamma$ is simple, the covering map $\pi$ restricts to a bijection $\pi \colon {\tilde{\alpha}} \cup \N(\tilde{\alpha}) \to {\alpha} \cup \N(\alpha)$.
	Clearly $\Phi(\widetilde{G}_{\tilde{\alpha}}) = G_\alpha$.
	Since the set of deck transformations acts freely on the vertices, $\ker(\Phi) \cap \widetilde{G}_{\tilde{\alpha}} = \{1\}$ and $\Phi$ even induces an isomorphism $\widetilde{G}_{\tilde{\alpha}} \to G_\alpha$ and the actions of this group on $\N(\alpha)$ and $\N(\tilde{\alpha})$, respectively, are conjugate via $\pi$. This shows that the local actions are the same.
\end{proof}

Conclusively, we get the following theorem.

	\begin{theorem}
	Let $G$ be a cgtdlc group with Cayley--Abels graph $\Gamma$.
	Let $K$ be the kernel of the action of $G$ on $\Gamma$.
	Let $\T$ be a regular tree with the same valency as $\Gamma$.
	Then, there exists a vertex-transitive, closed subgroup $H \leq \Aut(\T)$ and an embedding $\pi_1(\Gamma) \hookrightarrow H$ such that
	\begin{enumerate}
		\item $G/K \cong H/\pi_1(\Gamma)$,
		\item $G/K$ and $H$ are locally isomorphic, and
		\item $G/K$ and $H$ have the same local actions on $\Gamma$ respectively $\T$.
	\end{enumerate}
\end{theorem}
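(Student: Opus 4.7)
The strategy is to apply the $\widetilde{\cdot}$-construction of Definition~\ref{def:tildeG} to $G/K$, which we view as a closed subgroup of $\Aut(\Gamma)$ with compact open vertex stabilizers. Note that $K = \bigcap_{\alpha \in \V\Gamma} G_\alpha$ is compact normal, so $G/K$ is again cgtdlc and acts faithfully on $\Gamma$. Since $\Gamma$ is connected and regular of the same valency as $\T$, the universal covering of $\Gamma$ is exactly $\T$, and we fix a covering map $\pi \colon \T \to \Gamma$. Define
\[
H := \widetilde{G/K} \leq \Aut(\T),
\]
with $\Phi \colon H \to G/K$ the projection from the preceding lemma.

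With this choice, properties (1) and (3) follow essentially for free from the preceding results. The short exact sequence
\[
1 \to \pi_1(\Gamma) \to H \xrightarrow{\Phi} G/K \to 1
\]
already established yields both the embedding $\pi_1(\Gamma) \hookrightarrow H$ as the group of deck transformations and the isomorphism $G/K \cong H/\pi_1(\Gamma)$ (property (1)). The lemma immediately preceding this theorem provides vertex-transitivity of $H$ on $\T$ and identifies the local actions of $H$ and $G/K$, giving (3).

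For property (2), fix a vertex $\tilde\alpha \in \V\T$ and set $\alpha := \pi(\tilde\alpha)$. Because deck transformations act freely on $\V\T$, one has $H_{\tilde\alpha} \cap \pi_1(\Gamma) = \{1\}$, so $\Phi$ restricts to an injection $H_{\tilde\alpha} \to (G/K)_\alpha$. Surjectivity of this restriction is exactly Corollary~\ref{cor:universalcover} (taking the base-point lift of $g^{-1}\alpha$ to be $\tilde\alpha$ for any $g \in (G/K)_\alpha$). Since both sides are compact open in their ambient tdlc groups, this restriction is the desired isomorphism of compact open subgroups, i.e.\ a local isomorphism.

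The main obstacle is the topological bookkeeping: one must check that $H$ is closed in $\Aut(\T)$ (so that $H$ is tdlc in the subspace topology) and that $\Phi$ is continuous and open with discrete kernel, in order for the isomorphisms above to be topological. Discreteness of $\pi_1(\Gamma) \leq H$ is immediate from vertex-freeness of the deck action. For closedness of $H$, the plan is to observe that an element $\overline{g} \in \Aut(\T)$ lies in $H$ if and only if $\overline{g}$ permutes the fibres of $\pi$ and the induced bijection on $\V\Gamma$ belongs to the closed subgroup $G/K \leq \Aut(\Gamma)$; both requirements are closed conditions in the permutation topology on $\Aut(\T)$, from which closedness of $H$ and the expected compatibility of topologies follow.
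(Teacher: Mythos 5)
Your proposal is correct and follows essentially the same route as the paper, which derives this theorem directly from the preceding results (Corollary~\ref{cor:universalcover}, the lemma on $\widetilde{G}$ and $\Phi$ with kernel the deck transformations, and the lemma on vertex-transitivity and local actions of $\widetilde{G}$) applied to $G/K \leq \Aut(\Gamma)$. Your extra paragraph verifying that $H$ is closed in $\Aut(\T)$ via the fibre-permutation characterisation is a welcome addition, since the paper asserts closedness in the statement but leaves that check implicit.
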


\section{Similarities to Cayley graphs}
\label{sect:fg}

We will not give many proofs in this section, mostly because they are too long or resemble the finitely generated case too much.
Most of this material can be found, in the more general setting of locally compact groups, in the (highly recommended) book by Cornulier--de la Harpe \cite{CornulierDeLaHarpe2016}.

\subsection{Compact presentation}

Here we closely follow Sections 2.3-2.5 in \cite{dlSTessera19}.

Compact presentability is a non-discrete analogue of finite presentability for discrete groups. Recall that a \emph{presentation} for a group $G$ is a set $S$ together with a subset $R \subset F_S$ such that $G$ is isomorphic to $F_S/\lpres R \rpres$,
where $F_S$ is the free group with basis $S$ and $\lpres R \rpres$ is the smallest normal subgroup of $F_S$ containing $R$.
We write $G = \langle S| R \rangle$ and call $S$ the \emph{generators} and $R$ the \emph{relators}.

\begin{definition}
	A locally compact group $G$ is called \emph{compactly presented} if
	$G$ admits a presentation $G = \langle K|R \rangle$ such that $K \subset G$ is compact and $R$, viewed as subset of the vertex set of the Cayley graph of $F_K$ with generating set $K$, has finite diameter.
\end{definition}

Note that the Cayley graph mentioned in above definition is, in general, not locally finite. Another way of expressing the condition on $R$ is to view its elements as words with letters in $K$ and say that there exists an upper bound on the lengths of these words.

In view of the constructions of a Cayley--Abels graph given in Section \ref{sect:constructions} we would like to ask that a compact generating set contains a given compact, open subgroup.
%

\begin{construction}\label{cons:CayAb2cplx}
	Let $G$ be a cgtdlc group. Let $B \leq G$ be a compact, open subgroup and $K \subset G$ a compact generating set such that $K=BKB$. For example, take $K=SB$ with $S$ as in Construction \ref{cons:2}.
	Let $\phi \colon F_K \to G$ be the obvious homomorphism and let $R \subset \ker(\phi)$. The group homomorphism $\phi$ also defines a graph morphism from the Cayley graph of $F_K$ with generating set $K$, which is a tree, to the Cayley graph $\hat \Gamma$ of $G$ with generating set $K$. We can use the symbol $\phi$ also to denote this graph morphism.
	We use Construction \ref{cons:1} from above.
	Recall that there is an obvious surjective graph homomorphism $\psi \colon \hat \Gamma \to \Gamma$.
	We construct a polygonal $2$-complex $X$ as follows.
	Note that, for every $r \in R$, the graph morphism $\psi \circ \phi$ maps the unique arc from $1$ to $r$ in the Cayley graph of $F_K$ to a loop in $\Gamma$.
	The same holds true for the unique arc from $g$ to $gr$ for any $g \in F_K$.
	The $2$-complex $X$ 
	has $1$-skeleton $\Gamma$ and is now obtained by gluing a polygon along each of these loops.
\end{construction}

The following theorem works just as in the finitely generated case.

\begin{theorem}\label{thm:CayAb2cplx}
	In Construction \ref{cons:CayAb2cplx}, the set
	$R \subset F_K$ generates the kernel of the surjective group homomorphism $\phi \colon F_K \to G$ if and only if the complex $X$ is simply connected.
\end{theorem}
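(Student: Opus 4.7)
The plan is to establish an isomorphism $\pi_1(X, B) \cong \ker(\phi)/\lpres R \rpres$, from which the theorem follows immediately, since $X$ is simply connected precisely when this fundamental group is trivial.

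First, I would define a homomorphism $\iota \colon \ker(\phi) \to \pi_1(X, B)$ by sending $w \in \ker(\phi)$ to the homotopy class of $(\psi \circ \phi)(p_w)$, where $p_w$ is the unique reduced path from $1$ to $w$ in the tree $T$. Since $\phi(w) = 1$, this is indeed a loop based at the vertex $B$. Checking that $\iota$ is a well-defined group homomorphism is routine: the free cancellations needed to pass from the concatenation $p_w \cdot p_{w'}$ to $p_{ww'}$ correspond to back-and-forth moves in $\Gamma$, which are already null-homotopic in the $1$-skeleton.

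Next I would verify that $\iota$ is surjective. Any element of $\pi_1(X, B)$ is represented by a loop in the $1$-skeleton $\Gamma$, and such a loop can be lifted edge-by-edge to a path in $T$ starting at $1$; the condition $K = BKB$ permits the final edge of the lift to be chosen so that the endpoint lies in $\ker(\phi)$, producing a preimage of the given homotopy class under $\iota$.

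The bulk of the argument is to identify $\ker(\iota)$ with $\lpres R \rpres$. The inclusion $\lpres R \rpres \subseteq \ker(\iota)$ is immediate: for each $r \in R$, the loop $\iota(r)$ is by construction the attaching loop of a $2$-cell of $X$ based at $B$, and for each $g \in F_K$ the element $\iota(g r g^{-1})$ is, after conjugation by a path from $B$ to $\phi(g)B$, the attaching loop of the $2$-cell indexed by $(g, r)$. The reverse inclusion is the main obstacle of the proof. Suppose $w \in \ker(\iota)$, so that $(\psi \circ \phi)(p_w)$ admits a null-homotopy in $X$. I would decompose this null-homotopy into a finite sequence of elementary moves---edge cancellations and pushes across $2$-cells---and lift each move back to an operation on the word $w$ in $F_K$: edge cancellations correspond to inserting or removing a trivially cancelling $k k^{-1}$ in the word, and each push across a $2$-cell corresponds to multiplying by a conjugate of some $r^{\pm 1} \in R^{\pm 1}$. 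The delicate point is that edges of $\Gamma$ lift non-uniquely to edges of $T$, since distinct elements of $K$ may project to the same arc in $\Gamma$; the lifting choices must be made consistently throughout the null-homotopy so that the resulting word-theoretic moves genuinely reduce $w$ to the identity modulo $\lpres R \rpres$.
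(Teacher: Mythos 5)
Your reduction of the theorem to an isomorphism $\pi_1(X,B)\cong\ker(\phi)/\lpres R\rpres$ breaks down exactly at the point you flag as ``delicate'', and the paper offers no help here since it only asserts that the proof ``works just as in the finitely generated case''. In the finitely generated case with $B=\{1\}$ the map $\psi\circ\phi$ from the tree $T$ to the Cayley graph \emph{is} a covering map, so the restriction of your $\iota$ to the $1$-skeleton is an isomorphism $\ker(\phi)\to\pi_1(\Gamma,B)$ and the elementary-moves argument goes through. Here it is not a covering: all letters of a double coset $BkB\subset K$ label the same edge of $\Gamma$ at a given vertex, so the surjection $\iota_0\colon\ker(\phi)\to\pi_1(\Gamma,B)$, $w\mapsto[\psi\phi(p_w)]$, has a large kernel $N_0$. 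Concretely, an edge cancellation in a loop in $\Gamma$ corresponds to a subword $kk'$ with $\phi(kk')\in B$, \emph{not} to a free cancellation $k k^{-1}$ in $F_K$; undoing it in the word requires a short relation such as $kk'(k'')^{-1}$ with $k''\in K\cap B$, or an absorption of an element of $B$ into the neighbouring letter, and such relations need not lie in $\lpres R\rpres$. No scheme of ``consistent lifting choices'' can repair this, because the statement you are aiming for is false: what is true is $\pi_1(X,B)\cong\ker(\phi)/\bigl(\lpres R\rpres\cdot N_0\bigr)$, and $N_0\not\subseteq\lpres R\rpres$ in general.

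A concrete counterexample to your isomorphism (and, taken literally, to the ``only if'' direction of the theorem): $G=\ZZ\times\ZZ/2\ZZ$, $B=\{0\}\times\ZZ/2\ZZ$, $K=\{(\pm1,0),(\pm1,1)\}=BKB$, $R=\emptyset$. Then $\Gamma$ is the line $\Gamma_\ZZ$, so $X=\Gamma$ is simply connected, while $\ker(\phi)\neq\{1\}=\lpres R\rpres$; for instance $ab^{-1}ab^{-1}\in\ker(\phi)$ for the generators $a=(1,0)$, $b=(1,1)$, and its image loop $0\to1\to0\to1\to0$ is null-homotopic in $\Gamma_\ZZ$. So a correct proof must either add the hypothesis $N_0\subseteq\lpres R\rpres$ (which holds, e.g., when $R$ contains all relators of some bounded length, as in the corollary on compact presentability that follows the theorem) or build $N_0$ into the conclusion, and your handling of edge cancellations is precisely where that hypothesis has to be invoked. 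The parts of your argument that do survive are the well-definedness of $\iota$, the surjectivity of $\iota_0$ via the lifting argument (including the correction of the last letter using $K=KB$), the inclusion $\lpres R\rpres\subseteq\ker\iota$, and hence the implication ``$\lpres R\rpres=\ker(\phi)\Rightarrow X$ simply connected'', since the $2$-cells then kill $\iota_0(\lpres R\rpres)=\iota_0(\ker(\phi))=\pi_1(\Gamma,B)$.
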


This motivates the definition of a Cayley--Abels $2$-complex.

\begin{definition}
	In the situation of Theorem \ref{thm:CayAb2cplx}, we call $X$ a \emph{Cayley--Abels $2$-complex} of $G$.
\end{definition}

We get the following criterion for a tdlc group to be compactly presented.

\begin{corollary}
	Let $G$ be a tdlc group. It is compactly presented if and only if there exists a Cayley--Abels graph $\Gamma$ and $k \geq 0$ such that the $2$-complex obtained from $\Gamma$ by gluing in polygons along all loops of length at most $k$ is simply connected.
\end{corollary}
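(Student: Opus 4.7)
The plan is to derive both implications from Theorem~\ref{thm:CayAb2cplx} by translating between compact presentations $G = \langle K \mid R \rangle$ and simply connected Cayley--Abels $2$-complexes, using Construction~\ref{cons:CayAb2cplx} as the bridge in each direction.

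For the forward implication, I would start from a presentation $G = \langle K \mid R \rangle$ with $K$ compact and $\sup_{r \in R} |r|_K \leq M < \infty$. Fix a compact, open subgroup $B \leq G$ and enlarge the generating set to $K' := B \cup BKB$, which remains compact. The presentation rewrites as $G = \langle K' \mid R' \rangle$, where $R'$ appends to $R$ a family of relators of length at most $3$ recording multiplication inside $B$ and the identities $b_1 k b_2 = k'$ for $b_1, b_2 \in B$ and $k, k' \in K'$. The $K'$-lengths of elements of $R'$ are still bounded by some $M'$. Applying Construction~\ref{cons:CayAb2cplx} to $(K', B, R')$ produces a Cayley--Abels $2$-complex $X$, which is simply connected by Theorem~\ref{thm:CayAb2cplx}, and each of its $2$-cells is glued along a loop of length at most $M'$ in the underlying Cayley--Abels graph $\Gamma$. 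Since adjoining further $2$-cells can only kill more elements of $\pi_1$, gluing polygons along \emph{every} loop of length at most $M'$ in $\Gamma$ still yields a simply connected complex; take $k = M'$.

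For the converse, assume $\Gamma$ is a Cayley--Abels graph for $G$ with vertex stabilizer $B$, and that the $2$-complex obtained by filling every loop of length $\leq k$ is simply connected. Choose a compact generating set $K \subseteq G$ with $K = BKB$, so that arcs of $\Gamma$ correspond to left multiplication by elements of $K$. Let
\[
R := \{\, r \in F_K : |r|_K \leq k \text{ and } \phi(r) = 1 \,\},
\]
whose $K$-lengths are uniformly bounded by $k$. Construction~\ref{cons:CayAb2cplx} applied to $(K, R)$ produces a $2$-complex with (at least) one $2$-cell glued along each loop of length $\leq k$ in $\Gamma$; it surjects onto the hypothesized simply connected complex and is therefore itself simply connected. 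Theorem~\ref{thm:CayAb2cplx} then shows $R$ normally generates $\ker \phi$, so $\langle K \mid R \rangle$ is a compact presentation.

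The step I expect to need the most care is the precise combinatorial correspondence between loops of length $\ell$ in $\Gamma$ and relators of length $\ell$ in $F_K$: one must verify that every loop of length $\leq k$ at $B$ in $\Gamma$ lifts to a loop of the same length in the Cayley graph $\hat\Gamma$ of $G$ with generating set $K$. This relies essentially on the condition $K = BKB$, which allows successive cosets along the loop to be bridged by a single generator from $K$, and, in the forward direction, on checking that the added length-$\leq 3$ relators truly suffice to translate between the presentations $\langle K \mid R\rangle$ and $\langle K' \mid R'\rangle$ without inflating the length bound beyond $M'$.
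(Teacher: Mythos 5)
Your overall strategy---deriving both implications from Theorem~\ref{thm:CayAb2cplx} by translating between bounded-length presentations over a generating set satisfying $K=BKB$ and the $2$-cells of the complex of Construction~\ref{cons:CayAb2cplx}---is exactly the derivation the paper intends (it states the corollary without proof). Your converse direction is sound, and the lifting issue you flag does resolve as you hope: since $K=BKB$ forces $KB=BK=K$, the set of $g$ with $gB\in\N(B)$ is precisely $K$, so a closed walk $B=\alpha_0,\dots,\alpha_\ell=B$ lifts to $1=g_0,g_1,\dots,g_{\ell-1}$ with $g_{i-1}^{-1}g_i\in K$ and $g_{\ell-1}\in K$, and appending the letter $g_{\ell-1}^{-1}$ closes it up into a genuine relator of length $\ell$; translating by $G$ then covers walks based at other vertices.

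The forward direction, however, has a concrete gap: the relator set $R'=R\cup\{b_1b_2b_3^{-1}\}\cup\{b_1kb_2(k')^{-1}\}$ does \emph{not} in general normally generate $\ker(F_{K'}\to G)$. These relators present $F_{K'}/\lpres R'\rpres$ as a quotient of $B\ast G$ by the identifications $b_1kb_2=b_1'k'b_2'$ ($k,k'\in K$) that hold in $G$; nothing forces the $B$-generators into the copy of $G$ generated by the $K$-letters. If the multiplication map $B\times K\times B\to G$ happens to be injective these identifications are vacuous and the presented group is $B\ast G\not\cong G$; this actually occurs, e.g.\ for the discrete group $G=C_2\ast C_3=\langle f,r\mid f^2,r^3\rangle$ with $B=\langle f\rangle$ and $K=\{r,r^2,rfr,r^2fr^2\}$. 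The standard repair is to adjoin, for each $b\in B$, a relator $b\,u_b^{-1}$ with $u_b$ a word in $K$ representing $b$; to keep these of uniformly bounded length one needs $B\subseteq(K\cup K^{-1}\cup\{1\})^{n_0}$ for some fixed $n_0$, which follows from compactness of $B$ together with a Baire-category argument showing some power of $K\cup K^{-1}\cup\{1\}$ has nonempty interior. With that amendment (and the minor correction that the mixed relators have length $4$, not $3$) the remainder of your forward direction, including the passage from the specific $2$-cells of $X$ to all loops of length at most $M'$, is correct.
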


%

\subsection{Hyperbolicity}

Recall that a geodesic metric space is called \emph{hyperbolic} if there exists a $\delta >0$ such that for all geodesic triangles the $\delta$-neighbourhood of two sides contains the third. This definition is due to Gromov. Being hyperbolic is, for geodesic metric spaces, invariant under quasi-isomorphisms, so the following is well-defined.

\begin{definition}
	Let $G$ be a cgtdlc group. It is called \emph{hyperbolic} if a Cayley--Abels graph of $G$ with the usual metric is a hyperbolic metric space.
\end{definition}

Useful facts and interesting results about hyperbolic tdlc groups that can be found in \cite{Bywaters19}.

Recall from Corollary \ref{cor:group_types_of_elements} that elements of a cgtdlc group come in three different types: elliptic, hyperbolic and parabolic.

\begin{proposition}[\cite{BMW12}, Theorem 22]
	A hyperbolic cgtdlc group does not have any parabolic elements.
\end{proposition}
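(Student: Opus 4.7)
I would argue by contradiction, combining the graph-theoretic classification of Proposition~2.3 with the classical metric classification of isometries of a proper Gromov hyperbolic space. Suppose some $g \in G$ is parabolic, and let $\Gamma$ be a Cayley--Abels graph for $G$. By Definition~3.15 and Proposition~2.3, the element $g$ is not elliptic, fixes a unique thick end $\xi \in \calE\Gamma$, and some power $g^m$ translates a bi-infinite line $L \subset \Gamma$ by a positive integer distance $\ell$. Picking any vertex $\alpha \in L$ yields $d_\Gamma(\alpha,g^{mk}\alpha)=|k|\ell$ for all $k \in \ZZ$, so the stable translation length $\tau_\Gamma(g)=\ell/m$ is strictly positive.

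Next I would bring in the metric classification of isometries of a proper, geodesic, $\delta$-hyperbolic space. Since $\Gamma$ is locally finite, connected and hyperbolic in the metric sense, it is a proper geodesic $\delta$-hyperbolic space, and $G$ acts on it vertex-transitively, hence cocompactly, by isometries. In that setting an isometry with positive stable translation length is loxodromic: $g$ fixes exactly two distinct points $\zeta^+,\zeta^- \in \partial\Gamma$ on the Gromov boundary, with north--south dynamics $g^k\alpha \to \zeta^\pm$ as $k \to \pm\infty$ for every vertex $\alpha$, and $\zeta^\pm$ are the endpoints of the axis $L$ on $\partial\Gamma$.

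The core of the argument is to transfer this boundary dynamics to the end space. I would use the natural continuous $\Aut(\Gamma)$-equivariant map $\pi\colon \partial\Gamma \to \calE\Gamma$ sending a boundary point to the end of any geodesic ray converging to it, and set $\eta^\pm := \pi(\zeta^\pm)$; these are ends of $\Gamma$ fixed by $g$. Using $\delta$-thin triangles and local finiteness, one constructs for each sufficiently large $r$ a finite subgraph $\Theta_r$ of uniformly bounded edge-size (depending on $\delta$ and $\val(\Gamma)$) which separates the two tails of $L$ outside $\B(\alpha,r)$. This simultaneously shows, via Lemma~2.18, that both $\eta^+$ and $\eta^-$ are thin, and directly that $\eta^+ \neq \eta^-$. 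Hence $g$ fixes at least two distinct thin ends of $\Gamma$, which by the trichotomy of Proposition~2.3 makes $g$ hyperbolic, contradicting the assumption that $g$ is parabolic.

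The step I expect to be the main obstacle is the construction of the bounded edge-separators around the axis $L$: translating the metric $\delta$-thin triangle condition into an honest combinatorial finite-cut statement fit for Lemma~2.18 is where the hypothesis that $\Gamma$ is hyperbolic (as opposed to merely admitting an invariant line) is really consumed, and it is also where the cocompactness of the $G$-action enters, through the fact that $\Gamma$ has bounded valency. Once that separation lemma is in place the rest of the plan is essentially bookkeeping, and one recovers both the distinctness of $\eta^+,\eta^-$ and their thinness from a single geometric construction.
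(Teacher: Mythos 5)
First, a remark on the comparison you were asked to survive: the paper does not prove this proposition at all — it is quoted from \cite{BMW12} without argument — so your proof can only be judged on its own merits. Judged that way, it has a fatal gap at its very first step. Proposition \ref{prop:types_of_elements} only supplies an invariant line $L$ that a power $g^m$ shifts \emph{along $L$}; nothing says $L$ is a geodesic (or even a quasi-geodesic) line. Hence from ``$g^m$ shifts $L$ by $\ell$'' you may conclude $d_\Gamma(\alpha,g^{mk}\alpha)\le |k|\ell$, not equality. For a parabolic element the two tails of $L$ converge into the \emph{same} end, the line doubles back on itself near infinity, and $d_\Gamma(\alpha,g^{mk}\alpha)$ may grow sublinearly in $k$ — that is precisely what ``parabolic'' means metrically. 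Whether the stable translation length is positive is the entire content of the theorem, so asserting it from Halin's line begs the question.

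Second, even if loxodromicity were granted, the concluding step fails. Two distinct fixed points on the Gromov boundary need not project to two distinct ends, and the end(s) they land in need not be thin. The paper itself points at the counterexample: a tessellation of the hyperbolic plane — equivalently the Cayley graph of a cocompact Fuchsian group, a perfectly good hyperbolic cgtdlc group — is one-ended with a thick end and circle boundary; a loxodromic element there has an honest geodesic axis whose two tails lie in the single thick end, and no bounded separators between the tails exist. So the implication ``metrically loxodromic $\Rightarrow$ fixes two distinct thin ends'' is false, and the separation lemma you flag as the main obstacle cannot be proved. This example also shows that the statement must be read with the \emph{metric} (Gromov-boundary) notion of parabolic isometry, which is the one used in \cite{BMW12}: with the end-based trichotomy of Proposition \ref{prop:types_of_elements} taken literally, every infinite-order element of a one-ended discrete hyperbolic group would be ``parabolic''. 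Finally, note that any correct proof must exploit total disconnectedness or local finiteness in an essential way: $\mathrm{PSL}_2(\RR)$ acts transitively on $\mathbb{H}^2$ with compact stabilizers and does contain parabolic isometries, so vertex-transitivity plus hyperbolicity of the space alone can never rule them out.
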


For a finitely generated group, it is well-known that a finitely generated, hyperbolic group does not contain a discrete copy of $\ZZ^2$.
We turn now to a statement that is a bit similar in spirit.
The \emph{flat rank} of a tdlc group is an invariant analogous to the rank of a semisimple algebraic group over a local field \cite{BRW07}. Unfortunately, the definition looks very unmotivated at first sight.
The attentive reader might notice that few non-trivial statements are hidden in the definition.

\begin{definition}
	The \emph{flat rank} of $G$ is the supremum over all the ranks of free abelian groups $H/N_H(U)$, where $H$ ranges over all subgroups of $G$ admitting a compact, open subgroup $U \leq G$ that is tidy for every element of $H$ and $N_H(U)$ is the normalizer of $U$ in $H$.
%
%
\end{definition}

The hidden statements are that for such an $H$, the quotient $H/N_H(U)$ of $H$ by the normalizer of $U$ inside $H$ is a free abelian group and its rank is independent of $U$.

If $G$ is a semisimple algebraic group, then its flat rank indeed coincides with its rank.

\begin{theorem}[\cite{BMW12}, Theorem 1]
	Let $G$ be a hyperbolic cgtdlc group. The flat rank of $G$ is at most $1$.
\end{theorem}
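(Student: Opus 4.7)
The plan is to argue by contradiction: suppose the flat rank of $G$ is at least $2$, and extract from the definition a subgroup $H \leq G$ together with a compact open subgroup $U \leq G$, tidy for every element of $H$, such that $H/N_H(U)$ is free abelian of rank at least $2$. Choose $g_1, g_2 \in H$ whose images in $H/N_H(U)$ are $\mathbb{Z}$-linearly independent, so that $[g_1, g_2] \in N_H(U)$ and no nontrivial word $g_1^m g_2^n$ lies in $N_H(U)$. Fix a Cayley--Abels graph $\Gamma$ of $G$ and a base vertex $\alpha \in \V\Gamma$.

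First I would analyze the dynamical type of $g_1$ and $g_2$ on $\Gamma$. Since $G$ is hyperbolic, it has no parabolic elements, so each $g_i$ is either elliptic or hyperbolic in the sense of Definition~\ref{def:group_types_of_elements}. I would argue that the $g_i$ cannot be elliptic: an elliptic element has $\overline{\langle g_i \rangle}$ compact, hence $s_G(g_i) = s_G(g_i^{-1}) = 1$, and the tidyness of $U$ for every element of $H$ together with Proposition~\ref{prop:scale_power} forces $g_i^n U g_i^{-n} = U$ for all $n$, which (after a short argument using that $N_H(U)$ absorbs such elements) places $g_i$ in $N_H(U)$, contradicting the choice of $g_i$. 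Thus both $g_1$ and $g_2$ are hyperbolic; by Proposition~\ref{prop:types_of_elements} some positive power of each translates along a bi-infinite line, and Proposition~\ref{prop:scale_power} provides the quantitative lower bound
\[ d_\Gamma(\alpha, g_i^n \alpha) \;\geq\; C \log s_G(g_i) \cdot |n| \]
for a constant $C > 0$ depending only on $\Gamma$ and $U$, since $[U : U \cap g_i^n U g_i^{-n}] = s_G(g_i)^{|n|}$ controls the number of vertices in a ball that can be distinguished from $\alpha$ by $g_i^n$.

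The next step is to upgrade the orbit map $\Phi \colon \mathbb{Z}^2 \to \V\Gamma$, $(m,n) \mapsto g_1^m g_2^n \alpha$, to a quasi-isometric embedding. The upper bound $d_\Gamma(\alpha, \Phi(m,n)) \leq K(|m| + |n|)$ is immediate from the triangle inequality. For the lower bound, the key input is that $g_1, g_2$ commute modulo $N_H(U)$ combined with tidyness: this allows a joint tidying argument showing that $[U : U \cap (g_1^m g_2^n) U (g_1^m g_2^n)^{-1}] = s_G(g_1)^{|m|} s_G(g_2)^{|n|}$, independence of the images in $H/N_H(U)$ ensuring no cancellation. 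From this, a packing argument in $\Gamma$ yields $d_\Gamma(\alpha, \Phi(m,n)) \geq C'(|m|+|n|) - C''$. Consequently $\Phi$ is a quasi-isometric embedding of $(\mathbb{Z}^2, \ell^1)$ into $\Gamma$, contradicting hyperbolicity of $\Gamma$ by the standard fact that a $\delta$-hyperbolic geodesic metric space contains no quasi-isometric copy of the Euclidean plane.

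The step I expect to be the main obstacle is the lower bound on $d_\Gamma(\alpha, \Phi(m,n))$, i.e.\ showing that the two translation directions are asymptotically independent in $\Gamma$. The upper bound and the excluding of elliptic elements are routine, but the lower bound requires combining Willis' structure theory (the multiplicativity in Proposition~\ref{prop:scale_power} and the fact that $U$ remains tidy for arbitrary elements of $H$) with geometric control on how far $g_1^m$ and $g_2^n$ can bring the base vertex back together. Morally, independence in the free abelian quotient $H/N_H(U)$ should prevent such collisions, but turning this algebraic independence into a metric lower bound on $\Gamma$ is the technical heart of the argument.
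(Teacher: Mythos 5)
The survey does not prove this theorem; it is imported from \cite{BMW12}, and your strategy (show that the orbit map of a flat subgroup of rank $n$ is a quasi-isometric embedding of $\ZZ^n$, then invoke the nonexistence of rank-$2$ quasi-flats in hyperbolic spaces) is in fact the strategy of that source. Your preliminary steps are fine: elliptic elements of $H$ have scale $1$ in both directions and hence normalize the tidy subgroup $U$, so they die in $H/N_H(U)$; and the packing bound $d_\Gamma(\alpha,h\alpha)\geq \log_d [U:U\cap hUh^{-1}]$ (after arranging, via Construction \ref{cons:2}, that $U=G_\alpha$ for the chosen Cayley--Abels graph) is correct. One small repair: $C\log s_G(g_i)\cdot|n|$ is vacuous when $s_G(g_i)=1$; you should use $\max(s_G(g_i),s_G(g_i^{-1}))>1$, which holds because $g_i\notin N_H(U)$, together with $d_\Gamma(\alpha,g^n\alpha)=d_\Gamma(\alpha,g^{-n}\alpha)$.

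The genuine gap is the step you yourself flag as the heart of the matter, and the specific identity you propose there is false. For a flat group $H$ with common tidy subgroup $U$, the scale on $H$ is \emph{not} given by $s_G(g_1^mg_2^n)=s_G(g_1)^{|m|}s_G(g_2)^{|n|}$; the correct statement (Willis's structure theory of flat groups, which is the main input of \cite{BMW12}) is that there is a finite set $\Phi$ of ``roots'', i.e.\ homomorphisms $\rho\colon H\to\ZZ$ vanishing on $N_H(U)$, and integers $s_\rho\geq 2$, with $s_G(h)=\prod_{\rho\in\Phi,\,\rho(h)>0}s_\rho^{\rho(h)}$ --- the diagonal subgroup of $\mathrm{SL}_3(\QQ_p)$ already violates your formula. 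Moreover, the lower bound cannot be extracted from Proposition \ref{prop:scale_power} alone: tidiness of $U$ for each individual $h=g_1^mg_2^n$ only yields $d_\Gamma(\alpha,h^k\alpha)\geq ck$, hence a translation length bounded below by a constant for each primitive $(m,n)$, which gives $d_\Gamma(\alpha,\Phi(m,n))\gtrsim \gcd(m,n)$ rather than $\gtrsim |m|+|n|$ (consider $(m,n)=(1,N)$). To recover the quasi-flat you must use that the roots span the dual of $(H/N_H(U))\otimes\QQ$ (otherwise some $h\notin N_H(U)$ would have $s_G(h)=s_G(h^{-1})=1$), so that $\max_{\rho\in\Phi}|\rho(g_1^mg_2^n)|\geq c(|m|+|n|)$ and hence $d_\Gamma(\alpha,\Phi(m,n))\geq \log_d 2\cdot\max_\rho|\rho(g_1^mg_2^n)|$. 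Without importing or proving this root decomposition, the ``joint tidying argument'' is a black box and the proof is incomplete.
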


Hyperbolic spaces typically come with several types of boundaries, perhaps most commonly with the \emph{Gromov boundary}, the elements of which are equivalence classes of \emph{geodesics}. Two geodesics are equivalent if they stay at bounded distance from one another. For a tree, the Gromov boundary and the set of ends is the same.
Just as with the space of ends, also the Gromov boundary comes with a topology and a quasi-isometry between hyperbolic spaces induces a homeomorphism between the Gromov boundaries.

Tesselations of the hyperbolic plane give examples of hyperbolic graphs where the notions are very different. Those graphs are $1$-ended, but the Gromov boundary is homeomorphic to the circle. Note that their automorphism group is discrete.

\begin{theorem}[\cite{CCMT15} Corollary C]
	If $\Gamma$ is a hyperbolic Cayley--Abels graph for a tdlc group $G$ and the stabilizer of a point in the Gromov boundary acts vertex-transitively, then $\Gamma$ is quasi-isometric to a regular tree.
\end{theorem}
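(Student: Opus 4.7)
The plan is to exploit the focal structure provided by the $H$-fixed boundary point $\xi$, where $H \leq G$ is the stabilizer of $\xi$ acting vertex-transitively on $\Gamma$. Fix a base vertex $v_0 \in \V\Gamma$ and a geodesic ray $(v_0, v_1, v_2, \dots)$ representing $\xi$. First, I would introduce a Busemann-type cocycle $\beta \colon \V\Gamma \to \ZZ$, defined up to bounded error by $\beta(v) = \lim_{n\to\infty}(d_\Gamma(v,v_n) - n)$, whose level sets are the \emph{horospheres} centered at $\xi$. Since $H$ fixes $\xi$, the map $h \mapsto \beta(hv_0) - \beta(v_0)$ is a quasi-homomorphism $\sigma \colon H \to \RR$.

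Next I would analyze $\sigma(H)$. Since $G$ is tdlc with compact open stabilizers and $H$ acts vertex-transitively, $H$ acts properly cocompactly. Because $\beta$ is integer-valued up to bounded error, the image $\sigma(H)$ must be a closed, hence cyclic, subgroup of $\RR$, say $\ZZ \cdot c$. If $c=0$ then $H$ would preserve the horosphere through $v_0$, contradicting vertex-transitivity (since in a hyperbolic graph of more than one point, horospheres are proper subsets, as follows from the existence of geodesic rays with endpoints other than $\xi$). So $c>0$ and the kernel $H_0 := \ker\sigma$ is a closed subgroup acting vertex-transitively on each horosphere.

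I would then use hyperbolicity to make sense of the \emph{ancestor map}. Thin triangles imply that any two geodesic rays from $v$ to $\xi$ stay uniformly close, so one obtains, for each $k \geq 0$, a map $\pi_k \colon \V\Gamma \to \{\beta = \beta(v)+k\}$ well-defined up to bounded error. Hyperbolicity further forces horospheres to be exponentially distorted: two vertices on the same horosphere have graph distance at most a logarithmic function of their ``first meeting height" $\pi_k$. Vertex-transitivity of $H$ and the transitivity of $H_0$ on horospheres imply that, for a fixed and sufficiently large multiple $k$ of $c$, the fiber $\pi_k^{-1}(w) \cap \{\beta = \beta(w)-k\}$ always has the same cardinality $d$.

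Finally I would build the target: let $\T$ be the graph whose vertices are equivalence classes of $\V\Gamma$ under the relation ``$u \sim v$ iff $\pi_{kn}(u) = \pi_{kn}(v)$ for all sufficiently large $n$'', with edges between classes related by $\pi_k$. The vertex-transitivity of $H$ makes $\T$ a $(d{+}1)$-regular tree, and the assignment $v \mapsto [v]_\sim$ together with the Busemann function gives a quasi-isometry $\Gamma \to \T$: the Busemann difference accounts for tree distance up to a factor of $c$, and the exponential distortion of horospheres ensures that lateral distance on $\Gamma$ is controlled by the vertical (tree) distance. The main obstacle will be proving this exponential distortion rigorously; it is where hyperbolicity is truly essential, and it is the step that rules out the rank-one symmetric space case (whose horospheres are undistorted Euclidean spaces). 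Total disconnectedness of $G$ enters through the discreteness of $\sigma(H)$ and the local finiteness of $\Gamma$, both of which rule out non-tree focal geometries.
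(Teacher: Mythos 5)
The paper itself offers no proof of this statement; it is quoted directly from Caprace--Cornulier--Monod--Tessera, so your proposal can only be measured against that source. Your overall strategy (Busemann cocycle, horospherical decomposition, coalescence of vertical rays, quotient tree) is the right circle of ideas, but two steps as written would fail. First, the map $\sigma \colon H \to \RR$, $h \mapsto \beta(hv_0)-\beta(v_0)$, is only a quasi-homomorphism, so its image is not a subgroup of $\RR$ and the conclusion ``closed, hence cyclic'' does not follow. To repair this you must show that the Busemann quasicharacter is at bounded distance from a genuine continuous homomorphism $H \to \RR$ (this is a nontrivial point in \cite{CCMT15}; in the tdlc setting one can get it from the modular function or from the translation-length homomorphism of a hyperbolic element fixing $\xi$), and only then does discreteness of the image follow from local finiteness. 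Second, your projections $\pi_k$ are well defined only up to bounded error, so the equivalence relation ``$\pi_{kn}(u)=\pi_{kn}(v)$ for all large $n$'' is not a well-defined relation and the graph $\T$ is not actually constructed. Making this precise is where the real content lies: the standard route is to first extract the algebraic structure $G_0 \cong N \rtimes \langle a\rangle$ with $a$ a hyperbolic element compacting a subgroup $N$ that is an increasing union of the compact open subgroups $a^{-n}Ua^{n}$, and then build the tree from that filtration (Bass--Serre theory / the Mosher--Sageev--Whyte argument cited elsewhere in this survey), rather than from coarsely defined ancestor maps.

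A smaller but genuine error: you claim exponential distortion of horospheres is ``the step that rules out the rank-one symmetric space case (whose horospheres are undistorted Euclidean spaces).'' Horospheres in $\mathbb{H}^n$ \emph{are} exponentially distorted in exactly the same sense, so this dichotomy is not where the tree case separates from the symmetric-space case. What actually excludes the connected models is total disconnectedness: Van Dantzig's theorem and local finiteness force the vertical filtration of $N$ by compact open subgroups to be discrete with finite indices, so the coalescence pattern of vertical rays is combinatorial and the quotient is a locally finite simplicial tree. You gesture at this in your last sentence, but it should replace, not supplement, the distortion argument. Finally, you should dispose of the elementary cases (bounded $\Gamma$, two-ended $\Gamma$) separately, since there the boundary stabilizer hypothesis degenerates.
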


\subsection{Growth}

It is a famous theorem by Gromov that a finitely generated group has polynomial growth if and only if it has a nilpotent subgroup of finite index. We present an analogue for tdlc groups. First we have to define what ``polynomial growth" is supposed to mean for those groups.

\begin{theorem}[\cite{KronMoller2008},Theorem~4.4]\label{thm:polyngrowthdef}
	Let $G$ be a cgtdlc group. The following are equivalent.
	\begin{enumerate}
		\item Let $K \subset G$ be a compact, symmetric generating set for $G$. Let $\mu$ be a Haar measure for $G$. 
		Set $K^n\coloneqq \{g_1 g_2\cdots g_n\mid g_i\in K\}$.
		There are constants $c_1$ and $c_2$ such that $\mu(K^n)\leq c_1 n^{c_2}$ for all natural numbers $n$.
		\item Let $\Gamma$ be a Cayley--Abels graph for $G$ and $\alpha \in \V\Gamma$. There are constants $c_1$ and $c_2$ such that $|\V\B(\alpha,n)|\leq c_1 n^{c_2}$ for all natural numbers $n$.
	\end{enumerate}
\end{theorem}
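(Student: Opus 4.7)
The plan is to show that both conditions are invariant under the natural changes of auxiliary data (compact generating set on one side, Cayley--Abels graph on the other), and then to exhibit a compatible pair for which the two growth functions agree on the nose. I would fix a compact open subgroup $B \leq G$ and normalize the Haar measure so that $\mu(B) = 1$.

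For the invariance on the group side, any two compact generating sets $K, K'$ satisfy $K \subset (K')^a$ and $K' \subset K^b$ for some positive integers $a, b$ (compactness plus the fact that $\bigcup_n (K')^n = G$ is open), which gives $\mu(K^n) \leq \mu((K')^{an})$ and the reverse, so polynomial growth of $\mu(K^n)$ does not depend on $K$. For the invariance on the graph side, Proposition \ref{prop:CAqi} says that any two Cayley--Abels graphs for $G$ are quasi-isometric; a quasi-isometry with parameters $K_1,\dots,K_5$ maps an $n$-ball into a ball of radius $K_3 n + K_4$, and by almost surjectivity combined with the uniform local finiteness of the target, each ball of a given radius is covered by a bounded number of fibres of bounded cardinality, so polynomial ball growth passes between any two Cayley--Abels graphs. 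It then suffices to handle one compatible pair. I would take $K$ to be a compact, symmetric generating set satisfying $B \subset K$ and $BKB = K$ (obtained by replacing an arbitrary compact generating set by $BKB \cup B$, which does not change polynomial growth). Since $BK = KB = K$, induction gives $K^n = BK^n = K^nB$, so $K^n$ is a disjoint union of $[K^n : B]$ left cosets of $B$ and $\mu(K^n) = [K^n : B]$. Building $\Gamma$ via Construction \ref{cons:1} with this $K$ and letting $\alpha$ be the vertex stabilized by $B$, an induction on $n$ (using $B \subset K$) yields $\V\B(\alpha,n) = K^n/B$, whence $|\V\B(\alpha,n)| = [K^n : B]$. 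The two growth functions therefore coincide for this pair, completing the equivalence.

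The main obstacle is the quasi-isometric invariance of polynomial ball growth, since the quasi-isometry constants can change the degree of the polynomial bound and one must check they do not destroy the polynomial character altogether. Here the uniform local finiteness of Cayley--Abels graphs, which follows from vertex-transitivity with compact open stabilizers, is essential: it guarantees that fibres of the quasi-isometry have uniformly bounded cardinality and that balls of a given radius have uniformly bounded size, so the counting argument transporting a polynomial bound from one graph to the other goes through. Everything else reduces to bookkeeping with $B$-cosets and the explicit description of $\V\B(\alpha,n)$ provided by Construction \ref{cons:1}.
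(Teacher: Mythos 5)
The paper does not actually prove this theorem: it is quoted from Kr\"on--M\"oller, and the only piece the text engages with is the exercise immediately following it (independence of condition 2 from the choice of $\Gamma$), so there is no in-paper argument to compare yours against. On its own terms your proposal is correct and is the natural argument: establish that each condition is independent of the auxiliary data, then exhibit one pair $(K,\Gamma)$ for which $\mu(K^n)=|\V\B(\alpha,n)|$ exactly. The key computation is right: with $B\subset K$, $K=BKB$ and $\mu(B)=1$, one gets $K^n=K^nB$ a finite disjoint union of left $B$-cosets, and in Construction \ref{cons:1} the neighbours of $gB$ are precisely the cosets in $gKB\setminus\{gB\}$, so induction gives $\V\B(B,n)=K^n/B$ and the two growth functions coincide. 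Your graph-side invariance argument (fibres of a quasi-isometry lie in balls of radius $K_2/K_1$, hence have uniformly bounded size by regularity of the graphs) is also fine and settles the stated exercise.

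The one step where your justification is too quick is the claim $K\subset (K')^a$. You cannot extract a finite subcover from $K\subset\bigcup_n (K')^n$ directly, because the sets $(K')^n$ are compact, not open. You need an extra ingredient: either a Baire category argument showing some $(K')^{n_0}$ has nonempty interior (so $(K')^{2n_0}$ contains an open identity neighbourhood $V$, and then the open sets $(K')^nV$ do admit a finite subcover of $K$), or, in the tdlc setting, first enlarge $K'$ to $K'\cup U$ for a compact open subgroup $U$ from Van Dantzig's theorem (Theorem \ref{thm:vandantzig}) and note this does not affect polynomial growth. Either fix is routine, but as written the sentence ``compactness plus the fact that $\bigcup_n (K')^n=G$ is open'' does not yield the inclusion.
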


\begin{exercise}
	Show that the second condition of this theorem is independent of the choice of $\Gamma$.
\end{exercise}

\begin{definition}
	Let $G$ be a cgtdlc group. We say that $G$ has \emph{polynomial growth} if it satisfies the conditions in Theorem \ref{thm:polyngrowthdef}.
\end{definition}

Trofimov and, two years later, Losert prove generalizations of Gromov's result to locally compact groups. Trofimov uses the approach via graph. Losert's result holds for more general locally compact groups, he works with the Haar measures. Note that in the following theorem, once we have the existence of one compact, open, normal subgroup of $G$, the rest follows from Gromov's theorem.

\begin{theorem}[\cite{Trofimov1985} Theorem 2, \cite{Losert87}, Corollary after Theorem 2]
	Let $G$ be a compactly generated, totally disconnected, locally compact
	group.  Then $G$ has polynomial growth if and only if $G$ has a
	compact, open, normal subgroup $K$ such that $G/K$ is a finitely generated almost nilpotent group.
\end{theorem}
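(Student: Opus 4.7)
The backward direction is immediate: if $K \triangleleft G$ is compact and open with $G/K$ finitely generated and almost nilpotent, then $G/K$ has polynomial growth by Gromov's theorem, and any Cayley graph of $G/K$ with respect to a finite symmetric generating set is a Cayley--Abels graph of $G$ in which $K$ stabilizes a vertex, so Theorem~\ref{thm:polyngrowthdef} transports polynomial growth to $G$. For the forward direction, the plan is to produce, from polynomial growth of $G$, a compact, open, normal subgroup $K \triangleleft G$ such that $G/K$ is finitely generated. Given such $K$, the group $G/K$ is discrete, its Cayley graph for some finite generating set is also a Cayley--Abels graph of $G$ and hence quasi-isometric to any other by Proposition~\ref{prop:CAqi}; so $G/K$ itself has polynomial growth, and Gromov's theorem yields that it is almost nilpotent.

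To produce $K$, fix a Cayley--Abels graph $\Gamma$ of $G$, which by Theorem~\ref{thm:polyngrowthdef} has polynomial vertex-growth. The critical step, which is the heart of Trofimov's paper and the main obstacle of the entire proof, is to construct a $G$-congruence $\sim$ on $\V\Gamma$ with finite equivalence classes such that the induced $G$-action on $\Delta \coloneqq \Gamma/\sim$ has finite vertex stabilizers. Trofimov's strategy exploits the polynomial growth bound to control pointwise fixators of large balls in $\Aut(\Gamma)$, and then assembles these ``rigidity blocks'' into a single imprimitivity system via an iteration/compactness argument.

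Granted such $\sim$, let $K$ be the kernel of $G \to \Aut(\Delta)$. Then $K$ is closed and normal. For any single block $B \in \V\Gamma/\sim$ (a finite subset of $\V\Gamma$), the setwise stabilizer $G_{\{B\}}$ is compact and open: it is a finite extension of the pointwise stabilizer $G_B = \bigcap_{\beta \in B} G_\beta$, which is a finite intersection of compact open vertex stabilizers of $\Gamma$. Since $K \subseteq G_{\{B\}}$ with finite index (equal to the order of the corresponding vertex stabilizer of $G/K$ acting on $\Delta$), the subgroup $K$ is itself compact and open. The quotient $G/K$ is therefore discrete and compactly generated as a quotient of $G$; since in a discrete group compact subsets are finite, $G/K$ is finitely generated, completing the reduction to Gromov.

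An alternative route is Losert's: using the Haar-measure formulation in Theorem~\ref{thm:polyngrowthdef}(1), he shows that a locally compact group of polynomial growth admits a compact normal subgroup whose quotient is a Lie group, and in the totally disconnected case this Lie quotient is necessarily discrete and thus finitely generated and almost nilpotent by Gromov.
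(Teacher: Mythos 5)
The paper offers no proof of this theorem at all: it is stated as a citation to \cite{Trofimov1985} and \cite{Losert87}, with only the remark that ``once we have the existence of one compact, open, normal subgroup of $G$, the rest follows from Gromov's theorem.'' Your proposal is therefore not in competition with an argument in the text; it fleshes out exactly the architecture the paper gestures at. The parts you actually prove are correct: the backward direction via the observation that the Cayley graph of $G/K$ is a Cayley--Abels graph for $G$ with vertex stabilizer $K$, combined with Theorem~\ref{thm:polyngrowthdef}; and, in the forward direction, the deduction that the kernel $K$ of $G \to \Aut(\Delta)$ is compact and open (finite index in the compact open setwise block stabilizer $G_{\{B\}}$, since the vertex stabilizers of $\Delta$ are finite), so that $G/K$ is a finitely generated discrete group of polynomial growth by Proposition~\ref{prop:CAqi}, hence almost nilpotent by Gromov. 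The one point to be honest about is that the entire analytic/combinatorial content of the theorem sits in the step you explicitly defer: producing from polynomial growth a $G$-congruence with finite blocks whose quotient has finite vertex stabilizers. That is Trofimov's theorem on vertex-transitive graphs of polynomial growth, and you cite it as a black box rather than proving it. Since the paper does the same, your write-up is a faithful and correctly executed reduction to \cite{Trofimov1985} and Gromov, not a self-contained proof; the parallel remark about Losert's Haar-measure route is likewise accurate as a pointer but not an argument.
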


\subsection{Valency $2$ and Stalling's end theorem}

We give a characterisation of all non-compact, cgtdlc groups that have a Cayley--Abels graph of valency $2$.
Note that every connected graph of valency $0$ or $1$ is finite.
A group is compact if and only if one, and hence every, Cayley--Abels graph is finite.
The only connected, infinite graph of valency $2$ is the bi-infinite line, its automorphism group is the infinite dihedral group $D_\infty$.

The following theorem about tdlc groups allowing for a $2$-valent Cayley--Abels graph is  a generalization of a well-known statement about finitely generated groups.
For the equivalence of 1., 3. and 4. for a finitely generated group and its Cayley graph, see Hopf \cite[Satz 5]{Hopf1944} and Wall \cite[Lemma 4.1]{Wall1967}.
For cgtdlc groups, Abels showed in \cite[Satz 4.5, Satz 3.10]{a73} that 3. implies 4. and 5.

\begin{theorem}[see \cite{dC18}, Corollary 19.39]\label{T2ends}
	For a cgtdlc group $G$ the following are equivalent.
	\begin{enumerate}
		\item The minimal valency of a Cayley--Abels graph for $G$ is $2$.
		\item The bi-infinite line is a Cayley--Abels graph for $G$.
		\item The group $G$ has precisely two ends.
		\item There is a surjective homomorphism with compact, open kernel from $G$ to the infinite cyclic group or the infinite dihedral group.
		\item The group $G$ has a co-compact, cyclic, discrete subgroup.
	\end{enumerate}
\end{theorem}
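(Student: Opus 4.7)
The plan is to prove the cycle $(1) \Leftrightarrow (2) \Rightarrow (3) \Rightarrow (4) \Rightarrow (5) \Rightarrow (3)$, supplemented by the direct construction $(4) \Rightarrow (2)$. The equivalence $(1) \Leftrightarrow (2)$ is essentially formal: because $G$ acts vertex-transitively, every Cayley--Abels graph is regular, valency $0$ or $1$ yields a finite graph, and the only connected infinite $2$-regular graph is the bi-infinite line. The implication $(2) \Rightarrow (3)$ is immediate from Proposition \ref{prop:CAqi}, as the bi-infinite line has exactly two ends and the end space is a quasi-isometry invariant of Cayley--Abels graphs.

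The heart of the proof is $(3) \Rightarrow (4)$. Fix a Cayley--Abels graph $\Gamma$, which has two ends $\xi^+,\xi^-$, and a bi-infinite geodesic line $\omega = (\dots,\alpha_{-1},\alpha_0,\alpha_1,\dots)$ realizing them. Define a Busemann function $b \colon \V\Gamma \to \ZZ$ by $b(\alpha) = \lim_{n \to \infty}(n - d_\Gamma(\alpha,\alpha_n))$; this is well-defined and integer-valued because the sequence is non-decreasing in $n$ and bounded above by $d_\Gamma(\alpha,\alpha_0)$. By Lemma \ref{lem:thinends} the two ends are thin, from which I would deduce that each level set $b^{-1}(n)$ is finite (bounded by the finite separating subgraphs). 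Let $G^+ \leq G$ be the index-at-most-$2$ subgroup fixing each end. For $g \in G^+$ the function $b \circ g - b$ is constant, yielding a map $\tau \colon G^+ \to \ZZ$ with $\tau(g) = b(g\alpha_0) - b(\alpha_0)$; additivity follows from this constancy, and continuity from the fact that $g \mapsto g\alpha_0 \in \V\Gamma$ is locally constant. Set $K := \ker\tau$. Then $K$ is open by continuity, and compact because it preserves each finite set $b^{-1}(n)$, has compact vertex stabilizers (inherited from $G$), and hence has finite orbits with compact stabilizer. The map $\tau$ is non-trivial, for otherwise every $G^+$-orbit on $\V\Gamma$ would be finite, contradicting vertex-transitivity of the finite-index subgroup $G^+$. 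Hence $G^+/K \cong \ZZ$. If $G = G^+$ we are done with quotient $\ZZ$; otherwise any $\sigma \in G\setminus G^+$ swaps the two ends, so $b \circ \sigma = -b + c$; conjugation gives $\tau(\sigma g \sigma^{-1}) = -\tau(g)$, so $\sigma$ normalizes $K$, and applying this to $\sigma^2 \in G^+$ forces $\tau(\sigma^2) = -\tau(\sigma^2)$, hence $\sigma^2 \in K$. Thus $G/K \cong \ZZ \rtimes \ZZ/2\ZZ = D_\infty$.

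The remaining implications are comparatively routine. For $(4) \Rightarrow (5)$, take a preimage $g \in G$ of a generator of $\ZZ$ (resp.\ of the translation generator of the cyclic index-$2$ subgroup of $D_\infty$); openness of $K$ together with $\langle g\rangle \cap K = \{1\}$ forces $\langle g\rangle$ to be discrete, and $\langle g\rangle K$ has finite index in $G$ (with $K$ compact), so $\langle g\rangle$ is co-compact. For $(4) \Rightarrow (2)$, apply Construction \ref{cons:2} with $B = K$ and $S = \{g,g^{-1}\}$ in the $\ZZ$-case (resp.\ $S = \{g_a,g_b\}$ in the $D_\infty$-case for preimages of the two generating involutions); the four conditions follow quickly from normality of $K$, and the resulting regular graph has valency $2$, hence is the bi-infinite line. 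Finally for $(5) \Rightarrow (3)$, fix any Cayley--Abels graph $\Gamma$; the cyclic subgroup $H = \langle g\rangle$ acts by isometries, properly (since $H \cap G_\alpha$ is a discrete subgroup of the compact $G_\alpha$, thus finite) and co-compactly on $\Gamma$, so by the \v{S}varc--Milnor lemma $H \cong \ZZ$ is quasi-isometric to $\Gamma$, and $\Gamma$ inherits the two ends of $\ZZ$.

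The main obstacle is $(3) \Rightarrow (4)$: the construction of a continuous $\ZZ$-valued homomorphism on $G^+$ with compact open kernel. Two steps require care, namely finiteness of the Busemann level sets (which is where thinness of ends and Lemma \ref{lem:thinends} enter essentially) and the orientation-reversal bookkeeping ensuring that the extension $G/K$ is exactly $D_\infty$ rather than a larger extension of $\ZZ$ by $\ZZ/2\ZZ$.
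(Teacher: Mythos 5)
Your overall architecture is fine, and the routine implications ($1\Leftrightarrow 2$, $2\Rightarrow 3$, $4\Rightarrow 5$, $4\Rightarrow 2$, $5\Rightarrow 3$) match the paper's sketch in substance. The problem is the step you identify as the heart, $3\Rightarrow 4$: the claim that $b\circ g - b$ is constant for $g\in G^+$ is false in general, and with it the additivity of $\tau$ collapses. Concretely, let $\Gamma$ be the ladder, i.e.\ the Cayley graph of $\ZZ\times\ZZ/2\ZZ$ with generating set $\{(\pm1,0),(0,1)\}$, and let $\omega$ be the geodesic line $\alpha_n=(n,0)$. Then $b((m,0))=m$ and $b((m,1))=m-1$, since $d_\Gamma((m,1),(n,0))=|n-m|+1$. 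The automorphism $g\colon (n,i)\mapsto (n+1,i+1)$ (left multiplication by $(1,1)$) fixes both ends, yet $b(g(0,0))-b((0,0))=0$ while $b(g(0,1))-b((0,1))=2$. So $b\circ g-b$ is not constant, and your $\tau$ gives $\tau(g)=0$ but $\tau(g^2)=b((2,0))=2$, so $\tau$ is not a homomorphism. The underlying issue is that Busemann functions of equivalent geodesic rays in a graph agree only up to a bounded error, not up to an additive constant, so evaluating at a single base point does not produce a homomorphism. (Openness and compactness of your $K$ survive, since $K\subset\{g\mid g\alpha_0\in b^{-1}(b(\alpha_0))\}$ is a finite union of cosets of $G_{\alpha_0}$, but $K$ is then not the kernel of anything.)

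The paper sidesteps this by following Abels: choose subgraphs $\Gamma_1,\Gamma_2$ with $\A\Gamma=\A\Gamma_1\sqcup\A\Gamma_2$ and finite vertex intersection, each capturing one end, and set $\phi(g)=|\V\Gamma_1\cap g\V\Gamma_2|-|\V\Gamma_2\cap g\V\Gamma_1|$; the homomorphism property of this counting function is a finite inclusion--exclusion identity that does not require any pointwise rigidity of a Busemann function. If you want to keep $b$, you must replace $\tau$ by something insensitive to the bounded fluctuation, e.g.\ the homogenization $g\mapsto\lim_n\tau(g^n)/n$, and then separately prove that this homogeneous quasimorphism is additive here --- which is not automatic and essentially re-introduces the work you skipped. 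A second, smaller gap: you invoke Lemma \ref{lem:thinends} to conclude the two ends are thin and the level sets $b^{-1}(n)$ are finite, but that lemma only characterizes thinness; that a two-ended vertex-transitive locally finite graph has thin ends (equivalently, linear growth) needs its own argument.
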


\begin{proof}[Sketch of proof]
	Clearly 1. and 2. are equivalent, see explanation at the beginning of this subsection.
	
	 It is also easy to see that 2. is equivalent to 4. Part 2. implies that there exists a continuous homomorphism $G \to D_\infty$ with compact kernel. Because $D_\infty$ is discrete, this kernel is open. It is an exercise to show that every vertex-transitive subgroup of the automorphism group of the bi-infinite line is isomorphic to the infinite cyclic group or the infinite dihedral group.
	
	It is trivial that 2. implies 3.
	
	It is also easy to show that 4. implies 5. Let $\varphi$ be the required homomorphism, then any inverse image of any infinite order element in the image generates a co-compact, cyclic, discrete subgroup.
	
	To prove that 5. implies 3., let $K \subset G$ be a compact subset and $g \in G$ an infinte order element such that $\langle g \rangle K = G$.
	Show that, (using notation from Construction \ref{cons:2}) for any choice of Cayley--Abels graph for $G$ with base $B$ such that $g \in S$, the map $\ZZ \to G/B, \, n \mapsto g^n B$ is a quasi-isometry.
	
	It remains to prove that 3. implies 4. Let $G' \leq G$ be the subgroup fixing both ends of $G$, note that it is an open subgroup of index at most $2$ in $G$. Either we have $G = G'$ or $G = G' \rtimes \ZZ/2\ZZ$.
	It suffices to prove that $G'$ surjects onto $\ZZ$ with compact, open kernel. We use Abels' argument from \cite[Satz 4.5, 2. Fall]{a73}. Let $\Gamma_1,\Gamma_2 \subset \Gamma$ be subgraphs intersecting in finitely many vertices such that $\A\Gamma = \A\Gamma_1 \sqcup \A\Gamma_2$, and each $\Gamma_i$ contains all but finitely many vertices of each representative of one of the two ends.
	Check that the map $G \to \ZZ, \, g \mapsto |\V\Gamma_1 \cap g \V\Gamma_2| - |\V\Gamma_2 \cap g\V\Gamma_1|$ satisfies the claim.
\end{proof}

If a finitely generated group has more than one end, one can say a lot about its structure due to a famous theorem by Stallings \cite{Stallings68} \cite{Stallings71}. There are several proofs available, all use ideas by Dunwoody in a crucial way. A ``short" proof was published in a 9-pages paper by Kr\"on \cite{Kroen10}. It turns out that Stallings end theorem generalizes to cgtdlc groups. First we have go give some definitions.

\begin{definition} \label{amaldef}
	Let $G_1$, $G_2$ and $C$ be arbitrary groups and $\varphi_i\colon C \to G_i$, $i = 1,2$, be group homomorphisms.
	The \emph{amalgam of $G_1$ and $G_2$ with respect to $\varphi_1$ and $\varphi_2$} is a group $G$ together with group homomorphisms
	$\psi_i\colon G_i \to G$, $i = 1,2$, making the following diagram commutative:
	\begin{equation*} \label{amaldiag1}
	\xymatrix{ & G_1 \ar[dr]^{\psi_1} \\ C \ar[ur]^{\varphi_1} \ar[dr]_{\varphi_2} & & G \\ & G_2 \ar[ur]_{\psi_2}
	}
	\end{equation*}
	and satisfying the following universal property:
	For every group $H$ with homomorphisms $\rho_i \colon G_i \to H$, $i = 1,2$, satisfying the above commutative diagram (with $\rho_i$ instead of $\psi_i$),
	there exists a unique homomorphism $\phi \colon G \to H$ such that for $i = 1,2$ the diagram
	\begin{equation*} \label{amaldiag2}
	\xymatrix{ G_i \ar[r]^{\psi_i} \ar[dr]_{\rho_i} & G \ar[d]^{\phi}  \\   & H
	}
	\end{equation*}
	 commutes.
	We denote the amalgam by $G_1 \ast_C G_2$, omitting $\phi_1$ and $\phi_2$ from the notation.
\end{definition}

The amalgamated product often shows up in the context of fundamental groups, as a consequence of the famous theorem of Seifert--van Kampen.
Uniqueness of $G$ is shown via the usual general nonsense argument. Existence is granted by the following construction. Define $N \coloneqq  \lpres \{\phi_1(c)\phi_2(c)^{-1} \mid c \in C\} \rpres \trianglelefteq G_1 \ast G_2$. Here $G_1 \ast G_2$ denotes the free product, which is the amalgated product with respect to the trivial homomorphisms $\{1\} \to G_1$ and $\{1\} \to G_2$, and $\lpres \cdot \rpres$ denotes again the normal closure.
Then $G_1 \ast_C G_2 \cong (G_1 \ast G_2) / N$.

The next definition is that of an \emph{HNN-extension} of a group, named after Higman--Neumann--Neumann. The construction forces two isomorphic subgroups of a group $G$ to be conjugate via a new element.

\begin{definition}
	Let $G$ be a group, let $H \leq G$ be a subgroup and $\alpha \colon H \to G$ an injective homomorphism. The \emph{HNN-extension of $G$ relative to $\alpha$} is the group $G \ast_\alpha \coloneqq  (G \ast \ZZ) / N$, where $N \coloneqq  \lpres\{1_\ZZ h 1_\ZZ^{-1} = \alpha(h) \mid h \in H \} \rpres \trianglelefteq G \ast \ZZ$ and $1_\ZZ \in \ZZ$ denotes the standard generator.
%
\end{definition}

Also HNN-extensions found their importance as fundamental groups, namely of graphs of groups.

The usual end theorem by Stallings is nothing more than its generalization due to Abels \cite[Struktursatz 5.7]{a73} with the additional assumption that $G$ is discrete.
The non-discrete version also has a proof by Kr\"on--M\"oller \cite[Theorem 3.18]{KronMoller2008}.

\begin{theorem}[Stalling's end theorem for cgtdlc groups]
	\label{thm:StallEnd}
	Let $G$ be a cgtdlc group with more than one end.
	Then $G=G_1 \ast_C G_2$ or $G = G_1 \ast_\alpha$ for some compactly generated, open subgropus $G_1,G_2 \lneq G$ and a compact, open subgroup $C \leq G_1,G_2$. In the first case $C \hookrightarrow G_1,G_2$ is the restriction of the identity, in the second case $\alpha \colon C \to G_1$ is a continuous, open injection.
\end{theorem}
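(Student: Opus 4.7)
The plan is to apply Dunwoody's structure tree theory to a Cayley--Abels graph $\Gamma$ for $G$ and then invoke Bass--Serre theory. Fix such a $\Gamma$ with a base vertex $\alpha$ and compact, open stabilizer $B = G_\alpha$. By quasi-isometry invariance of the space of ends (Proposition~\ref{prop:CAqi}) the graph $\Gamma$ itself has more than one end, so there is a finite set $F \subset \E\Gamma$ of edges whose removal leaves at least two infinite connected components. The approach is to convert the combinatorics of such cuts, together with their $G$-translates, into an action of $G$ on a simplicial tree $T$, and then read off the amalgam or HNN-structure from $T/G$.

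First I would construct the structure tree. Following Dunwoody one shows that, starting from a cut $F$ of minimal size separating two chosen ends, there is a $G$-invariant family $\mathcal{C}$ of cuts in $\Gamma$ that is \emph{nested}: for any $C_1, C_2 \in \mathcal{C}$ and any choice of sides, one side of $C_1$ is contained in one side of $C_2$, contains it, or is disjoint from it. A nested $G$-invariant family of cuts gives rise to a tree $T$ on which $G$ acts without inversions: its vertices are the \emph{blocks} of $\Gamma$ (equivalence classes of vertices that are never separated by a $C \in \mathcal{C}$), and its edges correspond to the cuts in $\mathcal{C}$. The hard part is precisely the construction of $\mathcal{C}$, which rests on Dunwoody's combinatorial argument that any thin enough cut can be modified within its $G$-orbit to be nested with all its translates; I would borrow this input from the discrete graph-theoretic literature, noting that it applies verbatim to the locally finite graph $\Gamma$ regardless of whether the acting group is discrete.

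Second I would analyse stabilizers of $T$ inside $G$. Because $\Gamma$ is locally finite and $F$ is finite, the pointwise stabilizer in $G$ of any cut $C \in \mathcal{C}$ is an intersection of finitely many compact, open vertex stabilizers and is thus compact and open; its setwise stabilizer is then also compact and open, as it is a finite extension of the pointwise stabilizer. Hence every edge stabilizer in $T$ is compact and open, which will provide the compact, open subgroup $C$ in the statement. A block has open setwise stabilizer in $G$ since it contains an open vertex stabilizer of $\Gamma$, and a standard Reidemeister--Schreier-style argument (using that $G$ is compactly generated and that one needs only finitely many cosets to reach any cut adjacent to a fixed block) shows that this setwise stabilizer is itself compactly generated; these will play the roles of $G_1$ and $G_2$.

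Finally I would arrange that $T/G$ has exactly one edge. Thinness of $F$ and minimality in its $G$-orbit ensure that $\mathcal{C}$ is a single $G$-orbit of cuts, so $T$ has exactly one orbit of edges. Bass--Serre theory then yields directly the two cases: if the $G$-action on this edge preserves its two endpoints, $T/G$ is a segment and $G = G_1 \ast_C G_2$, where $G_1, G_2$ are the block stabilizers and $C$ is the edge stabilizer embedded via the restriction of identity; if $G$ swaps the two endpoints of the edge (or rather the two block orbits coincide), $T/G$ is a loop and $G = G_1 \ast_\alpha$ for an open injection $\alpha \colon C \to G_1$ coming from the identification of the two sides of the cut via the translating element. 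The main obstacle will be the nestedness construction and checking compact generation of the block stabilizers; compactness and openness of edge stabilizers, by contrast, follow directly from local finiteness and the Cayley--Abels property.
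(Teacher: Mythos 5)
The paper itself does not prove Theorem~\ref{thm:StallEnd}: it only records the statement and points to Abels \cite[Struktursatz 5.7]{a73} and Kr\"on--M\"oller \cite[Theorem 3.18]{KronMoller2008}. Your outline --- pass to a Cayley--Abels graph $\Gamma$, build a nested $G$-invariant family of finite edge cuts \`a la Dunwoody, form the structure tree $T$, and read the splitting off from Bass--Serre theory --- is precisely the route those references take, so there is no genuinely different method to compare. The specifically topological inputs are the ones you identify: the pointwise (hence setwise) stabilizer of a finite cut is an intersection of finitely many compact, open vertex stabilizers and is therefore compact and open, which makes the edge groups of $T$ compact and open; and your remark that the nestedness construction depends only on the locally finite graph and its automorphism group, not on the topology of $G$, is the correct justification for importing the discrete machinery wholesale.

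There is one step your sketch omits that is not cosmetic: you never show that $G_1$ and $G_2$ are \emph{proper} subgroups of $G$, equivalently that $G$ fixes no vertex of $T$. Without this, Bass--Serre theory can return the trivial decomposition (a single vertex stabilized by all of $G$) and the theorem asserts nothing. This is where ``more than one end'' and vertex-transitivity enter a second time: both sides of the chosen cut $C$ contain ends of $\Gamma$, so transitivity on $\V\Gamma$ produces translates $gC$ and $hC$ lying deep inside the two sides of $C$; nestedness forces the corresponding edges of $T$ onto opposite sides of the edge determined by $C$, and iterating shows $T$ has infinite diameter. Since $G$ is transitive on the edges of $T$, a globally fixed vertex would make all edges of $T$ equidistant from it and hence bound the diameter of $T$, a contradiction. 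You should also spell out the compact generation of the vertex groups rather than gesturing at Reidemeister--Schreier: the standard statement is that for a cocompact action of a compactly generated group on a tree with compactly generated (here compact) edge stabilizers, the vertex stabilizers are compactly generated. With those two points supplied, your outline matches the known proofs.
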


\begin{exercise}
	Verify this theorem for $\Aut(\T_d)$.
\end{exercise}

%

\subsection{Free subgroups} 

It is a well-known theorem attributed to Gromov, Stallings, Woess and others that a finitely generated group is quasi-isometric to a regular tree if and only if it contains a non-abelian free group as finite index subgroup.
Analogues of this result for cgtdlc groups were proven by Kr\"on--M\"oller.
The aim of these section is to state these analogues and give the necessary definitions.
All groups that are quasi-isometric to a tree are hyperbolic.

\begin{definition}
	Let $G$ be a locally compact group. A \emph{uniform lattice} in $G$ is a discrete subgroup $D \leq G$ such that the quotient $G/D$ is compact.
\end{definition}

\begin{remark}
	It is a well-known fact that only unimodular groups admit uniform lattices.
	For a reminder on unimodularity, see Section \ref{sect:modular}.
	The reason is that for a closed subgroup $H \leq G$ there exists a $G$-invariant measure on the quotient space $G/H$ if and only if the modular function on $H$ is the restriction of the modular function of $G$ to $H$; and for a discrete subgroup there exists such a measure on the quotient if this quotient is compact.
	This might not be a very satisfying reason because the statements inside are non-trivial to prove.
\end{remark}

\begin{remark}
	Let $G$ be a locally compact group and $H \leq G$ a subgroup such that the quotient $G/H$ is compact. Recall that $G$ is compactly generated if and only if $H$ is.
	Therefore, if a cgtdlc group allows for a cocompact lattice that is isomorphic to a free group, the free group will automatically be finitely generated.
\end{remark}

\begin{theorem}[\cite{KronMoller2008}, Theorem~3.28]
	\label{TQuasiTree}
	Let $G$ be a unimodular, cgtdlc group. Then $G$ is
		quasi-isometric to a regular tree if and only if $G$ has a uniform lattice isomorphic to a free group.
\end{theorem}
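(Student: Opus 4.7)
The plan is to prove each direction separately.

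\emph{Forward direction} (lattice $\Rightarrow$ quasi-isometric to a regular tree): Assume $G$ admits a uniform lattice $F \leq G$ isomorphic to a free group. By the second remark above, $F$ is finitely generated, say $F \cong F_n$, whose standard Cayley graph is a $2n$-regular tree (a point if $n=0$). The \v{S}varc--Milnor lemma, in its form for cocompact lattices in compactly generated locally compact groups (see \cite{CornulierDeLaHarpe2016}), implies that $F$ equipped with any word metric is quasi-isometric to $G$ equipped with the word metric from a compact generating set; and that word metric on $G$ is quasi-isometric to any Cayley--Abels graph of $G$ by (the proof of) Proposition~\ref{prop:CAqi}. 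Composing quasi-isometries yields that $G$ is quasi-isometric to a $2n$-regular tree.

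\emph{Converse direction}: Assume $G$ is quasi-isometric to a regular tree $\T_d$. If $d \leq 2$, then $\T_d$ has at most two ends, and by Proposition~\ref{prop:qi_graphs} so does $G$. If $G$ is compact, the trivial group is a free uniform lattice of rank $0$; otherwise Theorem~\ref{T2ends} supplies a cyclic cocompact discrete subgroup of $G$, which is free of rank $1$. Assume henceforth that $d \geq 3$, so $G$ has infinitely many ends.

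The plan is now twofold: (a) realize $G$ as acting cocompactly on a tree $T$ with compact open vertex stabilizers, and (b) extract from this action a free uniform lattice using unimodularity. For (a), I would iterate Stallings' theorem for cgtdlc groups (Theorem~\ref{thm:StallEnd}): each factor with more than one end splits again as an amalgamated product or HNN extension over a compact open subgroup. Termination (accessibility) is forced by the quasi-isometric rigidity to the locally finite tree $\T_d$, which bounds the complexity of iterated splittings in a Dunwoody-style manner. The associated Bass--Serre tree $T$ carries the required cocompact $G$-action with compact open vertex and edge stabilizers and is itself quasi-isometric to $\T_d$.

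\textbf{The main obstacle} is step (b), producing a \emph{free} uniform lattice rather than merely a uniform lattice. Unimodularity of $G$ translates, via balance of Haar measure in the cocompact action on $T$, into a numerical balance between the indices of edge stabilizers inside their adjacent vertex stabilizers; this is exactly the classical Bass--Kulkarni criterion ensuring existence of a uniform lattice. Concretely, I would pick a fundamental subtree $X \subset T$ meeting each vertex- and edge-orbit of $G$ exactly once, lift the edges of the finite quotient graph $G \backslash T$ not already in $X$ to elements $g_1, \ldots, g_k \in G$ that glue copies of $X$ together, and then \emph{perturb} these $g_i$ by elements of the relevant compact stabilizers so that the group $\Gamma = \langle g_1, \ldots, g_k \rangle$ acts on $T$ freely and cocompactly. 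Unimodularity is precisely what makes such a consistent perturbation possible. Once $\Gamma$ is known to act freely on the tree $T$, Bass--Serre theory identifies $\Gamma$ with the fundamental group of a finite graph with trivial vertex groups, hence with a free group; and cocompactness of $\Gamma \backslash T$ together with compactness of $G\backslash T$ implies that $\Gamma$ is a uniform lattice in $G$, completing the proof.
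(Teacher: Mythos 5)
The paper does not prove this theorem (it only cites Kr\"on--M\"oller), so I can only assess your argument on its own terms. Your forward direction is fine, and the overall architecture of the converse (produce a cocompact action on a locally finite tree with compact open stabilizers, then use unimodularity to extract a lattice) is the right one; the splitting-with-compact-factors input you sketch via iterated Stallings is in fact available as a black box in the surrounding text (Theorem~\ref{thm:QItoTreeNonunimod} and the theorem following Theorem~\ref{TQuasiTree}), so the hand-waving about accessibility could simply be replaced by a citation.

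The genuine gap is in your step (b). The group $\Gamma=\langle g_1,\dots,g_k\rangle$ generated by lifts of the edges of $G\backslash T$ outside a spanning subtree cannot, in general, be made into a uniform lattice by perturbing the $g_i$ inside compact stabilizers: the quotient $\Lambda\backslash T$ of a free uniform lattice $\Lambda$ is typically a proper finite covering (in the Bass sense of edge-indexed graphs) of $G\backslash T$, not $G\backslash T$ itself. Concretely, for $G=\Aut(\T_3)$ acting on $T=\T_3$ the quotient graph is a single edge, so your recipe produces at most one generator, and no group generated by one element acts cocompactly on $\T_3$; the actual free uniform lattice here is $F_2$ acting with quotient the theta graph. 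What unimodularity of $G$ really buys (Bass--Kulkarni) is that the edge-indexed quotient graph $(G\backslash T, i)$ is unimodular, which is the necessary and sufficient condition for the existence of \emph{some} uniform lattice $\Lambda\leq G$; one then observes that $\Lambda$, being finitely generated and acting on $T$ with finite vertex stabilizers, is virtually free, and a free finite-index subgroup of $\Lambda$ is still a uniform lattice in $G$. Replacing your perturbation construction by this two-step argument (existence of a lattice via Bass--Kulkarni, then passage to a free finite-index subgroup) closes the gap.
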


As a corollary, Kr\"on--M\"oller obtain that in Stallings' end theorem for cgtdlc groups (Theorem \ref{thm:StallEnd}), $G_1$ and $G_2$ can be taken compact.

\begin{theorem}[\cite{KronMoller2008}, Theorem 3.29]
	Let $G$ be a cgtdlc group quasi-isometric to a regular tree.
	Then $G=G_1 \ast_C G_2$ or $G = G_1 \ast_\alpha$ for some compact, open subgroups $G_1,G_2 \lneq G$ and a compact, open subgroup $C \leq G_1,G_2$. In the first case $C \hookrightarrow G_1,G_2$ is the restriction of the identity, in the second case $\alpha \colon C \to G_1$ is a continuous, open injection.
\end{theorem}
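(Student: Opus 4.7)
The strategy is to reduce via Stallings' end theorem for cgtdlc groups (Theorem~\ref{thm:StallEnd}) and use the hypothesis of quasi-isometry to a regular tree to control both the vertex groups that appear and the termination of any iterative refinement.

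Since $G$ is quasi-isometric to a regular tree $\T_d$, Proposition~\ref{prop:qi_graphs} gives a homeomorphism $\calE G \to \calE \T_d$. If $d \leq 2$, then $G$ has at most two ends, and Theorem~\ref{T2ends} provides a surjection $G \to \ZZ$ or $G \to D_\infty$ with compact open kernel $B$, from which the desired amalgamation or HNN presentation can be read off directly (the kernel together with the image's generators yields either $G = (B\rtimes\ZZ/2) *_B (B\rtimes\ZZ/2)$ or $G = B*_\alpha$). So assume $d \geq 3$, in which case $\calE G$ is a Cantor set and $G$ has infinitely many ends. I would then apply Theorem~\ref{thm:StallEnd} to obtain a first splitting $G = G_1 *_C G_2$ or $G = G_1 *_\alpha$ with $G_1, G_2$ open and compactly generated, and $C$ compact open.

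If the resulting $G_i$'s are already compact, we are done. Otherwise, the next step is to verify that each non-compact $G_i$ is itself a cgtdlc group quasi-isometric to a regular tree, so the argument can be iterated. This would be done by considering the Bass--Serre tree $T$ of the splitting: $G$ acts on $T$ cocompactly with compact open edge stabilizers, and a routine orbit-counting argument shows $T$ is itself quasi-isometric to $\T_d$. Each $G_i$ is a vertex stabilizer acting cocompactly on a $G_i$-invariant subtree, exhibiting it as a cgtdlc group with a Cayley--Abels graph quasi-isometric to a regular tree. Recursively applying Stallings then produces a graph-of-groups decomposition of $G$ with compact open edge groups.

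The main obstacle is termination of this recursion. For this, I would invoke Theorem~\ref{TQuasiTree} to obtain a uniform free lattice $F \leq G$, having first verified unimodularity of $G$ (which follows for $G$ quasi-isometric to a tree by a balls-counting argument in a Cayley--Abels graph, using symmetry of growth). The finitely generated free group $F$ is accessible by Dunwoody's theorem, so any reduced decomposition of $F$ over trivial edge groups has complexity bounded in terms of $\mathrm{rank}(F)$. The iterated splittings of $G$ restrict to refinements of a decomposition of $F$ over trivial subgroups, and accessibility of $F$ thus bounds the iteration. At termination, $G$ is the fundamental group of a finite graph of compact open groups. The hardest part will be the final repackaging of this graph of groups as a \emph{single} amalgamation or HNN extension, as claimed in the statement; I expect this to require choosing the Bass--Serre tree at the end so that $G$ acts with exactly one edge orbit and compact open vertex stabilizers, possibly by collapsing edges of the quotient graph and using the hypothesis on QI to a tree to rule out obstructions.
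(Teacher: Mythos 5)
The paper does not prove this theorem; it only cites Kr\"on--M\"oller and frames it as a corollary of Theorem~\ref{TQuasiTree}, so your proposal can only be judged on its own terms. Judged so, it has a fatal gap at exactly the point where you place the main weight. Your termination argument requires a uniform free lattice via Theorem~\ref{TQuasiTree}, and to get access to that theorem you assert that unimodularity ``follows for $G$ quasi-isometric to a tree by a balls-counting argument.'' This is false: unimodularity is not implied by being quasi-isometric to a regular tree. The stabilizer $\Aut(\T_d)_\xi$ of an end $\xi$ acts vertex-transitively on $\T_d$ with compact open stabilizers, so $\T_d$ is a Cayley--Abels graph for it and it is quasi-isometric to a regular tree; yet by Lemma~\ref{lem:modularfct} a translation towards $\xi$ has modular value $d-1$, so this group is not unimodular, and (as remarked in the paper) non-unimodular groups admit no lattices at all. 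The theorem as stated has no unimodularity hypothesis, so your termination mechanism simply does not exist in these cases, even though the conclusion does hold there (e.g.\ $\Aut(\T_d)_\xi$ is an ascending HNN extension of a compact open subgroup). This is why the correct route in the literature goes through the structure tree of a Cayley--Abels graph (cuts of uniformly bounded size separating the thin ends), which produces the whole tree action with compact open vertex and edge stabilizers in one step and without unimodularity --- this is the content of Theorem~\ref{thm:QItoTreeNonunimod} --- rather than through iterated applications of Theorem~\ref{thm:StallEnd}.

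Two further steps are also unjustified. First, to iterate Stallings you claim each vertex group $G_i$ is again quasi-isometric to a regular tree because it ``acts cocompactly on a $G_i$-invariant subtree'' of the Bass--Serre tree; but $G_i$ fixes a vertex of that tree, so this is vacuous, and more to the point $G_i$ is only an open, compactly generated subgroup of $G$ --- it is neither cocompact in $G$ nor quasi-isometrically embedded in general, so quasi-isometry to a tree does not pass to it without a genuine argument. (Likewise, the Bass--Serre tree of the first splitting is not quasi-isometric to $G$ unless the vertex groups are already compact.) Second, the ``repackaging'' you flag as the hardest part is worse than hard: collapsing a graph of groups with more than one edge always produces a splitting whose vertex groups are themselves non-compact fundamental groups of graphs of groups, so one cannot in general extract a \emph{single} amalgam or HNN extension with compact vertex groups this way (already in the discrete case $F_2$, quasi-isometric to the $4$-regular tree, admits no such one-edge splitting over its only compact subgroup, the trivial one). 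You should compare carefully with the statement in the cited source before trying to force this last step.
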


In the situation of the above theorem, Mosher, Sageev and Whyte showed that not only is $G$ quasi-isometric to a tree, it also acts nicely on a tree.
The result can also be found in \cite[Corollary~3.30]{KronMoller2008}

\begin{theorem}[\cite{MSW03} Theorem 9]
	Let $G$ be a cgtdlc group admitting a uniform lattice isomorphic to a  non-abelian free group.
	Then, there exists a locally finite tree $\T$ and an action of $G$ on $\T$ with compact, open vertex stabilizers and compact kernel.
\end{theorem}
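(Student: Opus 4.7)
The plan is to reduce this to the structure theorem stated immediately above and then apply Bass--Serre theory to manufacture the required tree. First, since $G$ admits a uniform lattice, $G$ is unimodular (as noted in the remark preceding Theorem \ref{TQuasiTree}). Together with the hypothesis that this lattice is a non-abelian free group, Theorem \ref{TQuasiTree} gives that $G$ is quasi-isometric to a regular tree of valency at least $3$. Then the theorem stated just before the one we are proving produces a splitting of $G$ as either an amalgamated product $G = G_1 \ast_C G_2$ or an HNN-extension $G = G_1 \ast_\alpha$, where $G_1,G_2$ are compact, open, proper subgroups of $G$ and $C \leq G_1,G_2$ is compact and open (and $\alpha\colon C \to G_1$ is a continuous, open injection in the HNN case).

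Second, I would attach to this splitting its Bass--Serre tree $\T$. In the amalgam case set $\V\T \coloneqq G/G_1 \sqcup G/G_2$ with edges indexed by $G/C$, the edge $gC$ joining $gG_1$ to $gG_2$; in the HNN case set $\V\T \coloneqq G/G_1$, with edges $gC$ joining $gG_1$ to $g\tau G_1$ for a stable letter $\tau$. That $\T$ is a tree is a standard consequence of the normal-form theorems for amalgams and HNN-extensions; $G$ acts on $\T$ by left translation, and this step is purely group-theoretic, so it carries over verbatim from the classical discrete setting.

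Third, I would verify the three properties required by the statement. \emph{Local finiteness}: the valency of the vertex $gG_i$ equals $[G_i:C]$ (respectively $[G_1:\alpha(C)]$), and since $C$ is open inside the compact group $G_i$, these indices are finite. \emph{Compact, open vertex stabilizers}: the stabilizer of $gG_i$ under the left-translation action is the conjugate $gG_ig^{-1}$, which is compact and open in $G$; in particular the action is continuous. \emph{Compact kernel}: the kernel of $G \to \Aut(\T)$ is contained in each vertex stabilizer, hence is a closed subgroup of the compact group $G_1$, and so is compact.

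The only point requiring a little care is packaging the standard Bass--Serre tree construction in the tdlc setting and checking that topological hypotheses behave as expected; but because the construction is group-theoretic and the sole topological input is the finiteness of $[G_i:C]$ (guaranteed by openness of $C$ in a compact group), there is no genuine obstacle. I expect the real intellectual content of the argument to be hidden upstream, inside the preceding theorem that upgrades the Stallings decomposition to one with compact vertex groups.
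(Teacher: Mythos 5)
Your proposal is correct and follows exactly the route the paper indicates: the text derives this statement from the preceding theorem (\cite{KronMoller2008}, Theorem~3.29) exactly as you do, by passing to the Bass--Serre tree of the splitting $G = G_1 \ast_C G_2$ or $G = G_1 \ast_\alpha$ with compact, open vertex groups, where local finiteness, openness and compactness of stabilizers, and compactness of the kernel all follow from $C$ being open in the compact groups $G_i$. You are also right that the substantive content lives upstream in Theorems~\ref{TQuasiTree} and 3.29, which upgrade the Stallings-type decomposition to one with compact vertex groups.
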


There is also a statement without the hypothesis of unimodularity. For completeness we are stating it here, for the definition of graph of groups we refer to Serre's book \cite{s03}.

\begin{theorem}[\cite{KronMoller2008}, Theorem~3.28]
	\label{thm:QItoTreeNonunimod}
	Let $G$ be a cgtdlc group. Then, $G$ is
	quasi-isometric to a tree if and only if $G$ has an expression as a
	fundamental group of a finite graph of groups such that all the vertex and edge
	groups are compact, open subgroups of $G$.
\end{theorem}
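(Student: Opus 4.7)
The plan is to prove the two implications separately. The forward direction is a direct application of Bass--Serre theory; the reverse direction uses Stallings' end theorem iteratively, with the termination of the iteration being the crux of the argument.

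First I would handle the forward direction. Given a finite graph of groups $(\mathcal{G}, Y)$ with fundamental group $G$ whose vertex groups $G_v$ and edge groups $G_e$ are compact open subgroups of $G$, form the associated Bass--Serre tree $T$. The $G$-action on $T$ has vertex (resp.\ edge) stabilizers equal to conjugates of vertex (resp.\ edge) groups of $\mathcal{G}$, hence compact and open. The valency at a vertex $\tilde v \in \V T$ lying over $v \in Y$ equals $\sum_{e \ni v} [G_v : G_e]$, which is finite because each $G_e$ is an open, hence finite-index, subgroup of the compact group $G_v$; thus $T$ is locally finite. Cocompactness of the action is immediate from finiteness of $Y$. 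By the Milnor--Svarc-type argument stated in the excerpt as the proposition from \cite{BMW12}, $G$ is then quasi-isometric to the $1$-complex $T$, which is a tree.

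Next I would handle the reverse direction. Suppose $G$ is quasi-isometric to a tree $T_0$. By Proposition \ref{prop:qi_graphs}, the end space of any Cayley--Abels graph of $G$ (well-defined up to homeomorphism by Proposition \ref{prop:CAqi}) is homeomorphic to $\calE T_0$, and hence is either empty, a two-point set, or a Cantor set. If $G$ is compact we use the trivial one-vertex graph of groups with $G$ as vertex group and no edges. If $G$ has two ends, Theorem \ref{T2ends} gives a continuous surjection onto $\ZZ$ or the infinite dihedral group with compact open kernel $K$, producing a graph of groups with a single edge and vertex group $K$ (an HNN-extension in the cyclic case, an amalgam in the dihedral case). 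In the Cantor case, apply Stallings' end theorem (Theorem \ref{thm:StallEnd}) to obtain a splitting $G = G_1 \ast_C G_2$ or $G = G_1 \ast_\alpha$ over a compact open subgroup $C$, with the $G_i$ open and compactly generated in $G$. Iteratively apply Stallings' theorem to the factors and assemble the splittings into a graph of groups whose edge and vertex groups are compact and open in $G$.

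The hard part will be ensuring that this iterative splitting process terminates after finitely many steps, producing a \emph{finite} graph of groups. This is an accessibility statement. Since $T_0$ is coarsely simply connected and coarse simple connectedness is a quasi-isometry invariant, $G$ is coarsely simply connected, which for a cgtdlc group is equivalent to compact presentability; so $G$ is compactly presented. The proof therefore requires a tdlc analogue of Dunwoody's accessibility theorem: a compactly presented cgtdlc group admits only boundedly complex splittings over compact open subgroups. An alternative route is to observe that a graph quasi-isometric to a tree is accessible in the graph-theoretic sense (finitely many orbit-classes of minimal cuts separating the ends), and transfer this accessibility to the splitting process on $G$. Once termination is in hand, the resulting finite graph of groups has the required compact open edge and vertex groups, completing the proof.
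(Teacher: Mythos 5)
The paper states this theorem without proof, citing Kr\"on--M\"oller, so the comparison is against the cited source's argument. Your forward direction is correct and is the standard one: the Bass--Serre tree is locally finite because the edge groups are open, hence of finite index, in the compact vertex groups, and the cocompact action with compact open stabilizers yields the quasi-isometry via the quoted proposition from \cite{BMW12}.

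The reverse direction has the right overall shape but two genuine gaps beyond the one you flag. First, you iterate Stallings' theorem (Theorem \ref{thm:StallEnd}) on the factors $G_1, G_2$, but Stallings only applies to groups with more than one end, and the iteration must terminate in \emph{compact} (i.e.\ zero-ended) pieces, not merely one-ended ones. For this you need to know that the factors appearing at every stage are themselves quasi-isometric to trees (so that their end spaces have cardinality $0$, $2$ or $2^{\aleph_0}$, never $1$); that requires showing the vertex groups of a splitting over a compact open subgroup are undistorted/quasi-convex in $G$, which you never address. Second, the termination itself: you correctly identify that a tdlc analogue of Dunwoody accessibility is needed, but you leave it as an unproven input, and it is not available in the paper or in the cited source as a black box --- this is precisely the crux, so the proof is incomplete where it matters most. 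The actual argument of Kr\"on--M\"oller avoids both issues by not iterating at all: for a locally finite vertex-transitive graph quasi-isometric to a tree, the minimal edge-cuts separating ends have uniformly bounded size and fall into finitely many $G$-orbits, and the associated structure tree (in the sense of Dunwoody's cut theory) carries a $G$-action with compact open \emph{edge} stabilizers and, crucially, compact open \emph{vertex} stabilizers, because the regions between cuts are bounded exactly when the graph is quasi-isometric to a tree. The finite graph of groups is then read off from the quotient of the structure tree in one step. So your route is not wrong in spirit, but as written it defers the essential difficulty to an unestablished accessibility theorem and omits the argument that the terminal pieces are compact rather than one-ended.
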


\section{Locally compact specialties}
\label{sect:lc}

In this section we talk about concepts defined for tdlc (and sometimes locally compact, Hausdorff in general) groups that are trivial for discrete groups.
The author hopes to convince the reader that Cayley--Abels graphs are more than just a way of applying methods that were originally developed for finitely generated groups.

\subsection{Essential Chief Series}

The content of this subsection is after work of Reid--Wesolek \cite{ReidWesolek18}.
The author recommends the summary paper \cite{Reid18}.

The aim of this section is to ``decompose" a cgtdlc group into pieces that are better understood. It is common in various branches of group theory to apply this strategy via a subnormal series; just think of the composition series for finite groups.

\begin{definition}
	Let $G$ be a tdlc group. An \emph{essentially chief series} of $G$ is a finite series
	\[
	\{1\} = N_0 \triangleleft N_1 \triangleleft \dots \triangleleft N_n = G
	\]
	of closed, normal subgroups of $G$ such that for all $1 \leq i \leq n$ the quotient $N_i/N_{i-1}$ is either
	\begin{itemize}
		\item compact,
		\item discrete, or
		\item a \emph{chief factor} of $G$, i.e., there is no closed, normal subgroup $N \triangleleft G$ with $N_{i-1} \lneq N \lneq N_i$.
	\end{itemize}
\end{definition}

A chief factor does not have to be topologically simple. There might exist a closed, normal subgroup of $N_i$ containing $N_{i-1}$ that is not normal in $G$.
Also, there is no reason why the quotient $N_i/N_{i-1}$ should be compactly generated even if $G$ is.

The aim of this section is to prove the following.

\begin{theorem}[\cite{ReidWesolek18} Lemma 4.3]\label{thm:esschiefseries}
	Let $G$ be a cgtdlc group. Then, $G$ has an essentially chief series.
\end{theorem}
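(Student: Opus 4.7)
The plan is to prove the theorem by induction on $d$, the minimum valency over Cayley--Abels graphs of $G$. The base case $d = 0$ is immediate: a connected graph of valency $0$ has a single vertex, so $G$ is compact and $\{1\} \triangleleft G$ is an essentially chief series.

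For the inductive step, fix a Cayley--Abels graph $\Gamma$ of valency $d \geq 1$. I aim to produce closed normal subgroups $\{1\} = N_0 \triangleleft N_1 \triangleleft N_2 \triangleleft N_3 \triangleleft G$ whose successive factors are compact, discrete, and chief respectively, and such that $G/N_3$ admits a Cayley--Abels graph of valency strictly less than $d$. Then applying the inductive hypothesis to $G/N_3$ and lifting via the correspondence theorem completes the series. First, set $N_1 = K$, the kernel of the action of $G$ on $\Gamma$: it is contained in the compact stabiliser $G_\alpha$ and is closed, hence compact. The quotient $G/K$ then acts faithfully on $\Gamma$.

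Next, let $\mathcal{F}$ be the family of closed normal subgroups $N \triangleleft G$ containing $K$ with $\val(N \setminus \Gamma) = d$. By Proposition \ref{prop:quotient_CA_by_normal} this is the family of $N$ whose image in $G/K$ has trivial local action on $\Gamma$; combined with faithfulness of the action of $G/K$, this forces the vertex stabilisers in $N/K$ to be trivial, so $N/K$ is discrete. The family $\mathcal{F}$ is closed under taking the closure of directed unions: if $n \in \overline{\bigcup_i N_i}$ were to map a neighbour $\beta$ of $\alpha$ to a distinct neighbour $\beta'$, the open coset $\{g \in G : g\beta = \beta'\}$ would meet some $N_i$, contradicting $N_i \in \mathcal{F}$. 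By Zorn's lemma, $\mathcal{F}$ admits a maximum element $N_2$, and $N_2/N_1$ is discrete.

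Finally, write $\overline{G} := G/N_2$ and $\overline{\Gamma} := N_2 \setminus \Gamma$, a Cayley--Abels graph for $\overline{G}$ of valency $d$. By maximality of $N_2$, every non-trivial closed normal subgroup of $\overline{G}$ has non-trivial local action on $\overline{\Gamma}$, and hence forces a strict drop in valency upon further quotient. The goal is to produce a minimal non-trivial closed normal subgroup $\overline{L} \triangleleft \overline{G}$, for then $N_3$, its preimage in $G$, yields a chief factor $N_3/N_2$ of $G$ and a Cayley--Abels graph of valency at most $d-1$ for $G/N_3$. The main obstacle is that descending chains of closed normal subgroups in a tdlc group need not stabilise, but compactness rescues us. Given a chain $\overline{L}_1 \supseteq \overline{L}_2 \supseteq \dots$ of non-trivial closed normal subgroups, pick for each $i$ some $n_i \in \overline{L}_i$ and distinct $\beta_i, \beta_i' \in \N(\overline{\alpha})$ with $n_i \beta_i = \beta_i'$. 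Since $\N(\overline{\alpha})$ is finite, we may pass to a subsequence along which $(\beta_i, \beta_i') = (\beta, \beta')$ is constant, so the $n_i$ lie in the compact coset $\{g \in \overline{G} : g\beta = \beta'\}$. A convergent subsequence of $(n_i)$ limits to a non-trivial element of $\bigcap_i \overline{L}_i$, proving this intersection is non-trivial. Zorn's lemma in reverse inclusion then yields the desired minimal $\overline{L}$, closing the induction.
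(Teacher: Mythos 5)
Your overall strategy is exactly the paper's: induction on the minimal valency of a Cayley--Abels graph, one application of Zorn's lemma to produce a maximal closed normal subgroup preserving the valency (yielding the compact and discrete factors) and a second, in reverse inclusion, to produce a chief factor above it, with the chain conditions verified as in Proposition \ref{prop:for_ess_chief_series}: openness of the cosets $\{g \mid g\beta=\beta'\}$ for unions, compactness of those cosets for intersections. The one substantive inaccuracy is your repeated identification of ``$\val(N\quot\Gamma)=\val(\Gamma)$'' with ``$N$ has trivial local action''. By Lemma \ref{lem:quotient_by_normal}(4) the correct criterion is that no element of $N$ carries a vertex of the closed ball $\V\B(\alpha,1)$ to a \emph{different} vertex of $\V\B(\alpha,1)$; trivial local action is only a necessary consequence of this (cf.\ the exercise following Proposition \ref{prop:quotient_CA_by_normal}), because the valency can also drop via elements of $N$ that fix no vertex. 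Consequently your claim that every non-trivial closed normal $\overline L\triangleleft\overline G$ has non-trivial local action is unjustified (it fails, for instance, whenever $\overline L$ acts freely on the vertices), and with it the assertion that one can choose $n_i\in\overline L_i$ carrying one \emph{neighbour} of $\overline\alpha$ to another: what maximality of $N_2$ actually supplies is an element of $\overline L_i$ identifying two distinct vertices of $\V\B(\overline\alpha,1)$, one of which may be $\overline\alpha$ itself. The repair is immediate --- let the pairs $(\beta_i,\beta_i')$ range over the finite set of ordered pairs of distinct vertices of $\V\B(\overline\alpha,1)$; the sets $\{g\mid g\beta=\beta'\}$ are still compact open cosets, so both Zorn verifications go through unchanged, and the same substitution fixes your closure-of-union step for $\mathcal F$.

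Two smaller points. Zorn's lemma quantifies over arbitrary chains, not descending sequences, and sequential compactness is not available without metrizability; replace the subsequence extraction by Cantor's intersection theorem applied to the chain of non-empty compact sets $\{g\in\overline L\mid g\beta=\beta'\}$, exactly as in the paper's proof of Proposition \ref{prop:for_ess_chief_series}(2). You should also dispose of the degenerate case in which $\overline G$ has no non-trivial closed normal subgroup at all (then $G/N_2$ is trivial or itself a chief factor and the series terminates). Apart from these repairs your proof is the paper's, differing only cosmetically: you anchor the family $\mathcal F$ at $K$, so the compact term of the series is $K$ rather than the paper's $H\cap K$, and you inline the content of Proposition \ref{prop:for_ess_chief_series} instead of citing it.
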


We will need a bit of preparation before going to the proof.

\begin{lemma}\label{lem:discrete_factor}
	Let $G$ be a cgtdlc group and $N \triangleleft G$ a closed, normal subgroup.
	Let $\Gamma$ be a Cayley--Abels graph for $G$ and $K$ the kernel of the action of $G$ on $\Gamma$. Assume that the local action of $N$ on $\Gamma$ is trivial.
	
	Then, $N \cap K$ is a compact, normal subgroup of $G$ and the quotient $N/(N \cap K)$ is discrete.
\end{lemma}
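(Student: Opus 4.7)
The plan is to reduce everything to the observation that, under the triviality of the local action of $N$, the intersection $N \cap K$ coincides with the vertex stabilizer $N_\alpha$ for any $\alpha \in \V\Gamma$. Once this is established, the three conclusions (compactness, normality, discreteness of the quotient) fall out immediately.

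First, I would fix a vertex $\alpha \in \V\Gamma$ and show that $N_\alpha \leq K$. The hypothesis that the local action of $N$ at $\alpha$ is trivial says precisely that $N_\alpha = N_\alpha \cap N_{\N(\alpha)}$, i.e., every element of $N_\alpha$ fixes every neighbour of $\alpha$. For each $\beta \in \N(\alpha)$ we then have $N_\alpha \leq N_\beta$; since $N$ is normal, the local action of $N$ at $\beta$ is also trivial (this is where the normality hypothesis is actually used, see Exercise \ref{exer:local_action_normal_subgp}), so $N_\beta$ fixes $\N(\beta)$ pointwise, and hence so does $N_\alpha$. Iterating this and using that $\Gamma$ is connected, I conclude by induction on distance from $\alpha$ that $N_\alpha$ fixes every vertex of $\Gamma$, hence lies in $K$.

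Next I would observe the reverse inclusion: any element of $N \cap K$ fixes $\alpha$, so $N \cap K \leq N_\alpha$. Combined with the previous step this gives $N \cap K = N_\alpha$. From here the three claims are essentially bookkeeping. For compactness, $N_\alpha = N \cap G_\alpha$ is a closed subgroup of the compact group $G_\alpha$, hence compact. Normality in $G$ is automatic since $N$ and $K$ are both normal in $G$. For discreteness of $N/(N\cap K)$, it suffices to check that $N \cap K$ is open in $N$; but $G_\alpha$ is open in $G$, so $N_\alpha = N \cap G_\alpha$ is open in $N$, giving the result.

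I expect no real obstacle in this proof: the only step requiring a moment's care is the inductive spread of the trivial local action from $\alpha$ through the whole graph, and in particular invoking Exercise \ref{exer:local_action_normal_subgp} to ensure that the triviality assumption at one vertex really does propagate via normality (it is worth being explicit here, since the statement of the lemma only asserts that the local action of $N$ is trivial, which as a well-defined notion for normal subgroups already means triviality at every vertex).
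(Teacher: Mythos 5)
Your proof is correct and follows essentially the same route as the paper: the key step in both is that triviality of the local action of $N$ at every vertex forces $N_\alpha \subset K$ (so $N\cap K$ contains the open subgroup $N_\alpha$ of $N$, hence is open, hence the quotient is discrete), with compactness and normality coming for free from $K$ being compact and $N,K$ being closed normal subgroups. You merely spell out the connectedness/propagation argument that the paper leaves implicit, and additionally record the (unneeded but true) equality $N\cap K = N_\alpha$.
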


\begin{proof}
	It is clear that the intersection of closed, normal subgroups is closed and normal. In addition it is compact if one of the subgroups is.
	Recall that $N/(N \cap K)$ is discrete if and only if $N \cap K$ is open in $N$.
	Let $\alpha \in \V\Gamma$. Since the local action of $N$ is trivial, we know that $N_{\alpha} \subset N \cap K$. But $N_\alpha$ is open in $N$ and we are done.
\end{proof}

\begin{exercise}\label{exer:quot_CA_degrees}
	Let $N \triangleleft H \triangleleft G$ be closed, normal subgroups of a cgtdlc group $G$. Let $\Gamma$ be a Cayley--Abels graph for $G$ and $\alpha \in \V\Gamma$.
	Show that $\val(N \quot \Gamma) = \val(H \quot \Gamma)$ if and only if
	$N \alpha \cap \B(\alpha,2) = H \alpha \cap \B(\alpha,2)$.
    \emph{Hint:} You can use Proposition \ref{prop:quotient_CA_by_normal}.
\end{exercise}

\begin{proposition}[\cite{ReidWesolek18} Lemma 3.1] \label{prop:for_ess_chief_series}
	Let $G$ be a cgtdlc group and let $\Gamma$ be a Cayley--Abels graph for $G$.
	Let $\mathcal{C}$ be a chain of closed, normal subgroups of $G$, i.e., a set of subgroups totally ordered by inclusion.
	\begin{enumerate}
		\item Let $H \coloneqq  \overline{\bigcup_{N \in \mathcal{C}}N} $. Then $\val(H\quot \Gamma) = \min\{\val(N\quot \Gamma) \mid N \in \mathcal{C} \}$.
		\item Let $H' \coloneqq  \bigcap_{N \in \mathcal{C}} N$. Then $\val(H'\quot \Gamma) = \max\{\val(N\quot \Gamma) \mid N \in \mathcal{C} \}$.
	\end{enumerate}
\end{proposition}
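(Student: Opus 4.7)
The plan is to reduce both parts to comparing orbit slices through $\B(\alpha,2)$ for a fixed $\alpha \in \V\Gamma$ and then appealing to Exercise~\ref{exer:quot_CA_degrees}, which translates equality of valencies into equality of such slices. First I would record that the set $\{\val(N\quot\Gamma) : N \in \mathcal{C}\}$ sits inside $\{0,1,\dots,\val(\Gamma)\}$ by Proposition~\ref{prop:quotient_CA_by_normal}, so both $v_{\min}$ and $v_{\max}$ exist and are attained in $\mathcal{C}$. Next I would observe the monotonicity principle: if $M \subset N$ are closed and normal in $G$, then $N\quot\Gamma$ is a further quotient of the Cayley--Abels graph $M\quot\Gamma$ (by the image of $N$ in $G/M$), so Lemma~\ref{lem:quotient_by_normal}(4) yields $\val(N\quot\Gamma) \leq \val(M\quot\Gamma)$. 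This immediately delivers the easy inequalities $\val(H\quot\Gamma) \leq v_{\min}$ and $\val(H'\quot\Gamma) \geq v_{\max}$. In each part I will fix $N_0 \in \mathcal{C}$ attaining the extremal valency; by monotonicity every $N \in \mathcal{C}$ lying on the appropriate side of $N_0$ in the chain also attains it, and Exercise~\ref{exer:quot_CA_degrees} then forces $N\alpha \cap \B(\alpha,2) = N_0\alpha \cap \B(\alpha,2)$ for all such $N$.

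For Part~1 the key identity will be $H\alpha = \bigcup_{N \in \mathcal{C}} N\alpha$. To prove it, given $h \in H$ one takes a net in $\bigcup_{N \in \mathcal{C}} N$ converging to $h$; since $G_\alpha$ is open, the net eventually lies in $hG_\alpha$ and so sends $\alpha$ to $h\alpha$. Now, for $\beta \in H\alpha \cap \B(\alpha,2)$ I pick $N \in \mathcal{C}$ with $\beta \in N\alpha$: if $N \subset N_0$ then $\beta \in N_0\alpha$ is automatic, while if $N \supset N_0$ then $N$ also realises $v_{\min}$ and the slice coincidence above places $\beta$ in $N_0\alpha \cap \B(\alpha,2)$. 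Hence $H\alpha \cap \B(\alpha,2) = N_0\alpha \cap \B(\alpha,2)$, and a final application of Exercise~\ref{exer:quot_CA_degrees} (with $N_0 \subset H$) concludes.

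For Part~2 the union identity is unavailable, and this is the step I expect to be the main obstacle. The substitute is compactness of $G_\alpha$, which is a defining feature of Cayley--Abels graphs. Given $\beta = n_0\alpha \in N_0\alpha \cap \B(\alpha,2)$, I would check that $n_0 G_\alpha \cap N \neq \emptyset$ for every $N \in \mathcal{C}$: if $N \supset N_0$ this is witnessed by $n_0$ itself, and if $N \subset N_0$ the slice coincidence gives $\beta \in N\alpha$, so some $n \in N$ satisfies $n\alpha = n_0\alpha$, placing $n$ in $n_0 G_\alpha \cap N$. Since $\mathcal{C}$ is totally ordered, the nonempty closed subsets $\{n_0 G_\alpha \cap N\}_{N \in \mathcal{C}}$ form a nested family inside the compact space $n_0 G_\alpha$, and the finite intersection property yields $n_0 G_\alpha \cap H' = \bigcap_{N \in \mathcal{C}}(n_0 G_\alpha \cap N) \neq \emptyset$, so $\beta \in H'\alpha$. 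Thus $H'\alpha \cap \B(\alpha,2) = N_0\alpha \cap \B(\alpha,2)$, and Exercise~\ref{exer:quot_CA_degrees} finishes. The crucial point, which distinguishes this half from Part~1, is that we trade a continuity argument (openness of $G_\alpha$) for a compactness argument (compactness of $G_\alpha$), exactly as one expects when passing from unions to intersections.
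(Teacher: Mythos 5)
Your proof is correct and follows essentially the same route as the paper's non-Chabauty proof: both parts funnel through Exercise~\ref{exer:quot_CA_degrees} and the finite ball $\B(\alpha,2)$, with Part~1 resting on openness of $G_\alpha$ (density of $\bigcup_{N}N$ in $H$) and Part~2 on compactness of $G_\alpha$ via a nested-intersection argument on the chain of closed sets $n_0G_\alpha \cap N$, which is exactly the paper's appeal to Cantor's intersection theorem stated in its contrapositive form. The only difference is organizational: you fix a single $N_0$ attaining the extremal valency and compare all slices to it, whereas the paper selects a witness $N^\beta$ for each $\beta$ and takes the maximum (resp.\ minimum) over the finitely many witnesses.
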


\begin{proof}[Proof sketch via the Chabauty topology.]
	The conditions imply that $H$ and $H'$ are limit points of $\mathcal{C}$, viewed as subspace of the Chabauty space of $G$.
	``The orbit of a given vertex has a given intersection with a given finite set" is a Chabauty-clopen condition.
	Use Exercise \ref{exer:quot_CA_degrees}.
\end{proof}

\begin{proof}[Proof without the Chabauty topology.]
	We first prove 1. Note that $\leq$ is clear. Let $\alpha \in \V\Gamma$.
	We have to find $N \in \mathcal{C}$ such that $\val(H \quot \Gamma) = \val(N \quot \Gamma)$.	
	By Exercise \ref{exer:quot_CA_degrees} this amounts to finding $N \in \mathcal{C}$ with $N\alpha \cap \B(\alpha,2) = H\alpha \cap \B(\alpha,2)$.
	Note that the inclusion $\subset$ is trivial.
	For each $\beta \in H\alpha \cap \B(\alpha,2)$, we find an $N^\beta \in \mathcal{C}$ as follows.
	The set $\{g \in G \mid g \alpha = \beta\}$ is open and has non-empty intersection with $H$. Therefore, it also intersects at least one element of $\mathcal{C}$ non-trivially, i.e., there exists $N^\beta \in \mathcal{C}$ with $\beta \in N^\beta \alpha \cap \B(\alpha,2)$. Since $\mathcal{C}$ is a chain and $\B(\alpha,2)$ has finitely many vertices, we can now take $N$ to be the maximum of $\{N^\beta \mid \beta \in H\alpha \cap  \B(\alpha,2)\}$.
	
	Now we prove 2. This time $\geq$ is clear. Let $\alpha \in \V\Gamma$.
	Again, we have to find $N \in \mathcal{C}$ such that $\val(H' \quot \Gamma) = \val(N \quot \Gamma)$. Again, by Exercise \ref{exer:quot_CA_degrees} this amounts to finding $N \in \mathcal{C}$ with $N\alpha \cap \B(\alpha,2) = H' \alpha \cap \B(\alpha,2)$.
	This time, the inclusion $\supset$ is trivial.
	For each $\beta \in \B(\alpha,2) \setminus H'\alpha$, we find $N^\beta \in \mathcal{C}$ as follows.
	We have
	\[
	\bigcap_{N \in \mathcal{C}} \{n \in N \mid n \alpha = \beta \} = \{h \in H' \mid h\alpha = \beta \} = \emptyset.
	\]
	The sets $\{\{n \in N \mid n \alpha = \beta\} \mid N \in \mathcal{C} \}$ form a chain, totally ordered by inclusion, consisting of compact sets. By Cantor's intersection theorem, one of them has to be empty. Choose $N^\beta \in \mathcal{C}$ such that $\beta \notin N^\beta \alpha$.
	Since $\mathcal{C}$ is a chain and $\B(\alpha,2)$ has finitely many vertices, we can now take $N$ to be the minimum of $\{N^\beta \mid \beta \in  \B(\alpha,2) \setminus H'\alpha\}$.
\end{proof}

\begin{proof}[Proof of Theorem \ref{thm:esschiefseries}]
	We show the theorem by induction on the minimal valency $\operatorname{mval}(G)$ of a Cayley--Abels graph for $G$.
	Assume first that $\operatorname{mval}(G) \leq 1$, then $\operatorname{mval}(G) =0$ and $G$ is compact. If $\operatorname{mval}(G)=2$ we get the result from Theorem \ref{T2ends}. We now assume that all cgtdlc groups $G'$ with $\operatorname{mval}(G') < \operatorname{mval}(G)$ have an essentially chief series.
	
	Let $\Gamma$ be a Cayley--Abels graph for $G$ of smallest possible valency.
	We can apply Zorn's lemma to show that there exists a maximal closed, normal subgroup $H \triangleleft G$ with $\val(H\quot\Gamma) = \val(\Gamma)$.
	Namely, the set of all such closed, normal subgroups is partially ordered by inclusion and Proposition \ref{prop:for_ess_chief_series}(1) grants that every chain has a maximal element. Using Lemma \ref{lem:quotient_by_normal}(4) we see that the local action of $H$ on $\Gamma$ is trivial. By Lemma \ref{lem:discrete_factor} the quotient $H/H \cap K$ is discrete, where $K$ is the kernel of the action of $G$ on $\Gamma$. The quotient $H\quot \Gamma$ is a Cayley--Abels graph for $G/H$ by Proposition \ref{prop:quotient_CA_by_normal}.
	
	Maximality of $H$ implies that for every closed, normal subgroup $N$ of $G$ containing $H$, we have $\val(N\quot \Gamma) < \val(\Gamma)$. By Proposition \ref{prop:for_ess_chief_series}(2), for every chain $\mathcal{C}'$ of closed, normal subgroups lying strictly between $H$ and $G$ we have $\bigcap_{C \in \mathcal{C}'} C \gneq H$, and again by Zorn's lemma, there exists a minimal, closed $H \lneq H' \triangleleft G$. Note that by definition, the quotient $H'/H$ is a chief factor of $G$.
	
	So far we obtained a series of normal subgroups
	\[
	\{1\} \triangleleft H \cap K \triangleleft H \triangleleft H' \triangleleft G,
	\]
	where $H \cap K$ is compact, $H / H \cap K$ is discrete and $H'/H$ is a chief factor of $G$.
	 By induction hypothesis, the group $G/H'$ has an essentially chief series, which can be lifted to complete above series to an essentially chief series for $G$. This finishes the proof.
\end{proof}

Reid--Wesolek also address essentially chief series for more general locally compact groups and discuss uniqueness of of chief factors.

\subsection{Modular function}\label{sect:modular}

Let $G$ be a locally compact group. Recall that the \emph{modular function} is defined as quotient
\[
 \Delta_G \colon G \to \mathbb{R}_+, \quad \Delta_G(g) \coloneqq  \frac{\mu(Ug)}{\mu(U)}
\]
for a left Haar measure $\mu$ on $G$ and an open subset $U \subset G$ with compact closure. It is a homomorphism and independent of the choice of $\mu$ and $U$.
If $G$ is totally disconnected, we can choose $U$ to be a compact, open subgroup of $G$.
Note that for any compact, open subgroup $V \leq U$, all left cosets $gV$ have to have the same measure of $V$. This simple observation, together with additivity of a measure, proves that $\mu(U) = [U:V] \cdot \mu(V)$.
Now left-invariance of the Haar measure shows that $\mu(Ug)=\mu(g^{-1}Ug)$, and we can calculate
\begin{align*}
\Delta_G(g) &= \frac{\mu(Ug)}{\mu(U)} = \frac{\mu(g^{-1}Ug)}{\mu(U)} \\
   &= \frac{\frac{\mu(g^{-1}Ug)}{\mu(U \cap g^{-1}Ug)}}{\frac{\mu(U)}{\mu(U \cap g^{-1}Ug)}} = \frac{[g^{-1}Ug : U \cap g^{-1} U g]}{[U : U \cap g^{-1} U g]}.
\end{align*}
Replacing $U$ with $g^{-1}Ug$ gives us the following lemma.

\begin{lemma}[\cite{Schlichting1979} Lemma 1]\label{lem:modularfct}
	Let $G$ be a tdlc group and $U \leq G$ a compact, open subgroup. Then the modular function is given by $$\Delta_G(g) = \frac{[U : U \cap gUg^{-1}]}{[gUg^{-1} : U \cap gUg^{-1}]}$$ and attains only rational values.
	
	In particular, if $G$ acts transitively on a set $X$ with compact, open point stabilizers, then for every $x \in X$ we have
	\[
	\Delta_G(g) = \frac{|G_{x} (gx)|}{|G_{gx}x|}.
	\]
\end{lemma}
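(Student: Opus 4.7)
The plan is almost entirely carried out in the paragraph immediately preceding the lemma; my proposal is to package that computation and then make one substitution. The key identity from that discussion, obtained via left-invariance of $\mu$, the substitution $\mu(Ug)=\mu(g^{-1}Ug)$, and the index formula $\mu(W)=[W:V]\mu(V)$ for $V\leq W$ compact open, is
\[
\Delta_G(g) \;=\; \frac{[g^{-1}Ug : U \cap g^{-1}Ug]}{[U : U \cap g^{-1}Ug]}.
\]
To obtain the form stated in the lemma, I would apply this identity to the compact, open subgroup $U' \coloneqq gUg^{-1}$ in place of $U$. Since $g^{-1}U'g = U$, the numerator becomes $[U : gUg^{-1}\cap U]$ and the denominator becomes $[gUg^{-1}: gUg^{-1}\cap U]$, which is exactly the claimed formula.

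Rationality then follows immediately: both $U$ and $gUg^{-1}$ are compact, while $U\cap gUg^{-1}$ is open (as an intersection of two open subgroups), so $[U:U\cap gUg^{-1}]$ and $[gUg^{-1}:U\cap gUg^{-1}]$ are finite positive integers, and hence $\Delta_G(g)\in\mathbb{Q}_{>0}$.

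For the ``in particular'' statement, I would specialize to $U = G_x$. The stabilizer of $gx$ is $G_{gx}=gG_xg^{-1}=gUg^{-1}$, and the stabilizer of $gx$ inside $G_x$ is $G_x\cap G_{gx}=U\cap gUg^{-1}$. By the orbit--stabilizer theorem applied to the action of $G_x$ on $X$,
\[
|G_x(gx)| \;=\; [G_x : G_x\cap G_{gx}] \;=\; [U : U\cap gUg^{-1}],
\]
and symmetrically $|G_{gx}x| = [gUg^{-1} : U\cap gUg^{-1}]$. Substituting these into the first formula yields the claimed expression for $\Delta_G(g)$.

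There is essentially no hard step here; the only thing that requires care is which of the two conjugates ($U$ or $gUg^{-1}$) plays which role after the substitution, and matching the convention $\Delta_G(g)=\mu(Ug)/\mu(U)$ used in the preceding discussion with the final orbit-size interpretation. Once the substitution $U\leadsto gUg^{-1}$ is performed, the rest is a short bookkeeping argument together with orbit--stabilizer.
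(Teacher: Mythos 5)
Your proposal is correct and follows exactly the paper's route: the computation in the paragraph preceding the lemma plus a conjugate substitution, with orbit--stabilizer applied to $U=G_x$ for the ``in particular'' clause. The only point worth flagging is that the paper phrases the final step as ``replacing $U$ with $g^{-1}Ug$'' while the substitution that actually produces the displayed formula is $U\leadsto gUg^{-1}$, as you correctly perform; your bookkeeping is the right one.
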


We will now apply this lemma to Cayley--Abels graphs. The idea comes from Bass and Kulkarni \cite[Section 3]{BassKulkarni1990}.
Let $\Gamma$ be a Cayley--Abels graph for $G$. Recall that $\A\Gamma \subset \V\Gamma \times \V\Gamma$ denotes the set of arcs of $\Gamma$. Define $\D_\Gamma \colon \A\Gamma \to \QQ_+$ by
\[
\D_\Gamma(\alpha,\beta) \coloneqq  \frac{|G_\alpha(\beta)|}{|G_\beta(\alpha)|}.
\]
Note that for every $g \in G$ with $g\alpha=\beta$ we have $\D_\Gamma(\alpha,\beta)=\Delta_G(g)$. Iterating this yields the following theorem.

\begin{theorem}[\cite{ALM2022}]
	Let $\Gamma$ be a Cayley--Abels graph for $G$. Let $g \in G$. Let $\alpha \in \V\Gamma$ and let $(\alpha_0,\alpha_1,\dots,\alpha_n)$ be an arc from $\alpha=\alpha_0$ to $\alpha_n = g\alpha$. Then
	\[
	\Delta_G(g) = \D_\Gamma(\alpha_0,\alpha_1) \cdot \D_\Gamma(\alpha_1,\alpha_2) \cdot \dots \cdot \D_\Gamma(\alpha_{n-1},\alpha_n).
	\]
	In particular, it is independent of the chosen arc.
\end{theorem}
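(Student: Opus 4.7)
The plan is to factor the group element $g$ along the given arc as a product of elements, each of which realises one step of the path, and then use multiplicativity of $\Delta_G$ together with the identity $\D_\Gamma(\alpha,\beta)=\Delta_G(h)$ recorded just before the theorem.

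More precisely, I would first exploit vertex-transitivity to pick, for each $i=1,\dots,n$, an element $h_i\in G$ with $h_i\alpha_{i-1}=\alpha_i$. By the observation preceding the theorem (that $\D_\Gamma(\alpha,\beta)=\Delta_G(h)$ for any $h$ sending $\alpha$ to $\beta$, a direct consequence of Lemma \ref{lem:modularfct} applied to $X=\V\Gamma$), one has
\[
\D_\Gamma(\alpha_{i-1},\alpha_i)=\Delta_G(h_i).
\]
Setting $h\coloneqq h_n h_{n-1}\cdots h_1$, an immediate induction gives $h\alpha=h\alpha_0=\alpha_n=g\alpha$, so $g^{-1}h\in G_\alpha$. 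Write $h=g k$ with $k\in G_\alpha$.

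The crucial (but short) step is to argue that $\Delta_G(k)=1$. Since $G_\alpha$ is compact and open, $\Delta_G(G_\alpha)$ is a compact subgroup of the multiplicative group $\RR_+$, and the only such subgroup is $\{1\}$. Combining this with the fact that $\Delta_G\colon G\to\RR_+$ is a group homomorphism, I would then compute
\[
\Delta_G(g)=\Delta_G(g)\Delta_G(k)=\Delta_G(gk)=\Delta_G(h)=\prod_{i=1}^{n}\Delta_G(h_i)=\prod_{i=1}^{n}\D_\Gamma(\alpha_{i-1},\alpha_i),
\]
which is the desired equality. Independence of the arc follows immediately, since the left-hand side $\Delta_G(g)$ makes no reference to it.

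I do not expect a serious obstacle here: the proof is essentially bookkeeping. The only conceptual point is the vanishing of $\Delta_G$ on the compact subgroup $G_\alpha$, which allows one to absorb the ambiguity in writing $g$ as a product realising the path; everything else is the multiplicativity of $\Delta_G$ and the dictionary between $\D_\Gamma$ and $\Delta_G$ already established in Lemma \ref{lem:modularfct}.
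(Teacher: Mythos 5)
Your proposal is correct and is essentially the paper's own argument: the paper simply says that iterating the observation $\D_\Gamma(\alpha,\beta)=\Delta_G(h)$ for $h\alpha=\beta$ yields the theorem, and your decomposition $g = h_n\cdots h_1 k$ with $k\in G_\alpha$ is exactly that iteration made explicit. (As a minor simplification, $\Delta_G(k)=1$ also follows directly from the formula in Lemma \ref{lem:modularfct}, since $k\alpha=\alpha$ gives $\Delta_G(k)=|G_\alpha(\alpha)|/|G_\alpha(\alpha)|=1$, without invoking compactness of $\Delta_G(G_\alpha)$.)
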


\begin{corollary}[\cite{ALM2022}]
	Let $\alpha \in \V\Gamma$ and let $\beta_1,\dots,\beta_d$ be the neighbours of $\alpha$ in $\Gamma$. The image of $\Delta_G$ is generated by $\{\D_\Gamma(\alpha,\beta_1),\dots,\D_\Gamma(\alpha,\beta_d)\}$.
\end{corollary}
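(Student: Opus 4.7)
The plan is to deduce the corollary directly from the preceding theorem, using vertex-transitivity to reduce every possible edge-factor that can appear in the product formula to one of the $d$ generators (or its inverse).

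The first step is to observe that $\D_\Gamma$ is constant on $G$-orbits of arcs: if $h \in G$ and $(\alpha',\beta') \in \A\Gamma$, then conjugation by $h$ gives a bijection $G_{\alpha'}(\beta') \to G_{h\alpha'}(h\beta')$ and likewise $G_{\beta'}(\alpha') \to G_{h\beta'}(h\alpha')$, so $\D_\Gamma(h\alpha',h\beta') = \D_\Gamma(\alpha',\beta')$. Also, directly from the definition, $\D_\Gamma(\beta',\alpha') = \D_\Gamma(\alpha',\beta')^{-1}$.

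Now let $g \in G$ and pick any path $(\alpha=\alpha_0,\alpha_1,\dots,\alpha_n=g\alpha)$ from $\alpha$ to $g\alpha$ in $\Gamma$, which exists by connectedness. The theorem just proven gives
\[
 \Delta_G(g) = \prod_{i=1}^n \D_\Gamma(\alpha_{i-1},\alpha_i).
\]
For each $i$, choose (by vertex-transitivity) an element $h_i \in G$ with $h_i \alpha_{i-1} = \alpha$. Then $h_i \alpha_i$ is a neighbour of $\alpha$, so $h_i \alpha_i = \beta_{j_i}$ for some $j_i \in \{1,\dots,d\}$, and by $G$-invariance of $\D_\Gamma$ we get $\D_\Gamma(\alpha_{i-1},\alpha_i) = \D_\Gamma(\alpha,\beta_{j_i})$. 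Consequently $\Delta_G(g)$ is a product of elements of $\{\D_\Gamma(\alpha,\beta_1),\dots,\D_\Gamma(\alpha,\beta_d)\}$, hence lies in the subgroup they generate in $\RR_+$.

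For the reverse inclusion, fix $j \in \{1,\dots,d\}$ and choose $g \in G$ with $g\alpha = \beta_j$ (again by vertex-transitivity). Applying the theorem to the length-one path $(\alpha,\beta_j)$ yields $\Delta_G(g) = \D_\Gamma(\alpha,\beta_j)$, so each generator lies in the image of $\Delta_G$. Combining the two inclusions proves the corollary.

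There is no real obstacle here: the theorem already does the heavy lifting, and the only step worth noting is the $G$-invariance of $\D_\Gamma$ on arcs, which is immediate from the orbit-translation argument. The appearance of backward arcs in a path simply corresponds to inverse values of the generators, which is why the multiplicative \emph{subgroup} (rather than just the semigroup) generated by the $\D_\Gamma(\alpha,\beta_j)$ captures the image.
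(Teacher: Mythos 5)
Your proof is correct, and it is exactly the intended derivation: the paper states this corollary without proof as an immediate consequence of the preceding theorem, and your argument (translating each arc of a path back to the base vertex via vertex-transitivity and the $G$-invariance of $\D_\Gamma$, plus realizing each generator as $\Delta_G(g)$ for $g$ with $g\alpha=\beta_j$) is the natural way to fill in the details. The only cosmetic remark is that the inversion identity $\D_\Gamma(\beta,\alpha)=\D_\Gamma(\alpha,\beta)^{-1}$ is not strictly needed for the forward inclusion, since translating any arc of the path already lands it directly on one of the $\D_\Gamma(\alpha,\beta_j)$; it does show, though, that the generating set is closed under inverses, so the semigroup and the subgroup it generates coincide.
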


The modular function can provide minimal valencies for Cayley--Abels graphs. The following is a special case of a theorem proved in \cite{ALM2022}.

\begin{theorem}[\cite{ALM2022}]
	Let $G$ be a cgtdlc group. Assume that $\Delta_G(G) \leq \mathbb{Z}$ is a cyclic group generated by the rational number $p/q$, where $p, q \in \NN$ are co-prime.
	Then, every Cayley--Abels graph for $G$ has valency at least $p+q$.
\end{theorem}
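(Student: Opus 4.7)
The plan is to combine the corollary immediately preceding the theorem with an elementary arithmetic argument exploiting $\gcd(p,q)=1$. Fix a vertex $\alpha \in \V\Gamma$ and let $B_1,\ldots,B_k$ denote the $G_\alpha$-orbits on $\N(\alpha)$. A direct check from the definition shows that $\D_\Gamma(\alpha,\cdot)$ is constant on each $G_\alpha$-orbit, so the corollary expresses $\Delta_G(G)$ as the subgroup of $\mathbb{Q}_+$ generated by the rationals $n_i/m_i$, where $n_i = |B_i|$ and $m_i = |G_\beta \alpha|$ for any $\beta \in B_i$. Note that $d = n_1 + \cdots + n_k$, so the conclusion is an additive bound on these orbit sizes.

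Next, I would pair up the orbits via arc-reversal. The natural bijection between $G_\alpha$-orbits on $\N(\alpha)$ and $G$-orbits on $\A\Gamma$ intertwines with the arc-reversal involution $i_\Gamma$. Writing $O_i$ for the $G$-orbit of $(\alpha,\beta)$ with $\beta \in B_i$, the set of arcs in $i_\Gamma(O_i)$ with origin $\alpha$ is $\{(\alpha,g\alpha) : g\beta = \alpha\} = G_\alpha \cdot g_0 \alpha$ for any $g_0$ with $g_0\beta = \alpha$; this has cardinality $|G_\beta\alpha|= m_i$. Hence if $i_\Gamma(O_i) = O_j$ then $n_j = m_i$ and $m_j = n_i$, so the pair $\{B_i,B_j\}$ contributes $n_i + m_i$ to the valency. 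In the symmetric case $O_i = i_\Gamma(O_i)$, there exists $g \in G$ with $g\alpha = \beta$ and $g\beta = \alpha$, so $g^2 \in G_\alpha$; since $G_\alpha$ is compact it lies in $\ker\Delta_G$, giving $\Delta_G(g)^2 = 1$ and thus $\Delta_G(g) = n_i/m_i = 1$. Symmetric orbits therefore contribute the trivial generator and can be discarded from the generating set.

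Now suppose $\Delta_G(G) = \langle p/q\rangle$ with $p,q$ coprime positive integers, and consider any asymmetric pair $\{B_i,B_j\}$. The ratio $n_i/m_i = (p/q)^{e}$ for some non-zero $e\in\mathbb{Z}$; after possibly swapping $B_i$ with $B_j$ we may take $e>0$. Because $\gcd(p^e,q^e)=1$, the equation $n_i\,q^e = m_i\,p^e$ forces $n_i = c\,p^e$ and $m_i = c\,q^e$ for some $c \in \mathbb{N}_{\geq 1}$. The pair therefore contributes
\[
n_i + n_j \;=\; n_i + m_i \;=\; c(p^e + q^e) \;\geq\; p^e + q^e \;\geq\; p + q
\]
to the valency, using $p,q\geq 1$ and $e\geq 1$. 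A non-trivial image $\langle p/q\rangle$ forces at least one asymmetric pair to exist, so summing non-negatively over all orbits yields $d \geq p+q$. The trivial case $p=q=1$ follows from connectedness of $\Gamma$ once $\Gamma$ has more than one vertex.

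The main difficulty is the bookkeeping step: one has to recognize that the pairing of orbits by arc-reversal is exactly what converts a \emph{multiplicative} constraint on the ratios $n_i/m_i$ into an \emph{additive} lower bound on the sizes $n_i$, and that coprimality of $p$ and $q$ (rather than merely their individual sizes) is what produces the sharp bound $p+q$. The elimination of symmetric orbits via the observation $g^2 \in G_\alpha$ is what ensures the remaining ratios are genuine non-trivial powers of $p/q$, after which the proof reduces to a one-line divisibility computation.
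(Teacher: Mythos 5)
Your argument is correct, and since the paper itself does not prove this theorem (it defers to \cite{ALM2022}), I can only assess it on its own terms: the pairing of $G_\alpha$-orbits on $\N(\alpha)$ under arc-reversal, the identity $n_j=m_i$ for a paired orbit, the elimination of symmetric orbits via $g^2\in G_\alpha\subseteq\ker\Delta_G$, and the divisibility step $p^e\mid n_i$, $q^e\mid m_i$ from coprimality are all sound, and together they do yield $d=\sum_i n_i\geq p^e+q^e\geq p+q$. One sentence is overstated: it is not true that \emph{every} asymmetric pair has $n_i/m_i=(p/q)^e$ with $e\neq 0$ (an asymmetric pair of arc-orbits can perfectly well be unimodular, i.e.\ have $e=0$); what your argument actually needs, and what does follow from $\Delta_G(G)$ being non-trivial together with the corollary on generators, is that \emph{at least one} orbit has a non-trivial ratio, and such an orbit is automatically asymmetric. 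With that correction the proof is complete; note also that the hypothesis should be read as $\Delta_G(G)\cong\ZZ$ (so $p\neq q$), since for a compact group the one-vertex Cayley--Abels graph would otherwise violate the bound $p+q=2$.
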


The second part of the following corollary follows directly from the above theorem, for the first part we refer to Example \ref{expl:arc_graph}.

\begin{corollary}[\cite{ALM2022}]
	Let $\T_{d,d'}$ be a bi-regular tree with valencies $d,d' \geq 2$. Let $G \leq \Aut(\T_{d'})$ be the subgroup leaving the bipartition on the vertices of $\T_{d,d'}$ invariant (note that $G=\Aut(\T_{d,d'})$ unless $d=d'$). Let $\omega$ be an end of $\T_{d,d'}$.
	\begin{enumerate}
		\item The group $G$ has a Cayley--Abels graph of valency $d+d'-2$.
		\item Every Cayley--Abels graph of $G_\omega$ has valency at least $(d-1)(d'-1)+1$.
	\end{enumerate}
\end{corollary}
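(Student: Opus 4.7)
The plan is to treat the two parts independently, with Part (1) an immediate application of Example \ref{expl:arc_graph} and Part (2) an explicit modular-function computation feeding into the last theorem above.

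For Part (1), first note that $G$ acts edge-transitively on $\T_{d,d'}$: every edge joins a vertex of valency $d$ to one of valency $d'$, and the full automorphism group is transitive on such edges; this edge-transitivity is inherited by the bipartition-preserving subgroup $G$. Example \ref{expl:arc_graph} therefore yields a Cayley--Abels graph $\Gamma'$ for $G$ with vertex set $\E \T_{d,d'}$. A vertex $\{\alpha,\beta\}$ with $\val(\alpha)=d$ and $\val(\beta)=d'$ has exactly $(d-1)+(d'-1)=d+d'-2$ neighbors in $\Gamma'$, namely the remaining edges incident to $\alpha$ and those incident to $\beta$.

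For Part (2), fix a vertex $\alpha$ of valency $d$ and let $\tau \colon G_\omega \to \ZZ$ be the horocyclic translation homomorphism recording the shift induced on the Busemann function based at $\omega$. Since $G$ preserves the bipartition of $\V\T_{d,d'}$, we have $\tau(G_\omega)\subseteq 2\ZZ$, and $\tau$ is surjective onto $2\ZZ$, as seen by exhibiting a loxodromic element of translation length $2$ with $\omega$ as attracting end. Moreover, any $g\in\ker\tau$ satisfies $d(\alpha,g\alpha)=2k$ for some $k\geq 0$ and then fixes the ray from $\alpha^{+k}$ to $\omega$, so $g$ lies in the compact subgroup $(G_\omega)_{\alpha^{+k}}$ and hence $\Delta_{G_\omega}(g)=1$. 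The image $\Delta_{G_\omega}(G_\omega)$ is therefore the cyclic subgroup of $\QQ^*_+$ generated by $\Delta_{G_\omega}(t)$ for any $t\in G_\omega$ with $\tau(t)=-2$.

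To compute $\Delta_{G_\omega}(t)$, I would apply the transitive-action form of Lemma \ref{lem:modularfct} with $x=\alpha$. The stabilizer $(G_\omega)_\alpha$ fixes the geodesic ray from $\alpha$ to $\omega$ pointwise, hence fixes $t\alpha$, giving $|(G_\omega)_\alpha(t\alpha)|=1$. Dually, $(G_\omega)_{t\alpha}$ acts transitively on the set of vertices of valency $d$ at distance $2$ from $t\alpha$ on the side away from $\omega$; choosing one of $d-1$ non-$\omega$-directed neighbors of $t\alpha$, then one of $d'-1$ non-$t\alpha$ neighbors of that vertex, exhibits this set as having size $(d-1)(d'-1)$, so $|(G_\omega)_{t\alpha}(\alpha)|=(d-1)(d'-1)$. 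Hence $\Delta_{G_\omega}(t)=1/((d-1)(d'-1))$; writing this generator in lowest terms as $p/q=1/((d-1)(d'-1))$, the theorem on modular functions yields $\val(\Gamma)\geq p+q=(d-1)(d'-1)+1$ for every Cayley--Abels graph $\Gamma$ of $G_\omega$.

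The main obstacle is establishing the transitivity of $(G_\omega)_{t\alpha}$ on the $(d-1)(d'-1)$ candidate vertices, which is needed to realize the full orbit size. This reduces to exhibiting enough bipartition-preserving, $\omega$-fixing automorphisms of $\T_{d,d'}$, and follows from the standard fact that pointwise stabilizers of half-trees in $\Aut(\T_{d,d'})$ act as full iterated wreath products on the corresponding rooted subtree. A minor preliminary is that $G_\omega$ is compactly generated, so that it admits Cayley--Abels graphs in the first place; this is standard, as $G_\omega$ is generated by the compact open subgroup $(G_\omega)_\alpha$ together with any translation $t$ as above.
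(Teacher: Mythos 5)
Your proposal is correct and follows exactly the route the paper intends: the text preceding the corollary says Part (1) is Example \ref{expl:arc_graph} (whose arc/edge graph visibly has valency $(d-1)+(d'-1)$) and Part (2) follows from the modular-function valency bound, which is precisely what you carry out by computing $\Delta_{G_\omega}(t)=1/((d-1)(d'-1))$ via Lemma \ref{lem:modularfct}. Your write-up simply supplies the details (edge-transitivity, the Busemann homomorphism, the orbit count $(d-1)(d'-1)$) that the paper leaves implicit, and these details check out.
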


If $d'=2$, the group $G$ is the automorphism group of the $d$-regular tree, which is a Cayley--Abels graph of the minimal valency $d=d+2-2=(d-1)(2-1)+1$ of both $G$ and $G_\omega$.

\subsection{Scale function}

In this subsection we are working with oriented graphs, also called directed graphs.

\begin{definition}
	Let $\Gamma$ be a graph. An \emph{orientation on $\Gamma$} is a subset $\Or\Gamma \subset \A\Gamma$ such that $\A\Gamma = \Or\Gamma \sqcup i(\Or\Gamma)$.
	In other words, for every edge $\{e,o(e)\}$ exactly one of $e,i(e)$ lies in $\Or\Gamma$.
	Let $\alpha \in \V\Gamma$. The \emph{in-valency} of $\alpha$ is defined as $|t^{-1}(\alpha)|$, the \emph{out-valency} as $|o^{-1}(\alpha)|$.
	A path $(e_1,\dots,e_n)$ in $\Gamma$ is \emph{oriented} if $e_i \in \Or\Gamma$ for all $i=1,\dots,n$.
	A graph morphism $\phi \colon \Gamma \to \Delta$ between two graphs $\Gamma$ and $\Delta$ with orientations $\Or\Gamma$ and $\Or\Delta$ is called \emph{orientation-preserving} if $\phi(\Or\Gamma) \subset \Or\Delta$.
	An action of a group $G$ on $\Gamma$ is orientation-preserving if every element of $G$ acts like an orientation-preserving graph morphism.
\end{definition}

Note that the definition implies that an orientation-preserving graph morphism $\phi$ also satisfies $\phi(\A\Gamma \setminus \Or\Gamma) \subset \A\Delta \setminus \Or\Delta $.

\begin{proposition}[\cite{ALM2022}]\label{Prop:coprime}
	 Let $\Gamma$ be a locally finite graph with orientation $\Or\Gamma$. Let $G$ be a cgtdlc group acting vertex-transitively and orientation-preservingly on $\Gamma$.
	  Assume that $G$ is transitive on $\Or\Gamma$.
	
	If the in- and out-valencies of one (and hence every) vertex are co-prime, then $G$ acts transitively on the set of oriented paths of length $n$ for every $n \geq 0$.
	Moreover, for one (and hence every) vertex $\alpha \in \V\Gamma$, the maximal subgraph of $\Gamma$ containing all oriented paths starting at $\alpha$ (the ``subgraph spanned by the set of descendants of $\alpha$") is a tree.
\end{proposition}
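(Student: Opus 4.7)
Write $p$ for the out-valency and $q$ for the in-valency of $\Gamma$, so that $\gcd(p,q)=1$. I would first discard the trivial case $p=q=1$, in which $\Gamma$ is a disjoint union of oriented cycles and bi-infinite lines and the statement is immediate, and assume $p\neq q$ from here on. The overall plan is to use the modular-function formula from the previous subsection to pin down a ``signed length'' on vertices, use it to show that every forward walk is a path, and then run a joint induction on $n$ in which the coprimality hypothesis enters via a clean orbit-counting identity.

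The first step is to define a signed length $\phi\colon\V\Gamma\to\ZZ$ with $\phi(\alpha)=0$. Lemma~\ref{lem:modularfct} together with $\Or\Gamma$-transitivity yields $\D_\Gamma(\alpha,\beta)=p/q$ if $(\alpha,\beta)\in\Or\Gamma$ and $\D_\Gamma(\alpha,\beta)=q/p$ otherwise, so the product formula for $\Delta_G$ along any path from $\alpha$ to $g\alpha$ reads $\Delta_G(g)=(p/q)^{k-\ell}$, where $k$ and $\ell$ count the forward respectively backward arcs of the path. Since $p\neq q$ implies that $p/q$ has infinite order in $\QQ_+^*$, the exponent $k-\ell$ is path-independent and defines $\phi$. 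Applied to closed walks (take $g=\mathrm{id}$), this forces every closed oriented walk to have signed length $0$; hence a forward walk of length $\ge 1$ that revisits a vertex would contain a positive-signed-length closed forward sub-walk, a contradiction. So every oriented walk starting at $\alpha$ is a path, and in particular there are exactly $p^n$ oriented paths of length $n$ from $\alpha$ (and, by symmetry, $q^n$ ending at $\beta$).

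I would then prove transitivity on oriented paths of length $n$ by induction on $n$; the cases $n=0,1$ are the hypotheses, and because reversing the orientation yields an instance of the same theorem with $p,q$ swapped, the inductive hypothesis is simultaneously available in both orientations. Given two paths of length $n$, align their length-$(n-1)$ prefixes using the induction, reducing the question to showing that the pointwise $G$-stabilizer $H$ of a common prefix of length $n-1$, ending at some vertex $\beta$, acts transitively on the set of outgoing neighbours of $\beta$. The reverse-oriented inductive hypothesis applied to $G_\beta$ yields $[G_\beta:H]=q^{n-1}$, while $G_\beta$-transitivity on the $p$ outgoing neighbours gives $[G_\beta:G_{\beta,\gamma}]=p$. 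Writing $r$ for the $H$-orbit size of $\gamma$, two applications of orbit-stabilizer give
\[
q^{n-1}\,r \;=\; [G_\beta : H\cap G_{\beta,\gamma}] \;=\; p\,s
\]
for some positive integer $s$; coprimality of $p$ and $q^{n-1}$ forces $p\mid r$, and $r\le p$ yields $r=p$. This is the essential use of the coprimality hypothesis.

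For the tree claim, let $m$ denote the number of oriented paths of length $n=\phi(\gamma)$ from $\alpha$ to a descendant $\gamma$. The transitivity just established gives $G_\alpha$-transitivity on the set $V_n(\alpha)$ of endpoints of such paths, and its reverse-oriented analogue gives $G_\gamma$-transitivity on the set $V_n^-(\gamma)$ of starting points of length-$n$ oriented paths ending at $\gamma$; both $m\cdot|V_n(\alpha)|=p^n$ and $m\cdot|V_n^-(\gamma)|=q^n$ then hold, so $m\mid\gcd(p^n,q^n)=1$ and $m=1$. An undirected cycle in the descendants subgraph would contain a $\phi$-maximal vertex with two $\Or\Gamma$-arcs of the subgraph pointing into it, producing two distinct oriented paths from $\alpha$ to that vertex and contradicting this uniqueness. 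The step I expect to be most delicate is the bookkeeping of the joint induction: one must invoke the reverse-orientation inductive hypothesis to identify $[G_\beta:H]$ with exactly $q^{n-1}$, and this in turn depends on knowing beforehand that reverse walks of length $n-1$ are paths, which is why the signed-length argument must come first.
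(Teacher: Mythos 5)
Your proof is correct and follows the same route as the paper's (which is only a two-sentence sketch): both rest on Lemma~\ref{lem:modularfct} giving $\D_\Gamma=p/q$ on oriented arcs, the count of $p^n$ oriented walks from a vertex, and the coprimality of $p$ and $q$. You supply in full the steps the sketch leaves implicit --- the signed-length argument showing every oriented walk is a path, the joint-induction orbit-counting identity $q^{n-1}r=ps$ for transitivity, and the $\gcd(p^n,q^n)=1$ double count for the tree claim --- and all of them are carried out correctly (modulo the hypothesis, implicit in the paper's sketch as well, that the vertex stabilizers are compact and open so that Lemma~\ref{lem:modularfct} applies).
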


\begin{proof}[Sketch of proof]
	Let $q$ denote the in-valency of $\Gamma$ and $p$ denote the out-valency. 
	Note that, for every $n$, the number of sequences $(e_1,\dots,e_n)$ of arcs such that $o(e_1)=\alpha$, and such that $t(e_{i})=o(e_{i+1})$ but $e_{i}\neq i(e_{i+1})$ for all $i=1,\dots,n-1$ is $p^n$.
	Use Lemma \ref{lem:modularfct} to prove that all those sequences have to have different endpoints and thus they are paths.
\end{proof}

Note that Proposition \ref{Prop:coprime} does not require $\Gamma$ to be connected.
We use it to derive a statement about detecting values of the scale function and finding tidy subgroups.

\begin{corollary}\cite{ALM2022}
	Let \(G\) be a totally disconnected, locally compact group and let $\Gamma$ be a Cayley--Abels graph for $G$.
	Let $g \in G$ and suppose that there exists a vertex $\alpha \in \V\Gamma$ such that $g\alpha$ is a neighbour of $\alpha$.
	
	If $|G_{\alpha}(g\alpha)|$ and $|G_\alpha(g^{-1}\alpha)|$ are co-prime, then $G_\alpha$ is tidy for $g$ and $s_G(g)=|G_{\alpha}(g\alpha)|$.
%
\end{corollary}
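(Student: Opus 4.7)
The plan is to verify Willis's criterion for tidyness (Proposition~\ref{prop:scale_power}) applied to $U := G_\alpha$. Writing $p := |G_\alpha \cdot g\alpha|$ and $q := |G_\alpha \cdot g^{-1}\alpha|$, the orbit--stabilizer theorem combined with $g^{n}G_\alpha g^{-n}=G_{g^{n}\alpha}$ gives $[G_\alpha : G_\alpha \cap g^{n}G_\alpha g^{-n}] = |G_\alpha \cdot g^{n}\alpha|$, and in particular $[G_\alpha : G_\alpha \cap gG_\alpha g^{-1}]=p$. Thus both conclusions (tidyness of $G_\alpha$ for $g$ and $s_G(g)=p$) follow at once from the single identity $|G_\alpha \cdot g^n\alpha| = p^n$ for every $n \geq 0$.

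To establish this identity I would package $g$ into an oriented graph amenable to Proposition~\ref{Prop:coprime}. Let $O := G\cdot(\alpha, g\alpha) \subset \A\Gamma$, and form the subgraph $\Gamma'$ with $\V\Gamma'=\V\Gamma$ and $\A\Gamma' = O \cup i(O)$, oriented by $\Or\Gamma' := O$. A short stabilizer computation shows $O \cap i(O) = \emptyset$ unless $g^2\alpha = \alpha$ --- a degenerate case I will handle separately --- so in the main case this is a genuine orientation. By construction $G$ acts on $\Gamma'$ vertex-transitively (inherited from $\Gamma$), orientation-preservingly, and transitively on $\Or\Gamma'$, with out- and in-valencies at $\alpha$ equal to $p$ and $q$, which are coprime by hypothesis. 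Proposition~\ref{Prop:coprime} then delivers $G$-transitivity on oriented paths of every length starting at $\alpha$, and tells us that the subgraph $T \subset \Gamma'$ spanned by all such paths is a tree; vertex-transitivity of $G$ upgrades this to a rooted tree in which every vertex has out-valency $p$, so there are exactly $p^n$ vertices at oriented depth $n$.

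To finish, I locate $g^n\alpha$ at depth $n$: applying $g^k$ to $(\alpha, g\alpha)\in O$ and using $G$-invariance of $O$ gives $(g^k\alpha, g^{k+1}\alpha)\in O$ for every $k\geq 0$, so $(\alpha, g\alpha, \ldots, g^n\alpha)$ is an oriented walk in $T$. A tree forbids revisits of vertices, so this walk strictly descends in depth at each step, placing $g^n\alpha$ at depth exactly $n$. The $G$-transitivity on oriented paths of length $n$ from $\alpha$ implies that any two depth-$n$ descendants of $\alpha$ are related by some element of $G_\alpha$ (the element moving one path to the other must fix the shared root $\alpha$), so the $p^n$ depth-$n$ vertices form a single $G_\alpha$-orbit and $|G_\alpha\cdot g^n\alpha|=p^n$ as required. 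Feeding this back into Proposition~\ref{prop:scale_power} finishes the argument. The sensitive point is cleanly extracting the tree structure from Proposition~\ref{Prop:coprime} and reading off the orbit count; the degenerate case $g^2\alpha=\alpha$ is harmless, since coprimality then forces $p=q=1$, which yields $G_\alpha=G_{g\alpha}=gG_\alpha g^{-1}$, so $g$ normalizes $G_\alpha$ and both conclusions are immediate.
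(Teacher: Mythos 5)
Your argument is essentially the paper's: the published sketch likewise applies Proposition~\ref{Prop:coprime} to the subgraph with edge set $G\{\alpha,g\alpha\}$ oriented by $O:=G(\alpha,g\alpha)$ and then invokes Proposition~\ref{prop:scale_power}; you have simply filled in the details of how the rooted-tree structure yields $|G_\alpha( g^n\alpha)|=p^n$. One small correction: the relevant dichotomy is not whether $g^2\alpha=\alpha$ but whether $(g\alpha,\alpha)\in O$ --- for instance, for $D_\infty$ acting on the bi-infinite line with $g$ a translation one has $O=i(O)$ even though $g^2\alpha\neq\alpha$, so your ``main case'' would be entered with a set that is not an orientation. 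This does not damage the proof: $O\cap i(O)\neq\emptyset$ forces $O=i(O)$ (both are single $G$-orbits of arcs), hence equal in- and out-valencies, so coprimality gives $p=q=1$ and your trivial-case argument ($g$ normalizes $G_\alpha$) applies verbatim.
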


\begin{proof}[Sketch of proof]
	Note that the condition implies that $(g\alpha,\alpha) \notin G(\alpha,g\alpha)$.
	Apply Proposition \ref{Prop:coprime} to the subgraph with vertex set $\V\Gamma$, edge set $G\{\alpha,g\alpha\}$ and orientation $G(\alpha,g\alpha)$.
	Use Proposition \ref{prop:scale_power}.
\end{proof}

\subsection{Local prime content}

The local prime content is a local invariant of a tdlc group. It gives the set of primes occurring locally as index between compact, open subgroups.

\begin{definition}\label{def:lpc}
	Let $G$ be a tdlc group. The \emph{local prime content} of $G$ consists of all primes $p \in \NN$ such that every compact, open subgroup $U \leq G$ has a compact, open subgroup $V \leq U$ with $p \mid [U:V]$.
\end{definition}

The following equivalent formulation comes in handy when determining the local prime content in concrete examples.

\begin{lemma}[\cite{w07}, Lemma 2.3]\label{lem:lpc}
	Let $G$ be a tdlc group. Then $p$ is in the local prime content if and only if there exist compact open subgroups $U_0 \geq U_1 \geq U_2 \geq \dots$ of $G$ such that $p \mid [U_n:U_{n+1}]$ for all $n \geq 1.$
\end{lemma}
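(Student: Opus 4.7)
The direction $(\Rightarrow)$ is routine: starting from any compact open subgroup $U_0 \leq G$, iteratively apply Definition \ref{def:lpc} to produce $U_{n+1} \leq U_n$ compact open with $p \mid [U_n : U_{n+1}]$, giving the desired chain.

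For $(\Leftarrow)$, suppose we have a chain $U_0 \geq U_1 \geq \dots$ with $p \mid [U_n : U_{n+1}]$ for all $n \geq 1$. Given an arbitrary compact open $U \leq G$, the plan is to show that $V := U \cap U_n$ works for $n$ sufficiently large. The tool is to double count $[U_0 : U \cap U_n]$ through the two towers $U \cap U_n \leq U \cap U_0 \leq U_0$ and $U \cap U_n \leq U_n \leq U_0$, which yields
\[
[U \cap U_0 : U \cap U_n] \cdot [U_0 : U \cap U_0] = [U_0 : U_n] \cdot [U_n : U \cap U_n].
\]

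The key input is that $[U_n : U \cap U_n]$ remains bounded uniformly in $n$. Indeed, the natural map $U_n/(U \cap U_n) \to U_0/(U \cap U_0)$, $g(U \cap U_n) \mapsto g(U \cap U_0)$, is injective: if $g_1, g_2 \in U_n$ satisfy $g_1^{-1} g_2 \in U \cap U_0$, then $g_1^{-1} g_2 \in U_n \cap U = U \cap U_n$. This bounds $[U_n : U \cap U_n] \leq [U_0 : U \cap U_0] =: C$. Writing $p^c$ for the largest power of $p$ dividing $C$ and using that $[U_0 : U_n]$ is divisible by $p^n$ by hypothesis, the displayed equation forces $p^{n-c}$ to divide $[U \cap U_0 : U \cap U_n]$. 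For $n > c$ we therefore have $p \mid [U \cap U_0 : U \cap U_n]$, and consequently $p$ divides $[U : U \cap U_n] = [U : U \cap U_0] \cdot [U \cap U_0 : U \cap U_n]$; taking $V := U \cap U_n$ finishes the proof.

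The main obstacle I foresee is simply committing to this setup: a naive attempt to transfer $p$-divisibility jump-by-jump along the chain (i.e.\ hoping $p \mid [U \cap U_{n-1} : U \cap U_n]$ for some $n$) need not succeed, and one must instead track the accumulated growth of $[U_0 : U_n]$ while controlling the bounded ``waste'' $[U_n : U \cap U_n]$.
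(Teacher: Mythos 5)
The paper does not prove this lemma itself; it is quoted from Willis's paper \cite{w07}, so there is no in-text argument to compare against. Your proof is correct and self-contained. The forward direction is indeed just iterated application of Definition \ref{def:lpc}, and your backward direction is the right idea: for an arbitrary compact open $U$ you cannot hope to see $p$ in a single step $[U\cap U_{n-1}:U\cap U_n]$, so you instead let the divisibility accumulate in $[U_0:U_n]$ and absorb the uniformly bounded defect $[U_n:U\cap U_n]\leq [U_0:U\cap U_0]$, which your injectivity argument for the coset map establishes cleanly (note $g_1^{-1}g_2\in U_n\cap U\cap U_0=U\cap U_n$ uses $U_n\leq U_0$, which holds). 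The only blemish is an off-by-one: the hypothesis gives $p\mid [U_k:U_{k+1}]$ only for $k\geq 1$, so $[U_0:U_n]$ is a priori divisible by $p^{\,n-1}$ rather than $p^{\,n}$; taking $n>c+1$ instead of $n>c$ repairs this and nothing else changes.
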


\begin{exercise}\label{exer:lpc}
	\begin{enumerate}
		\item Show that $V$ can be taken to be a normal subgroup of $U$ in Definition \ref{def:lpc}.
		\item Show that, in Lemma \ref{lem:lpc}, ``for all" can be replaced by ``for infinitely many".
	\end{enumerate}
\end{exercise}

The existence of Cayley--Abels graphs was used by Caprace--Reid--Willis to show that the local prime content of a cgtdlc group without nontrivial, compact, normal subgroups is finite. More precisely, an inspection of their proof reveals the following.

\begin{theorem}[\cite{crw17a}, Proposition 4.6]
	\label{thm:localprimecontentfinite}
	Let $G$ be a non-compact, cgtdlc group and $\Gamma$ a Cayley--Abels graph for $G$ of valency $d$.
	We denote by $K \leq G$ the kernel of the action of $G$ on $\Gamma$.
	Let $L \leq \Sym(d)$ be the local action of $G$ on $\Gamma$.
	Let $p$ be in the local prime content of $G/K$.
	
	Then, $p$ divides the cardinality of a point stabilizer in $L$.
	In particular, the local prime content of $G/K$ is finite.
\end{theorem}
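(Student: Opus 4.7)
The plan is to work in $G/K$ (so the action on $\Gamma$ is faithful) and to detect $p$ on a specific descending chain of compact open subgroups built from $\Gamma$. Fix a vertex $\alpha \in \V\Gamma$, set $W := G_\alpha/K$, and for $n \geq 0$ let $W_n := (G_\alpha \cap G_{\B(\alpha,n)})/K$, the pointwise stabilizer in $W$ of the finite ball $\B(\alpha,n)$. Each $W_n$ is open in the profinite group $W$; the sequence is descending; and $\bigcap_n W_n = \{1\}$ because $G/K$ acts faithfully on $\Gamma$ and $\Gamma$ is connected.

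The first substantive step is that $(W_n)_{n \geq 0}$ is a neighbourhood basis of $1$ in $W$, hence (since $W$ is open in $G/K$) in $G/K$. Writing $W = \varprojlim_i W/N_i$ over the open normal subgroups $N_i$ of $W$, the images of $W_n$ in each finite quotient $W/N_i$ form a descending chain of subgroups with trivial intersection, so $W_n \leq N_i$ for $n$ large; since the $N_i$ form a basis of neighbourhoods of $1$, so do the $W_n$. Using this, I extract an index divisible by $p$: the local prime content hypothesis gives a compact open $V \leq W_1$ with $p \mid [W_1 : V]$, and the basis property gives $W_n \leq V$ for some $n \geq 1$, so $p \mid [W_1 : V] \mid [W_1 : W_n] = \prod_{i=1}^{n-1}[W_i : W_{i+1}]$. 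Hence $p \mid [W_i : W_{i+1}]$ for some $i \geq 1$.

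Fix such an $i$. Since $W_i$ fixes $\B(\alpha,i)$ pointwise while $W_{i+1}$ fixes $\B(\alpha,i+1)$, the quotient $W_i/W_{i+1}$ acts faithfully on the sphere $S_{i+1}$ of vertices at distance $i+1$ from $\alpha$. Writing $S_i$ for the sphere of radius $i$, the identity $S_{i+1} = \bigcup_{\beta \in S_i} (\N(\beta)\setminus \B(\alpha,i))$ yields an injection
\[
W_i/W_{i+1} \hookrightarrow \prod_{\beta \in S_i} \Sym\!\bigl(\N(\beta)\setminus \B(\alpha,i)\bigr).
\]
For each $\beta \in S_i$ the $\beta$-component factors through the local action at $\beta$, which is a copy of $L$ inside $\Sym(\N(\beta))$. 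Since $i \geq 1$, the set $\N(\beta) \cap \B(\alpha,i)$ is non-empty (it contains any predecessor of $\beta$ on a geodesic from $\alpha$) and is fixed pointwise by $W_i$; hence each component lands in the pointwise stabilizer in $L$ of this non-empty set, which is contained in a one-point stabilizer $L_{x_\beta}$. Therefore $|W_i/W_{i+1}|$ divides $\prod_{\beta \in S_i} |L_{x_\beta}|$, so $p$ divides $|L_x|$ for some $x$. The ``in particular'' conclusion is immediate: $L \leq \Sym(d)$ is finite, so only finitely many primes can divide orders of its point stabilizers.

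The step I expect to be the main obstacle is the proof that $(W_n)$ is a neighbourhood basis of $1$: without it one cannot convert the existential assertion of local prime content into divisibility statements about the graph-theoretic indices $[W_n : W_{n+1}]$. The rest reduces to bookkeeping with the natural actions of $W_n$ on successive spheres and the observation that each of these actions factors through the local action.
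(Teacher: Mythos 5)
Your proof is correct and takes essentially the same route as the paper: pointwise stabilizers of an exhaustion of $\Gamma$ form a neighbourhood basis of the identity (the step you flag as the main obstacle is exactly the standard profinite fact the paper cites from Wilson; note that the trivial intersection of the images of the $W_n$ in a finite quotient is not automatic from $\bigcap_n W_n = \{1\}$ but needs the usual compactness argument), the local prime content then forces $p$ to divide some consecutive index $[W_i:W_{i+1}]$, and that index is controlled by point stabilizers of the local action. The only difference is that you grow the exhaustion ball by ball rather than adding the neighbourhood of a single vertex at a time as the paper does, which costs you one extra application of ``a prime dividing a product divides some factor'' via the embedding of $W_i/W_{i+1}$ into a product over the sphere $S_i$ --- harmless.
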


\begin{proof}
	To make notation simpler, we replace $G$ by $G/K$, i.e. we assume that $G \leq \Aut(\Gamma)$.
	
	Let $\alpha_0$ be a vertex of $\Gamma$.
	Let $\Gamma_0$ be the subgraph of $\Gamma$ spanned by $\alpha_0$ and its neighbours.
	We inductively choose vertices $\alpha_1,\alpha_2,\dots$ and define subgraphs $\Gamma_1 \subset \Gamma_2 \subset \dots$ in the following fashion.
	For every $i \geq 1$ the vertex $\alpha_i$ is contained in the subgraph $\Gamma_{i-1}$, but at least one of its neighbours is not.
	Define $\Gamma_i$ to be the subgraph spanned by $\Gamma_{i-1}$ and all neighbours of $\alpha_i$.
	We also require that $\Gamma$ is spanned by $\{\alpha_0,\alpha_1,\dots\}$.
	That these choices are indeed possible is left to the reader as exercise.
	
	Define $U_i \coloneqq  G_{\Gamma_i}$. The condition that $\Gamma$ is spanned by $\{\alpha_0,\alpha_1,\dots\}$ implies that $\bigcap_{i \geq 0} U_i = \{1\}$.
	It now follows from a general fact about profinite groups that the $U_i$ form a neighbourhood basis of the identity in $G$, see Lemma 0.3.1(h) in \cite{Wilson1998}. 
	Clearly $U_i \trianglelefteq U_{i-1}$ for all $i \geq 1$. For all $i \geq 2$ the group $U_{i-1}$ fixes by construction at least one neighbour of $\alpha_i$. Therefore the quotient $U_{i-1}/U_i$ is a subquotient of a point stabilizer in $L$. In particular, all prime divisors of $|U_{i-1}/U_i|$ divide the order of that point stabilizer.
	It is left to prove that $p$ divides the index $[U_{i-1}:U_i]$ for some $i \geq 2$.
	
	By definition there exist compact, open $U \leq U_1$ and $V \leq U$ such that $p \mid [U:V]$. Because the $U_i$ form a neighbourhood basis of the identity we can choose an $n \geq 1$ such that $U_n \leq V$.
	Now the result follows from
	\[
	[U_1 : U_2] \dots [U_{n-1}:U_n] = [U_1:U_n] = [U_1 : U] \cdot [U:V] \cdot [V:U_n]
	\]
	and the fact that if a prime number divides a product, then it divides one of the factors.
%
\end{proof}

The proof also gives a more geometric interpretation of the local prime content.
By Cauchy's theorem, if $p$ divides the finite group $U_{i-1}/U_i$, then this group has an element $g U_i$ of order $p$. The element $g$ will act like an element of order $p$, i.e. like a product of disjoint $p$-cycles, on the neighbouring vertices of $\alpha_i$ that are not contained in $\Gamma_{i-1}$.

\begin{remark}
	It is possible to refine the notion of the local prime content to encapsulate simple groups appearing locally as quotients, and adapt the proof of Theorem \ref{thm:localprimecontentfinite} to get a stronger result. This is done in the paper \cite{ALM2022}.
\end{remark}

Gl\"ockner proved the following connection between the local prime content and the scale function.

\begin{lemma}[\cite{g06}, Proposition 6.2]
	Let $G$ be a tdlc group. Let $g \in G$ and let $p$ be a prime dividing $s_G(g)$. Then, $p$ is in the local prime content of $G$.
\end{lemma}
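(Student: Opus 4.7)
The plan is to produce a descending chain of compact, open subgroups whose successive indices are divisible by $p$ infinitely often, then invoke the ``infinitely many'' strengthening of Lemma \ref{lem:lpc} recorded in Exercise \ref{exer:lpc}(2).

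First I would fix a compact, open subgroup $U \leq G$ tidy for $g$, write $s \coloneqq s_G(g)$, and set
\[
U_n \;\coloneqq\; \bigcap_{k=0}^{n} g^k U g^{-k}
\]
for each $n \geq 0$. Each $U_n$ is a finite intersection of compact, open subgroups and therefore itself a compact, open subgroup of $G$, and the descending inclusions $U_0 \supseteq U_1 \supseteq U_2 \supseteq \cdots$ are automatic from the definition.

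The arithmetic input is that $U_n \leq U \cap g^n U g^{-n} \leq U$, so by multiplicativity of index along this tower the quantity $[U : U \cap g^n U g^{-n}]$ divides $[U : U_n]$. Since $U$ is tidy, Proposition \ref{prop:scale_power} delivers $[U : U \cap g^n U g^{-n}] = s^n$, and hence $s^n \mid [U : U_n]$. Combined with the hypothesis $p \mid s$, this yields $p^n \mid [U : U_n]$ for every $n \geq 0$.

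Now the chain decomposition $[U : U_n] = \prod_{k=0}^{n-1} [U_k : U_{k+1}]$ together with the $p$-adic valuation $\nu_p$ gives
\[
\sum_{k=0}^{n-1} \nu_p\bigl([U_k : U_{k+1}]\bigr) \;=\; \nu_p\bigl([U : U_n]\bigr) \;\geq\; n \quad \text{for all } n \geq 1,
\]
and a sequence of non-negative integers whose partial sums diverge must have infinitely many positive terms. Therefore $p \mid [U_k : U_{k+1}]$ for infinitely many $k$, which by Exercise \ref{exer:lpc}(2) places $p$ in the local prime content of $G$. The only real subtlety is choosing $U$ tidy at the outset, since that is precisely what lets Proposition \ref{prop:scale_power} promote the single identity $[U : U \cap g U g^{-1}] = s$ into the power rule $[U : U \cap g^n U g^{-n}] = s^n$ that drives the whole estimate.
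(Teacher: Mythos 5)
The paper states this lemma only with a citation to Gl\"ockner and supplies no proof of its own, so there is no internal argument to compare against; judged on its own, your proof is correct. The key points all check out: tidy subgroups exist because the minimum defining $s_G(g)$ is attained over the nonempty (by Theorem \ref{thm:vandantzig}) family of compact, open subgroups; each $U_n = \bigcap_{k=0}^{n} g^k U g^{-k}$ is compact and open as a finite intersection; the tower $U_n \leq U \cap g^n U g^{-n} \leq U$ together with Proposition \ref{prop:scale_power} gives that $s^n$ divides $[U : U_n]$, so $\nu_p([U:U_n]) \geq n$; and since each $\nu_p([U_k : U_{k+1}])$ is a fixed finite non-negative integer, the divergence of the partial sums does force infinitely many of them to be positive, which is exactly the hypothesis of Lemma \ref{lem:lpc} in the weakened form of Exercise \ref{exer:lpc}(2). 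For comparison, Gl\"ockner's original argument goes through the tidying decomposition $U = U_+U_-$, where the chain $g^nU_+g^{-n}$ is increasing with successive indices equal to $s_G(g)$; your route via M\"oller's index characterization reaches the same conclusion using only statements already recorded in this paper, which makes it the more natural proof in this context.
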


It has the following consequence, proved by Willis with different methods.

\begin{corollary}[\cite{Willis2001a}]
	Let $G$ be a cgtdlc group.
	The set of all prime divisors of values of the scale function is finite.
\end{corollary}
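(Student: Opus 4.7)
The plan is to combine Gl\"ockner's lemma (which lives in $G$) with Theorem~\ref{thm:localprimecontentfinite} (which controls $G/K$ for a suitable compact normal $K$) through a quotient-formula bridge. First, if $G$ is compact, then $\langle g\rangle$ has compact closure for every $g\in G$, so $s_G(g)=1$ by the standard properties of the scale recalled in the preceding Fact, and there is nothing to prove. Hence we may assume $G$ is non-compact and fix a Cayley--Abels graph $\Gamma$ for $G$. Let $K$ be the kernel of the $G$-action on $\Gamma$; being the intersection of the compact, open vertex stabilizers, $K$ is compact and normal in $G$.

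The core step is to establish the quotient formula $s_G(g)=s_{G/K}(\pi(g))$ for the canonical projection $\pi\colon G\to G/K$. The inequality $s_G(g)\leq s_{G/K}(\pi(g))$ is the easy one: for any compact open $\bar U\leq G/K$, the preimage $\pi^{-1}(\bar U)$ is compact open in $G$ and contains $K$, and since $K$ is normal one has $K\leq \pi^{-1}(\bar U)\cap g\pi^{-1}(\bar U)g^{-1}$, so the two indices $[\pi^{-1}(\bar U):\pi^{-1}(\bar U)\cap g\pi^{-1}(\bar U)g^{-1}]$ and $[\bar U:\bar U\cap \pi(g)\bar U\pi(g)^{-1}]$ agree. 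For the reverse inequality, start with an arbitrary compact open $U\leq G$ and enlarge it to $U'\coloneqq UK$, which is again compact open (as $K$ is compact) and contains $K$. A short index chase using $[UK:U]=[K:K\cap U]$ and the embedding $K/(K\cap A)\hookrightarrow B/A$ (where $A=U\cap gUg^{-1}$ and $B=UK\cap gUKg^{-1}$) shows $[UK:UK\cap gUKg^{-1}]\leq [U:U\cap gUg^{-1}]$. Combining this with the easy direction applied to $\bar U=U'/K$ yields $s_{G/K}(\pi(g))\leq [UK:UK\cap gUKg^{-1}]\leq [U:U\cap gUg^{-1}]$; taking the infimum over $U$ gives $s_{G/K}(\pi(g))\leq s_G(g)$.

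Given the quotient formula, the corollary is immediate. If $p$ is a prime dividing $s_G(g)$ for some $g\in G$, then $p\mid s_{G/K}(\pi(g))$, so by Gl\"ockner's lemma applied to the tdlc group $G/K$, $p$ belongs to the local prime content of $G/K$. By Theorem~\ref{thm:localprimecontentfinite} applied to the non-compact cgtdlc group $G$ together with $\Gamma$, the local prime content of $G/K$ is finite, consisting only of prime divisors of the cardinality of a point stabilizer in the local action $L\leq \Sym(d)$. Hence the set of prime divisors of values of the scale function on $G$ is finite.

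The main obstacle is the reverse direction of the quotient formula: enlarging $U$ to $UK$ could a priori shrink the index $[U:U\cap gUg^{-1}]$, and one must chase cosets carefully to bound that shrinking by $[K:K\cap U]=[UK:U]$. A tempting shortcut, applying Gl\"ockner's lemma directly to $G$ and claiming that $\text{LPC}(G)$ is already finite, does not work: for a cgtdlc group the local prime content of $G$ itself can well be infinite (a compact normal subgroup isomorphic to $\prod_p \ZZ/p$ contributes every prime to $\text{LPC}(G)$ yet none to $\text{LPC}(G/K)$), and it is precisely this pathology that the passage to $G/K$ from Theorem~\ref{thm:localprimecontentfinite} excises.
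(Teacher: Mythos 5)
Your proof is correct and follows the route the paper intends, namely combining Gl\"ockner's lemma with Theorem~\ref{thm:localprimecontentfinite}; the paper states no proof, and you have correctly identified and filled the one step it leaves implicit, namely that Gl\"ockner's lemma only places the prime divisors of $s_G(g)$ in the local prime content of $G$ while Theorem~\ref{thm:localprimecontentfinite} controls that of $G/K$. Your bridge $s_G = s_{G/K}\circ\pi$ is the right one, and the index chase checks out: with $A=U\cap gUg^{-1}$ and $B=UK\cap gUKg^{-1}$ the injection of coset spaces $K/(K\cap A)\hookrightarrow B/A$ gives $[B:A]\geq[K:K\cap A]\geq[K:K\cap U]=[UK:U]$, which is exactly what is needed to get $[UK:B]\leq[U:A]$. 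One quibble with your closing remark: $\prod_p\ZZ/p$ has \emph{empty} local prime content, since for each prime $\ell$ the open subgroup $\prod_{q\neq\ell}\ZZ/q$ has supernatural order prime to $\ell$ and hence admits no open subgroup of index divisible by $\ell$; the example you want of a cgtdlc group with infinite local prime content but $\mathrm{LPC}(G/K)=\emptyset$ is rather $K\cong\prod_p\ZZ_p$ sitting as a compact open normal subgroup of, say, $\prod_p\ZZ_p\times\ZZ$. This slip occurs only in a side remark and does not affect the proof, which is complete.
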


\bibliographystyle{alpha}
\bibliography{references}
\end{document}